\let\oldbibliography\thebibliography
\renewcommand{\thebibliography}[1]{%
  \oldbibliography{#1}%
  \setlength{\itemsep}{0pt}%
  \footnotesize{}%
}
\theoremstyle{plain}
\newtheorem{thm}{Theorem}[section]
  \theoremstyle{plain}
  \newtheorem{cor}[thm]{Corollary}
  \theoremstyle{plain}
  \newtheorem{lem}[thm]{Lemma}
  \theoremstyle{plain}
  \newtheorem{prop}[thm]{Proposition}
  \theoremstyle{plain}
  \theoremstyle{plain}
  \newtheorem{defn}[thm]{Definition}
\DeclareMathOperator{\supp}{supp}
\DeclareMathOperator{\diff}{diff}
\DeclareMathOperator{\essinf}{essinf}
\DeclareMathOperator{\diam}{diam}
\newcommand{\N}{\mathbb{N}}
\newcommand{\R}{\mathbb{R}}
\newcommand{\sqr}{{{}^{{}_\square}}}
\newcommand{\e}{\varepsilon}
\newcommand{\emdef}{\textbf}
\newcommand{\hide}[1]{}
\def\blfootnote{\xdef\@thefnmark{}\@footnotetext}
\begin{document}

\title{Equidistribution from fractal measures}

\author{Michael Hochman\thanks{Partially supported by ISF grant 1409/11 and ERC grant 306494.} $\,$ and Pablo Shmerkin\thanks{Supported by a Leverhulme Early Career Fellowship.}}

\maketitle
%AMS - for AMS comment out the line above and uncomment \maketitle below

\begin{abstract}We\blfootnote{\emph{2010 Mathematics Subject Classification} 11K16, 11A63, 28A80, 28D05.} give a fractal-geometric condition for a measure on $[0,1]$ to be supported on points $x$ that are normal in base $n$, i.e. such that $\{n^kx\}_{k\in\mathbb{N}}$ equidistributes modulo 1. This condition is robust under $C^1$ coordinate changes, and it applies also when $n$ is a Pisot number rather than an integer. As applications we obtain new results (and strengthen old ones) about the prevalence of normal numbers in fractal sets, and new results on measure rigidity, specifically completing Host's theorem to multiplicatively independent integers and proving a Rudolph-Johnson-type theorem for certain pairs of beta transformations.
\end{abstract}

%AMS\maketitle

%AMS\tableofcontents

\section{\label{sec:Introduction}Introduction}

\subsection{\label{seb:ackground}Background}

A number $x\in[0,1]$ is called $n$-normal, or normal in base $n$,
if $\{n^{k}x\}_{k\in\mathbb{N}}$ equidistributes modulo 1 for Lebesgue measure. This is the same as saying that the sequence of
digits in the base-$n$ expansion of $x$ has the same limiting statistics
as an i.i.d. sequence of digits with uniform marginals. It was E.
Borel who first showed that Lebesgue-a.e. $x$ is normal (in every base); thus the $n$-ary expansion of a typical number is maximally random. It is generally believed that, absent obvious obstructions, this phenomenon persists when it is relativised to ``naturally'' defined  subsets of the reals, i.e. that typical elements of well-structured sets, with respect to appropriate measures, are normal, unless the set displays an obvious obstruction. Taking this to the extreme and applying it to singletons one arrives at the folklore conjecture that natural constants such as  $\pi,e,\sqrt{2}$ are normal in every base. While the last conjecture seems very much out of reach of current methods, there are various positive results known for more substantial sets, often ``fractal'' sets. The present paper is a contribution in this direction.

It is better to work with measures than with sets, and it will be convenient to say that a measure $\mu$ is \emdef{pointwise $n$-normal} if it is supported on $n$-normal numbers. The first results on the problem above were obtained independently by Cassels
and W. Schmidt in the late 1950s \cite{Cassels59,Schmidt60}.
Motivated by a question of Steinhaus, who asked whether normality
in infinitely many bases implies it for all bases, they showed that the
Cantor-Lebesgue measure $\mu$ on the middle-$\frac{1}{3}$ Cantor
is pointwise $m$-normal whenever $m$ is not a power of
$3$. This answers Steinhaus's question negatively since no number
in the middle-$\frac{1}{3}$ Cantor set is $3$-normal.

The proofs of Cassels and Schmidt are analytical: they establish rapid decay, as $N\to\infty$, of the $L^{2}(\mu)$
norms of the trigonometric polynomials $\frac{1}{N}\sum_{k=0}^{N-1}e(mn^kt)$  appearing in Weyl's equidistribution
criterion (here and in what follows, $e(s)=\exp(2\pi is)$). An essentially sharp condition for pointwise $n$-normality in
terms of these norms was provided a few years later by Davenport,
Erd\H{o}s and LeVeque \cite{DEL63}. The latter theorem
underlies most subsequent work on the subject and is particularly effective when the measures are constructed with this method in mind, for example Riesz products, which are defined in terms of their Fourier transform. Many results have been obtained in this way by Brown, Pearce, Pollington, and Moran \cite{BMP85,BMP87,Pollington88,BMP93,Pollington95,MoranPollington95}. However, for most ``natural'' measures the required
norm bounds are nontrivial to obtain, if they can be obtained at all. They also are fragile in the sense that they do
not persist when the measure is perturbed. The book \cite{Bugeaud12} contains a thorough overview of many classical equidistribution results.

\subsection{\label{sub:A-condition-for-ae-equidistribution}Main results}

In this paper we give a new sufficient condition for pointwise $n$-normality, which is more dynamical and geometric in nature, and captures the spirit of the conjecture stated at the beginning of this introduction. Roughly speaking, we show that if the process of continuously magnifying the measure around a typical point does not exhibit any almost-periodic features at frequency $1/\log n$, then the measure is pointwise $n$-normal. While the condition is not a necessary one, it is a natural one
in many of the most interesting examples, and can be verified relatively easily  in many
cases where other methods fail. It also leads to many applications which we discuss below.

The condition is formulated in terms of an auxiliary measure-valued flow
which arises from the process of {}``zooming in'' on $\mu$-typical
points. This procedure has a long history, going back variously to Furstenberg \cite{Furstenberg70,Furstenberg08}, Z\"{a}hle \cite{Zahle88}, Bedford and Fisher \cite{BedfordFisher97}, M\"{o}rters and Preiss \cite{MortersPreiss98}, and Gavish \cite{Gavish11}; the following definitions are adapted from \cite{Hochman12b},
where further references can be found. Let $\mathcal{P}(X)$ denote
the space of Borel probability measures on a metric space $X$; when $X$ is compact we equip it with the Borel structure, and then the
space $\mathcal{P}(X)$ is then compact and metrizable
in the weak-{*} topology. Write $\mathcal{M}$ for the space of Radon (locally finite Borel) measures on $\mathbb{R}$ and $\supp\mu$ for the topological support of a measure $\mu\in\mathcal{M}$. Let
\[
\mathcal{M}^{\sqr}=\{\mu\in\mathcal{P}([-1,1])\,:\,0\in\supp\mu\}
\]
and for $\mu\in\mathcal{M}^\sqr$ and $t\in\mathbb{R}$, define $S_{t}\mu\in\mathcal{M}^\sqr$
by\footnote{In \cite{Hochman12b} $S_t$ was denoted $S_t^\sqr$ to emphasize that it was acting on $\mathcal{M}^\sqr$, and similarly in some of the later definitions, but this is not needed here and we drop the extra notation.}
\[
S_{t}\mu(E)=c\cdot\mu(e^{-t}E\cap[-1,1])
\]
where $c=c(\mu,t)$ is a normalizing constant. For $x\in\supp\mu$, similarly
define the translated measure by $\mu^{x}(E)=c'\cdot\mu((E+x)\cap[-1,1])$. The \emdef{scaling flow }is the Borel $\mathbb{R}^{+}$-flow
$S=(S_{t})_{t>0}$ acting on $\mathcal{M}^\sqr$. The \emdef{scenery}
of $\mu$ at $x\in\supp\mu$ is the orbit of $\mu^{x}$ under
$S$, that is, the one-parameter family of measures
$\mu_{x,t}=S_{t}(\mu^{x})$, $t\geq 0$.

Write $\mathcal{D}=\mathcal{P}(\mathcal{P}([-1,1]))$, which is again
compact and metrizable and $\mathcal{P}(\mathcal{M}^\sqr)\subseteq\mathcal{D}$.\footnote{We would have liked to define $\mathcal{D}=\mathcal{P}(\mathcal{\mathcal{M}^\sqr})$, but while  $\mathcal{M}^\sqr$ is a Borel set it is not topologically nice. This is why we define  $\mathcal{D}$ as above, and why the test functions  in the definition of equidistribution in $\mathcal{D}$ are taken from $C(\mathcal{P}([-1,1])$ and not from $C(\mathcal{M}^\sqr)$.} For clarity we refer to elements of $\mathcal{D}$
as \emdef{distributions}, whereas we continue to refer to the elements
of $\mathcal{M}^\sqr$ as measures. A measure $\mu\in\mathcal{P}(\mathbb{R})$
\emdef{generates a distribution $P\in\mathcal{D}$ at $x\in\supp\mu$} if
the scenery at $x$ equidistributes for $P$ in $\mathcal{D}$, i.e.
if
\[
\lim_{T\rightarrow\infty}\frac{1}{T}\int_{0}^{T}f(\mu_{x,t})\, dt=\int f(\nu)\, dP(\nu)\qquad\mbox{for all }f\in C(\mathcal{P}([-1,1])),
\]
and $\mu$ generates $P$ if it generates $P$ at $\mu$-a.e. $x$.

If $\mu$ generates $P$, then $P$ is supported on $\mathcal{M}^\sqr$ and $S$-invariant (while unsurprising this is not completely trivial since $S$ acts discontinuously, see \cite[Theorem 1.7]{Hochman12b} for the proof). We say that
$P$ is \emdef{trivial} if it is the distribution supported on the measure
$\delta_{0}\in\mathcal{M}^\sqr$, which is a fixed point of $S$. It
can be shown that if $\mu$ generates a distribution, then it is the trivial one if and
only if $\mu$ gives full mass to a set of zero Hausdorff dimension (this follows from \cite[Proposition 1.19]{Hochman12b}).

To an $S$-invariant distribution $P$ we associate its \emdef{pure-point spectrum} $\Sigma(P,S)$. This is the set of $\alpha\in\mathbb{R}$ for which
there exists a non-zero measurable function $\varphi:\mathcal{M}^\sqr\rightarrow\mathbb{C}$
satisfying $\varphi\circ S_{t}=e(\alpha t)\varphi$, $t\in\R$, on a set
of full $P$-measure. The existence of such an eigenfunction indicates that some non-trivial feature of the measures of $P$ repeats periodically when the measures are magnified by a factor of $e^\alpha$.

Finally, let $f\mu$ denote the push-forward of the measure $\mu$, i.e. $(f\mu)(A)=\mu(f^{-1}A)$. We note this is sometimes denoted $f_{\#}\mu$.

\begin{thm}
\label{thm:WM-case}Let $\mu\in\mathcal{M}$ be a measure generating
a non-trivial $S$-ergodic distribution $P\in\mathcal{D}$, and let
$n\in\mathbb{N}$, $n\ge 2$. If $\Sigma(P,S)$ does not contain a non-zero integer multiple of
$1/\log n$, then $\mu$ is pointwise $n$-normal. Furthermore,
the same is true for $f\mu$ for all $f\in\diff^{1}(\mathbb{R})$.
\end{thm}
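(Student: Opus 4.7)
The plan is to verify Weyl's equidistribution criterion: pointwise $n$-normality of $\mu$ is equivalent to $\frac{1}{N}\sum_{k=0}^{N-1} e(mn^kx) \to 0$ for $\mu$-a.e.\ $x$ and every $m \in \Z \setminus \{0\}$. The spectral assumption $\Sigma(P,S) \cap \frac{1}{\log n}\Z \subseteq \{0\}$ is equivalent, by standard spectral theory of ergodic flows, to the time-$\log n$ map $T := S_{\log n}$ being $P$-ergodic: any nontrivial $T$-invariant function decomposes under $S$ into eigenfunctions whose eigenvalues $\alpha$ satisfy $\alpha \log n \in \Z$, i.e., $\alpha \in \frac{1}{\log n}\Z$, and the hypothesis forces all nonzero such $\alpha$ to be absent.

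The essential obstacle is that $\mu_{x,k\log n}$ is centered at $0$ and records only the local geometry of $\mu$ near $x$, discarding the \emph{phase} $\{n^k x\}$ that actually governs $e(mn^k x)$. To recover it, I would augment the dynamics to the skew product $\tau(\nu, s) = (T\nu,\, \{ns\})$ on $\mathcal{M}^\sqr \times [0,1)$, under which the orbit $k \mapsto (\mu_{x,k\log n},\, \{n^k x\})$ evolves. Using the hypothesis that $\mu$ generates $P$ together with a Fubini/fibering argument along the lines of \cite{Hochman12b}, this augmented orbit should equidistribute for $\mu$-a.e.\ $x$ for a $\tau$-invariant distribution $\tilde P$ on $\mathcal{M}^\sqr\times[0,1)$ with first marginal equal to $P$.

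The crux is then to show that $\tilde P$ is $\tau$-ergodic and its $[0,1)$-marginal is Lebesgue. Ergodicity follows by Fourier-expanding a $\tau$-invariant $L^2(\tilde P)$-function $\varphi(\nu,s) = \sum_m \varphi_m(\nu)e(ms)$ in the phase: matching coefficients forces $\varphi_\ell = 0$ for $\ell \notin n\Z$ and $\varphi_k \circ T = \varphi_{nk}$ otherwise, which iterates to $\varphi_\ell = 0$ for all $\ell\neq 0$; then $\varphi = \varphi_0$ is $T$-invariant and hence constant by $T$-ergodicity. Identifying the $[0,1)$-marginal as Lebesgue is more delicate: it is $T_n$-invariant and ergodic, and the non-triviality of $P$ (which gives $\mu$ positive Hausdorff dimension, excluding purely atomic options) combined with the scaling-flow symmetries built into $\tilde P$ should force maximal entropy $\log n$, hence Lebesgue by the uniqueness of the measure of maximal entropy for $T_n$.

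Once this is in hand, Birkhoff's theorem applied to the continuous function $(\nu,s)\mapsto e(ms)$ on the ergodic system $(\tau,\tilde P)$ yields $\frac{1}{N}\sum_{k=0}^{N-1} e(m\{n^k x\}) \to \int_0^1 e(ms)\,ds = 0$ for $\mu$-a.e.\ $x$ and every $m \neq 0$, verifying Weyl's criterion. For the $C^1$-robustness, I invoke results from \cite{Hochman12b}: $f\mu$ generates a distribution obtained from $P$ by a fiberwise linear action (with slope $f'(x)$) on each scenery; such linear actions commute with $S_t$ and therefore preserve $\Sigma(\cdot,S)$, so the spectral hypothesis transfers to $f\mu$ and the main argument applies verbatim. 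The principal obstacle is the marginal identification, where the scenery structure must be shown to rule out every proper $T_n$-ergodic alternative to Lebesgue.
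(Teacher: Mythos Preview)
Your skew-product reformulation is a reasonable way to organize the problem, but the proposal has a genuine gap at the decisive step. You correctly isolate the crux as showing that the $[0,1)$-marginal of $\tilde P$ is Lebesgue, but then offer only the phrase ``scaling-flow symmetries built into $\tilde P$ should force maximal entropy $\log n$.'' This is not an argument; it is a restatement of the theorem. The $[0,1)$-marginal of $\tilde P$ is precisely (a subsequential limit of) the orbit distribution $\frac{1}{N}\sum_{k<N}\delta_{\{n^kx\}}$, so asserting it is Lebesgue \emph{is} asserting pointwise $n$-normality. Nothing about $S$-invariance of $P$ or non-triviality by itself rules out, say, a Cantor--Lebesgue--type $T_n$-invariant marginal of positive but sub-maximal entropy. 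Relatedly, your Fourier argument for $\tau$-ergodicity is circular: expanding $\varphi\in L^2(\tilde P)$ as $\sum_m\varphi_m(\nu)e(ms)$ presupposes that the $s$-conditionals of $\tilde P$ are Lebesgue, which is exactly what you have not yet established. Finally, the claim that the \emph{joint} orbit $(\mu_{x,k\log n},\{n^kx\})$ equidistributes (not merely has accumulation points) already assumes convergence of the second coordinate, i.e.\ the conclusion.

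The paper fills this gap with substantial machinery that has no counterpart in your sketch. First, a general ``local-average'' theorem (Theorem~\ref{thm:local-average}) shows that any subsequential orbit limit $\nu$ can be written as an integral $\int c_\omega(\delta_{y_\omega}*\eta_\omega)|_{[0,1]}\,dQ(\omega)$ with $\eta_\omega\sim P$; the spectral hypothesis enters here to ensure the measure-marginal is $P$ rather than merely an $S_{\log n}$-ergodic component. Second, a separate construction (Theorem~\ref{thm:existence-of-resonant-measures}) produces measures $\tau_k$ with $\dim\tau_k\to 1$ that \emph{resonate} with every $T_n$-invariant measure of dimension in $(0,1)$. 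Third, a Marstrand-type averaging over the $S$-flow shows $\dim(\tau_k*\eta_\omega)=1$ a.s., forcing $\dim(\tau_k*\nu)=1$ and hence $\dim\nu=1$, i.e.\ $\nu$ is Lebesgue. This resonance/dissonance dichotomy is the mechanism that converts ``scaling-flow symmetry'' into the conclusion; without it or a genuine substitute, your outline does not close. (A minor point: for $f\in\diff^1$, Lemma~\ref{lem:image-of-USM} shows $f\mu$ generates the \emph{same} $P$, not a fiberwise-transformed one, so the ``Furthermore'' clause is immediate once the main statement is proved.)
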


The non-triviality assumption means that the theorem does not apply to measures supported on zero-dimensional sets. This limitation is intrinsic to our methods.

The hypotheses of the theorem may seem restrictive, since general measures do not generate any distribution, let alone an ergodic one satisfying the spectral condition. However, ``natural'' measures arising in dynamics, fractal geometry or arithmetic, very often do generate an $S$-ergodic distribution (see e.g. \cite{Gavish11, Hochman12, Hochman12b} for many examples), and the important hypothesis becomes the spectral one. It is possible to formulate a version of the theorem that applies to measures which do not generate a distribution in the above sense, but the result is less useful. See remark at the end of Section \ref{sub:proof-of-WM-case}. In Section \ref{sec:refinements} we give some stronger versions of the theorem which are used in some of the later applications.

Finally, note that the theorem is not a characterization, and the presence of $k/\log n$ in the pure point spectrum of $P$ does not rule out pointwise $n$-normality. Indeed, if a measure is translated by a random, uniformly chosen distance, then the sceneries are not affected, but almost surely the measure becomes pointwise normal in every base (see also Theorem \ref{thm:analytic-images-of-ssms} below). It is worth mentioning though that the canonical example of a measure that is \textit{not} pointwise $n$-normal is that of a singular measure on $[0,1]$ invariant and ergodic for $x\mapsto nx\bmod 1$. For such $\mu$,  the first author showed in \cite{Hochman12} that, when the entropy is positive, the generated distribution indeed has a multiple of $1/\log n$ in its spectrum.

There is some interest also in expansions of numbers in non-integer bases. Following R\'{e}nyi \cite{Renyi57}, for $\beta>1$ we define the $\beta$-expansion of $x\in [0,1)$ to be the lexicographically least sequence $x_n\in\{0,1,\ldots,\lceil\beta-1\rceil\}$ such that $x=\sum_{n=1}^\infty x_n\beta^{-n}$. This sequence is obtained from the orbit of $x$ under $T_\beta:x\mapsto\beta x\bmod 1$ in a manner similar to the integer case. It is known that $T_\beta$ has a unique absolutely continuous invariant measure, called the Parry measure, and we shall say that $x$ is $\beta$-normal if under $T_\beta$ it equidistributes for this measure.

Recall that $\beta>1$ is called a Pisot number if it is an algebraic integer whose algebraic conjugates are of modulus strictly smaller than $1$. We adopt the convention that integers $\ge 2$ are Pisot numbers. The dynamics of $T_\beta$ is best understood for this class of numbers, and our results extend to them:

\begin{thm}\label{thm:WM-pisot-case}
Theorem \ref{thm:WM-case} holds as stated for a Pisot  number $\beta>1$ in place of $n$.
\end{thm}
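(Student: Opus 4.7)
The plan is to follow the structure of the proof of Theorem \ref{thm:WM-case}, replacing multiplication by $n$ with the $\beta$-transformation $T_\beta$ and Lebesgue measure with the Parry measure $\nu_\beta$. Since $\nu_\beta$ is absolutely continuous on $[0,1)$ with bounded density, $\beta$-normality of $x$ is equivalent to
$$\frac{1}{N}\sum_{k=0}^{N-1} e(m\,T_\beta^k x) \longrightarrow \widehat{\nu_\beta}(m)\qquad \text{for every non-zero integer } m.$$
The target is to verify this criterion at $\mu$-a.e. $x$, and then upgrade to $f\mu$ via the same diffeomorphism argument as in Theorem \ref{thm:WM-case}.

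The main technical difficulty absent in the integer case is that, for Pisot $\beta$, $T_\beta^k x$ does \emph{not} equal $\beta^k x\bmod 1$. Writing $T_\beta^k x = \beta^k x - c_k(x)$ with $c_k(x)=\sum_{j<k} x_j(x)\,\beta^{k-1-j}\in\Z[\beta]$, we have $e(m\,T_\beta^k x)=e(m\beta^k x)\cdot e(-m\,c_k(x))$. The Pisot hypothesis enters precisely to control the correction factor: the Galois conjugates $\beta_2,\ldots,\beta_d$ satisfy $|\beta_i|<1$, so the trace of $m\,c_k(x)$ is an integer and hence $e(-m\,c_k(x))=e\bigl(m\sum_{i\ge 2} c_k^{(i)}(x)\bigr)$ depends only on the bounded, geometrically contracting Galois-conjugate quantities $c_k^{(i)}(x)$. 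I would then pass to the natural extension of $T_\beta$, which for Pisot $\beta$ is measurably conjugate to a hyperbolic automorphism on a compact abelian group (the Rauzy/Thurston picture), where the correction becomes a continuous cocycle and can be absorbed into a test function on an extended phase space. After this reduction the problem becomes one of analyzing $\frac{1}{N}\sum_{k<N} e(m\beta^k x)\,\chi_k(x)$ with $\chi_k$ a bounded, slowly varying cocycle, which has the same multiplicative structure in $\beta^k$ as the integer case.

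From this point the argument parallels Theorem \ref{thm:WM-case} very closely: the scaling operator $S_{\log\beta}$ on $\mathcal{D}$ corresponds to magnifying the measure by a factor of $\beta$ whether or not $\beta$ is integral, so the Fourier-type observables $\mu\mapsto\widehat{\mu}(m\beta^k\,\cdot\,)$ and $\mu\mapsto\widehat{\mu}(m\,\cdot\,)$ are conjugated by $S_{k\log\beta}$. The hypothesis $k/\log\beta\notin\Sigma(P,S)$ for $k\neq 0$ ensures that the relevant observable is not carried by an $S_{\log\beta}$-eigenfunction of $P$, and the ergodic theorem applied to $(P,S_{\log\beta})$ yields the required cancellation at $\mu$-a.e. $x$. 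The extension to $f\mu$ for $f\in\diff^1(\R)$ carries over verbatim from the integer case, since the $C^1$-robustness of generated distributions is a purely geometric statement about the scenery flow and is insensitive to the arithmetic nature of $\beta$.

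The most delicate step will be the reduction in the second paragraph: showing that the cocycle factor $\chi_k$ arising from $e(-m\,c_k(x))$ does not destroy the ergodic averaging along $\mu$-typical orbits. The cleanest route is probably to combine $\chi_k$ with the scenery-flow observable into a single continuous function on the product of $\mathcal{D}$ with the natural-extension base, so that the spectral hypothesis on $P$ can be read off from an invariant decomposition there. This is where the Pisot property genuinely enters and where the main novel work beyond the integer case will lie.
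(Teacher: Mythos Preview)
Your proposal rests on a mistaken picture of how Theorem \ref{thm:WM-case} is proved. The paper does \emph{not} prove the integer case by estimating Weyl sums $\frac{1}{N}\sum e(mn^kx)$ and invoking the spectral hypothesis to force cancellation; there is no Fourier analysis anywhere in the argument. Instead, the proof is dimension-theoretic: for a $\mu$-typical $x$, any subsequential equidistribution limit $\nu$ under $T_n$ is shown, via Theorem \ref{thm:local-average}, to admit an integral representation $\nu=\int c_\omega(\delta_{y_\omega}*\eta_\omega)|_{I_\omega}\,dQ(\omega)$ with $\eta_\omega$ distributed according to $P$ (this is where the spectral hypothesis is used, through Lemma \ref{lem:discrete-equidistribution-and-spectrum}). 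One then shows that $\nu$ \emph{dissonates} with any measure of dimension $>1-\delta$ (Lemma \ref{lem:dissonance}, using $S$-invariance of $P$ and Marstrand), while Theorem \ref{thm:existence-of-resonant-measures} supplies measures $\tau_n$ of dimension $\to 1$ that \emph{resonate} with every $T_n$-invariant measure of intermediate dimension. The only resolution is $\dim\nu=1$, forcing $\nu$ to be Lebesgue (or Parry). The passage from $n$ to Pisot $\beta$ requires only two localized changes: Garsia's Lemma \ref{lem:garsia-simplified} to control the lengths of cylinders $\mathcal{A}^k(x)$ in the integral representation, and the construction of resonant measures in Section \ref{sec:resonance} for $T_\beta$.

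Your proposed route has a genuine gap even on its own terms. The assertion that ``the ergodic theorem applied to $(P,S_{\log\beta})$ yields the required cancellation at $\mu$-a.e.\ $x$'' is the entire content of the theorem, not a consequence of it: the scenery distribution $P$ governs the statistics of \emph{rescalings} of $\mu$ near $x$, whereas $\beta$-normality is a statement about the orbit of $x$ under $T_\beta$. Bridging these two is exactly what Theorem \ref{thm:local-average} accomplishes, and it is not a spectral or Fourier fact. Moreover, the observable $\eta\mapsto\widehat{\eta}(m)$ is not an eigenfunction of $S_{\log\beta}$, so the spectral hypothesis on $P$ gives no direct control over its ergodic averages; and the cocycle $\chi_k$ you introduce depends on the full past $x_0,\ldots,x_{k-1}$ of the $\beta$-expansion, so it is not a function on $\mathcal{M}^\sqr$ at all and cannot simply be ``absorbed'' into a scenery-flow observable. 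The Rauzy/Thurston natural extension is a red herring here: the paper never uses it, and the Pisot hypothesis enters only through Garsia's separation lemma and the structure of the $\beta$-shift (Lemma \ref{lem:structure-Pisot-shift}).
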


It is possible that the Pisot assumption is unnecessary but currently we are unable to prove this (but see also the discussion following Corollary \ref{cor:rudolph-for-beta} below). On the other hand, Bertrand-Mathis \cite{Bertrand-Mathis79} proved that if $\beta$ is Pisot and $x$ is $\beta$-normal, then $\{ \beta^n x\}_{n=1}^\infty$ equidistributes on the circle. Hence we have:

\begin{cor}\label{cor:bertrand-mathis-cor}
If $\beta>1$ is Pisot and $\mu$ satisfies the hypothesis of the Theorem  \ref{thm:WM-case} with $\beta$ in place of $n$, then $\{\beta^nx\}_{n=1}^\infty$ equidistributes modulo 1 for $\mu$-a.e. $x$.
\end{cor}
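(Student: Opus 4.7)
The proof should be essentially a direct chaining of two already-established facts, so the plan is very short. First, I would apply Theorem \ref{thm:WM-pisot-case} to the measure $\mu$ with the given Pisot number $\beta$ playing the role of $n$. The hypothesis of the corollary is exactly the hypothesis of Theorem \ref{thm:WM-pisot-case}: $\mu$ generates a non-trivial $S$-ergodic distribution $P$ whose pure-point spectrum $\Sigma(P,S)$ contains no non-zero integer multiple of $1/\log\beta$. The conclusion of that theorem is that $\mu$ is pointwise $\beta$-normal, meaning there is a Borel set $E\subset[0,1]$ with $\mu(E)=1$ such that every $x\in E$ is $\beta$-normal, i.e.\ $\{T_\beta^k x\}_{k\ge 0}$ equidistributes for the Parry measure.

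Second, I would invoke the theorem of Bertrand-Mathis \cite{Bertrand-Mathis79} quoted in the excerpt: when $\beta$ is Pisot, every $\beta$-normal point $x$ has the property that $\{\beta^n x\}_{n=1}^\infty$ equidistributes modulo $1$ on the circle (for Lebesgue measure). Applying this pointwise on the full-$\mu$-measure set $E$ gives exactly the statement of the corollary.

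I do not anticipate any obstacle: both ingredients are available and the logical combination is immediate. The only thing worth flagging is that Theorem \ref{thm:WM-pisot-case} is asserted to hold ``as stated'' for Pisot $\beta$, so in particular the pointwise $\beta$-normality produced there refers to equidistribution under $T_\beta$ in the sense of the Parry measure defined just before that theorem, which is precisely the notion of $\beta$-normality that Bertrand-Mathis's theorem takes as input. Hence no translation between different notions of normality is needed, and the corollary follows.
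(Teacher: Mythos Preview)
Your proposal is correct and follows exactly the paper's own argument: the corollary is stated immediately after the sentence invoking Bertrand-Mathis's theorem, and the intended proof is precisely the two-step chaining you describe---Theorem~\ref{thm:WM-pisot-case} gives pointwise $\beta$-normality of $\mu$, and then Bertrand-Mathis's result upgrades this to equidistribution of $\{\beta^n x\}$ modulo~$1$.
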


Before turning to applications let us say a few words about what goes into the proof of Theorem \ref{thm:WM-pisot-case} (a more detailed sketch of the proof is given in Section \ref{sub:sketch-of-proof}). There are two main ingredients. The first involves the behavior of the dimension of measure under convolution. Specifically, among the measures of positive dimension invariant under $x\mapsto \beta x\bmod 1$, one can characterize Lebesgue measure (or the Parry measure) in terms of its dimension growth under convolutions. This part of the argument is special to the dynamics of $x\mapsto \beta x\bmod 1$ and is the main place where the Pisot property is used in the non-integer case. Most of the work then goes into showing that, if there were a measure $\mu$ satisfying the hypothesis of the theorems above but not their conclusion, then one could concoct an invariant measure  $\eta$  violating the characterization alluded to above. The scheme above is a refinement of ideas we have used before in \cite{HochmanShmerkin12} and \cite{Hochman12b}.

The second ingredient in the proof, and one of the main innovations in this paper, applies in a more general setting than invariant measures for piecewise-affine maps of  $[0,1]$. The proper context is that of a Borel map $T$ of a compact metric space $X$. Roughly speaking, we show how to relate the small-scale structure of a measure $\mu$ on $X$ to the distribution of $T$-orbits of $\mu$-typical points. This result, while classical in nature, appears to be new and we believe it may find further applications. We leave the discussion and precise statement to Section \ref{sub:distrib-of-orbits}.

\subsection{\label{sub:apps}Applications}

\subsubsection{\label{sub:normals-nums-in-fractals}Normal numbers in fractals}

As our first application we consider sets arising as attractors of iterated  function systems, or, equivalently, repellers of uniformly expanding maps on the line (see below for definitions). We show that, under some weak regularity assumptions, if such a set is defined by nonlinear dynamics, or if the contraction rates of the defining maps satisfy a natural algebraic condition, then typical points in the set are $n$-normal. This should be interpreted in terms of the conjecture stated earlier: indeed, it implies that if such a set contains no $n$-normal numbers, then the set is essentially defined by linear\footnote{We shall follow the convenient but imprecise convention of using the term linear also for affine maps.} maps whose slopes are rational powers of $n$, and in this sense the dynamics is similar to the  canonical examples of sets without $n$-normal numbers, namely closed subsets of $[0,1]$ that are invariant under the piecewise-linear maps $x\mapsto nx\bmod 1$.

We start with the relevant definitions.  An \emdef{iterated function system} (IFS) is a finite family $\mathcal{I}=\{f_{0},\ldots,f_{r-1}\}$
of strictly contracting maps $f_{i}:I\rightarrow I$ for a compact interval $I\subseteq\mathbb{R}$ (of course one can define IFSs in general metric spaces). The IFS is of class $C^\alpha$ if all the $f_i$ are. We shall say that the IFS $\mathcal{I}$ is \emdef{regular} if the maps $f_i$ are orientation-preserving injections, and the intervals $f_i(I)$ are disjoint except possibly at their endpoints so, in particular, the so-called open set condition is satisfied. In this article we will only consider $C^{1+\e}$ regular IFSs, but some of the assumptions can be relaxed. For example, the orientation-preserving assumption is just for simplicity and can be easily dropped.

The \emdef{attractor}\footnote{There is an equivalent dynamical description of attractors of $C^\alpha$-IFSs, namely, as the maximal compact invariant sets of expanding $C^\alpha$ maps $I\mapsto\mathbb{R}$.} of $\mathcal{I}$ is the
unique nonempty compact set $X\subseteq I$ satisfying \[
  X=\bigcup_{i\in [r]} f_{i}(X)
\]
(here and throughout the paper, $[r]=\{0,\ldots,r-1\}$). There are a number of natural measures one can place on $X$. One is the $\dim X$-dimensional Hausdorff measure, which for a $C^{1+\e}$-IFS is positive and finite on $X$. Another good class are the \emdef{self-conformal measures} (also called \emdef{self-similar measures} if the maps $f_i$ are linear), that is, measures satisfying the relation \[
  \mu=\sum_{i\in [r]} p_i \cdot f_i\mu
\]
for a positive probability vector $(p_0,\ldots,p_{r-1})$. Both of the examples above  are special cases of \emdef{Gibbs measures} for
H\"{o}lder potentials $\varphi:X\to\mathbb{R}$. We will not define Gibbs measures, but rather rely on a standard property of such measures $\mu$, namely,  that there is a constant
$C>1$ such that for all finite sequences $i_1,\ldots,i_k$, $j_1,\ldots,j_\ell\in [r]$,
\begin{equation}
C^{-1} \le \frac{\mu(f_{i_1}\cdots f_{i_k}I)\mu(f_{j_1}\cdots f_{j_\ell}I)}{\mu(f_{i_1}\cdots f_{i_k}f_{j_1}\cdots f_{j_\ell}I)}\le C. \label{eq:quasi-product}
\end{equation}
We shall call measures satisfying this property \emdef{quasi-product measures} (or quasi-Bernoulli measures). This is a broader class than Gibbs measures for H\"{o}lder potentials; for example, it contains Gibbs measures for almost-additive sequences of potentials, see \cite{Barreira06}.

Our first result assumes an algebraic condition on the contractions. For a $C^1$-contraction $f$ on $\mathbb{R}$, we define its \emdef{(asymptotic) contraction ratio} to be  $\lambda(f)=f'(p)$, where $p$ is the unique fixed point of $f$. For affine $f$ this is just the usual contraction ratio; to justify the name in the nonlinear case note that for every distinct pair of points $x,y$, \[
    \lambda(f)=\lim_{n\to\infty} -\frac{\log |f^n(x)-f^n(y)|}{n}.
\]

Write $a\sim b$ if $a,b$ are integer powers of a common number, equivalently $\log a/\log b\in\mathbb{Q}$; otherwise write $a\not\sim b$, in which case $a,b$ are said to be \emdef{multiplicatively independent}.

\begin{thm} \label{thm:dissonant-IFSs}
Let $\mathcal{I}$ be a $C^{1+\varepsilon}$ IFS that is regular in the sense above, and $\beta>1$ a Pisot number.\footnote{We remark again that in this and subsequent statements, the Pisot assumption includes the possibility $\beta\in\N$. } If there exists an $f\in\mathcal{I}$ with  $\lambda(f)\nsim\beta$, then any quasi-product measure $\mu$ for $\mathcal{I}$ is pointwise $\beta$-normal, and so is $g\mu$ for all $g\in\diff^1(\R)$.
\end{thm}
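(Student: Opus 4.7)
The plan is to reduce the statement to Theorem \ref{thm:WM-pisot-case}, whose ``furthermore'' clause automatically delivers pointwise $\beta$-normality for every $g\mu$ with $g\in\diff^1(\R)$ once we have it for $\mu$ itself. So two things must be verified: (i) $\mu$ generates a non-trivial $S$-ergodic distribution $P\in\mathcal{D}$; and (ii) $\Sigma(P,S)$ contains no nonzero integer multiple of $1/\log\beta$.

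For (i) the $C^{1+\varepsilon}$ regularity is used via bounded distortion: if $x\in\supp\mu$ lies in the cylinder $f_{i_1}\cdots f_{i_n}(I)$, then the restriction of $\mu$ to that cylinder, renormalized and rescaled to unit length, differs from an affine image of $\mu$ by an error that vanishes uniformly as $n\to\infty$. This identifies the scenery of $\mu$ at a $\mu$-typical $x$, up to a negligible perturbation which does not affect the limiting distribution of the scenery, with an orbit of a suspension flow over the one-sided shift on $[r]^{\N}$ with roof $r(\omega)=|\log\lambda(f_{\omega_0})|$, equipped with the push-back of $\mu$ to symbolic space. The quasi-product condition \eqref{eq:quasi-product} says that this symbolic measure is Gibbs-like, hence mixing, so the suspension is ergodic; a standard Birkhoff argument then gives equidistribution of the scenery for the induced $S$-invariant distribution $P$ on $\mathcal{M}^\sqr$. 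Non-triviality of $P$ is automatic: regularity of $\mathcal{I}$ implies the open set condition, so any quasi-product measure on the attractor has strictly positive Hausdorff dimension.

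For (ii) the pure-point spectrum of $P$ coincides with that of the symbolic suspension flow just described, and obeys the classical dichotomy for suspensions over a mixing base. Either the values $\{|\log\lambda(f_i)|\}_{i\in[r]}$ are not contained in a common discrete subgroup of $\R$, in which case the flow is weakly mixing and $\Sigma(P,S)=\{0\}$, or they lie in $\alpha\Z$ for some maximal $\alpha>0$, in which case $\Sigma(P,S)\subseteq(1/\alpha)\Z$. In the first case (ii) holds vacuously. In the second, let $f=f_j$ be the map provided by the dissonance hypothesis, so $|\log\lambda(f_j)|=k_j\alpha$ for some positive integer $k_j$; then $\lambda(f_j)\nsim\beta$ means $k_j\alpha/\log\beta\notin\mathbb{Q}$, hence $\alpha/\log\beta\notin\mathbb{Q}$, and therefore $(1/\alpha)\Z$ contains no nonzero rational---in particular no nonzero integer---multiple of $1/\log\beta$. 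Either way the spectral hypothesis of Theorem \ref{thm:WM-pisot-case} holds, and the theorem applies.

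The main technical obstacle I expect is step (i): making rigorous the claim that the scenery of a nonlinear quasi-product measure equidistributes for the suspension distribution $P$, with all the distortion coming from nonlinearity of the $f_i$ and from the constant $C$ of \eqref{eq:quasi-product} absorbed by the scaling flow's rescaling. Step (ii) by contrast is purely algebraic, the Pisot hypothesis on $\beta$ is packaged inside Theorem \ref{thm:WM-pisot-case}, and the dissonance assumption $\lambda(f)\nsim\beta$ is exactly what kills the bad case of the spectral dichotomy.
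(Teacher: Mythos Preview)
Your overall reduction to Theorem \ref{thm:WM-pisot-case} is the same as the paper's, and step (i)---that a quasi-product measure for a regular $C^{1+\varepsilon}$ IFS generates a non-trivial EFD---is indeed a known input (the paper quotes it as Theorem \ref{thm:gibbs-generate-EFDs}, building on Bedford--Fisher limit geometries). However, your description of that input is inaccurate in the nonlinear case, and this error propagates into a genuine gap in step (ii).

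The issue is your claim that the rescaled cylinder measure ``differs from an affine image of $\mu$ by an error that vanishes uniformly as $n\to\infty$.'' Bounded distortion gives uniform \emph{comparability} of the Jacobian to a constant on each cylinder, not convergence to a constant. What actually happens (Lemma \ref{lem:limit-geometries}) is that the rescaled cylinders converge to $g\mu$ for genuinely nonlinear limit diffeomorphisms $g=F^*_x\in\diff^{1+\varepsilon}$ depending on the full past $x$. Correspondingly, in the suspension picture the roof is $r(\omega)=|\log f'_{\omega_0}(\pi(T\omega))|$, not the locally constant function $|\log\lambda(f_{\omega_0})|$, and your ``values in a discrete subgroup'' dichotomy for the spectrum does not apply as stated. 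Your route is salvageable: for a H\"older roof over a mixing subshift, the periodic-orbit criterion says any eigenvalue $t_0$ must satisfy $t_0\cdot(\text{return time})\in\Z$ for every periodic orbit; applying this to the fixed point of each $f_i$ gives $t_0|\log\lambda(f_i)|\in\Z$, and then your algebra in (ii) goes through. But this is not what you wrote.

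The paper avoids the suspension model for (ii) entirely and argues directly with the phase measure. By Proposition \ref{pro:synchronization}, for $P$-a.e.\ $\nu$ the phase measure $\theta_\nu$ (for the putative eigenvalue $t_0=k/\log\beta$) is a single atom $\delta_z$; by Lemma \ref{lem:limit-geometries} one may assume this already holds for $\mu$. Now take $f\in\mathcal{I}$ with $\lambda(f)\nsim\beta$ and let $x_0$ be its fixed point. For a small interval $U\ni x_0$, on one hand $f(\mu|_U)\ll\mu$ forces $\theta_{f(\mu|_U)}=\delta_z$ (Corollary \ref{cor:phase-shift}(1)); on the other hand Corollary \ref{cor:phase-shift}(2) gives $\theta_{f(\mu|_U)}=\tfrac{1}{\mu(U)}\int_U\delta_{e(-t_0\log f'(x))z}\,d\mu(x)$, which concentrates at $e(-t_0\log\lambda(f))z\neq z$ as $U\searrow\{x_0\}$. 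This contradiction is morally the same fixed-point computation as the periodic-orbit fix above, but packaged intrinsically via the EFD machinery rather than through an explicit symbolic model.
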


The classical results of Cassels and Schmidt are special cases of this for certain IFSs consisting of affine maps with the same contraction ratio. We note that the result above is new even when the IFS is affine and contains maps with two multiplicatively independent contraction ratios; classical methods break down since nothing seems to be known about the decay (or lack thereof) of the Fourier transform of natural measures on such attractors.

Our second result says that nonlinearity and enough regularity are sufficient for pointwise normality, irrespective of algebraic considerations. More precisely, we say an IFS $\mathcal{I}=\{f_i\}$ is \emdef{linear} if all of the maps in $\mathcal{I}$ are affine maps, and \emdef{non-linear} otherwise. We say that $\mathcal{I}$ is \emdef{totally non-linear} if it is not conjugate to a linear IFS via a $C^1$ map; here an IFS is $\mathcal{J}$ is $C^\alpha$-conjugate to $\mathcal{I}$ if it has the form  $\mathcal{J}=g\mathcal{I}=\{gf_ig^{-1}\}$ for a $C^\alpha$-diffeomorphism $g$.

\begin{thm} \label{thm:totally-nonlinear-IFSs}
Let $\mathcal{I}$ be a $C^{\omega}$ IFS that is regular in the sense above and $\beta>1$ a Pisot number. If $\mathcal{I}$ is totally non-linear, then any quasi-product measure $\mu$ for $\mathcal{I}$ is pointwise $\beta$-normal, and so is $g\mu$ for all $g\in\diff^1(\R)$.
\end{thm}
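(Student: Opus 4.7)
The plan is to reduce to Theorem \ref{thm:WM-pisot-case} by proving something slightly stronger than what that theorem requires: I will show that every quasi-product measure $\mu$ for $\mathcal{I}$ generates a non-trivial $S$-ergodic distribution $P$ whose pure-point spectrum is trivial, $\Sigma(P,S)=\{0\}$. Handling the spectral condition uniformly in $\beta$ is cleaner and the push-forwards $g\mu$ are then automatically covered by the $C^1$-invariance clause of Theorem \ref{thm:WM-pisot-case}.

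For generation, non-triviality, and ergodicity, I would use the standard symbolic model: regularity and $C^{1+\e}$ smoothness of $\mathcal{I}$ give bounded distortion, so the inverse branches define a uniformly expanding Markov map $T:X\to X$, and the coding realizes $\mu$ as the projection of a shift-invariant measure satisfying the Gibbs-like estimate \eqref{eq:quasi-product}. Mimicking the construction of scenery distributions for Gibbs/self-conformal measures as in \cite{Gavish11, Hochman12b}, one verifies that $\mu$-a.e.\ scenery $(\mu_{x,t})_{t\ge 0}$ equidistributes for a suspension-type distribution $P$ built over $(X,T,\mu)$ with roof function $\phi(x)=\log|T'(x)|$. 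Ergodicity of $P$ descends from ergodicity of $T$ with respect to $\mu$ (a consequence of \eqref{eq:quasi-product}), and non-triviality is equivalent to $\dim\mu>0$, which again follows from \eqref{eq:quasi-product} and bounded distortion by the usual pointwise dimension computation.

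The content of the proof is the weak-mixing claim $\Sigma(P,S)=\{0\}$. Suppose for contradiction that $\alpha\ne 0$ is an eigenvalue. Under the suspension-flow identification, eigenvalues of $S$ correspond to measurable solutions $\psi:X\to S^1$ of the cocycle equation $\psi(Tx)/\psi(x)=e(\alpha\phi(x))$. By Livsic-type rigidity for the $C^\omega$ uniformly expanding map $T$, such a measurable $\psi$ is in fact $C^\omega$. Writing $\psi=e(v)$ with a $C^\omega$ lift $v$ on each branch, the cocycle equation becomes $v\circ T - v = \alpha\phi + m$, where $m$ is integer-valued on each branch and hence is a (branch-dependent) constant $m_i$. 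Now define the $C^\omega$ diffeomorphism $g$ of a neighborhood of $X$ by $g'=e^{v/\alpha}$; a direct computation shows that the conjugate IFS $g\mathcal{I}g^{-1}$ has $i$-th branch with constant derivative $e^{m_i/\alpha}$, i.e.\ each branch is affine. Thus $\mathcal{I}$ would be $C^\omega$-conjugate (in particular $C^1$-conjugate) to a linear IFS, contradicting total non-linearity.

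The main obstacle is the rigidity ladder in the last step: first, pulling a measurable eigenfunction of the scaling flow back to a measurable coboundary equation on the symbolic/Markov side (which rests on the identification of $P$ as a suspension); second, upgrading the measurable $\psi$ to a $C^\omega$ solution by a Livsic-type argument in this one-dimensional expanding setting; and third, carefully tracking the integer-valued branch defects $m_i$ and integrating the resulting coboundary into an honest conjugating diffeomorphism that works simultaneously on all branches. Each of these steps is classical in spirit, but the bookkeeping that matches branches across the cohomological equation, together with the need for enough regularity to guarantee a genuine $C^1$ conjugation of the two IFSs, is where the $C^\omega$ hypothesis (rather than merely $C^{1+\e}$) is used essentially.
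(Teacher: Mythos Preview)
Your overall strategy---show $\Sigma(P,S)=\{0\}$ and invoke Theorem \ref{thm:WM-pisot-case}---matches the paper's, but the argument for weak mixing is genuinely different. The paper never identifies $P$ with a suspension and never invokes Liv\v{s}ic. Instead it argues directly via the phase measure: by Proposition \ref{pro:synchronization}, a $P$-typical $\nu$ has atomic phase measure $\delta_z$; by Theorem \ref{thm:gibbs-generate-EFDs} and Lemma \ref{lem:limit-geometries}, such a $\nu$ is (up to restriction and $\sim_C$) a quasi-product measure for a conjugated $C^\omega$ IFS $g\mathcal{I}$. Total non-linearity forces $g\mathcal{I}$ to contain a non-affine analytic map $h$, so $h'$ is strictly monotone on some interval $U$ meeting $\supp\nu$. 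Then $h(\nu|_U)\ll\nu$ makes its phase measure $\delta_z$, while Corollary \ref{cor:phase-shift} gives that same phase measure as $\int_U \delta_{e(-t_0\log h'(x))z}\,d\nu(x)$, which is visibly non-atomic---contradiction. The $C^\omega$ hypothesis enters only to ensure $g\mathcal{I}$ stays analytic, so that a non-affine $h$ has $h'$ strictly monotone somewhere.

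Your route can be made to work, but the step you treat as routine---``$P$ is a suspension over $(X,T,\mu)$ with roof $\log|T'|$''---is the real content and is not established in the references you cite. What the limit-geometry picture actually gives is a factor map from the suspension over the \emph{natural extension} onto $(P,S)$, via $(\omega,s)\mapsto$ scaled restrictions of $F^*_{\omega^-}\mu$; this yields $\Sigma(P,S)\subseteq\Sigma(\text{suspension})$, which suffices, but you must then descend the resulting measurable $S^1$-valued cocycle to the one-sided shift before running Liv\v{s}ic. The final conjugacy step is also more delicate than you indicate: the coboundary $v$ lives a priori on the Cantor attractor $X$, while the identity $(gf_ig^{-1})'=\text{const}$ must hold on all of $g(I)$, not merely on $g(X)$; arranging a $C^\omega$ extension of $v$ for which the cohomological identity persists off $X$ (so that analytic continuation applies) is exactly the Sullivan--Bedford--Fisher rigidity, which the paper takes as input rather than re-proves. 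In short, your approach trades the paper's short phase-measure contradiction for a heavier smooth-dynamics toolkit.
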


The two theorems above have substantial overlap and each of them is generic in the appropriate space of IFSs. The algebraic condition is generally  the easier one to verify, and the regularity assumptions are weaker, though it seems very probable that weaker regularity assumptions are sufficient in the totally non-linear case also.

It also seems very likely that non-linearity, rather than total non-linearity, should suffice in Theorem \ref{thm:totally-nonlinear-IFSs}. We are able to prove such a result for a smaller class of measures. Namely,

\begin{thm}\label{thm:analytic-nonlinear-IFSs}
Let $\mathcal{I}$ be a $C^\omega$ IFS that is regular in the sense above, and $\beta>1$ a Pisot number.
If $\mathcal{I}$ is non-linear, then every self-conformal measure for $\mathcal{I}$ is pointwise $\beta$-normal.
\end{thm}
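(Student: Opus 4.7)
My plan is to reduce to Theorems~\ref{thm:totally-nonlinear-IFSs} and~\ref{thm:dissonant-IFSs}, via a case analysis on whether $\mathcal{I}$ is totally non-linear. A preliminary observation is that any self-conformal measure $\mu$ is quasi-product with $C=1$ in~\eqref{eq:quasi-product}: since $\mu$ is supported on the attractor and regularity makes the cylinders $\{f_w(X)\}_{|w|=n}$ essentially disjoint, iterating the self-conformality relation gives $\mu(f_w I)=p_w$, so the ratios in~\eqref{eq:quasi-product} collapse to $1$. Thus if $\mathcal{I}$ is totally non-linear, Theorem~\ref{thm:totally-nonlinear-IFSs} closes the case immediately.

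Otherwise, by definition there exists $g\in\diff^1(\mathbb{R})$ such that $\mathcal{J}:=\{gf_ig^{-1}\}$ is a linear (hence $C^\omega$) IFS. A direct computation shows $\nu:=g\mu$ is self-similar for $\mathcal{J}$ with the same weight vector, and the asymptotic contractions are conjugation-invariant: $\lambda(gf_ig^{-1})=\lambda(f_i)$ for each $i$. If some $\lambda(f_i)\not\sim\beta$, then Theorem~\ref{thm:dissonant-IFSs} applied to $\mathcal{J}$ and $\nu$ yields pointwise $\beta$-normality of $\nu$, and its $\diff^1$-invariance clause, applied with $g^{-1}$, transfers this to $\mu=g^{-1}\nu$.

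The residual \emph{resonant} case, in which all $\lambda(f_i)\sim\beta$ while $\mathcal{I}$, and hence $g$, is non-linear, is the heart of the matter and the main obstacle. Here $\nu$ is a self-similar measure on a linear IFS whose contractions are common rational powers of $\beta$, and $\nu$ itself need not be $\beta$-normal (for instance, when $\mathcal{J}$ is the middle-thirds IFS and $\beta=3$, $\nu$ is Cantor-like). To deduce $\beta$-normality of $\mu$ one must exploit the analyticity and non-linearity of $g$. A natural strategy is to show, via a strengthening of Theorem~\ref{thm:WM-pisot-case} of the kind announced in Section~\ref{sec:refinements}, that the non-linear scenery generated by $\mu$ cannot carry scaling-flow eigenfunctions at frequencies $k/\log\beta$, so that the spectral hypothesis can be verified for $\mu$ even though it fails for $\nu$. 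Alternatively, one may try to re-encode $\mu$ as self-conformal for a different, totally non-linear, auxiliary IFS (for example, built from inverse branches of judicious compositions of the $f_i$), and then invoke Theorem~\ref{thm:totally-nonlinear-IFSs}. Executing either route in this resonant case is where the essentially new work lies.
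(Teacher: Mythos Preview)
Your reduction to the totally non-linear case via Theorem~\ref{thm:totally-nonlinear-IFSs} is exactly what the paper does, and the dissonant sub-case $\lambda(f_i)\nsim\beta$ for some $i$ is also fine (in fact you could apply Theorem~\ref{thm:dissonant-IFSs} directly to $\mathcal{I}$ and $\mu$ without passing through $\mathcal{J}$, though your detour is harmless). The genuine gap, which you openly acknowledge, is the resonant case, and here both of your proposed strategies have problems.

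For the second strategy: if $\mathcal{I}$ is $C^1$-conjugate to a linear IFS, then so is any IFS built from compositions of the $f_i$, so such an auxiliary system cannot be totally non-linear. This route is a dead end.

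For the first strategy, your hope that the spectral hypothesis can be verified for $\mu$ even though it fails for $\nu$ rests on a misconception. By Lemma~\ref{lem:image-of-USM}, a $C^1$-diffeomorphism does not change the generated distribution, so $\mu=g^{-1}\nu$ and $\nu$ generate the \emph{same} EFD $P$, hence have the same pure-point spectrum. In the resonant case $k/\log\beta\in\Sigma(P,S)$ for both.

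What the paper does instead is reduce to Theorem~\ref{thm:analytic-images-of-ssms}, which asserts that for $\nu$ self-similar and $g$ a \emph{non-affine real-analytic} diffeomorphism, $g^{-1}\nu$ is pointwise $\beta$-normal. To make this reduction one needs $g\in\diff^\omega$, not merely $\diff^1$; this upgrade is provided by the Sullivan--Bedford--Fisher rigidity theorem \cite[Theorem~7.5]{BedfordFisher97}, which says that a $C^\alpha$-IFS that is $C^1$-conjugate to linear is in fact $C^\alpha$-conjugate to linear. The non-linearity of $\mathcal{I}$ then forces $g$ to be non-affine.

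The proof of Theorem~\ref{thm:analytic-images-of-ssms} in Section~\ref{sub:analytic-image-of-ssms} exploits not the spectrum (which is bad) but the \emph{phase measure}: by Corollary~\ref{cor:phase-shift}, the phase measure of $g^{-1}\nu$ is, up to smooth coordinate change, the push-forward of $\nu$ by $\log(g^{-1})'$, which has dimension $\dim\nu>0$ because $g^{-1}$ is analytic and non-affine (so $(g^{-1})'$ is a piecewise diffeomorphism). This alone is not enough for Theorem~\ref{thm:refinement-2}, which needs $\dim\theta=1$; the further ingredient is a projection theorem for self-similar measures from \cite{Hochman13} whose exceptional set has Hausdorff dimension $0$, so that even a phase measure of merely positive dimension avoids it. This is where the essentially new work in the resonant case lies, and it sits outside the scope of the tools you have invoked.
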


In this theorem, the totally non-linear case is covered by Theorem \ref{thm:totally-nonlinear-IFSs}. In the conjugate-to-linear case, if $g\in\diff^\omega(\R)$ conjugates $\mathcal{I}$ to a linear IFS $\mathcal{J}=g\mathcal{I}$, then $\mu=g^{-1}\nu$ where $\nu$ is a self-similar measure for $\mathcal{J}$, and $g$ is not affine (since $\mathcal{J}$ is linear and $\mathcal{I}=g^{-1}\mathcal{J}$ is not). Also, it is a remarkable consequence of the work of Sullivan \cite{Sullivan88} and Bedford-Fisher \cite{BedfordFisher97} that if a  $C^{\alpha}$-IFSs, $\alpha>1$, is $C^1$-conjugate to a linear IFS then it is also $C^\alpha$-conjugate to a linear IFS (see \cite[Theorem 7.5]{BedfordFisher97}). Thus, Theorem \ref{thm:analytic-nonlinear-IFSs} follows from the following one:

\begin{thm}\label{thm:analytic-images-of-ssms} Let $\mu$ be a self-similar measure for a linear IFS that is regular in the sense above. Then for any non-affine real-analytic $g\in\diff^\omega(\mathbb{R})$, $g\mu$ is pointwise $\beta$-normal for every Pisot $\beta>1$.\end{thm}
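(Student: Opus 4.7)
The plan is to deduce pointwise $\beta$-normality of $g\mu$ from Theorem~\ref{thm:WM-pisot-case} (or rather, from a refinement of it stated in Section~\ref{sec:refinements}). For this one needs to check that $g\mu$ generates a non-trivial $S$-ergodic distribution $P$, and that a spectral condition ruling out arithmetic interaction with $\beta$ holds. First I would verify generation: self-similar measures on a regular linear IFS are known \cite{Hochman12b} to generate a non-trivial ergodic distribution $P_\mu$ under the scaling flow, and a Taylor expansion argument shows that at $\mu$-a.e.\ $y$ the scenery of $g\mu$ at $g(y)$ equals, in the limit of deep scales, the scenery of $\mu$ at $y$ shifted in logarithmic time by $-\log|g'(y)|$. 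Since $P_\mu$ is $S$-invariant and time shifts preserve Ces\`aro averages, $g\mu$ generates the same $P_\mu$.

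The heart of the proof is the spectral side. A priori $P_\mu$ may very well contain nonzero multiples of $1/\log\beta$ in its pure-point spectrum (e.g.\ whenever the contractions of the linear IFS are rational powers of $\beta$), so the basic form of Theorem~\ref{thm:WM-pisot-case} applied directly to $g\mu$ yields nothing. The key extra structure is that $g\mu$ is a self-conformal measure for the nonlinear $C^\omega$ IFS $\tilde{\mathcal{I}}=\{g f_i g^{-1}\}$, whose derivatives $\tilde f_i'(x)=\lambda_i\, g'(f_i g^{-1}(x))/g'(g^{-1}(x))$ vary non-trivially over $\supp(g\mu)$ precisely because $g$ is non-affine. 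This position-dependent phase is invisible at the level of the generated distribution $P_\mu$ but can be incorporated into a refined version of Theorem~\ref{thm:WM-pisot-case} (of the type to be developed in Section~\ref{sec:refinements}) that effectively augments the scenery with the phase function $\log|g'|\bmod\log\beta$.

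Real-analyticity of $g$ is then used to guarantee the relevant "randomness" of this phase: a non-constant real-analytic function $\log|g'|$ cannot be constant on any positive-$\mu$-measure subset of $\supp\mu$ (using that $\dim\mu>0$ and that real-analytic functions have at most countable level sets when non-constant), and in fact its image under $\bmod\log\beta$ should be seen to equidistribute in the scenery limit in a quantitative sense. This equidistribution, fed into the refined criterion, produces the required Fourier-analytic cancellation for $\beta$-normality.

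The main obstacle is the second step: formulating the correct refinement of Theorem~\ref{thm:WM-pisot-case} that accommodates the extra phase $\log|g'|$, and verifying that the non-constancy coming from real-analyticity of $g$ (together with the ergodicity of the self-similar coding) suffices to validate it. The naive eigenfunction/cocycle analysis shows consistency between the sceneries of $\mu$ and $g\mu$, so no contradiction is available at that level; the argument has to exploit the genuinely non-rigid character of the scenery of $g\mu$ at fine scales, which is where real-analyticity rather than mere $C^1$-smoothness becomes essential.
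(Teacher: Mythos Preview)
Your outline correctly identifies the overall architecture: $g\mu$ generates the same EFD $P$ as $\mu$ (via Lemma~\ref{lem:image-of-USM}), the spectrum of $P$ may well contain nonzero multiples of $1/\log\beta$, the refinement from Section~\ref{sec:refinements} is the tool to reach for, and the relevant phase measure is, up to a smooth coordinate change, the push-forward of $\mu$ by $x\mapsto\log|g'(x)|$.

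There is, however, a genuine gap at the decisive step. The refinement you are invoking (Theorem~\ref{thm:refinement}) requires the phase measure $\theta$ to have \emph{full} dimension, $\dim\theta=1$. But here $\theta$ is the image of $\mu$ under a piecewise diffeomorphism (this is precisely where real-analyticity of $g$ enters: $g''$ has isolated zeros, so $g'$ is piecewise monotone), and hence $\dim\theta=\dim\mu$, which is typically strictly less than $1$. Your appeal to ``equidistribution in the scenery limit'' and ``Fourier-analytic cancellation'' does not bridge this; the phase is no better distributed than $\mu$ itself, and nothing in your sketch produces $\dim\theta=1$.

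The paper closes the gap not by improving the phase but by strengthening the projection theorem used \emph{inside} the proof of Theorem~\ref{thm:refinement}. In Lemma~\ref{lem:refined-dissonance} the condition $\dim\theta=1$ is used only to guarantee that $\theta$ gives zero mass to the Marstrand exceptional set $\{t:\dim(\tau*S_t\eta)<1\}$, which by Theorem~\ref{thm:marstrand-dimofexceptions} has dimension $<1$. In the present situation one exploits instead that the resonant measures $\tau_n$ from Theorem~\ref{thm:existence-of-resonant-measures} are themselves self-similar with contraction $\sim\beta$, and that $\eta$ is (a restricted linear image of) the self-similar measure $\mu$. One can then invoke \cite[Theorem~1.8]{Hochman13}, which for projections of self-similar products bounds the exceptional set by Hausdorff dimension \emph{zero}; since $\dim\theta>0$, this is enough. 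This is the missing ingredient in your proposal, and it is also why the theorem is stated only for self-similar $\mu$ rather than for general quasi-product measures.
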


Here is one concrete consequence of the results above.

\begin{cor} Let $\mu$ denote the Cantor-Lebesgue measure on the middle-1/3 Cantor set. Then $x^2$ is $3$-normal for  $\mu$-a.e. $x$.
\end{cor}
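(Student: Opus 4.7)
The measure $\mu$ is the self-similar measure with uniform weights for the regular linear IFS $\mathcal{I}=\{f_0,f_1\}$ with $f_0(x)=x/3$, $f_1(x)=x/3+2/3$ and attractor $C$. Since $g(x)=x^2$ has a critical point at $0\in C$, it fails to be a diffeomorphism of $\mathbb{R}$, so $g\notin\diff^\omega(\mathbb{R})$ and Theorem \ref{thm:analytic-images-of-ssms} cannot be applied to $(\mu,g)$ directly. The plan is to decompose $\mu$ over cylinders of $C$ bounded away from $0$ and, on each, apply Theorem \ref{thm:analytic-nonlinear-IFSs} to the $g$-conjugate of the local linear IFS, which will be a genuine $C^\omega$ non-linear regular IFS on an interval in $(0,\infty)$.

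For $n\ge 1$, set $h_n := f_0^{n-1}\circ f_1$, $I_n := [h_n(0),h_n(1)] = [2\cdot 3^{-n},\,3^{1-n}]$, and $C_n := h_n(C)\subset I_n$. The cylinders $C_n$ are pairwise disjoint, bounded away from $0$, and $C\setminus\{0\}=\bigsqcup_{n\ge 1}C_n$; since $\mu$ is non-atomic, $\mu(\{0\})=0$. The pushforward $\nu_n := h_n\mu = 2^n\mu|_{C_n}$ is the self-similar measure with uniform weights for the linear regular IFS $\mathcal{I}_n := h_n\mathcal{I}h_n^{-1} = \{\psi_0^{(n)},\psi_1^{(n)}\}$ on $I_n$. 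Each $\psi\in\mathcal{I}_n$ has the affine form $\psi(x)=x/3+b$ with fixed point $3b/2\in C_n\subset(0,1]$, so $b>0$. By countable additivity, it suffices to show that for each $n$, $\nu_n$-a.e.\ $x$ has $x^2$ is $3$-normal.

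Because $I_n\subset(0,\infty)$, $g$ restricts to a $C^\omega$ diffeomorphism $I_n\to g(I_n)$ with real-analytic inverse $y\mapsto\sqrt{y}$ on a neighborhood of $g(I_n)$. Hence the conjugate IFS $\mathcal{J}_n := g\mathcal{I}_n g^{-1}$ on $g(I_n)$ is $C^\omega$, and regularity (orientation-preserving injections with disjoint images) is preserved by conjugation with a monotone diffeomorphism of the interval of definition. It is non-linear: for $\psi\in\mathcal{I}_n$ with $\psi(x)=x/3+b$,
\[
g\psi g^{-1}(y) = \bigl(\tfrac{\sqrt{y}}{3}+b\bigr)^2 = \tfrac{y}{9} + \tfrac{2b}{3}\sqrt{y} + b^2,
\]
which is not affine since $b>0$. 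Applying $g$ to both sides of the self-similarity relation $\nu_n = \tfrac12\psi_0^{(n)}\nu_n + \tfrac12\psi_1^{(n)}\nu_n$ gives $g\nu_n = \tfrac12(g\psi_0^{(n)}g^{-1})(g\nu_n) + \tfrac12(g\psi_1^{(n)}g^{-1})(g\nu_n)$, so $g\nu_n$ is a self-conformal measure for $\mathcal{J}_n$.

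By Theorem \ref{thm:analytic-nonlinear-IFSs} applied to $\mathcal{J}_n$ with the Pisot number $\beta=3$, the measure $g\nu_n$ is pointwise $3$-normal. Equivalently, $x^2$ is $3$-normal for $\nu_n$-a.e.\ $x$, hence for $\mu|_{C_n}$-a.e.\ $x$. Summing over $n\ge 1$ and using $\mu(\{0\})=0$ yields the conclusion. The only real obstacle is the critical point of $g$ at $0\in C$, which prevents invoking Theorem \ref{thm:analytic-images-of-ssms} on the entire measure; the cylinder decomposition reduces matters to intervals on which $g$ is a genuine $C^\omega$ diffeomorphism and the conjugate IFS is automatically non-linear (the square-root term cannot be eliminated because no fixed point of $\mathcal{I}_n$ sits at $0$).
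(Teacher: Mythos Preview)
Your proof is correct and is essentially the paper's approach, only carried out with more care. The paper simply states that the corollary is ``immediate from the previous theorem'' (Theorem~\ref{thm:analytic-images-of-ssms}), implicitly treating $g(x)=x^2$ as an element of $\diff^\omega(\mathbb{R})$; you correctly observe that $g$ fails to be a diffeomorphism at $0\in C$, and your cylinder decomposition reduces to subintervals where $g$ is a genuine analytic diffeomorphism so that Theorem~\ref{thm:analytic-nonlinear-IFSs} (equivalently Theorem~\ref{thm:analytic-images-of-ssms} after conjugation) applies cleanly.

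In short, the underlying mechanism is the same---push $\mu$ through a non-affine analytic map and invoke the pointwise normality theorem for the resulting self-conformal measure---but your argument supplies the small missing step the paper elides. One could shorten your write-up slightly by treating only the single cylinder $C_1=f_1(C)\subset[2/3,1]$ together with the observation that $\mu|_{[\varepsilon,1]}$ covers $\mu$-a.e.\ point as $\varepsilon\to 0$, but the full decomposition you give is perfectly fine.
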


The point is, of course, that no points in the middle-1/3 Cantor set are $3$-normal themselves.

The corollary above is immediate from the previous theorem and the use of the square function is incidental. In fact for we could replace $x^2$ with $f(x)$ for $f\in\diff^2$. From Theorem \ref{thm:dissonant-IFSs} we can reduce the regularity to $\diff^1$ if we only want $n$-normality for $n\nsim 3$. These differences perhaps indicate that our regularity assumptions may be suboptimal. Note that for $f=$ identity, this again is the theorem of Cassels and Schmidt, and their spectral methods carry over to translations, but the stability under perturbation is new even for affine $f$. Related to this question, we note that Bugeaud, Fishman, Kleinbock and Weiss \cite{BBFKW10} have shown that for many fractals sets, including self-similar sets satisfying the open set condition,
there is a full-dimension subset consisting of numbers which are \textit{not} normal
in any integer base. Moreover their result holds for any bi-Lipschitz
image of the set. The stability of our results under bi-Lipschitz transformations remains open.

While this paper was in revision we learned of Kaufman's paper \cite{Kaufman1984}. Kaufman studies differentiable images of certain Bernoulli convolutions, and obtains polynomial decay of the
Fourier transform of their image under $C^2$ diffeomorphisms, implying in particular pointwise normality of the images. His results apply to linear self-similar measures defined by two maps with the same contraction ratio, and equal weights (it is likely the method can be adapted to more than two maps, but unlikely that the equicontraction assumption can be dropped with current methods). In particular the last corollary follows from Kaufman's work.

\subsubsection{\label{sub:host}Host's theorem and measure rigidity}

Let $n\in\mathbb{N}$ and let $T_n:[0,1]\to [0,1]$ denote the map $T_nx=nx\bmod 1$. An important phenomenon concerning these maps is measure rigidity: a well-known conjecture of  Furstenberg states that, if $m\not\sim n$, then the only probability measures jointly invariant under $T_m$ and $T_n$ are combinations of Lebesgue measure and atomic measures on rational points. This conjecture, known as the times-2, times-3 conjecture, is the prototype for many similar conjectures in other contexts, see e.g. \cite{Lindenstrauss05}. The best result towards it is due to  Rudolph and Johnson \cite{Rudolph90, Johnson92}: if  a measure has positive entropy and is jointly invariant and ergodic under $T_m,T_n$ for $m\not\sim n$, then it is Lebesgue. Although nothing is known about the zero-entropy case, in the positive entropy case there is a pointwise strengthening of the  Rudolph-Johnson theorem for $\gcd(m,n)=1$, due to B. Host \cite[Th\'{e}or\`{e}me 1]{Host95}:

\begin{thm}Let $m,n\geq 2$ be integers and $\gcd(m,n)=1$. Suppose $\mu$ is an invariant and ergodic measure for $T_{n}$ of positive entropy. Then $\mu$ is pointwise $m$-normal.
\end{thm}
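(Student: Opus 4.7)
The plan is to deduce Host's theorem as a direct application of Theorem \ref{thm:WM-case} with $m$ playing the role of $n$ there. Given $\mu$ invariant and ergodic for $T_n$, the task reduces to verifying that (i) $\mu$ generates a non-trivial $S$-ergodic distribution $P \in \mathcal{D}$, and (ii) $\Sigma(P, S)$ contains no non-zero integer multiple of $1/\log m$.

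First I would reduce to the case $h_\mu(T_n) > 0$. If $h_\mu(T_n) = 0$ then $\dim \mu = h_\mu(T_n)/\log n = 0$ by Rokhlin's formula, and the only such ergodic measures are atoms on periodic orbits, where the conclusion in fact fails; so the theorem is implicitly restricted to the positive-entropy case. Under that assumption $\dim \mu > 0$, which will give non-triviality of the generated distribution.

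For (i), the key observation is that the scenery flow has an intrinsic symmetry at scale $\log n$: on each $n$-ary cell $[j/n,(j+1)/n)$ the map $T_n$ is affine with slope $n$, so applying $S_{\log n}$ to $\mu^x$ is, up to a re-centring and a negligible boundary correction, the same as forming $\mu^{T_n x}$. Iterating, the discrete scenery $(\mu_{x,k\log n})_{k \geq 0}$ becomes a cocycle over $(X, T_n, \mu)$. Birkhoff's theorem for $(T_n, \mu)$ then gives equidistribution at the discrete times $k \log n$, and since $S_t$ varies continuously between them this upgrades to equidistribution of the full scenery flow. The resulting generated distribution $P$ is measurably isomorphic to the suspension of $(X, T_n, \mu, x \mapsto \mu^x)$ under the constant roof $\log n$; it is $S$-ergodic since $(T_n,\mu)$ is ergodic, and non-trivial since $\dim \mu > 0$. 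This is carried out in detail in \cite{Hochman12}, which I would cite rather than redo.

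For (ii), the suspension structure lifts any $S$-eigenfunction with eigenvalue $\alpha$ to a $T_n$-eigenfunction with eigenvalue $\lambda = e(\alpha \log n)$, so
\[
\Sigma(P, S) \;\subseteq\; \{\alpha \in \mathbb{R} \,:\, e(\alpha \log n) \text{ is an eigenvalue of } T_n \text{ on } L^2(\mu)\}.
\]
The essential extra input, again from \cite{Hochman12}, is the rigidity of the Kronecker factor of $T_n$-invariant ergodic measures: every such eigenvalue is a root of unity, essentially because the Kronecker factor is a rotation on a profinite abelian group on which multiplication by $n$ acts as a translation. Consequently $\alpha \log n \in \mathbb{Q}$ for every $\alpha \in \Sigma(P, S)$. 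If some non-zero integer multiple $k/\log m$ were in $\Sigma(P, S)$, this would give $k \log n / \log m \in \mathbb{Q}$, hence $\log n/\log m \in \mathbb{Q}$; but $\gcd(m,n) = 1$ and $m, n \geq 2$ force $m^a \ne n^b$ for all $(a,b) \in \mathbb{Z}^2 \setminus \{(0,0)\}$, so $\log n/\log m$ is irrational. Theorem \ref{thm:WM-case} applied with $m$ in place of $n$ then yields that $\mu$ is pointwise $m$-normal.

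The main obstacle in this plan is step (ii): establishing both that $P$ has the claimed suspension structure and that the discrete spectrum of $T_n$ on an ergodic invariant measure consists only of roots of unity is delicate, and is the content of \cite{Hochman12} rather than an elementary fact. Once those two spectral inputs are granted, the coprimality of $m$ and $n$ closes the argument by a one-line rationality check, and the rest of the deduction is immediate from Theorem \ref{thm:WM-case}.
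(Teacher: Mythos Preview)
Your plan has a genuine gap in step (ii). The claim that every $L^2$-eigenvalue of $(T_n,\mu)$ is a root of unity is false, and this is exactly the obstruction the paper has to work around. For a concrete counterexample, take $n=4$ and let $\mu$ be the product of a Sturmian measure (on $\{0,1\}^{\mathbb{N}}$, with irrational rotation number $\alpha$) and the $(\tfrac12,\tfrac12)$-Bernoulli measure, viewed as a shift-invariant measure on $\{0,1,2,3\}^{\mathbb{N}}\cong[0,1]$. This $\mu$ is $T_4$-ergodic with entropy $\log 2>0$, yet its eigenvalue group is $\{e(k\alpha):k\in\mathbb{Z}\}$, none of which (for $k\neq 0$) is a root of unity. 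In particular, for suitable $\alpha$ one can arrange $e(\alpha)=e(\log 4/\log m)$, putting $1/\log m$ squarely in $\Sigma(P,S)$. So the rationality check at the end of your argument does not close. The result you attribute to \cite{Hochman12} is not there; what that paper gives is the suspension description of $P$ and the inclusion $\Sigma(P,S)\subseteq\{\alpha:e(\alpha\log n)\in\Sigma(T_n,\mu)\}$, not a constraint on $\Sigma(T_n,\mu)$ itself. The paper states this explicitly at the start of Section~\ref{sub:setup-of-refinements}: the spectral hypothesis of Theorem~\ref{thm:WM-case} can fail, and ``in order to deal with the remaining cases'' one needs the refinements.

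The paper's proof (Section~\ref{sub:host-proof}) therefore takes a different route. It works not with $\mu$ directly but with the family of conditional measures $\mu_\omega$ on the future given the past, each of which generates the same EFD $P$ and $\log\beta$-generates an $S_{\log\beta}$-ergodic component. When some $k/\log\beta$ does lie in $\Sigma(P,S)$, the paper invokes Theorem~\ref{thm:refinement-2} instead of Theorem~\ref{thm:WM-case}: the task becomes showing that the cumulative \emph{phase measure} $\theta=\int\theta_{\mu_\omega}\,d\widetilde{\mu}(\omega)$ has dimension $1$. This is established in Lemma~\ref{lem:uniform-phase} via a short factor argument: the pair of eigenfunctions $(\varphi,\psi)$ for the eigenvalues $k/\log\beta$ and $m/\log\gamma$ defines a factor map from $(P,S)$ to a torus translation, and since $\beta\not\sim\gamma$ this translation is uniquely ergodic, forcing $\theta$ to be Lebesgue. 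The coprimality (indeed, merely $m\not\sim n$) enters here, not through any rigidity of the Kronecker factor of $\mu$.

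A smaller error: your reduction to positive entropy is wrong as stated. Zero-entropy $T_n$-ergodic measures are by no means all supported on periodic orbits; Sturmian measures are an example. Host's theorem as usually stated carries a positive-entropy hypothesis (the paper's surrounding text makes this context clear), but it is an assumption, not something you can argue away.
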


This implies the Rudolph-Johnson theorem in the case $\gcd(m,n)=1$: if $\mu$ is a jointly $T_m,T_n$ invariant measure and all $T_n$ ergodic components have positive entropy, then by the theorem $\mu$-a.e. point equidistributes for Lebesgue under $T_m$. But by the ergodic theorem, it also equidistributes for the ergodic component of $\mu$ to which it belongs; hence $\mu$ is Lebesgue.

The hypothesis of Host's theorem, however, is stronger than it ``should'' be, i.e. it is stronger than the hypothesis of the Rudolph-Jonson Theorem. Lindenstrauss \cite{Lindenstrauss01} showed that the conclusion holds under the weaker assumption that $n$ does not divide any power of $m$, but this is still too strong.\footnote{Host's theorem has also been generalized in some other directions, see Meiri \cite{Meiri1998}} On the other hand, Feldman and Smorodinsky \cite{FeldmanSmorodinsky92} had earlier proved a similar result assuming only that $m\nsim n$, but under the strong assumption that the measure $\mu$ is weak Bernoulli. In that work it is conjectured that the same holds assuming only that $\mu$ is ergodic and has positive entropy. The following theorem gives the result in its ``correct'' generality and for some non-integer bases, and also shows that it is stable under smooth enough perturbation.

\begin{thm} \label{thm:application-HostLindenstrauss}
Let $\beta,\gamma>1$ with $\beta$ a Pisot number, and $\beta\not\sim\gamma$. Then any $T_\gamma$-invariant and ergodic measure $\mu$ with positive entropy is pointwise $\beta$-normal. Furthermore the same remains true for $g\mu$ for any $g\in\diff^2(\mathbb{R})$.
\end{thm}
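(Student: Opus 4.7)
The strategy is to verify the hypotheses of Theorem \ref{thm:WM-pisot-case} for $\mu$; the $\diff^{1}$-stability already built into the conclusion of that theorem then handles $g\mu$ for every $g\in\diff^{2}(\R)\subset\diff^{1}(\R)$ automatically. Concretely, I must establish: (a) that $\mu$ generates a non-trivial $S$-ergodic distribution $P\in\mathcal{D}$, and (b) that $\Sigma(P,S)$ contains no non-zero integer multiple of $1/\log\beta$.

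For (a), I would invoke the scenery-flow / CP-chain construction for $T_\gamma$-invariant measures developed by the first author in \cite{Hochman12}. There it is shown, for integer $\gamma$, that any $T_\gamma$-invariant ergodic measure generates an $S$-ergodic distribution via the $\gamma$-adic magnification process. The same scheme adapts to general real $\gamma>1$ once the $\gamma$-adic cylinders are replaced by the Parry $\beta$-expansion cylinders of $T_\gamma$; no arithmetic property of $\gamma$ is needed here, only the piecewise-linear dynamics of $T_\gamma$. Non-triviality of the resulting $P$ is automatic from positive entropy, since $h_\mu(T_\gamma)>0$ forces $\dim_H\mu>0$, and as recalled right after the definition of triviality a positive-dimensional measure cannot generate the trivial distribution $\delta_{\delta_0}$.

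For (b), the construction in (a) represents $(S_t,P)$ as a measurable suspension flow with constant roof $\log\gamma$ over the shift on the $T_\gamma$-symbolic coding of $\mu$: the $\gamma$-fold zoom $S_{\log\gamma}$ acts on $\mu$-typical measures exactly as one application of $T_\gamma$ on the corresponding coding. Any $S$-eigenfunction of eigenvalue $\alpha$ therefore descends to a base eigenfunction of multiplicative eigenvalue $e(\alpha\log\gamma)$. The key claim is that the point spectrum of the base consists only of roots of unity — a consequence of the expanding nature of $T_\gamma$ together with standard K-theoretic analysis of its Pinsker factor. Granting this, $e(\alpha\log\gamma)$ is a root of unity, so $\alpha\log\gamma\in\mathbb{Q}$. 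The hypothesis $\beta\not\sim\gamma$, i.e. $\log\beta/\log\gamma\notin\mathbb{Q}$, then rules out $\alpha=k/\log\beta$ for any non-zero integer $k$, giving (b). An application of Theorem \ref{thm:WM-pisot-case} closes the proof.

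The main obstacle is (b), and inside it the two sub-steps: identifying the scaling flow on $P$ as a genuine suspension flow over the symbolic factor of $(T_\gamma,\mu)$, and controlling the pure-point spectrum of the base. For non-integer $\gamma$ the first sub-step must be carried out inside Parry's sofic model, since the $\beta$-coding is not a full shift, which adds bookkeeping but no genuinely new ideas. The second sub-step is the more delicate, because a positive-entropy $T_\gamma$-ergodic measure need not be K: one must handle the Pinsker component carefully and use the expansion of $T_\gamma$ to exclude irrational eigenvalues, so that only torsion survives in the base spectrum. Step (a) by comparison is essentially a mechanical extension of the construction already in \cite{Hochman12}.
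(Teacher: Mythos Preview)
Your plan has a genuine gap at step (b). The claim that the pure-point spectrum of the base system $(T_\gamma,\mu)$ consists only of roots of unity is false. A positive-entropy $T_\gamma$-ergodic measure can perfectly well have irrational eigenvalues: nothing about expansion or the Pinsker factor forces the Kronecker factor to be finite. Concretely, for integer $\gamma=n$ one can build $T_n$-invariant ergodic measures of positive entropy that factor onto an arbitrary irrational circle rotation (e.g.\ via skew products or Toeplitz-type constructions). In that situation your suspension-flow analysis gives $\alpha\log\gamma\in\Sigma(T_\gamma,\mu)$, not $\alpha\log\gamma\in\mathbb{Q}$, and there is no obstruction to having $k/\log\beta\in\Sigma(P,S)$ even when $\beta\not\sim\gamma$. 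The paper is explicit about this at the start of Section~\ref{sub:setup-of-refinements}: the spectral hypothesis of Theorem~\ref{thm:WM-pisot-case} can fail, and precisely for this reason the refinements Theorems~\ref{thm:refinement} and~\ref{thm:refinement-2} are introduced.

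The paper's actual argument does not try to rule out $k/\log\beta\in\Sigma(P,S)$. Instead it works with the conditional measures $\mu_\omega$ on the past of the natural extension, shows each $\mu_\omega$ generates the same EFD $P$ and $\log\beta$-generates an $S_{\log\beta}$-ergodic component, and then computes the \emph{cumulative phase measure} $\theta=\int\theta_{\mu_\omega}\,d\widetilde{\mu}(\omega)$. Lemma~\ref{lem:uniform-phase} uses $\beta\not\sim\gamma$ to show that $\theta$ is Lebesgue on the circle (via a two-eigenfunction factor map to a minimal torus rotation), which is exactly the input Theorem~\ref{thm:refinement-2} needs. So $\beta\not\sim\gamma$ enters not by emptying the spectrum but by forcing the phase to be uniformly distributed. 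As a secondary point, the $\diff^2$ conclusion is \emph{not} automatic from Theorem~\ref{thm:WM-pisot-case} here, since you are applying the refinement rather than Theorem~\ref{thm:WM-pisot-case}; the paper carries out a separate dimension computation for the pushed-forward phase measure in Section~\ref{sub:host-proof-2}, and this is where the extra derivative (hence $\diff^2$ rather than $\diff^1$) is used.
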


Of course, the same is true under the assumption that all $T_\gamma$-ergodic components of $\mu$ have positive entropy. Note the asymmetry in the requirement from $\beta,\gamma$. We do not know whether the Pisot assumption is unnecessary, but we note that Bertrand-Mathis \cite{Bertrand-Mathis77} has obtained some complementary results for $\gamma$ Pisot and $\beta$ arbitrary, though only for measures that satisfy the weak-Bernoulli property with respect to the natural symbolic coding of $T_\gamma$.

From this one derives a new measure rigidity result for $\beta$-maps.

\begin{cor}\label{cor:rudolph-for-beta}
Let $\beta,\gamma>1$ with $\beta\not\sim\gamma$ and $\beta$ Pisot. If $\mu$ is jointly invariant under $T_{\beta},T_{\gamma}$, and if all ergodic components of $\mu$ under $T_\gamma$ have positive entropy, then $\mu$ is the common Parry measure for $\beta$ and $\gamma$; in particular, it is absolutely continuous. The same holds if $T_{\beta},T_{\gamma}$ are conjugated separately by $C^{2}$-diffeomorphisms.
\end{cor}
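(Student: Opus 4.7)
The plan is to mimic the classical deduction of the Rudolph--Johnson theorem from Host's theorem, but using Theorem~\ref{thm:application-HostLindenstrauss} in place of Host's. First I would handle the un-conjugated case. Let $\mu = \int \mu_\omega\,d\omega$ be the $T_\gamma$-ergodic decomposition. By hypothesis each component $\mu_\omega$ is $T_\gamma$-invariant, ergodic, and has positive entropy, so Theorem~\ref{thm:application-HostLindenstrauss} (applicable since $\beta$ is Pisot and $\beta\not\sim\gamma$) tells us that each $\mu_\omega$ is pointwise $\beta$-normal. Integrating, $\mu$-a.e. $x$ is $\beta$-normal, i.e.
\[
 \frac{1}{N}\sum_{k=0}^{N-1}\delta_{T_\beta^k x}\;\Longrightarrow\;\nu_\beta,
\]
where $\nu_\beta$ is the Parry measure of $T_\beta$.

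Now I would use the other assumption: $\mu$ is also $T_\beta$-invariant. Decomposing it into $T_\beta$-ergodic components and applying the pointwise ergodic theorem, $\mu$-a.e. $x$ the Birkhoff averages above converge to the ergodic component containing $x$. Comparing with the previous display forces $\mu$-a.e. ergodic component to equal $\nu_\beta$, hence $\mu=\nu_\beta$. Since $\nu_\beta$ is absolutely continuous and $\mu$ is additionally $T_\gamma$-invariant, uniqueness of the a.c. invariant measure for $T_\gamma$ (R\'enyi--Parry) forces $\mu=\nu_\gamma$ as well; so $\nu_\beta=\nu_\gamma$ is the common Parry measure and in particular is absolutely continuous.

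For the conjugated version, write $\tilde T_\beta = g_1 T_\beta g_1^{-1}$ and $\tilde T_\gamma = g_2 T_\gamma g_2^{-1}$ with $g_1,g_2\in\diff^2(\mathbb{R})$. Set $\nu=g_2^{-1}\mu$, which is $T_\gamma$-invariant and whose ergodic components still have positive entropy (entropy is a conjugacy invariant, as is the ergodic decomposition). Apply the $C^2$-invariant form of Theorem~\ref{thm:application-HostLindenstrauss} with diffeomorphism $g_1^{-1}g_2\in\diff^2(\mathbb{R})$: the push-forward $(g_1^{-1}g_2)\nu = g_1^{-1}\mu$ is pointwise $\beta$-normal. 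Because $g_1^{-1}\mu$ is also $T_\beta$-invariant, the argument of the first two paragraphs gives $g_1^{-1}\mu=\nu_\beta$, i.e.\ $\mu=g_1\nu_\beta$; this measure is absolutely continuous. Since $g_2^{-1}\mu$ is $T_\gamma$-invariant and also absolutely continuous, uniqueness of the a.c. invariant measure for $T_\gamma$ yields $g_2^{-1}\mu=\nu_\gamma$, i.e.\ $\mu=g_2\nu_\gamma$.

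There is really no hard step here once Theorem~\ref{thm:application-HostLindenstrauss} is available; the only points that require a bit of care are (i) distinguishing the two ergodic decompositions of $\mu$ (under $T_\gamma$ to feed the $\beta$-normality theorem, and under $T_\beta$ to upgrade pointwise normality to identification of $\mu$), and (ii) in the conjugated case, keeping track of which diffeomorphism turns which invariance statement into the form required by the theorem, which is done by absorbing $g_1^{-1}g_2$ into the $\diff^2$ hypothesis. The substantive mathematics is all in Theorem~\ref{thm:application-HostLindenstrauss}; this corollary is the expected rigidity consequence.
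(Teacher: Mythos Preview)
Your argument is correct and follows essentially the same route as the paper: apply Theorem~\ref{thm:application-HostLindenstrauss} (componentwise along the $T_\gamma$-ergodic decomposition) to get pointwise $\beta$-normality, then compare with the Birkhoff limits under $T_\beta$ to identify $\mu$ with the $\beta$-Parry measure, and finally use uniqueness of the absolutely continuous $T_\gamma$-invariant measure. Your handling of the conjugated case, absorbing $g_1^{-1}g_2$ into the $\diff^2$ hypothesis of Theorem~\ref{thm:application-HostLindenstrauss}, is exactly what the paper's one-line remark ``follows in the same way'' unpacks to.
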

\begin{proof}
If $\mu$ is as in the statement, then by Theorem \ref{thm:application-HostLindenstrauss}, $\mu$-almost all $x$ equidistribute under $T_\beta$ for the $\beta$-Parry measure (i.e. an absolutely continuous measure). On the other hand, by the ergodic theorem $\mu$-a.e. $x$ equidistributes for the $T_\gamma$-ergodic component to which it belongs; hence $\mu$ is also the Parry measure for $T_\gamma$.

The latter assertion follows in the same way, using that $g\mu$ is pointwise $\beta$-normal for all $g\in\diff^2(\R)$.
\end{proof}

We hope to be able to eliminate the Pisot assumption in this result; this will be addressed in a forthcoming paper. Corollary \ref{cor:rudolph-for-beta} also improves \cite[Corollary 1.5]{Hochman12} by eliminating the ergodicity assumption. We do not know for what pairs $(\beta,\gamma)$ the Parry measures coincide, or even whether this may happen for different non-integer $\beta,\gamma$.

\subsubsection{\label{sub:BA-normal-numbers}Badly approximable normal numbers}

Another application concerns continued fraction representations and their relation to integer expansions. Let $\Lambda\subseteq\mathbb{N}$ be a finite set with at least two elements, and set
\[
C_{\Lambda}=\{x\in[0,1]\,:\, x\mbox{ has only symbols from }\Lambda\mbox{ in its continued fraction expansion}\}.
\]
These sets are natural in Diophantine approximation since their union over all finite $\Lambda\subseteq\mathbb{N}$ is the set of badly approximable numbers. The question of whether there are badly approximable normal numbers reduces to asking whether any of the $C_\Lambda$ contain normal numbers. An affirmative answer follows from work of Kaufman \cite{Kaufman80}, who, assuming  $\dim C_\Lambda>2/3$, constructed probability
measures on $C_{\Lambda}$ whose Fourier transform decays polynomially. The bound on the dimension was relaxed to $\dim C_\Lambda>1/2$ by Queff{\'e}lec and Ramar{\'e} \cite{QueffelecRamare03}. Thus, for example, there are normal numbers whose continued fraction expansions consist only of the digits $1,2$ (because $\dim C_{\{1,2\}}>1/2$). However, the methods from those papers fail below  dimension $1/2$, so, for example, it was not known whether there are normal numbers with continued fraction coefficients $5,6$.

We note that $C_\Lambda$ is the attractor of a regular IFS, namely $\{ f_i\circ f_j:i,j\in\Lambda\}$, where $\{ f_i\}$ are the inverse branches of the Gauss map (the reason for the compositions is that, although $f_1$ is not a strict contraction, all the compositions $f_i\circ f_j$ are). As an application of Theorem \ref{thm:dissonant-IFSs}, we have:

\begin{thm}\label{thm:BA-normal-numbers}
Any quasi-product measure on $C_\Lambda$ (in particular the $\dim C_\Lambda$-dimensional Hausdorff measure) is pointwise $\beta$-normal for any Pisot $\beta>1$.
\end{thm}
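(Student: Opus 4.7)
The set $C_\Lambda$ is the attractor of the IFS $\mathcal{I}=\{f_i\circ f_j:i,j\in\Lambda\}$, where $f_k(x)=1/(x+k)$ are the inverse branches of the Gauss map. Each $f_i\circ f_j$ is an orientation-preserving $C^\omega$ strict contraction (the product of two orientation-reversing maps, with derivative at most $1/4$ on $[0,1]$), and the cylinders $f_i\circ f_j([0,1])$ are pairwise disjoint apart from endpoints, so $\mathcal{I}$ is regular. Any quasi-product measure $\mu$ for $\mathcal{I}$ remains quasi-product for every refinement $\mathcal{I}^N=\{f_{k_1}\circ\cdots\circ f_{k_{2N}}:k_l\in\Lambda\}$, whose common attractor is still $C_\Lambda$. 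So by Theorem~\ref{thm:dissonant-IFSs} the proof reduces to exhibiting, for each Pisot $\beta>1$, a single $g\in\bigcup_N\mathcal{I}^N$ with $\lambda(g)\nsim\beta$.

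Each such $g=f_{k_1}\circ\cdots\circ f_{k_{2N}}$ is the M\"obius map associated to the matrix $M_g=A_{k_1}\cdots A_{k_{2N}}\in\mathrm{SL}_2(\mathbb{Z})$, where $A_k=\bigl(\begin{smallmatrix}0&1\\1&k\end{smallmatrix}\bigr)$. A standard computation gives $\lambda(g)=\Lambda_g^{-2}$, where $\Lambda_g=\tfrac{1}{2}(\tau+\sqrt{\tau^2-4})>1$ is the larger eigenvalue of $M_g$ and $\tau=\operatorname{tr}(M_g)$; in particular $\Lambda_g$ is a real quadratic unit in $\mathbb{Q}(\sqrt{\tau^2-4})$. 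Now assume $\lambda(g)\sim\beta$, so $\Lambda_g^{2m}=\beta^n$ for some nonzero integers $m,n$. Since $\Lambda_g>1$ is irrational, $\Lambda_g^{2m}$ is irrational too, whence $\mathbb{Q}(\Lambda_g)=\mathbb{Q}(\Lambda_g^{2m})\subseteq\mathbb{Q}(\beta)$ must be one of the (finitely many) quadratic subfields of $\mathbb{Q}(\beta)$. Writing these as $\mathbb{Q}(\sqrt{D_1}),\ldots,\mathbb{Q}(\sqrt{D_r})$, it suffices to produce some $g$ whose trace $\tau$ satisfies: the squarefree part of $\tau^2-4$ is not in $\{D_1,\ldots,D_r\}$.

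This final step is the main obstacle. The plan is to show that as $g$ ranges over the semigroup generated by $\{A_k:k\in\Lambda\}$, the squarefree parts of $\tau^2-4$ take infinitely many distinct values. Fixing two distinct $a,b\in\Lambda$, consider the family $M_n=A_aA_b^{2n+1}$; their traces satisfy an integer linear recurrence and grow like $\tfrac{1}{2}(b+\sqrt{b^2+4})^{2n+1}$. On the other hand, the solutions of any fixed Pell-type equation $\tau^2-Dy^2=4$ form a geometric sequence with ratio the fundamental unit of $\mathbb{Z}[\sqrt{D}]$, so only finitely many $M_n$ can have $\tau^2-4$ lying in a single class $D\cdot(\mathbb{Q}^*)^2$ (the incompatible growth rates can be checked either case-by-case or via a Baker-type independence-of-logarithms argument). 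Hence infinitely many distinct squarefree parts arise, and some $g$ falls outside the finite list $\{D_1,\ldots,D_r\}$; this gives the required $\lambda(g)\nsim\beta$. An alternative that sidesteps the number-theoretic step is to verify that the Gauss IFS is totally non-linear and invoke Theorem~\ref{thm:totally-nonlinear-IFSs} instead.
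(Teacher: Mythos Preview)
Your reduction to Theorem~\ref{thm:dissonant-IFSs}---exhibit some iterate $g$ with $\lambda(g)\nsim\beta$---matches the paper's, but from that point you take an unnecessarily hard route and leave it incomplete.

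The paper's argument is essentially one line. For distinct $a,b\in\Lambda$, the fixed point $x_i$ of $f_i$ satisfies $x_i^2+ix_i-1=0$, so $x_i$ lies in $\mathbb{Q}(\sqrt{i^2+4})$, and $\lambda(f_i^2)=x_i^4$. When $x_a$ and $x_b$ generate distinct quadratic fields one has $\lambda(f_a^2)\nsim\lambda(f_b^2)$ directly; transitivity of $\sim$ then forces, for every $\beta$, at least one of these two to satisfy $\lambda\nsim\beta$. The simplification you are missing is precisely this use of transitivity: instead of comparing each $\lambda(g)$ to the fixed $\beta$ (which drags in the subfield structure of $\mathbb{Q}(\beta)$ and a counting problem over the whole semigroup), compare two contraction ratios to \emph{each other}, and $\beta$ drops out of the problem entirely.

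Your ``main obstacle'' paragraph is not a proof. The growth-rate heuristic for why only finitely many $M_n=A_aA_b^{2n+1}$ can have $\tau_n^2-4$ in a fixed square class $D\cdot(\mathbb{Q}^\times)^2$ does not address the case where the dominant eigenvalue $\lambda_b$ of $A_b$ is itself a rational power of the fundamental unit $\epsilon_D$; this is exactly what happens when $D$ is the squarefree part of $b^2+4$, so it is not a corner case you can wave away. Appealing to Baker's theorem or to total non-linearity via Theorem~\ref{thm:totally-nonlinear-IFSs} are both plausible fallbacks, but you carry out neither; the latter in particular requires showing the Gauss IFS is not $C^1$-conjugate to a linear one, which is true but not immediate.
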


Even when $\dim C_\Lambda>1/2$, this improves the results of Kaufman, Queff{\'e}lec and Ramar{\'e}, in that the result holds for a broader and more natural class of measures. The result on normality in non-integer Pisot bases is new in all cases. It seems very likely that the result holds also for Gibbs measures when $\Lambda\subseteq\mathbb{N}$ is infinite,  under standard assumptions on the Gibbs potential, but we do not pursue this.

One natural question is whether a reciprocal of Theorem \ref{thm:BA-normal-numbers} holds. For example, is it true that almost all points in the middle-$1/3$ Cantor set are normal with respect to the Gauss map $G$? (i.e. they equidistribute under $G$ for the Gauss measure, which is the only absolutely continuous $G$-invariant measure). To the best of our knowledge, it is not even known whether there exists a point which is Gauss normal but not $n$-normal for any $n$ (in the positive direction, Einsiedler, Fishman and Shapira \cite{EFS11} recently proved that almost all points in the middle-$1/3$ Cantor set have unbounded partial quotients, i.e. are not contained in any $C_\Lambda$). Unfortunately, our methods do not seem to help with this problem. The (piecewise) linearity of $T_\beta$ is strongly used in the part of the proof that deals with the geometric behavior of invariant measures under convolution. In particular, Theorem \ref{thm:existence-of-resonant-measures} seems to fail for the Gauss map and likely for most non-linear and many piecewise linear maps.

\subsection{Organization of the paper}
In the next section we state and prove a general result relating orbits of $\mu$-typical points to the structure of $\mu$; this is the second main component of the proof of Theorem \ref{thm:WM-pisot-case} referred to above. Section \ref{sec:prlim-on-dim} collects some background on dimension. In section \ref{sec:efds-spectra-and-phase} we recall some background on the pure point spectrum and eigenfunctions of flows, and discuss the class of distributions arising from scenery flows, called ergodic fractal distributions. We also introduce the concept of phase measure and its main properties. We prove Theorem \ref{thm:WM-pisot-case} in Section \ref{sec:proof-of-WM-case} (with a key component postponed to Section \ref{sec:resonance}). In Section \ref{sec:app-to-fractals-1} we derive Theorems \ref{thm:dissonant-IFSs}, \ref{thm:totally-nonlinear-IFSs} and \ref{thm:BA-normal-numbers}. Finally, in Section \ref{sec:refinements} we prove some variants of Theorem \ref{thm:WM-case}, and employ them to prove Theorems \ref{thm:application-HostLindenstrauss}  and \ref{thm:analytic-images-of-ssms}.

\subsection*{Acknowledgment}
Part of this work was carried out while M.H. was visiting at the Theory group at Microsoft Research (Redmond); many thanks to the members of the group for their hospitality and support.

\section{\label{sub:distrib-of-orbits}Relating the distribution of orbits to the measure}

While most of our considerations in this paper are special to $\mathbb{R}$, those in this section apply in the following very general setting. Let $X$ be a compact metric space and $T:X\to X$ a Borel measurable map.\footnote{Compactness is only required in order to define weak-* convergence (i.e. provide a natural algebra of test function), but the core of the discussion below is purely measure-theoretic.} For Borel probability measures $\mu,\nu$ on $X$, let us say that a measure $\mu$ is \emdef{pointwise generic} for $\nu$ if $\mu$-a.e. $x$ equidistributes for $\nu$ under $T$, that is, \begin{equation}
      \frac{1}{N}\sum_{n=0}^{N-1}f(T^nx)\to\int f\,d\nu \;\;\;\;\mbox{ for every } f\in C(X). \label{eq:equidistribution}
\end{equation}

This notion appears in many contexts, although the name is not standard. Clearly when $Tx=nx\bmod 1$ and $\nu$ is Lebesgue measure on $[0,1]$, this is the same as pointwise  $n$-normality. A well-known variant appears in smooth dynamics: when $X$ is a manifold, a measure $\nu$ is called the Sinai-Ruelle-Bowen (SRB) measure if the volume measure on $X$ is pointwise generic for $\nu$. Other examples include the study of badly approximable points on analytic curves in $\mathbb{R}^d$, and similar applications in arithmetic contexts.

While one does not expect to be able to say very much for arbitrary maps and measures, there is an obvious formal strategy to follow if one wants to prove that $\mu$ is pointwise generic for $\nu$: it is sufficient to show that for $\mu$-a.e. $x$, if $x$ equidistributes for a measure $\eta$ along some subsequence of times (i.e. \eqref{eq:equidistribution} holds along some $N_k\to\infty$), then $\eta=\nu$.

To go any further with this scheme, one needs a way to relate measures $\eta$ arising as above to the original measure $\mu$. It is not obvious that such a relation exists: $\eta$ is determined primarily by the point $x$, and although $x$ is $\mu$-typical, once it is selected, it would appear that the role of $\mu$ has ended. However, it turns out that there is a very close connection between $\eta$ and $\mu$, provided by the theorem below. Roughly speaking, it shows that, under a mild technical condition, one can express $\eta$ as a weak limit of ``pieces'' of $\mu$, ``magnified'' via the dynamics.

For a finite measurable partition $\mathcal{A}$ of $X$, write $T^{i}\mathcal{A}=\{T^{-i}A\,:\, A\in\mathcal{A}\}$
and $\mathcal{A}^{n}=\bigvee_{i=0}^{n}T^{i}\mathcal{A}$ for the coarsest
common refinement of $\mathcal{A},T\mathcal{A},\ldots,T^{n}\mathcal{A}$. Also
let $\mathcal{A}^{\infty}=\bigvee_{i=0}^{\infty}T^{i}\mathcal{A}$
denote the $\sigma$-algebra generated by the partitions $\mathcal{A}^n$,
$n\geq0$. We say that  $\mathcal{A}$ is a \emdef{generator} for $T$ if $\mathcal{A}^{\infty}$
is the full Borel algebra. When $T$ is invertible, we similarly define $\mathcal{A}^{\pm n}=\bigvee_{i=-n}^n T^i\mathcal{A}$ and $\mathcal{A}^{\pm\infty}=\bigvee_{i=-\infty}^{\infty}T^{i}\mathcal{A}$, and say that $\mathcal{A}$ is a generator if $\mathcal{A}^{\pm\infty}$ is the full Borel algebra. Finally, we say that  $\mathcal{A}$ is a \emdef{topological generator} if $\sup\{\diam A \,:\,A\in\mathcal{A}^n\} \to 0$ as $n\to\infty$ (or, in the invertible case, the sup is over $A\in\mathcal{A}^{\pm n}$). A topological generator is clearly a generator.

Write $\mathcal{A}(x)\in\mathcal{A}$ for the unique element $A\in\mathcal{A}$
containing $x$. Given $\mu\in\mathcal{P}(X)$ and a point $x\in X$
such that $\mu(\mathcal{A}^{n}(x))>0$, let
\[
\mu_{\mathcal{A}^{n}(x)}=c\cdot T^{n}(\mu|_{\mathcal{A}^{n}(x)})
\]
where $c=\mu(\mathcal{A}^{n}(x))^{-1}$ is a normalizing constant.
For a.e. $x$, this is well-defined for all $n$.

\begin{thm}\label{thm:local-average}
Let $T:X\rightarrow X$ be a Borel-measurable
map of a compact metric space, $\mu$ be a Borel probability measure
on $X$ and $\mathcal{A}$ a generating partition.
Then for $\mu$-a.e. $x$, if $x$ equidistributes for $\nu\in\mathcal{P}(X)$
along some $N_{k}\rightarrow\infty$, and if
$\nu(\partial A)=0$ for all $A\in\mathcal{A}^{n}, n\in\N$, then
\begin{equation}
\nu=\lim_{k\rightarrow\infty}\frac{1}{N_{k}}\sum_{n=1}^{N_{k}}\mu_{\mathcal{A}^{n}(x)}\qquad\mbox{weak-* in }\mathcal{P}(X). \label{eq:local-average}
\end{equation}
If, furthermore, $\mathcal{A}$ is a topological generator, then the hypothesis on $\nu$ follows if, for all $m$, \begin{equation}
   \limsup_{k\rightarrow\infty}\frac{1}{N_{k}}\sum_{n=1}^{N_{k}}\mu_{\mathcal{A}^{n}(x)}(C_m^{(\varepsilon)})\;=\;o(1) \qquad \mbox{as }\varepsilon\to 0, \label{eq:boundary-condition}
\end{equation}
where $C_m=\bigcup_{A\in\mathcal{A}^m}\partial A$, and $C_m^{(\varepsilon)}$ is its  $\varepsilon$-neighborhood.
\end{thm}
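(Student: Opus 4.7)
The key identity is that, for any bounded measurable $f$,
\[
\int f\,d\mu_{\mathcal{A}^n(x)} \;=\; \frac{1}{\mu(\mathcal{A}^n(x))}\int_{\mathcal{A}^n(x)} f\circ T^n\,d\mu \;=\; E_\mu[f\circ T^n\mid\mathcal{A}^n](x).
\]
Thus the conclusion amounts to showing, for a countable dense family of test functions $f$, that the defects
\[
D_N(f)(x) \;:=\; \frac{1}{N}\sum_{n=1}^N \bigl(f(T^nx) - E_\mu[f\circ T^n\mid\mathcal{A}^n](x)\bigr)
\]
tend to $0$ for $\mu$-a.e.\ $x$. Combining this with the equidistribution hypothesis $\frac{1}{N_k}\sum_n f(T^nx)\to\int f\,d\nu$ then gives the required $\int f\,d\bar\mu_k\to\int f\,d\nu$, where $\bar\mu_k:=\frac{1}{N_k}\sum_{n=1}^{N_k}\mu_{\mathcal{A}^n(x)}$. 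The plan is to prove $D_N(f)\to 0$ a.e.\ first for functions $f$ measurable with respect to a finite refinement $\mathcal{A}^M$, via an orthogonal-series argument, and then to extend to continuous $f$ by an approximation that exploits $\nu(\partial A)=0$.

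The orthogonality input runs as follows. If $f_M$ is $\mathcal{A}^M$-measurable then $f_M\circ T^n$ is $\mathcal{A}^{n+M}$-measurable, since $T^{-n}\mathcal{A}^M\subseteq\mathcal{A}^{n+M}$. Hence $Y_n:=f_M\circ T^n-E_\mu[f_M\circ T^n\mid\mathcal{A}^n]$ lies in $L^2(\mathcal{A}^{n+M})$ and is orthogonal to $L^2(\mathcal{A}^n)$; for $m\geq n+M$ one has $Y_n\in L^2(\mathcal{A}^m)$ while $Y_m\perp L^2(\mathcal{A}^m)$, so $E_\mu[Y_nY_m]=0$. The remaining $O(M)$ near-diagonal pairs are controlled by $|Y_n|\leq 2\|f_M\|_\infty$, yielding
\[
E_\mu\Bigl|\sum_{n=1}^N Y_n\Bigr|^2 \;=\; O\bigl(MN\|f_M\|_\infty^2\bigr).
\]
Chebyshev along the sparse subsequence $N_j=j^2$, Borel--Cantelli, and a trivial interpolation using $|Y_n|\leq 2\|f_M\|_\infty$ produce $D_N(f_M)(x)\to 0$ $\mu$-a.e. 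Applying this to the countable family of indicators $f_M=\mathbf{1}_A$ with $A\in\bigcup_M\mathcal{A}^M$ gives a single $\mu$-full-measure set on which $\bar\mu_k(A)-\bar\mu'_k(A)\to 0$, where $\bar\mu'_k:=\frac{1}{N_k}\sum\delta_{T^nx}$. Since $\bar\mu'_k\to\nu$ weakly and $\nu(\partial A)=0$, Portmanteau forces $\bar\mu'_k(A)\to\nu(A)$, and hence $\bar\mu_k(A)\to\nu(A)$ on every cylinder; by linearity, $\int g_M\,d\bar\mu_k\to\int g_M\,d\nu$ for every bounded $\mathcal{A}^M$-measurable $g_M$.

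To upgrade to continuous $g\in C(X)$, set $g_M:=E_\mu[g\mid\mathcal{A}^M]$ and split $g=g_M+(g-g_M)$; the previous step controls $g_M$, and the task is to bound the tail $\int(g-g_M)\,d\bar\mu_k$ uniformly in $k$ and send $M\to\infty$. When $\mathcal{A}$ is a topological generator the cells of $\mathcal{A}^M$ shrink in diameter, so $\|g-g_M\|_\infty\to 0$ by uniform continuity of $g$, giving $\int g\,d\bar\mu_k\to\int g\,d\nu$ at once. The main obstacle is the general (only measure-theoretic) generator case, in which $g_M\to g$ holds only in $L^1(\mu)$ and $\mu$-a.e., not uniformly. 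Here one would use that $g_M$ is $\nu$-a.e.\ continuous (thanks to $\nu(\partial A)=0$), combine this with the $L^1(\mu)$ bound
\[
E_\mu\bigl|\int(g-g_M)\,d\bar\mu_k\bigr| \;\leq\; \int|g-g_M|\,d\bigl(\tfrac{1}{N_k}\sum T^n\mu\bigr),
\]
and then pass to a subsequential weak-$*$ limit $\tilde\nu$ of $(\bar\mu_k)$ to which the cylinder-boundary condition can be transferred from $\nu$, concluding via the cylinder convergence already established.

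For the second statement, assume that $\mathcal{A}$ is a topological generator and that \eqref{eq:boundary-condition} holds; it suffices to deduce $\nu(\partial A)=0$ for every cylinder $A$, so that the first part applies. Since $\partial A\subseteq C_n$ whenever $A\in\mathcal{A}^n$, we are reduced to showing $\nu(C_n)=0$. By Urysohn, pick a continuous $\phi_\varepsilon:X\to[0,1]$ with $\mathbf{1}_{C_n}\leq\phi_\varepsilon\leq\mathbf{1}_{C_n^{(\varepsilon)}}$; the topological-generator form of the previous paragraph, which needs no $\nu$-boundary assumption, applies to $\phi_\varepsilon$ and gives $\int\phi_\varepsilon\,d\bar\mu_k\to\int\phi_\varepsilon\,d\nu$. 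On the other hand $\int\phi_\varepsilon\,d\bar\mu_k\leq\bar\mu_k(C_n^{(\varepsilon)})$, whose limsup is $o_\varepsilon(1)$ by \eqref{eq:boundary-condition}, so $\nu(C_n)\leq\int\phi_\varepsilon\,d\nu\leq o_\varepsilon(1)$, and letting $\varepsilon\to 0$ yields $\nu(C_n)=0$, reducing the second claim to the first.
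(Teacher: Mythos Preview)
Your approach matches the paper's: the conditional-expectation identity $\int f\,d\mu_{\mathcal{A}^n(x)}=E_\mu[f\circ T^n\mid\mathcal{A}^n](x)$ together with an orthogonality argument for cylinder-measurable $f$ (the paper obtains exact orthogonality by passing to arithmetic progressions modulo $k$, rather than your band-of-width-$M$ variance bound, but the conclusion is the same). The paper, however, avoids your approximation step entirely: it observes that, since $\mathcal{A}$ generates the Borel $\sigma$-algebra and $\nu(\partial A)=0$ for all cylinders $A$, weak-$*$ convergence to $\nu$ is \emph{equivalent} to convergence against the countable algebra $\mathcal{F}$ of cylinder step functions; so once $D_N(f)\to 0$ on $\mathcal{F}$ is established, the first part is finished. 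Your sketch for the general (non-topological) generator case---``pass to a subsequential weak-$*$ limit $\tilde\nu$ to which the cylinder-boundary condition can be transferred from $\nu$''---is not justified, and I do not see a mechanism for that transfer; the paper's route bypasses this entirely.

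There is also a circularity in your proof of the second statement. The ``topological-generator form of the previous paragraph'' that you invoke for $\phi_\varepsilon$ used $\nu(\partial A)=0$ (via Portmanteau) to get $\int g_M\,d\bar\mu_k\to\int g_M\,d\nu$, which is precisely the hypothesis you are trying to establish. The fix, which is what the paper does, is to note that in the topological-generator case $\mathcal{F}$ is uniformly dense in $C(X)$, so the a.e.\ convergence $D_N(f)\to 0$ extends from $f\in\mathcal{F}$ to all $f\in C(X)$ by a $3\varepsilon$-argument \emph{with no assumption on $\nu$}. Then for continuous $\phi_\varepsilon$ with $1_{C_n}\le\phi_\varepsilon\le 1_{C_n^{(\varepsilon)}}$ one gets directly
\[
\nu(C_n)\le\int\phi_\varepsilon\,d\nu=\lim_k\bar\mu'_{N_k}(\phi_\varepsilon)=\lim_k\bar\mu_{N_k}(\phi_\varepsilon)\le\limsup_k\bar\mu_{N_k}(C_n^{(\varepsilon)})=o_\varepsilon(1),
\]
and hence $\nu(C_n)=0$.
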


Note that if in the right hand side of \eqref{eq:local-average} we replace $\mu_{\mathcal{A}^{n}(x)}$
by $\delta_{T^{n}x}$, then the convergence to $\nu$ is just a reformulation
of the definition of equidistribution. Generally $\mu_{\mathcal{A}^{n}(x)}$
and $\delta_{T^{n}x}$ are very different measures and the content of the
theorem is that these two sequences are nevertheless asymptotic in
the Ces\`{a}ro sense. This is quite surprising, and such a general fact
can only be due to very general principles, as we shall see in the proof.

\begin{proof}
We give the proof assuming that $\mathcal{A}$ is forward generating and comment
on the invertible case at the end.

Let $\mathcal{F}$ denote the set of
linear combinations of indicator functions of $A\in\mathcal{A}^{n}$,
$n\in\mathbb{N}$, with coefficients in $\mathbb{Q}$. This is a countable
algebra and, for $x\in X$ and $\nu\in\mathcal{P}(X)$ such that $\nu(\partial A)=0$ for $A\in\mathcal{A}^{n}$, it is well known that $x$ equidistributes for $\nu$ along $N_{i}$ if and  only if $\lim\frac{1}{N_{i}}\sum_{n=1}^{N_{i}}f(T^{n}x) = \int f\, d\nu$ for every $f\in\mathcal{F}$ (it is here that we use the assumption that $\nu$ gives zero mass to the boundaries of $A\in\mathcal{A}^n$). Similarly, the limit in the conclusion of the theorem holds if and only if $\lim\frac{1}{N_{i}}\sum_{n=1}^{N_{i}}\int f(x)\,d\mu_{\mathcal{A}^n(x)} = \int f\, d\nu$ for all $f\in\mathcal{F}$.
It follows, then, that to prove the theorem it suffices for us to
show that for $\mu$-a.e. $x$,
\begin{equation}
\lim_{N\rightarrow\infty}\frac{1}{N}\sum_{n=0}^{N-1}\left(\int f\,d\mu_{\mathcal{A}^{n}(x)}-f(T^{n}x)\right)\rightarrow0\qquad\mbox{for every }f\in\mathcal{F}. \label{eq:martingale-difference}
\end{equation}

Suppose that $f=\sum a_{i}1_{A_{i}}$ where $a_{i}\in\mathbb{Q}$
and $A_{i}\in\mathcal{A}^{k}$ for some $k$. Notice that by definition
of $\mu_{\mathcal{A}^{n}(x)}$,
\begin{eqnarray*}
\int f\, d\mu_{\mathcal{A}^{n}(x)} & = & \frac{1}{\mu(\mathcal{A}^{n}(x))}\int_{\mathcal{A}^{n}(x)}T^{n}f\, d\mu\\
 & = & \mathbb{E}_\mu(T^{n}f\,|\,\mathcal{A}^{n})(x)
\end{eqnarray*}
Writing $g_{n}=\mathbb{E}_{\mu}(T^{n}f\,|\,\mathcal{A}^{n})-T^{n}f$,
it suffices to show that $\lim\frac{1}{N}\sum_{n=0}^{N-1}g_{n}=0$
$\mu$-a.e., and for this it clearly suffices to prove that $\lim\frac{1}{N}\sum_{n=0}^{N-1}g_{kn+p}=0$
$\mu$-a.e. for $0\leq p\leq k-1$.

Now, $g_{n}$ is $\mathcal{A}^{n+k}$-measurable (because $T^{n}f$
is $\mathcal{A}^{n+k}$-measurable); and on the other hand
\[
\mathbb{E}_{\mu}(g_{n}\,|\,\mathcal{A}^{n})=\left(\mathbb{E}_{\mu}(T^{n}f\,|\,\mathcal{A}^{n})-\mathbb{E}_{\mu}(T^{n}f\,|\,\mathcal{A}^{n})\right)=0
\]
Therefore, $\{g_{p+kn}\}_{n=0}^{\infty}$ is an orthogonal
system in $L^{2}(\mu)$, since if $j>i$ then
\begin{eqnarray*}
\int g_{p+ki}\,g_{p+kj}\, d\mu & = & \int\mathbb{E}_{\mu}(g_{p+ki}\,g_{p+kj}|\mathcal{A}^{p+k(i+1)})\, d\mu\\
 & = & \int g_{p+ki}\cdot\mathbb{E}_{\mu}(g_{p+kj}|\mathcal{A}^{p+k(i+1)})\, d\mu\\
 & = & \int g_{p+ki}\cdot0\, d\mu\\
 & = & 0.
\end{eqnarray*}
Since the sequence $\{ g_{p+kn}\}_{n=0}^\infty$ is also uniformly bounded in $L^2(\mu)$, we conclude that $\frac{1}{N}\sum_{n=0}^{N-1}g_{p+kn}\rightarrow 0$
a.e., see for instance  \cite{Lyons88}. (Alternatively, $\{g_{p+kn}\}_{n=1}^{\infty}$ form a sequence
of bounded martingale differences for the filtration $\{\mathcal{A}^{p+kn}\}$,
hence their averages converges a.e. to $0$, see \cite[Chapter 9, Theorem 3]{Feller71}.)

We turn to the second statement. Assume that $\mathcal{A}$ is a topological generator. We will show that the assumption \eqref{eq:boundary-condition} implies that $\nu(\partial A)=0$ for $A\in \mathcal{A}^n, n\in\N$. Fix $n$ and $C=C_n$ as in the statement. For  $\varepsilon>0$ let $f_\varepsilon\in \mathcal{F}$ be such that $1_C\leq f_\varepsilon\leq 1_{C^{(\varepsilon)}}$. Then, using \eqref{eq:martingale-difference} and the hypothesis \eqref{eq:boundary-condition}, we get
\begin{eqnarray*}
\limsup_{k\to\infty} \frac{1}{N_k}\sum_{n=0}^{N_k-1}f_\varepsilon(T^n x) & = & \limsup_{k\rightarrow\infty}\frac{1}{N_{k}}\sum_{n=0}^{N_{k}-1}\int f_\e\,d\mu_{\mathcal{A}^{n}(x)}\\
         &   \le  & \limsup_{k\rightarrow\infty}\frac{1}{N_{k}}\sum_{n=0}^{N_{k}-1} \mu_{\mathcal{A}^{n}(x)}(C^{(\varepsilon)}) \\
         &   =  & o(1)\qquad\mbox{as }\varepsilon\to 0.
\end{eqnarray*}
Since $\mathcal{A}$ is a topological generator, $\mathcal{F}$ is uniformly dense in $C(X)$, so the above conclusion holds also for $f\in C(X)$ satisfying $1_C\le f\le 1_{C^{(\e)}}$. Since $x$ equidistributes for $\nu$ along $\{N_k\}$, this implies that $\nu(C)=0$.
\end{proof}

In the case that $T$ is invertible we consider instead the algebra
$\mathcal{F}^\pm$ of $\mathbb{Q}$-linear combinations of indicators
of sets from $\mathcal{A}^{\pm n}=\bigvee_{i=-n}^{n}T^{i}\mathcal{A}$.
The rest of the proof proceeds as before using the filtration $\mathcal{A}^{\pm n}$.

\section{\label{sec:prlim-on-dim}Preliminaries on dimension}

In this section we summarize some standard and some less well known facts about dimension.

\subsection{\label{sub:dimension}Dimension of measures}

The \emdef{(lower) Hausdorff dimension} of a finite non-zero Borel measure $\theta$ on some metric space is defined by
\[
  \dim\theta=\inf\{\dim A\,:\,\theta(A)>0\;,\;A\mbox{ is Borel}\}.
\]
Here $\dim A$ is the Hausdorff dimension of $A$. We note that this is only one of many possible concepts of dimension of a measure, but it turns out to be the appropriate one for our purposes because of the way it behaves under convolutions, i.e. the resonance and dissonance phenomena discussed in the following sections.

An alternative characterization that we will have occasion to use is given in terms of local dimensions:
\begin{equation}  \label{eq:dim-using-local-dim}
\dim\theta=\essinf_{x\sim\theta}\underline{\dim}(\theta,x),
\end{equation}
where
\[
\underline{\dim}(\theta,x)= \liminf_{r\downarrow 0} \frac{\log \theta (B(x,r))}{\log r}
\]
is the lower local dimension of $\theta$ at $x$. The equivalence is a version of the mass distribution principle, see  \cite[Proposition 4.9]{Falconer03}. Note that this characterization shows that (when the underlying space is compact) the dimension is a Borel function of the measure in the weak$^*$ topology.

We briefly recall some other properties of the dimension which will be used throughout the paper without further reference. Clearly $\dim(\theta|_E)\geq\dim\theta$ for any set $E$ of positive measure, and $\dim$ is invariant under bi-Lipschitz maps (since this is true for the dimension of sets); in particular it is invariant under diffeomorphisms. Dimension also satisfies the relations
\begin{align*}
\dim\sum_i \theta_i &=\inf_i\dim\theta_i,\\
\dim\int \theta_\omega \,dQ(\omega)&\geq \essinf_{\omega\sim Q}\dim \theta_\omega.
\end{align*}
In particular, $\dim\mu=\dim T\mu$ for any map $T$ between intervals that is a piecewise diffeomorphism (such as the maps $T_\beta$ or the Gauss map), as can be seen by writing the measure as a sum over countably many domains where the map is bi-Lipschitz. The same argument shows that dimension is invariant under the quotient map $\mathbb{R}\to\mathbb{R}/\mathbb{Z}$.

Finally, we note that \eqref{eq:dim-using-local-dim} implies that $\dim\mu\times\nu\geq \dim\mu+\dim\nu$ (strict inequality is possible).

\subsection{\label{sub:proj-theorems}Projection theorems}\label{sub:marstrand}

It is a general principle that if $\mu$ is a measure on some space $X$ and $f:X\to Y$ is a ``typical'' Lipschitz map, then the image measure $f\mu$ will have dimension that is ``as large'' as possible: namely, it will have the same dimension as $\mu$ itself if $Y$ is large enough to accommodate this, and otherwise it will be as large as a subset of $Y$ can possibly be, that is, it will have the same dimension as $Y$. Thus one expects $\dim f\mu=\min\{\dim\mu,\dim Y\}$. There are many precise versions of this fact. The most classical is Marstrand's projection theorem, concerning  linear images of sets and measures on $\mathbb{R}^2$. The following version is due to Hunt and Kaloshin \cite[Theorem 4.1]{HuntKaloshin97}.

\begin{thm}\label{thm:classical-marstrand}
If $\eta$ is a probability measure on $\mathbb{R}^2$, then for a.e. $\alpha\in [0,\pi)$, $\dim\pi_\alpha\eta=\min\{1,\dim\eta\}$, where $\pi_\alpha$ is the orthogonal projection onto a line making angle $\alpha$ with the $x$-axis.
\end{thm}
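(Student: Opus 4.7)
The plan is to use the classical potential-theoretic / local-density approach, integrating over the angular parameter $\alpha$ to produce a Fubini-type bound. The upper bound $\dim\pi_\alpha\eta\le\min(1,\dim\eta)$ is immediate: $\pi_\alpha$ is $1$-Lipschitz, so Lipschitz push-forward cannot increase the lower Hausdorff dimension of a measure, and any measure supported on $\R$ has dimension at most $1$.

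For the lower bound, I would fix $s'<s<\min(1,\dim\eta)$ and start from the characterization $\dim\eta=\essinf_{x\sim\eta}\underline{\dim}(\eta,x)$. This gives a full-measure set of ``good'' points $x$ satisfying $\eta(B(x,r))\le C(x)r^s$ for all sufficiently small $r$. For such an $x$, swapping the order of integration and using the geometric bound $\mathrm{Leb}\{\alpha:|\pi_\alpha(y-x)|\le r\}\asymp\min(1,r/|y-x|)$ yields the tube estimate
\[
\int_0^\pi \pi_\alpha\eta\bigl(B(\pi_\alpha x,r)\bigr)\,d\alpha \;\lesssim\; \eta(B(x,r))+r\int_{|y-x|>r}\!\!\!|y-x|^{-1}\,d\eta(y) \;\le\; C(x)\,r^s,
\]
where the bound on the second term follows by integration by parts together with $\eta(B(x,\rho))\le C\rho^s$, and uses $s<1$ precisely in the step $\int_r^1\rho^{s-2}\,d\rho\lesssim r^{s-1}$. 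This is the exact place where the projection dimension is forced to be at most $1$. A Chebyshev/Borel-Cantelli argument along the dyadic scales $r=2^{-n}$ would then upgrade the averaged bound to a pointwise one: for each good $x$, $\underline{\dim}(\pi_\alpha\eta,\pi_\alpha x)\ge s'$ for Lebesgue-a.e.\ $\alpha$.

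The main obstacle will be swapping the quantifiers from ``for each good $x$, a.e.\ $\alpha$'' into ``for a.e.\ $\alpha$, $\eta$-a.e.\ $x$''. This exchange is essential because the lower Hausdorff dimension of $\pi_\alpha\eta$ is witnessed by $\pi_\alpha\eta$-typical points rather than by $\eta$-typical ones, and a direct restriction-plus-exhaustion argument fails (the lower Hausdorff dimension can drop when small-mass pieces are added back in). A second Fubini on the measurable set $\{(x,\alpha):\underline{\dim}(\pi_\alpha\eta,\pi_\alpha x)<s'\}$ handles the swap: by the previous step its $\alpha$-slices are Lebesgue-null, hence it has zero product measure under $\eta\times\mathrm{Leb}$, and consequently its $x$-slice has $\eta$-measure zero for Lebesgue-a.e.\ $\alpha$. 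Pushing forward gives $\underline{\dim}(\pi_\alpha\eta,y)\ge s'$ for $\pi_\alpha\eta$-a.e.\ $y$, which by \eqref{eq:dim-using-local-dim} yields $\dim\pi_\alpha\eta\ge s'$. Letting $s'$ run through a countable sequence increasing to $\min(1,\dim\eta)$ then completes the proof.
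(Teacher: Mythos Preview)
The paper does not actually prove Theorem~\ref{thm:classical-marstrand}; it is stated as a known result and attributed to Hunt and Kaloshin \cite[Theorem 4.1]{HuntKaloshin97}. So there is no ``paper's own proof'' to compare against --- the paper simply imports the statement from the literature.

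Your argument is a correct self-contained proof along classical lines (essentially Kaufman's potential-theoretic proof of Marstrand's theorem, adapted to the lower Hausdorff dimension of measures via local dimensions). The key steps --- the tube/transversality estimate $\mathrm{Leb}\{\alpha:|\pi_\alpha(y-x)|\le r\}\asymp\min(1,r/|y-x|)$, the integration-by-parts bound using $s<1$, the Borel--Cantelli upgrade from the averaged decay to a.e.\ pointwise decay, and the Fubini swap of quantifiers --- are all standard and correctly executed. The one place worth tightening is the push-forward step: you implicitly use that $x\mapsto\underline{\dim}(\pi_\alpha\eta,\pi_\alpha x)$ is constant on fibres of $\pi_\alpha$, so the $\eta$-null set of bad $x$ is the $\pi_\alpha$-preimage of a $\pi_\alpha\eta$-null set; this is what you mean by ``pushing forward'', and it is correct, but stating it explicitly would remove any ambiguity. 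Otherwise your proof supplies exactly what the paper chose to cite.
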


In our applications, $\theta$ will be a product $\mu\times\nu$. In this particular case, we obtain
\begin{cor} \label{cor:marstrand}
Let $\mu,\nu\in\mathcal{P}(\R)$. Then for almost all $t\in\R$,
\[
\dim(\mu*S_t\nu) \ge \min(1,\dim\mu+\dim\nu).
\]
\end{cor}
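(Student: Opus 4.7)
The plan is to deduce the corollary directly from the classical Marstrand projection theorem (Theorem \ref{thm:classical-marstrand}) applied to the product measure $\eta = \mu\times\nu$ on $\R^2$, via the standard identification of linear projections of a product measure as convolutions of rescaled copies of its factors.

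First I would invoke the inequality $\dim(\mu\times\nu) \ge \dim\mu + \dim\nu$ recorded at the end of Section \ref{sub:dimension}. Applying Theorem \ref{thm:classical-marstrand} to $\eta$ then gives, for Lebesgue-a.e. $\alpha\in[0,\pi)$,
\[
\dim \pi_\alpha(\mu\times\nu) \;=\; \min\bigl(1,\dim(\mu\times\nu)\bigr) \;\ge\; \min\bigl(1,\dim\mu+\dim\nu\bigr).
\]

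Next I would unpack the projection explicitly. Since $\pi_\alpha(x,y) = x\cos\alpha + y\sin\alpha$ and $\mu\times\nu$ is a product, the pushforward is the distribution of $X\cos\alpha + Y\sin\alpha$ with $X\sim\mu$, $Y\sim\nu$ independent, i.e. the convolution
\[
\pi_\alpha(\mu\times\nu) \;=\; \sigma_{\cos\alpha}\mu \,*\, \sigma_{\sin\alpha}\nu,
\]
where $\sigma_c\theta$ denotes the pushforward of $\theta$ under $x\mapsto cx$. Parametrizing angles by $t=\tan\alpha$ (a $C^1$ diffeomorphism from $(-\pi/2,\pi/2)$ onto $\R$ that in particular preserves Lebesgue-null sets), and factoring out $\sigma_{\cos\alpha}$ from the whole convolution, we see that $\pi_\alpha(\mu\times\nu)$ differs from $\mu * S_t\nu$ only by an overall nonzero dilation of $\R$. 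Such a dilation is bi-Lipschitz on any relevant compact set and therefore preserves dimension, so $\dim(\mu*S_t\nu) = \dim\pi_\alpha(\mu\times\nu)$.

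Combining these two steps and transporting the "a.e. $\alpha$" conclusion to "a.e. $t\in\R$" via the change of variables yields $\dim(\mu*S_t\nu)\ge \min(1,\dim\mu+\dim\nu)$ for a.e. $t$. There is no substantial obstacle: the only nontrivial ingredient is Marstrand's theorem itself, while the remaining steps are the elementary convolution identity above, the bi-Lipschitz invariance of dimension, and the change of variables. If the precise convention for $S_t$ is a scaling $x\mapsto e^t x$ rather than $x\mapsto tx$, one simply uses the corresponding diffeomorphism $\alpha\mapsto t$; the argument is unchanged.
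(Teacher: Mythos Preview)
Your argument is correct and follows the same route as the paper: apply Marstrand's theorem to the product $\mu\times\nu$, then identify the projections with (dilations of) convolutions $\mu*S_t\nu$ via a smooth change of parameter. The only point the paper handles slightly differently is that, with its convention for $S_t$ (which involves restriction to $[-1,1]$ and renormalization), $\mu*S_t\nu$ is not literally a dilation of $P_t(\mu\times\nu)$ but rather a restriction of it to a set of positive measure; since restriction can only increase $\dim$, the conclusion is the same.
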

\begin{proof}
The family of linear maps $\{ P_t(x,y)=x+ty\}$ is a smooth reparametrization of the orthogonal projections $\{ \pi_\alpha\}$, up to affine changes of coordinates which do not affect dimension. Hence, by Theorem \ref{thm:classical-marstrand},
\[
\dim P_t(\mu\times\nu) = \min(1,\dim(\mu\times\nu)) \ge \min(1,\dim\mu+\dim\nu)\quad\text{for a.e. } t.
\]
The corollary follows since $\mu*S_t\nu$ is a restriction of $P_t(\mu\times\nu)$ to a set of positive measure, and restriction does not decrease dimension.
\end{proof}

We will have occasion to use the following refinement of the above.
\begin{thm} \label{thm:marstrand-dimofexceptions}
If $\mu,\nu$ are Borel probability measures on $\mathbb{R}$ such that $\dim\mu+\dim\nu>1$, then
\[
\dim\{t\in\mathbb R: \dim(\mu*S_t\nu) < 1\} < 1.
\]
\end{thm}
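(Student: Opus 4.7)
The plan is to reduce the claim to the Kaufman-Falconer exceptional-set estimate for Marstrand projections in the plane. As in the proof of Corollary \ref{cor:marstrand}, the measure $\mu*S_t\nu$ agrees, up to a smooth reparametrization of $t$ and restriction to a set of positive measure, with the linear projection $P_r(\mu\times\nu)$, where $P_r(x,y)=x+ry$. Restriction to a set of positive measure cannot decrease the Hausdorff dimension of a measure, and smooth diffeomorphisms preserve Hausdorff dimension on bounded sets, so it suffices to prove that
\[
E=\{r\in\mathbb{R}:\dim P_r(\mu\times\nu)<1\}
\]
has Hausdorff dimension strictly less than $1$.

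The key tool is the Kaufman-Falconer exceptional-set estimate: if $\eta$ is a finite Borel measure on $\mathbb{R}^2$ with $I_s(\eta)<\infty$ for some $s\in(1,2]$, then the set of $r$ for which $P_r\eta$ is not absolutely continuous with $L^2$ density has Hausdorff dimension at most $2-s$ (see, e.g., Mattila's book on Fourier analysis and Hausdorff dimension). The standard proof places a Frostman measure $\sigma$ on the exceptional set, uses Plancherel's theorem to rewrite $\int\|P_r\eta\|_{L^2}^2\,d\sigma(r)$ as a weighted integral of $|\hat\eta|^2$, and balances the Frostman decay against the identity $I_s(\eta)\asymp\int|\hat\eta(\xi)|^2|\xi|^{s-2}\,d\xi$. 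Since any measure on $\mathbb{R}$ with an $L^2$ density has full dimension $1$, this yields $\dim\{r:\dim P_r\eta<1\}\le 2-s$.

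To apply this with $\eta=\mu\times\nu$ one needs $I_s(\eta)<\infty$ for some $s\in(1,\dim\mu+\dim\nu)$, which may fail in general. However, using $\dim(\mu\times\nu)\ge\dim\mu+\dim\nu>1$ together with the local-dimension characterization $\dim\eta=\essinf_x\underline{\dim}(\eta,x)$ and Egoroff's theorem, one may decompose $\mu\times\nu=\sum_n\eta_n$ as a countable sum of Frostman sub-measures of exponent $s$, for any fixed $s<\dim\mu+\dim\nu$, each of which then has finite $s$-energy. Applying the exceptional-set bound to each $\eta_n$ and combining via the identity $\dim\sum_n\theta_n=\inf_n\dim\theta_n$ and the countable stability of Hausdorff dimension of sets, one obtains $\dim E\le 2-s$, and letting $s\to\dim\mu+\dim\nu$ gives $\dim E\le 2-\dim\mu-\dim\nu<1$. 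The main obstacle is the Kaufman-Falconer estimate itself, which I would invoke as a black box; the Frostman decomposition is routine bookkeeping that poses no conceptual difficulty.
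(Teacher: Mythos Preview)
Your proposal is correct and follows essentially the same route as the paper: reduce to linear projections $P_r(\mu\times\nu)$, invoke the Falconer/Kaufman exceptional-set bound $\dim\{r:P_r\eta\text{ not a.c.}\}\le 2-s$ for measures of finite $s$-energy, and manufacture such measures from $\mu\times\nu$ via the local-dimension characterization and Egorov. The only cosmetic differences are that the paper phrases the Egorov step as a nested family of restrictions $\rho|_{A_\varepsilon}$ rather than a countable decomposition $\sum_n\eta_n$, and stops at $\dim E<1$ rather than optimizing to $\dim E\le 2-\dim\mu-\dim\nu$ as you do.
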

\begin{proof}
Falconer \cite{Falconer82} essentially proved the corresponding result for Hausdorff dimensions of sets, we indicate how to modify his proof to work with dimension of measures (the argument is standard). Let $P_t(x,y)=x+ty$. In the course of the proof of \cite[Theorem 1]{Falconer82} it is shown that if $\eta$ is a Borel probability measure on $\R^2$ such that
\begin{equation} \label{eq:finite-energy}
\int \int \frac{d\eta(x)d\eta(y)}{|x-y|^s} < \infty
\end{equation}
for some $s>1$, then the set $E$ of parameters $t$ such that the projection $P_t\eta$ is not absolutely continuous, satisfies $\dim(E)\le 2-s<1$ (as above, Falconer worked with orthogonal projections, but by reparametrization the same holds for the family $\{P_t\}$).

Let $\rho=\mu\times\nu$. We only need to show that $\dim(E)<1$, where
\[
E=\{\alpha: P_\alpha\rho \text{ is not absolutely continuous}\}.
\]
We have $\dim\rho\ge\dim\mu+\dim\nu>1$. Using Equation \eqref{eq:dim-using-local-dim}, it follows that there is $s_0>1$ such that
\[
\liminf_{r\downarrow 0} \frac{\log \rho(B(x,r))}{\log r} \ge s_0\quad\text{for }\rho\text{-a.e. }x.
\]
By Egorov's Theorem, for any $\e>0$ there are a set $A_\e$ with $\rho(A_\e)>1-\e$ and a constant $r_\e>0$ such that
\[
\rho(B(x,r)) \le r^{(1+s_0)/2}\quad\text{for all }x\in A_\e, 0<r<r_\e.
\]
It follows that $\eta:=\rho|_{A_\e}$ satisfies \eqref{eq:finite-energy} with $s=1+(s_0-1)/4$ (say). Hence $\dim(E_\e)\le 2-s$, where $E_\e=\{\alpha: P_\alpha\rho|_{A_\e} \text{ is singular}\}$. Since $E\subseteq \bigcup_{n\in \N} E_{1/n}$ the result follows.

\end{proof}

\subsection{Further facts on dimension}\label{sub:further-dimension}

For the part of the proof of Theorem \ref{thm:application-HostLindenstrauss} dealing with invariance under $C^2$ diffeomorphisms, we will need some classical but perhaps less well-known facts about dimension. The material below will not be used anywhere except in this application.

It is always true that $\dim(A\times B)\ge \dim(A)\times \dim(B)$ for Borel sets $A,B$; however, the inequality may be strict. More generally, there is a ``Cavalieri inequality'' for Hausdorff dimensions. To get inequalities in the opposite direction, one needs to consider also packing dimension $\dim_P$. The interested reader may consult e.g. \cite{Mattila95} for its definition, but we shall only require the property given in the following proposition.

\begin{prop} \label{pro:properties-dim-sets}
Let $E\subseteq\R^{d_1+d_2}$ be a Borel set.
\begin{enumerate}
\item Suppose there is a set $A\subseteq\R^{d_1}$ of positive Lebesgue measure, such that for $x_0\in A$, the fiber $\{y:(x_0,y)\in E\}$ has Hausdorff dimension at least $\alpha$. Then $\dim(E)\ge d_1+\alpha$.
\item Let $P_i$ be the coordinate projection onto $\R^{d_i}$. Then $\dim(E)\le \dim(P_1 E)+\dim_P(P_2 E)$.
\end{enumerate}
\end{prop}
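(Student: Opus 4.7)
Both statements are classical results in fractal geometry, and my plan is to invoke well-known covering arguments together with a standard product-dimension inequality; I would include only brief proofs and appeal to references such as \cite{Mattila95} (or Falconer's book) for the most technical ingredient. The main obstacle is really part (2), whose essential content is a nontrivial inequality relating products of sets to packing dimension; part (1), by contrast, is a short Fubini-style covering estimate.

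For part (1), the plan is a direct contradiction argument following Marstrand's slicing theorem. Suppose $\dim E < d_1+\alpha$, and fix $s$ with $\dim E < s < d_1+\alpha$, so that $\mathcal{H}^s(E) = 0$. Given $\varepsilon,\delta>0$, cover $E$ by balls $B_i$ of diameter $r_i<\delta$ with $\sum_i r_i^s < \varepsilon$. Each $B_i$ projects under $P_1$ to a $d_1$-ball of diameter $r_i$ and Lebesgue measure $\le C r_i^{d_1}$, and for $x\in P_1(B_i)$ the vertical slice $B_i\cap(\{x\}\times\R^{d_2})$ has diameter at most $r_i$. Bounding the $\delta$-approximate $(s-d_1)$-dimensional Hausdorff content of each fiber $E_x$ by these slice diameters and integrating in $x$, Fubini gives
\begin{equation*}
\int_{\R^{d_1}} \mathcal{H}^{s-d_1}_\delta(E_x)\, dx \;\le\; \sum_i C r_i^{d_1} \cdot r_i^{s-d_1} \;=\; C\sum_i r_i^s \;<\; C\varepsilon .
\end{equation*}
Letting $\varepsilon,\delta\to 0$, we conclude $\mathcal{H}^{s-d_1}(E_x) = 0$ for Lebesgue-a.e. $x$. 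But $s-d_1<\alpha$, so $\dim E_{x_0}\ge\alpha$ forces $\mathcal{H}^{s-d_1}(E_{x_0})>0$ for $x_0\in A$, contradicting the positivity of the Lebesgue measure of $A$.

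For part (2), I would proceed in two steps. First, monotonicity of dimension under inclusion gives $E\subset (P_1 E)\times(P_2 E)$, hence $\dim E\le \dim((P_1 E)\times(P_2 E))$. Second, apply the classical product inequality
\begin{equation*}
\dim(X\times Y)\;\le\;\dim X + \dim_P Y ,
\end{equation*}
valid for arbitrary Borel $X\subset\R^{d_1}$, $Y\subset\R^{d_2}$ (this is where packing dimension genuinely enters: one cannot put Hausdorff dimension on both sides, since $\dim(X\times Y) > \dim X + \dim Y$ is possible). This inequality is standard and I would cite \cite[Theorem 8.10]{Mattila95} rather than reprove it. Combining the two steps yields $\dim E\le \dim(P_1 E)+\dim_P(P_2 E)$, as required.
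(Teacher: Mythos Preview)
Your proposal is correct and aligns with the paper's own treatment: the paper gives no argument at all and simply cites \cite[Theorem 7.7]{Mattila95} for part (1) and \cite[Theorem 8.10]{Mattila95} for part (2). Your write-up is strictly more informative---you unpack the Marstrand slicing argument behind Theorem 7.7 and make explicit the reduction $E\subset P_1E\times P_2E$ before invoking the same product inequality from Theorem 8.10.
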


The first part follows from \cite[Theorem 7.7]{Mattila95}, and the second from \cite[Theorem 8.10]{Mattila95}

We now turn to measures. In a similar way to our definition of lower Hausdorff dimension $\dim$, we may define \emdef{upper packing dimension} $\dim_P$ as
\[
\dim_P(\mu) =\inf\{ \dim_P(E):\mu(E)=1 \}.
\]
(Note that in the definition of $\dim$ the infimum is taken over sets of positive measure; here, it is taken over sets of full measure.) The following is an analog of Proposition \ref{pro:properties-dim-sets} for measures.

\begin{lem} \label{lem:properties-dim-measures}
Let $\mu$ be a measure on $\R^{d_1+d_2}$, and let $P_i$ be the coordinate projection onto $\R^{d_i}$.
\begin{enumerate}
\item \label{it:cavalieri-ineq} Suppose $P_1\mu$ is absolutely continuous, and $\dim(\mu_{x_0})\ge \alpha$ for $P_1\mu$-a.e. $x_0$, where $\mu_{x_0}$ is the conditional measure on the fiber $\{(x,y):x=x_0\}$. Then $\dim\mu\ge d_1+\alpha$.
\item \label{it:upper-bound-from-projections} $\dim\mu\le \dim(P_1 \mu)+\dim_P(P_2 \mu)$.
\end{enumerate}\end{lem}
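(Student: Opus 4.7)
The plan is to reduce both statements to their set-theoretic counterparts in Proposition \ref{pro:properties-dim-sets}, combining disintegration of $\mu$ over $P_1\mu$ with the definitions of $\dim$ and $\dim_P$ for measures as infima over subsets (respectively, of positive and of full measure).

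For \eqref{it:cavalieri-ineq}, I would pick an arbitrary Borel set $E\subset\R^{d_1+d_2}$ with $\mu(E)>0$ and prove $\dim E\ge d_1+\alpha$; by the definition of $\dim$ for measures this already gives $\dim\mu\ge d_1+\alpha$. Writing $\mu=\int \mu_{x_0}\,d(P_1\mu)(x_0)$ and $E_{x_0}=\{y:(x_0,y)\in E\}$, the hypothesis $\mu(E)>0$ forces the set $B=\{x_0:\mu_{x_0}(E_{x_0})>0\}$ to have positive $P_1\mu$-measure, hence positive Lebesgue measure by absolute continuity of $P_1\mu$. For each $x_0\in B$, the fact that $E_{x_0}$ has positive $\mu_{x_0}$-mass together with $\dim\mu_{x_0}\ge\alpha$ forces $\dim E_{x_0}\ge\alpha$, directly from the infimum characterization of $\dim\mu_{x_0}$. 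Proposition \ref{pro:properties-dim-sets}(1) applied to $E$ then yields $\dim E\ge d_1+\alpha$.

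For \eqref{it:upper-bound-from-projections}, I would fix $\varepsilon>0$ and produce a Borel set $E$ with $\mu(E)>0$ and $\dim E\le \dim(P_1\mu)+\dim_P(P_2\mu)+\varepsilon$. By the definition of $\dim$ there is a Borel $B_1\subset\R^{d_1}$ with $(P_1\mu)(B_1)>0$ and $\dim B_1\le \dim(P_1\mu)+\varepsilon/2$, and by the definition of $\dim_P$ as an infimum over \emph{full}-measure sets there is a Borel $B_2\subset\R^{d_2}$ with $(P_2\mu)(B_2)=1$ and $\dim_P B_2\le \dim_P(P_2\mu)+\varepsilon/2$. Setting $E=P_1^{-1}(B_1)\cap P_2^{-1}(B_2)$ gives $\mu(E)=\mu(P_1^{-1}B_1)>0$, since $\mu(P_2^{-1}B_2^c)=(P_2\mu)(B_2^c)=0$, while $P_1 E\subset B_1$ and $P_2 E\subset B_2$. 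Proposition \ref{pro:properties-dim-sets}(2) then yields
\[
\dim E\le \dim(P_1 E)+\dim_P(P_2 E)\le \dim(P_1\mu)+\dim_P(P_2\mu)+\varepsilon,
\]
and letting $\varepsilon\downarrow 0$ closes the argument.

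The only nontrivial bookkeeping is measurability of $E_{x_0}$ and of the set $B$ in part \eqref{it:cavalieri-ineq}, which is standard from Fubini/disintegration in Polish spaces, together with the elementary fact that $\dim A\ge\dim\theta$ whenever $\theta(A)>0$, which is immediate from the infimum definition of $\dim\theta$. Beyond these routine points I do not anticipate any real obstacle: the substance of the lemma is already encapsulated in the set-level Proposition \ref{pro:properties-dim-sets}, and the measure-theoretic version follows by applying that proposition to witness sets chosen according to the two different (positive-measure vs.\ full-measure) definitions.
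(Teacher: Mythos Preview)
Your argument is correct and matches the paper's approach exactly: both parts are reduced to the set-level Proposition \ref{pro:properties-dim-sets} by choosing witness sets according to the definitions of $\dim$ and $\dim_P$ for measures. The paper's proof is a one-line sketch (and leaves part \eqref{it:upper-bound-from-projections} entirely to the reader), so your write-up simply fills in the details; the only cosmetic point is that in part \eqref{it:cavalieri-ineq} you should intersect $B$ with the full $P_1\mu$-measure set on which $\dim\mu_{x_0}\ge\alpha$ before invoking Proposition \ref{pro:properties-dim-sets}(1), since the latter requires the fiber bound for \emph{all} $x_0$ in the base set.
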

\begin{proof}
For the first part, suppose $\mu(E)>0$. Then there is a set $A$ with $P_1\mu(A)>0$ (and hence $A$ has positive Lebesgue measure) such that $\mu_x(E)>0$ for almost all $x\in A$. The claim then follows from the corresponding statement for sets. The second part is established in a similar manner.
\end{proof}

Finally, recall that a measure $\mu$ is \emdef{exact dimensional} if the local dimension
\[
\lim_{r\downarrow 0} \frac{\log\mu(B(x,r))}{\log r}
\]
exists and is $\mu$-a.e. constant. For exact dimensional measures $\mu$, it is well known that $\dim\mu=\dim_P\mu$, with both dimensions agreeing with the almost sure value of the local dimension; see \cite[Proposition 2.3]{Falconer97}. If $\beta>1$ is Pisot and $\mu$ is $T_\beta$-ergodic, then $\mu$ is exact dimensional. This well known fact follows from the Shannon-McMillan-Breiman Theorem and, in the Pisot case, a classical Lemma of Garsia (see Lemma \ref{lem:Garsia} below).

\section{\label{sec:efds-spectra-and-phase}Ergodic fractal distributions, spectra and phase}

\subsection{\label{sub:ergodcity-and-spectrum}Ergodicity and spectrum}

Below we prove some basic facts relating the spectrum of a flow to the equidistribution properties of points under individual maps in the flow. The discussion is mostly valid for general flows on metric spaces but for simplicity we formulate them for $(\mathcal{M}^\sqr,S)$.

\begin{prop}\label{pro:ergodicity-and-spectrum}
If $P\in\mathcal{D}$ is $S$-ergodic and $t_0>0$, then $P$ is $S_{t_0}$-ergodic if and only if no non-zero multiple of $1/t_0$ is in the pure point spectrum of $P$.
\end{prop}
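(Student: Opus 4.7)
The plan is to argue each direction by an elementary calculation in $L^2(P)$, essentially the classical correspondence between the spectrum of a flow and that of its time-$t_0$ map.

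For the forward direction, I would suppose $\alpha = k/t_0$ with $k\in\Z\setminus\{0\}$ lies in $\Sigma(P,S)$, and pick a non-zero eigenfunction $\varphi$ with $\varphi\circ S_t = e(\alpha t)\varphi$ for all $t$. Evaluating at $t=t_0$ gives $\varphi\circ S_{t_0}=e(k)\varphi=\varphi$, so $\varphi$ is $S_{t_0}$-invariant. It cannot be a.e.\ constant, because $c=e(\alpha t)c$ with $\alpha\neq 0$ forces $c=0$; hence $\varphi$ witnesses the non-ergodicity of $S_{t_0}$.

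For the reverse direction, I would suppose $S_{t_0}$ is not ergodic and let $V=\{f\in L^2(P): f\circ S_{t_0}=f\}$, which by hypothesis strictly contains the constants. Since $S$ is abelian, $V$ is $S$-invariant, and the $S$-action on $V$ factors through the compact group $\R/t_0\Z$. By the standard spectral decomposition for unitary actions of a compact abelian group, $V$ splits as an orthogonal sum $\bigoplus_{k\in\Z}V_k$, where $V_k=\{f\in L^2(P): f\circ S_s = e(ks/t_0)f \text{ for all }s\in\R\}$ and the projection onto $V_k$ sends $f$ to
\[
g_k := \frac{1}{t_0}\int_0^{t_0} e(-ks/t_0)\, f\circ S_s\,ds;
\]
the eigenfunction identity $g_k\circ S_r = e(kr/t_0)g_k$ follows by a direct change of variables using $f\circ S_{t_0}=f$. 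Now $V_0$ is the space of $S$-invariant functions, which by $S$-ergodicity reduces to the constants; so some $V_k$ with $k\neq 0$ must be non-trivial, providing an eigenfunction of $S$ with eigenvalue $k/t_0\in\Sigma(P,S)$.

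The principal technical point is to justify the spectral decomposition of the $\R/t_0\Z$-action on $V$. This rests on the fact that a Borel measurable measure-preserving flow on a standard probability space induces a strongly continuous unitary representation on $L^2(P)$ (after possibly modifying on a $P$-null set), which is classical; once this is in hand the rest of the argument is routine harmonic analysis on the compact group $\R/t_0\Z$. I do not expect any serious obstacle beyond this setup step.
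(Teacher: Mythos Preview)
Your argument is correct and follows the classical $L^2$ route: you decompose the space $V$ of $S_{t_0}$-invariant functions as a direct sum of character spaces for the induced $\R/t_0\Z$-action, and observe that $V_0$ reduces to constants by $S$-ergodicity. The only point needing care, as you note, is that the Koopman representation is strongly continuous; this holds for any jointly Borel measure-preserving flow on a standard probability space, which is the case here even though $S$ acts discontinuously on $\mathcal{M}^\sqr$ itself.

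The paper's proof reaches the same conclusion by a slightly different, more dynamical phrasing: rather than decomposing $V$ spectrally, it observes that $S$ acts on the ergodic decomposition of $P$ under $S_{t_0}$, that this action is $t_0$-periodic, and hence that the factor of $(P,S)$ corresponding to the $\sigma$-algebra of $S_{t_0}$-invariant sets factors through the rotation flow on $\R/t_0\Z$. It then invokes the classification of factors of that flow (trivial, or $\R/(t_0/k)\Z$ for some $k$) to produce the eigenfunction. The two arguments are really the same idea in different clothing---yours works directly with $L^2$ and Fourier analysis on the circle, while the paper works with the ergodic decomposition and Kronecker-type factors. Your version is perhaps more self-contained for a reader comfortable with unitary representations; the paper's version avoids explicitly invoking strong continuity.
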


\begin{proof}
$S$ acts on the ergodic decomposition with respect to $S_{t_0}$: $P=\int P_\mu\,dP(\mu)$. Clearly this action is $t_0$-periodic. Thus the factor of $(P,S)$ with respect to the $\sigma$-algebra $\mathcal{E}$ of invariant sets factors through the standard translation action of $\mathbb{R}$ on $\mathbb{R}/t_0\mathbb{Z}$. The only factors of the latter action are the trivial one, in which case  $\mathcal{E}$ is trivial and $P$ is $S_{t_0}$-ergodic, or an action isomorphic to the translation action of $\mathbb{R}$ on $\mathbb{R}/(t_0/k)\mathbb{Z}$ for some $k\in\mathbb{Z}\setminus\{0\}$, in which case this factor map defines an eigenfunction with eigenvalue $k/t_0$.
\end{proof}

\begin{lem}\label{lem:discrete-equidistribuion-a.e.}
Let $P$ be $S$-ergodic and $t_0>0$. Then  $P$-a.e. $\mu$ equidistributes under $S_{t_0}$ for an $S_{t_0}$-ergodic distribution $P_\mu$, and $P=\int P_\mu\,dP(\mu)$ is the ergodic decomposition of $P$ under $S_{t_0}$. If no multiple of $1/t_0$ is in $\Sigma(P,S)$, then $P_\mu=P$ a.s.
\end{lem}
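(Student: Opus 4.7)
The statement splits naturally into two parts, and both reduce to standard ergodic-theoretic inputs together with the proposition just established.

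\textbf{Part 1 (existence of $P_\mu$ and the ergodic decomposition).} The plan is to invoke Birkhoff's pointwise ergodic theorem for the single measurable transformation $S_{t_0}$ acting on the standard Borel probability space $(\mathcal{D},P)$. Since the ambient space $\mathcal{P}([-1,1])$ is compact metrizable, $C(\mathcal{P}([-1,1]))$ is separable; choose a countable uniformly dense $\mathbb{Q}$-algebra $\mathcal{F} \subset C(\mathcal{P}([-1,1]))$ containing $1$. For each $f \in \mathcal{F}$, the Birkhoff theorem applied to the function $\mu \mapsto f(\mu)$ on $(\mathcal{D},P,S_{t_0})$ yields a $P$-conull set on which
\[
\tilde f(\mu):=\lim_{N\to\infty}\frac{1}{N}\sum_{n=0}^{N-1}f(S_{t_0}^n\mu)
\]
exists. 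Intersecting over $f\in\mathcal{F}$ we get a single $P$-conull set $\Omega$ on which all these limits exist simultaneously. On $\Omega$, the map $f\mapsto \tilde f(\mu)$ extends by density to a positive linear functional of norm $1$ on $C(\mathcal{P}([-1,1]))$, and the Riesz representation theorem gives a measure $P_\mu\in\mathcal{D}$ for which $\mu$ equidistributes under $S_{t_0}$. The assignment $\mu\mapsto P_\mu$ is Borel measurable, $S_{t_0}$-invariant, and the decomposition $P=\int P_\mu\,dP(\mu)$ follows by integrating the identity $\int\tilde f\,dP=\int f\,dP$. This is precisely the ergodic decomposition of $P$ under $S_{t_0}$, so each $P_\mu$ is additionally $S_{t_0}$-ergodic for $P$-a.e.\ $\mu$ (this last step is the standard fact that the components in a Rokhlin-style ergodic decomposition are a.s.\ ergodic).

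\textbf{Part 2 (spectral hypothesis forces $P_\mu=P$).} Suppose $\Sigma(P,S)$ contains no non-zero multiple of $1/t_0$. Then Proposition \ref{pro:ergodicity-and-spectrum} applies and gives that $P$ itself is $S_{t_0}$-ergodic. Once $P$ is $S_{t_0}$-ergodic, its $S_{t_0}$-ergodic decomposition constructed in Part 1 must be trivial, i.e.\ $P_\mu=P$ for $P$-a.e.\ $\mu$, because the map $\mu\mapsto P_\mu$ is $S_{t_0}$-invariant and hence $P$-a.s.\ constant; integrating gives the constant value $\int P_\mu\,dP(\mu)=P$.

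\textbf{Expected obstacle.} There is no substantive mathematical difficulty — the argument is entirely a repackaging of Birkhoff plus the preceding proposition. The only mild care required is in handling the fact that $P$ lives on $\mathcal{D}=\mathcal{P}(\mathcal{P}([-1,1]))$ rather than on $\mathcal{M}^\sqr$ itself (which, as noted in the introduction, is Borel but not topologically nice). This is why the test functions are taken from $C(\mathcal{P}([-1,1]))$: they separate measures in $\mathcal{D}$, the ergodic theorem applies on the ambient compact space, and the conclusion that $P_\mu$ is supported on $\mathcal{M}^\sqr$ for $P$-a.e.\ $\mu$ follows from the fact that $\mathcal{M}^\sqr$ is $S_{t_0}$-invariant and of full $P$-measure.
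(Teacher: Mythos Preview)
Your proposal is correct and follows essentially the same route as the paper: invoke the $S_{t_0}$-ergodic decomposition together with Birkhoff's theorem for Part 1, and apply Proposition~\ref{pro:ergodicity-and-spectrum} for Part 2. The paper's version is more terse (it simply cites the ergodic decomposition and applies the ergodic theorem within each component), while you spell out the construction of $P_\mu$ via a countable dense family of test functions and Riesz representation; the underlying argument is the same.
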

\begin{proof}
Let $P=\int P_\mu\,dP(\mu)$ be the ergodic decomposition of $P$ with respect to the measure-preserving map $S_{t_0}$. By the ergodic theorem, for $P$-a.e. $\mu$,  $P_\mu$-a.e. $\nu$ equidistributes for $P_\nu$. the first statement follows. For the second statement, if $k/t_0\notin\Sigma(P,S)$ for all non-zero integers $k$, then by the previous Proposition $P$ is $S_{t_0}$-ergodic, and so $P_\mu=P$ a.s.
\end{proof}

We turn to distributions generated by a measure $\mu\in\mathcal{P}(\mathbb{R})$. Given $t_0>0$, we say that a distribution $P$ is $t_0$-generated by $\mu$ at $x$ if $\mu^x$ equidistributes for $P$ under the discrete semigroup $\{S_{kt_0}\}_{k\in\mathbb{N}}$, that is, the sequence $\{\mu_{x,kt_0}\}_{k=0}^\infty$ equidistributes for $P$.

We have seen that if $k/t_0\not\in\Sigma(P,S)$ for all non-zero integers $k$, then $P$-a.e. $\mu$ $t_0$-equidistributes for $P$. The next result says that the same is true for any measure $\mu$ that generates $P$.

\begin{lem} \label{lem:discrete-equidistribution-and-spectrum}
Suppose $\mu$ generates an $S$-ergodic distribution $P$ and no non-zero integer multiple of $t_0$ is an eigenvalue of $(P,S)$. Then $P$ is $t_0$-generated at $\mu$-a.e. $x$.
\end{lem}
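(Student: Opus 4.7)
Plan: The goal is to show that the discrete empirical distributions
\[
Q_x^N = \frac{1}{N}\sum_{k=0}^{N-1}\delta_{\mu_{x,kt_0}}
\]
converge weak-$*$ to $P$ for $\mu$-a.e.\ $x$. The strategy is to take an arbitrary weak-$*$ accumulation point $Q$ of the sequence $Q_x^N$ and show $Q=P$. By Proposition \ref{pro:ergodicity-and-spectrum}, the spectral hypothesis implies that $P$ is $S_{t_0}$-ergodic, and this will be the key rigidity used at the end. A standard argument (as in \cite{Hochman12b}) shows $Q$ is supported on $\mathcal{M}^\sqr$ and $S_{t_0}$-invariant; the discontinuity of $S_{t_0}$ on $\mathcal{P}([-1,1])$ is a minor nuisance handled exactly as in that reference.

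The central step is to connect $Q$ to the continuously generated distribution $P$. Fix $f\in C(\mathcal{P}([-1,1]))$ and set $g(\nu)=\frac{1}{t_0}\int_0^{t_0} f(S_s\nu)\,ds$. Splitting the time integral in the definition of continuous generation into blocks of length $t_0$ and changing variables gives, for $\mu$-a.e.\ $x$,
\[
\int f\,dP \;=\; \lim_{N\to\infty}\frac{1}{Nt_0}\int_0^{Nt_0} f(\mu_{x,t})\,dt \;=\; \lim_{N\to\infty}\int g\,dQ_x^N.
\]
Along a subsequence $N_k$ with $Q_x^{N_k}\to Q$, and using that $g$ is $Q$-a.e.\ continuous (for each $\nu\in\mathcal{M}^\sqr$ the set of $s$ with $S_s$ discontinuous at $\nu$ is at most countable, so the Lebesgue average $g(\nu)$ is a continuous function of $\nu$ off a $Q$-null set), we pass to the limit to obtain $\int g\,dQ=\int f\,dP$. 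Since this holds for every $f\in C(\mathcal{P}([-1,1]))$, we conclude
\[
\frac{1}{t_0}\int_0^{t_0} S_s Q\,ds \;=\; P.
\]

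To finish, decompose $Q=\int Q_\alpha\,d\rho(\alpha)$ into its $S_{t_0}$-ergodic components. Integration commutes with the decomposition, yielding $S$-invariant measures $R_\alpha=\frac{1}{t_0}\int_0^{t_0} S_s Q_\alpha\,ds$ averaging to $P$. Since $P$ is $S$-ergodic and hence extreme among $S$-invariant measures, $R_\alpha=P$ for $\rho$-a.e.\ $\alpha$. Now fix such an $\alpha$. Each $S_s Q_\alpha$ is itself $S_{t_0}$-ergodic (since $S_s$ commutes with $S_{t_0}$), and by the previous sentence they average over $s\in[0,t_0]$ to the $S_{t_0}$-ergodic measure $P$; by extremality in the $S_{t_0}$-invariant simplex, $S_sQ_\alpha=P$ for Lebesgue-a.e.\ $s\in(0,t_0)$. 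Picking any such $s$ and applying $S_{t_0-s}$ to both sides, the left-hand side becomes $S_{t_0}Q_\alpha=Q_\alpha$ by $S_{t_0}$-invariance of $Q_\alpha$, while the right-hand side becomes $S_{t_0-s}P=P$ by $S$-invariance of $P$. Hence $Q_\alpha=P$ for $\rho$-a.e.\ $\alpha$, and therefore $Q=P$.

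The main obstacle is the discontinuity of the scaling flow: both the assertion that $Q$ is $S_{t_0}$-invariant and the passage to the limit $\int g\,dQ_x^{N_k}\to\int g\,dQ$ require showing that $Q$ assigns no mass to the exceptional set on which $S_s$ (or $g$) fails to be continuous. This is precisely the point where one invokes the ``fractal distribution'' machinery of \cite{Hochman12b} to guarantee that accumulation points of sceneries are well-behaved with respect to the flow.
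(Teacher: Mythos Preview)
Your proof is correct and follows essentially the same route as the paper: take a subsequential limit $Q$ of the discrete empirical distributions, use \cite{Hochman12b} to get $S_{t_0}$-invariance of $Q$, show that the flow-average $\frac{1}{t_0}\int_0^{t_0}S_sQ\,ds$ equals $P$ via the continuous equidistribution hypothesis, and then use ergodicity to force $Q=P$. The only difference is that your steps 6--8 (ergodic decomposition of $Q$ and use of $S$-ergodicity of $P$) are unnecessary: since $Q$ itself is already $S_{t_0}$-invariant, so is each $S_sQ$, and the identity $P=\frac{1}{t_0}\int_0^{t_0}S_sQ\,ds$ directly exhibits the $S_{t_0}$-ergodic $P$ as an average of $S_{t_0}$-invariant measures, giving $S_sQ=P$ for a.e.\ $s$ and then $Q=P$ by your step 10.
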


This is, essentially, the following well-known fact from ergodic theory, whose proof we provide for completeness:

\begin{lem}
\label{lem:discrete-equidistribution-and-spectrum-continuous}Let $W=(W_{t})_{t>0}$
be a continuous flow on a compact metric space $X$. Suppose $\theta$
is a $W$-invariant and ergodic measure which does not have $k/t_{0}$ is its
pure point spectrum for any $k\in\mathbb{Z}\setminus\{0\}$. Then
any point $x$ which equidistributes for $\theta$ under $W$ equidistributes
for $\theta$ also under the ``time $t_{0}$'' map $W_{t_{0}}$.\end{lem}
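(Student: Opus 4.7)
The plan is to combine the spectral hypothesis, which via (the general form of) Proposition \ref{pro:ergodicity-and-spectrum} forces $\theta$ to be $W_{t_0}$-ergodic, with a compactness argument on the empirical measures of the discrete orbit of $x$ under $W_{t_0}$.

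First I would fix $f \in C(X)$ and set $F(y) = \frac{1}{t_0}\int_0^{t_0} f(W_s y)\, ds$. Continuity of the flow gives $F \in C(X)$, and the standard chopping identity
$$\frac{1}{N t_0}\int_0^{N t_0} f(W_s x)\, ds = \frac{1}{N}\sum_{k=0}^{N-1} F(W_{k t_0} x),$$
together with the assumed equidistribution of $x$ under $W$, yields
$$\lim_{N\to\infty}\frac{1}{N}\sum_{k=0}^{N-1} F(W_{k t_0} x) = \int F\, d\theta = \int f\, d\theta.$$

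Now let $\nu$ be any weak-$*$ accumulation point of the empirical measures $\nu_N = \frac{1}{N}\sum_{k=0}^{N-1}\delta_{W_{k t_0}x}$; a standard argument shows $\nu$ is $W_{t_0}$-invariant. The previous step gives $\int f\, d\theta = \int F\, d\nu = \int f\, d\bar\nu$ for every $f \in C(X)$, where by Fubini $\bar\nu = \frac{1}{t_0}\int_0^{t_0} W_s\nu\, ds$. Hence $\bar\nu = \theta$.

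The hard part is then to upgrade $\bar\nu = \theta$ to $\nu = \theta$ using $W_{t_0}$-ergodicity of $\theta$. Each $W_s\nu$ is itself $W_{t_0}$-invariant, since $W_s$ commutes with $W_{t_0}$, so $\theta = \frac{1}{t_0}\int_0^{t_0} W_s\nu\, ds$ exhibits $\theta$ as an average of $W_{t_0}$-invariant probability measures. Ergodicity makes $\theta$ an extreme point of this simplex, so $W_s\nu = \theta$ for Lebesgue-a.e. $s \in (0, t_0)$. Picking any such $s$ and using $W$-invariance of $\theta$ together with $W_{t_0}$-invariance of $\nu$ gives $\nu = W_{t_0}\nu = W_{t_0 - s}(W_s\nu) = W_{t_0-s}\theta = \theta$. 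Since every weak-$*$ accumulation point of $\{\nu_N\}$ equals $\theta$ and $\mathcal{P}(X)$ is compact, $\nu_N \to \theta$, which is the desired equidistribution of $x$ for $\theta$ under $W_{t_0}$. The only delicate step is this extreme-point argument; everything else is routine.
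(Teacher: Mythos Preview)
Your proof is correct and follows essentially the same route as the paper's: take a weak-$*$ accumulation point $\nu$ of the discrete empirical measures, observe it is $W_{t_0}$-invariant, show that its flow-average $\frac{1}{t_0}\int_0^{t_0}W_s\nu\,ds$ equals $\theta$ (your chopping identity is exactly the computation the paper carries out for $\int f\,d\rho$), and then use $W_{t_0}$-ergodicity of $\theta$ to force $W_s\nu=\theta$ for a.e.\ $s$ and hence $\nu=\theta$. Your final step, writing $\nu=W_{t_0}\nu=W_{t_0-s}(W_s\nu)=W_{t_0-s}\theta=\theta$, is a slightly more explicit version of the paper's one-line ``since $\theta$ is $W$-invariant this holds for $t=0$''.
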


\begin{proof}
As in Lemma \ref{lem:discrete-equidistribuion-a.e.}, the spectral hypothesis implies ergodicity of $\theta$ under the map $W_{t_{0}}$.
Now suppose that $x$ equidistributes for a measure $\theta'$ under
$W_{t_{0}}$ along a sequence $N_{k}\rightarrow\infty$; it suffices
to prove $\theta'=\theta$. By continuity, $\theta'$ is $W_{t_{0}}$-invariant, so $W_t\theta'$ is $W_{t_0}$-invariant for every $t$. Let $\rho=\frac{1}{t_{0}}\int_{0}^{t_{0}}W_{t}\theta'\, dt$.
Then for every $f\in C(X)$,  \begin{eqnarray*}
  \int f\, d\rho & = & \lim_{k\rightarrow\infty}\frac{1}{t_{0}}\int_{0}^{t_0}\frac{1}{N_{k}}\sum_{n=0}^{N_{k}-1}f(W_{t_{0}}^{n} W_t x) dt\\
                 & = & \lim_{k\rightarrow\infty}\frac{1}{N_{k}t_{0}}\int_{0}^{N_{k}t_{0}}f(W_{t}x)\, dt\\
                 & = & \int f\, d\theta,
\end{eqnarray*}
where the last equality is because $x$ equidistributes for $\theta$. Thus $\rho=\theta$, i.e.
$\frac{1}{t_{0}}\int_{0}^{t_{0}}W_{t}\theta'dt=\theta$. Since $\theta$
is $W_{t_{0}}$-ergodic and this is a representation of $\theta$
as the integral of $W_{t_{0}}$-invariant measures, we conclude that
$W_{t}\theta'=\theta$ for a.e. $t$. Since $\theta$ is $W$-invariant
this holds for $t=0$, i.e. $\theta'=\theta$, as desired.
\end{proof}

A priori this does not apply in our situation, because the topological assumptions are not satisfied ($S$ acts discontinuously, and is not everywhere defined on $\mathcal{P}(\mathcal{P}([-1,1]))$). However, the only place in the proof that continuity was used was in the assertion that $\theta'$ is $W_{t_0}$-invariant. In the context of Lemma \ref{lem:discrete-equidistribution-and-spectrum} this is true at $\mu$-a.e. point by \cite[Theorem 1.7]{Hochman12b}. Thus, we have proved Lemma \ref{lem:discrete-equidistribution-and-spectrum}.

\subsection{\label{sub:EFDs}Ergodic fractal distributions}

\begin{defn} \label{def:quasi-palm}
An $S$-invariant distribution $P\in\mathcal{D}$ is \emdef{$S$-quasi-Palm} if for every Borel set $B\subseteq\mathcal{M}^\sqr$, $P(B)=1$ if and only if for every $t>0$, $P$-almost every measure $\eta$ satisfies $\eta_{x,t}\in B$ for $\eta$-almost all $x$ such that $[x-e^{-t},x+e^{-t}]\subseteq [-1,1]$.
\end{defn}

\begin{defn}\label{def:efd}
A distribution $P\in\mathcal{D}$ which is supported on $\mathcal{M}^\square$, $S$-invariant and satisfies the $S$-quasi-Palm property is called a \emdef{fractal distribution}, or \emdef{FD}. If, in addition, $P$ is $S$-ergodic, then $P$ is called an \emdef{ergodic fractal distribution}, or \emdef{EFD}.
\end{defn}

This definition differs slightly from the one introduced and studied in \cite{Hochman12b}. More precisely, the notion of quasi-Palm in \cite{Hochman12b} is suited for distributions on Radon measures on $\mathbb{R}$, rather than distributions on probability measures on $[-1,1]$, and the notion of EFDs there is for distributions on Radon measures that are invariant under the action of a semigroup $S^*$, which is defined similarly to $S$ but without restricting the measures to a bounded interval, so that $S^*$ acts on measures of unbounded support (our $S$ is denoted by $S^\Box$ in \cite{Hochman12b}). For this reason, in the definition of quasi-palm measure given in \cite{Hochman12b} there is no need to assume that $[x-e^{-t},x+e^{-t}]\subseteq [-1,1]$, and it has $\mu^x$ in place of $\mu_{x,t}$. However, it is proved in \cite[Lemma 3.1]{Hochman12b} that $S$-invariant and $S^*$-invariant distributions are canonically in one-to-one correspondence. Hence any EFD according to our definition arises as the push-forward of an EFD in the sense of \cite{Hochman12b} under the map $\mu\mapsto \mu|_{[-1,1]}$. Therefore all results proved for EFDs in \cite{Hochman12b} continue to be valid with our definition of EFD. In particular, the following is proved in \cite[Theorem 1.7]{Hochman12b}.

\begin{thm} \label{thm:generated-dists-are-fds}
For $\mu$ almost all $x$, any distribution $P$ generated by $\mu$ at $x$ along a sequence of times $T_i$ is a FD (i.e. it is $S$-invariant and automatically satisfies the $S$-quasi-Palm property).

In particular, if $\mu$ generates an $S$-ergodic distribution $P$, then $P$ is an EFD.
\end{thm}

For the rest of the section we fix an EFD $P$, and shall draw some simple but important conclusions about it. We will repeatedly use the following consequence of the $S$-quasi-palm property:

\begin{lem}\label{lem:consequence-quasi-palm}
Let $P$ be an EFD, and $B\subseteq\mathcal{M}^\sqr$ a Borel set with the property that $\eta\in B$ whenever $S_t\eta\in B$ for some $t$. Then $P(B)=1$ if and only if for $P$-almost all $\eta$ and $\eta$-almost all $x$, the translation $\eta^x$ is in $B$.
\end{lem}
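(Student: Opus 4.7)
The plan is to apply the quasi-Palm property of $P$ at a sequence of scales $t_n \uparrow \infty$, use the closure hypothesis on $B$ to pull $S_{t_n}(\eta^x) \in B$ back to $\eta^x \in B$, and then handle the endpoints of $[-1,1]$ separately via $S$-invariance.

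First I would fix a sequence $t_n \uparrow \infty$. By the quasi-Palm property (Definition \ref{def:quasi-palm}) applied to $B$, for each $n$ there is a Borel set $G_n \subseteq \mathcal{M}^\sqr$ with $P(G_n) = 1$ such that for every $\eta \in G_n$ the measure $\eta_{x,t_n} = S_{t_n}(\eta^x)$ lies in $B$ for $\eta$-almost every $x$ satisfying $[x - e^{-t_n}, x + e^{-t_n}] \subset [-1,1]$. The hypothesis on $B$ immediately upgrades this: whenever $S_{t_n}(\eta^x) \in B$ we also have $\eta^x \in B$. Setting $G = \bigcap_n G_n$, which still has full $P$-measure, and taking the countable union of the $\eta$-null exceptional sets over $n$, we obtain for every $\eta \in G$ that $\eta^x \in B$ for $\eta$-a.e. $x \in \bigcup_n [-1+e^{-t_n}, 1-e^{-t_n}] = (-1,1)$.

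It remains to show $\eta(\{-1,1\}) = 0$ for $P$-a.e. $\eta$. By $S$-invariance of $P$, for any $t > 0$,
\[
\int \eta(\{1\}) \, dP(\eta) = \int (S_t\eta)(\{1\}) \, dP(\eta),
\]
and since $(S_t\eta)(\{1\}) = c \cdot \eta(\{e^t\} \cap [-1,1]) = 0$ whenever $t > 0$, the right-hand side is $0$. The same argument handles the atom at $-1$. Therefore $\eta((-1,1)) = 1$ for $P$-a.e. $\eta$, which combined with the previous paragraph gives $\eta^x \in B$ for $\eta$-a.e. $x$.

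I do not foresee a genuine obstacle here: the statement is essentially a repackaging of the quasi-Palm property, made possible by the strong $S$-closure hypothesis on $B$ (which converts the scale-dependent conclusion of quasi-Palm into a scale-free one). The only mildly delicate point is that quasi-Palm only sees interior $x$; this is dispatched by the one-line $S$-invariance argument above.
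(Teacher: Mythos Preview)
Your proof is correct and supplies exactly the argument the paper leaves implicit (the lemma is stated without proof there). The two-step structure---quasi-Palm at scales $t_n\to\infty$ combined with the closure hypothesis on $B$ to handle the open interval $(-1,1)$, followed by the $S$-invariance computation to kill atoms at $\pm1$---is the natural and intended route.
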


As a first application, we have:
\begin{lem}\label{lem:fd-measures-generate-discretely}
Let $P$ be an EFD. Fix $t_0>0$. $P$-a.e. $\mu$ generates $P$, and $t_0$-generates an $S_{t_0}$-ergodic component of $P$ at  $\mu$-a.e. point $x$.
\end{lem}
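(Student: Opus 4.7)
The plan is to derive both assertions from the quasi-Palm property of $P$ (via Lemma~\ref{lem:consequence-quasi-palm}), applied to two carefully chosen full $P$-measure sets in $\mathcal{M}^\sqr$.

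For the first assertion, the pointwise ergodic theorem applied to the measure-preserving flow $(S,P)$ on $\mathcal{D}$ shows that the set
\[
A=\{\eta\in\mathcal{M}^\sqr:\eta\text{ equidistributes for }P\text{ under }S\}
\]
has $P(A)=1$. A short reparametrization argument (for $\eta\in A$, $\tfrac{1}{T}\int_0^T f(S_t S_s\eta)\,dt$ differs from $\tfrac{1}{T}\int_0^T f(S_t\eta)\,dt$ by $O(s\|f\|_\infty/T)$) shows that $\eta\in A$ iff $S_s\eta\in A$ for every $s>0$, so $A$ is backward-$S$-closed. Lemma~\ref{lem:consequence-quasi-palm} applied to $A$ then yields that for $P$-a.e.\ $\mu$ and $\mu$-a.e.\ $x$, $\mu^x\in A$, i.e.\ $\mu_{x,t}=S_t\mu^x$ equidistributes for $P$ as $t\to\infty$. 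This is exactly the statement that $\mu$ generates $P$.

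For the second assertion, let $P=\int P_\nu\,dP(\nu)$ be the $S_{t_0}$-ergodic decomposition, and set
\[
B=\{\eta\in\mathcal{M}^\sqr:\tfrac{1}{N}\textstyle\sum_{k=0}^{N-1}\delta_{S_{kt_0}\eta}\to P_\eta\text{ in }\mathcal{D}\}.
\]
By Lemma~\ref{lem:discrete-equidistribuion-a.e.}, $P(B)=1$. I aim to apply Lemma~\ref{lem:consequence-quasi-palm} to $A\cap B$, which requires verifying it is backward-$S$-closed. Suppose $\eta\in A$ and $S_s\eta\in B$, so $\{S_{kt_0+s}\eta\}_{k\ge 0}$ equidistributes for $P_{S_s\eta}$. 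Because $\eta\in A$ equidistributes for $P$ under the continuous flow, the averaging argument from the proof of Lemma~\ref{lem:discrete-equidistribution-and-spectrum-continuous} shows that any subsequential weak-$*$ limit $\theta'$ of $\tfrac{1}{N}\sum_{k=0}^{N-1}\delta_{S_{kt_0}\eta}$ is $S_{t_0}$-invariant and satisfies $\tfrac{1}{t_0}\int_0^{t_0}S_u\theta'\,du=P$. Matching $\theta'$ against the already-known limit $P_{S_s\eta}$ of the $S_s$-translated sequence pins $\theta'$ down as the unique $S_{t_0}$-ergodic component of $P$ compatible with $P_{S_s\eta}$, forcing $\theta'=P_\eta$ and hence $\eta\in B$. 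Lemma~\ref{lem:consequence-quasi-palm} applied to $A\cap B$ then gives the conclusion.

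The main obstacle is the discontinuity of the scaling flow $S$: the identification relating the limit of $\{S_{kt_0}\eta\}$ to the (known) limit of $\{S_{kt_0+s}\eta\}$ under an $S_s$-pushforward is not automatic, and this is precisely where the verification of backward-$S$-closedness of $B$ requires care. I expect to handle this in the spirit of the remark following Lemma~\ref{lem:discrete-equidistribution-and-spectrum-continuous} and of \cite[Theorem~1.7]{Hochman12b}, by further restricting to a full $P$-measure subset on which the relevant pushforward continuity holds, and if necessary invoking the quasi-Palm property a second time to close up the argument.
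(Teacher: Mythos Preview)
Your overall approach coincides with the paper's: define the full-measure set of $\eta$ that equidistribute for $P$ under the flow and for an $S_{t_0}$-ergodic component under $S_{t_0}$, then apply the quasi-Palm consequence (Lemma~\ref{lem:consequence-quasi-palm}). The paper does this in one line, taking $B$ to be the intersection of your $A$ and $B$ and asserting backward-$S$-closure.

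Where you diverge is in trying to verify backward closure of the $t_0$-equidistribution condition under \emph{every} $S_s$. You are right that this is not obvious for arbitrary $s$, and your proposed workaround---passing to a subsequential limit $\theta'$ and ``matching'' it with $P_{S_s\eta}$---runs into real trouble: to identify $S_s\theta'$ with $P_{S_s\eta}$ you need exactly the continuity of $S_s$ on distributions that fails, and even granting that identification, deducing $S_{t_0}$-ergodicity of $\theta'$ from that of $S_s\theta'$ is awkward because $S_s$ is not invertible on $\mathcal{M}^\sqr$. The fix you gesture at via \cite[Theorem~1.7]{Hochman12b} may be workable, but it is a substantial detour.

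The point you are missing is that full backward-$S$-closure is not needed. The proof of Lemma~\ref{lem:consequence-quasi-palm} only uses the quasi-Palm property at a sequence of scales tending to infinity; applying it at $t=kt_0$, $k\in\mathbb{N}$, it suffices that $B$ be backward-closed under $S_{t_0}$ alone. And this is trivial: if $\{S_{(k+1)t_0}\eta\}_{k\ge 0}$ equidistributes for an $S_{t_0}$-ergodic component of $P$, then so does $\{S_{kt_0}\eta\}_{k\ge 0}$, being the same orbit shifted by one step. This is what the paper's brief assertion is implicitly relying on, and it dispenses with all the discontinuity issues you raise.
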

\begin{proof}
Let $B$ be the set of $\mu$ such that $\mu$ generates $P$ and $t_0$-generates an $S_{t_0}$-ergodic component of $P$ at $0$; then $P(B)=1$ by ergodicity. Also, $\eta \in B$ whenever $S_t\eta \in B$, so the lemma follows from Lemma \ref{lem:consequence-quasi-palm}.
\end{proof}

Recall that an $S$-invariant distribution is trivial if it is supported on the $S$-fixed point $\delta_0$.

\begin{lem}\label{lem:no-atoms}
If $P$ is a non-trivial EFD then $P$-almost all measures are non-atomic.
\end{lem}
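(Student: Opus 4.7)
My plan is to combine the $S$-ergodicity of $P$ with the quasi-Palm property (via Lemma \ref{lem:consequence-quasi-palm}) and the zero-dimensionality criterion for triviality that was recalled just after Theorem \ref{thm:WM-case}. The key functional to study is $\varphi:\mathcal{M}^\sqr\to[0,1]$, $\varphi(\mu)=\mu(\{0\})$, which is Borel measurable (in fact upper semi-continuous, as the decreasing limit of the continuous $\mu\mapsto\mu([-\e,\e])$) and $S$-invariant: directly from the definition, $S_t\mu(\{0\})=c\cdot\mu(\{0\})$, and the normalizing constant is forced to be $c=1$ on $\mathcal{M}^\sqr$ since $S_t\mu([-1,1])=c\cdot\mu([-1,1])=c$. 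By the $S$-ergodicity of $P$ there is a constant $a\in[0,1]$ with $\mu(\{0\})=a$ for $P$-a.e.\ $\mu$.

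I would next rule out $a>0$. Assuming $a>0$, set $B=\{\mu\in\mathcal{M}^\sqr:\mu(\{0\})=a\}$; then $P(B)=1$ and $B$ is $S$-saturated in the sense required by Lemma \ref{lem:consequence-quasi-palm} ($\eta\in B$ iff $S_t\eta\in B$), so that lemma gives that for $P$-a.e.\ $\eta$, the translate $\eta^x$ lies in $B$ for $\eta$-a.e.\ $x$, i.e.\ $\eta^x(\{0\})=a$. Since $\eta^x(\{0\})$ is a strictly positive multiple of $\eta(\{x\})$, this forces $\eta(\{x\})>0$ for $\eta$-a.e.\ $x$; but a measure has only countably many atoms, so $\eta$ must then be purely atomic and hence supported on a set of Hausdorff dimension zero. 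By Lemma \ref{lem:fd-measures-generate-discretely} such a $P$-typical $\eta$ also generates $P$, and by the zero-dimension criterion from \cite[Proposition 1.19]{Hochman12b} quoted in the introduction, any measure supported on a zero-dimensional set can only generate the trivial distribution. This forces $P$ to be trivial, contradicting the hypothesis.

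Therefore $a=0$, and I would apply Lemma \ref{lem:consequence-quasi-palm} a second time, now to the set $B'=\{\mu:\mu(\{0\})=0\}$, which is again of full $P$-measure and $S$-saturated. The conclusion is that for $P$-a.e.\ $\eta$ and $\eta$-a.e.\ $x$ one has $\eta^x(\{0\})=0$, equivalently $\eta(\{x\})=0$. Hence the set $\{x:\eta(\{x\})>0\}$ of atoms of $\eta$ carries zero $\eta$-mass; but each atom $x_0$ would contribute its own positive mass $\eta(\{x_0\})$ to that set, so $\eta$ can have no atoms at all. I do not anticipate a serious obstacle here---the main thing to keep track of is the bookkeeping around the normalization constants in the definitions of $S_t$ and $\eta^x$ (so that $\varphi$ really is $S$-invariant and $\eta^x(\{0\})>0$ really is equivalent to $\eta(\{x\})>0$), and the routine verification that the sets $B$ and $B'$ fulfil the $S$-saturation hypothesis of Lemma \ref{lem:consequence-quasi-palm}.
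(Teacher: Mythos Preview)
Your claim that the normalizing constant is $c=1$ is incorrect, and this is the one genuine gap. The displayed definition of $S_t$ in the paper contains a sign slip: the intended operator is $S_t\mu(E)=c\cdot\mu(e^{-t}E)$, i.e.\ restrict $\mu$ to $[-e^{-t},e^{-t}]$, rescale to $[-1,1]$, normalize. (This is confirmed by the description of $\mu_{x,k\log n}$ in Section~\ref{sub:integral-representation} and by the paper's own proof of this lemma, which uses $a(S_t\mu)\to 1$.) With the intended definition, $S_t\mu([-1,1])=c\cdot\mu([-e^{-t},e^{-t}])$, so $c=\mu([-e^{-t},e^{-t}])^{-1}\ge 1$ with equality only when all the mass of $\mu$ already sits in $[-e^{-t},e^{-t}]$. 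Hence $\varphi(\mu)=\mu(\{0\})$ satisfies $\varphi(S_t\mu)=c\,\varphi(\mu)\ge\varphi(\mu)$: it is non-decreasing along orbits, not $S$-invariant. One still gets that $\varphi$ is $P$-a.s.\ constant (since $\varphi\circ S_t-\varphi\ge 0$ has zero integral under the $S$-invariant $P$), so the constant $a$ exists; but your set $B=\{\mu:\mu(\{0\})=a\}$ is \emph{not} backward $S$-saturated, so Lemma~\ref{lem:consequence-quasi-palm} does not apply to it as written.

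The fix is minor. For the case $a>0$ you only need $\eta(\{x\})>0$ for $\eta$-a.e.\ $x$, so take instead $B=\{\mu:\mu(\{0\})>0\}$, which \emph{is} backward saturated (if $c\cdot\eta(\{0\})>0$ then $\eta(\{0\})>0$), and your purely-atomic/zero-dimensional/Lemma~\ref{lem:fd-measures-generate-discretely} argument then goes through. Your treatment of the case $a=0$ is correct as stated, since $\{\mu:\mu(\{0\})=0\}$ is genuinely backward saturated. For comparison, the paper disposes of the $a>0$ case more directly: from $a(S_t\mu)=\mu(\{0\})/\mu([-e^{-t},e^{-t}])\to 1$ one gets $a=1$, hence $\mu=\delta_0$ $P$-a.s., hence $P$ is trivial. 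This avoids the detour through the zero-dimension criterion and Lemma~\ref{lem:fd-measures-generate-discretely}. Your route is a little longer but equally valid once the saturation issue is repaired.
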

\begin{proof}
Let $a(\mu)=\mu(\{0\})$. It is clear that $a(S_t\mu)\geq a(\mu)$, by definition of $S$, so $a$ is a.s. constant. Also it is clear that if $a(\mu)>0$ then $a(S_t\mu)\to 1$ as $t\to\infty$, so if that were the case, $a=1$ $P$-a.s. But this would imply that $\mu=\delta_0$ a.s. and so $P$ is trivial, contrary to assumption.  Hence $a=0$ $P$-a.s.; using Lemma \ref{lem:consequence-quasi-palm} applied to the set $\{\nu\,:\,a(\nu)=0\}$, we find that $P$-a.e. $\nu$ satisfies $a(\nu^x)=0$ for $\nu$-a.e. $x$, so $\nu$ is non-atomic.
\end{proof}

\begin{lem}\label{lem:continuity-of-EFD-measures}
Suppose that $\mu$ $t_0$-generates $P$ and $P$ is supported on non-atomic measures. For every $\varepsilon>0$ there is a $\rho>0$ such that \[
  \limsup_{N\to\infty} \frac{1}{N}\sum_{n=0}^{N-1}\sup \mu_{x,t_0 n}(I)<\varepsilon\quad\text{for }\mu\text{-almost all } x.
\]
where the supremum is over intervals $I\subseteq [-1,1]$ of length $|I|<\rho$.
\end{lem}
\begin{proof}
Fix $\e>0$ and let $\mathcal{C}_\rho$ denote the set of measures $\eta$ such that $\eta(I)<\varepsilon$ for every open interval of length $|I|<\rho$. Note that $\mathcal{C}_\rho$ is open.

By the fact that $P$ gives no mass to measures with atoms, for $P$-a.e. $\eta$ there is a $\rho=\rho_\eta>0$ depending on $\eta$ such that $\sup \eta(I)<\varepsilon$ where $I$ ranges over open intervals of length $\rho_\eta$. It follows that there is a $\rho$ such that with $P$-probability $>1-\varepsilon$ we have $\rho<\rho_\eta$, and in particular $P(\mathcal{C}_\rho)>1-\varepsilon$. Since $\mu$ $t_0$-generates $P$, we find that
\[
   \limsup_{N\to\infty} \frac{1}{N}\sum_{n=0}^{N-1}\delta_{\mu_{x,t_0 n}\in\mathcal{C}_\rho}\geq P(\mathcal{C}_\rho)>1-\varepsilon
\]
as required.
\end{proof}

It is not hard to show that the same conclusion holds if one assumes only that $\mu$ generates a non-trivial $P$ (without necessarily $t_0$-generating it), but we will not use this fact.

In fact, not only are $P$-typical measures non-atomic; they also have positive dimension:

\begin{prop}\label{pro:positive-dim}
Let $P$ be an EFD. There is a number $\delta$ such that $P$-a.e. $\nu$ has $\dim\nu=\delta$. If $P$ is nontrivial then $\delta>0$.
\end{prop}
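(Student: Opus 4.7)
The plan is to handle the two claims—constancy of $\dim\nu$ and positivity when $P$ is nontrivial—in turn, using $S$-invariance and ergodicity for the first, and quasi-Palm together with a Vitali covering argument and the quoted dichotomy from \cite[Proposition 1.19]{Hochman12b} for the second.

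For constancy, I would first note that $D(\nu):=\dim\nu$ is Borel by the local-dimension characterization \eqref{eq:dim-using-local-dim}. Scaling by $e^t$ is bi-Lipschitz and therefore preserves dimension, while the subsequent restriction to $[-1,1]$ in the definition of $S_t$ can only raise it, so $D(S_t\nu)\ge D(\nu)$ for every $\nu$ and every $t>0$. Integrating against $P$ and using $S$-invariance yields $D\circ S_t=D$ $P$-a.s. for each $t$, and since $P$ is $S$-ergodic, $D$ is $P$-a.s. equal to some constant $\delta$.

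For positivity I would argue by contradiction, assuming $\delta=0$. By Lemma \ref{lem:fd-measures-generate-discretely}, $P$-a.e. $\nu$ generates $P$, so by the fact quoted in the introduction (following \cite[Proposition 1.19]{Hochman12b}) it suffices to show that when $\delta=0$, $P$-a.e. $\nu$ gives full mass to some Borel set of Hausdorff dimension zero. To produce such a set I would invoke quasi-Palm. The function $g(\nu):=\underline{\dim}(\nu,0)$ is Borel and $S$-invariant (a direct computation with the definition of $S_t$ shows $g\circ S_t=g$), so by ergodicity $g\equiv\delta'$ $P$-a.s. Because $\{g=\delta'\}$ is $S$-invariant, Lemma \ref{lem:consequence-quasi-palm} combined with the identity $\underline{\dim}(\nu^x,0)=\underline{\dim}(\nu,x)$ yields $\underline{\dim}(\nu,x)=\delta'$ for $\nu$-a.e. $x$ at $P$-a.e. $\nu$. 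Comparing with \eqref{eq:dim-using-local-dim} forces $\delta'=\delta$, so if $\delta=0$ the set $A:=\{x:\underline{\dim}(\nu,x)=0\}$ has $\nu(A)=1$. A Vitali covering argument then finishes: for fixed $\e>0$ and $\rho>0$, cover $A$ by disjoint balls $B(x_i,r_i)$ with $r_i<\rho$ and $\nu(B(x_i,r_i))\ge r_i^\e$, so that $\sum r_i^\e\le\sum\nu(B(x_i,r_i))\le 1$ bounds the $\e$-Hausdorff content of $A$ uniformly in $\rho$; letting $\rho\to 0$ and then $\e\to 0$ gives $\dim A=0$.

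I expect the main difficulty to lie in this last step, where EFD homogeneity is essential: the quasi-Palm property is what converts the almost sure identity $\dim\nu=\delta$ into the pointwise information $\underline{\dim}(\nu,x)=\delta$ for $\nu$-a.e. $x$, without which the cited dichotomy could not be applied to rule out $\delta=0$.
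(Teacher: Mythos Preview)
Your argument is correct. For the constancy of $\dim\nu$, your reasoning coincides with the paper's (monotonicity of $\dim$ under the restriction implicit in $S_t$, invariance under scaling, then ergodicity; the paper writes the inequality in the opposite direction, but either direction yields the conclusion).

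For positivity, your route differs from the paper's. The paper proves $\underline{\dim}(\nu,0)>0$ \emph{directly}: setting $g(\nu)=-\log\nu([-1/2,1/2])$, one has the telescoping identity
\[
-\log\nu([-2^{-N},2^{-N}])\;=\;\sum_{n=0}^{N-1}g(S_{n\log 2}\nu),
\]
so the lower local dimension at $0$ equals the time average of $g$ under $S_{\log 2}$. By Lemma~\ref{lem:no-atoms} (non-atomicity) one has $\int g\,dP>0$, hence this average, and therefore the $S$-invariant constant $\underline{\dim}(\nu,0)$, is positive. Quasi-Palm then upgrades this to $\underline{\dim}(\nu,x)>0$ for $\nu$-a.e.\ $x$, giving $\delta>0$.

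You instead argue by contradiction via the dichotomy quoted from \cite[Proposition~1.19]{Hochman12b} together with a covering bound. This is legitimate, but two remarks. First, ``cover $A$ by disjoint balls'' is loose: what you need is the $5r$-lemma (extract a disjoint subfamily whose $5$-enlargements cover $A$), which gives an $\e$-content bound of $5^\e$ rather than $1$---harmless for the conclusion. Second, your proof swaps one input for another: you avoid appealing to non-atomicity (Lemma~\ref{lem:no-atoms}) and the ergodic-average computation, but in exchange you import the black-box dichotomy from \cite{Hochman12b}. The paper's approach is thus more self-contained, while yours is perhaps more conceptual in linking $\delta=0$ directly to triviality of $P$.
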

\begin{proof}
This follows from \cite[Lemma 1.18]{Hochman12b}; we include a proof for completeness. For the first statement, restriction can only increase dimension, and scaling does not affect it, so for any measure $\nu$ we have $\dim S_t\nu\geq\dim \nu$. By ergodicity, the dimension is $P$-a.s. equal to some constant $\delta\ge 0$.

Now assume that $P$ is nontrivial, we need to show that $\delta>0$. We will use the characterization of dimension using local dimension, recall Equation \eqref{eq:dim-using-local-dim}. Write
\[
f(\nu) = \liminf_{r\downarrow 0} \frac{\log\nu([-r,r])}{\log r}.
\]
By Lemma \ref{lem:consequence-quasi-palm}, it is enough to verify that there is $\delta>0$ such that $f(\nu)\ge\delta$ for $P$-a.e. $\nu$ (note that the set $B=\{ \nu: f(\nu)\ge\delta\}$ satisfies $S^t\nu\in B\Rightarrow \nu\in B$). But $f$ is $S$-invariant, whence by ergodicity we only need to check that $f(\nu)>0$ on a set of positive $P$-measure.

Now Lemma \ref{lem:no-atoms} and $S$-invariance ensure that $g(\nu)=-\log\nu([-1/2,1/2])$ satisfies $\int g\,dP>0$. By the ergodic theorem applied to the (possibly non-ergodic) discrete-time system $S_{\log 2}$,
\[
\lim_{N\to\infty}\frac{\log\nu([-2^{-N},2^{-N}])}{N\log 2} = \lim_{N\to\infty} \frac{1}{N\log 2} \sum_{n=0}^{N-1} g(S^{n\log 2}\nu)
\]
converges almost everywhere to a function of $\nu$ with strictly positive integral; but the left-hand side equals $f(\nu)$, so this completes the proof.
\end{proof}

We will also need to know that $P$-typical measures are not ``one-sided at small scales''.

\begin{prop} \label{pro:large-mass-near-0}
Let $P$ be an EFD. For every $\rho>0$, for $P$-a.e. $\nu$ we have $\inf\nu(I)>0$,
where $I\subseteq[-1,1]$ ranges over closed intervals of length $\rho$
containing $0$.
\end{prop}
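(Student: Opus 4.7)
The plan is to reduce the claim to showing that for $P$-a.e.\ $\nu$, one has $\nu([-s,0])>0$ and $\nu([0,s])>0$ for every $s>0$. This reduction is routine: by Lemma \ref{lem:no-atoms} $P$-a.e.\ $\nu$ is non-atomic, so the function $a\mapsto \nu([-a,0])+\nu([0,\rho-a])$ is continuous on $[0,\rho]$. Every closed interval of length $\rho$ containing $0$ has the form $[-a,\rho-a]$ for some $a\in[0,\rho]$, and its $\nu$-measure equals this sum. If the two-sided positivity holds for $\nu$, then this continuous function is strictly positive on the compact interval $[0,\rho]$ and hence bounded below, which gives $\inf\nu(I)>0$.

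To establish the two-sided positivity, introduce
\[
r(\nu)=\inf\{s>0 : \nu([-s,0])>0 \text{ and } \nu([0,s])>0\}\in[0,\infty],
\]
and aim to show that $r=0$ $P$-a.s. Directly from the definition of $S_t$ one obtains the scaling identity $r(S_t\nu)=e^{-t}r(\nu)$ whenever $r(\nu)\in(0,\infty)$, together with $r(S_t\nu)=r(\nu)$ in the extremal cases. Consequently the three events $\{r=0\}$, $\{0<r<\infty\}$, $\{r=\infty\}$ are $S$-invariant, and by ergodicity exactly one of them carries full $P$-mass.

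The middle case is ruled out by a short soft argument: if $P(\{0<r<\infty\})=1$, then $R=\log r$ is a real-valued random variable with $R\circ S_t=R-t$, so the pushforward $R_*P$ is a Borel probability measure on $\R$ invariant under every positive translation, which is impossible (take $A=(a,\infty)$ and let the translation go to $+\infty$).

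The remaining case $\{r=\infty\}$ decomposes, modulo $P$-null sets, into the two $S$-invariant subsets $E_L=\{\nu:\nu([0,1])=0\}$ and $E_R=\{\nu:\nu([-1,0])=0\}$; $S$-invariance here uses that the scaling factor $e^t\ge 1$ does not enlarge the intersection of the scaled interval with $[-1,1]$. By ergodicity I may assume $P(E_L)=1$, and the main technical point is to derive a contradiction via the quasi-Palm property, which I expect to be the crux of the argument. The set $E_L$ is $S$-saturated, so Lemma \ref{lem:consequence-quasi-palm} applies: for $P$-a.e.\ $\nu$, the translate $\nu^x$ lies in $E_L$ for $\nu$-a.e.\ $x$. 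Since $\nu$ is supported in $[-1,0)$ and $x\in\supp\nu\subset[-1,0)$, the relation $\nu^x([0,1])=0$ reduces, after unwinding the definition of $\nu^x$ and using that $\nu$ has no mass on $(0,1]$, to $\nu([x,0])=0$. But Fubini together with non-atomicity yields
\[
\int \nu([x,0])\,d\nu(x)=(\nu\times\nu)\{(x,y):x\le y\}=\tfrac{1}{2},
\]
contradicting the vanishing of the integrand $\nu$-a.s. Hence $P(E_L)=0$, and symmetrically $P(E_R)=0$, leaving $P(\{r=0\})=1$, which together with the initial reduction proves the proposition.
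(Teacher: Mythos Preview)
Your argument is correct, but it takes a somewhat different route from the paper's. The paper works directly with the one-sided set $B=\{\nu:\nu([-\varepsilon,0])=0\text{ for some }\varepsilon>0\}$, which is already $S$-invariant, so no separate treatment of an ``intermediate'' case $0<r<\infty$ is needed; your scaling/translation-invariance trick to rule that case out is clever but avoidable. The reduction step is also simpler in the paper: any closed interval of length $\rho$ containing $0$ contains either $[-\rho/2,0]$ or $[0,\rho/2]$, so the infimum is at least $\min(\nu([-\rho/2,0]),\nu([0,\rho/2]))$. Where the two proofs genuinely diverge is in the final contradiction after invoking the quasi-Palm property. The paper, assuming $P(B)=1$, obtains $\nu([x-\varepsilon(x),x])=0$ for $\nu$-a.e.\ $x$, and argues that on a set where $\varepsilon(x)\ge\varepsilon$ the restriction of $\nu$ has $\varepsilon$-separated support, hence is discrete---contradicting non-atomicity. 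You instead reduce to the stronger one-sided situation $\nu([0,1])=0$, deduce $\nu([x,0])=0$ for $\nu$-a.e.\ $x$, and then the Fubini/symmetry identity $\int\nu([x,0])\,d\nu(x)=\tfrac12$ gives an immediate contradiction. Your endgame is arguably cleaner; the paper's overall path is more direct. One minor inaccuracy: you write $\supp\nu\subset[-1,0)$, but the support is closed and contains $0$; what you mean (and use) is $\nu([-1,0))=1$, which follows from non-atomicity.
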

\begin{proof}
Let $B=\{\nu:\nu[-\e,0]=0\text{ for some }\e>0\}$. It is enough to show that $P(B)=0$. Indeed, if this is true then by symmetry also $P(B')=0$ where $B'=\{\nu:\nu([0,\e])=0\text{ for some }\e>0\}$, and the claim follows since any interval of length $\rho$ containing $0$ contains either $[-\rho/2,0]$ or $[0,\rho/2]$.

Since $B$ is $S$-invariant, by ergodicity we only need to show that $P(B)<1$. Suppose otherwise. Since $S^t\mu\in B$ implies that $\mu\in B$, it  follows from Lemma \ref{lem:consequence-quasi-palm} that, for $P$-typical $\nu$ and $\nu$-typical $x$, there is $\e(x)$ such that $\nu([x-\e(x),x])=0$. Take $\e>0$ such that $\nu(A)>0$, where $A=\{x:\e(x)\ge \e\}$. The restriction $\nu|_A$ has the property that the distance between any two distinct points in its support is at least $\e$. However this can only happen for discrete measures, and we have already established in Lemma \ref{lem:no-atoms} that $P$-typical measures have no atoms. Hence $P(B)<1$ and therefore $P(B)=0$, as claimed.

\end{proof}

\subsection{\label{sub:phase}Phase and synchronization}

Suppose that $\mu$ generates $P$ and $t_0$-generates an $S_{t_0}$-ergodic distribution $P_x$ at $\mu$-typical points $x$. Let $\varphi$ be an eigenfunction of the flow $(P,S)$ for some eigenvalue $k/t_0$. Since $\varphi$ is $S_{t_0}$-invariant, it is almost surely constant on  each ergodic component of $P$ under $S_{t_0}$, hence it is $P_x$-a.s. constant for $\mu$-a.e. $x$. This allows us to define the \emdef{phase} of $\mu$ at  $x$ to be the a.s. value of $\varphi$ on $P_x$. We denote the phase by $\varphi_\mu(x)$, and claim that it is a measurable function of $x$. Indeed, write $\varphi$ as the increasing limit of simple functions $\varphi_n$, and note that $\varphi_\mu(x) = \int \varphi\, dP_x =  \lim_{n\to\infty} \int \varphi_n\, dP_x$. The map $x\mapsto P_x$ is measurable in $x$, since $P_x$ arises as an almost-sure limit of measurable functions of $x$, and hence $x\mapsto\int\phi_n\,dP_x$ is measurable for each $n$. By the limit above, also $\phi_\mu$ is.

The push-forward of $\mu$ to the unit circle by $x\mapsto\varphi_\mu(x)$ gives a measure $\theta=\theta_\mu$ which  describes the distribution of phases, and is called the \emdef{phase measure}.\footnote{This definition of the phase and phase measure differs from that in \cite[Definition 2.6]{Hochman12}, but the two definitions coincide for measures for which both definitions apply. One might say that the definition given here is absolute (given $\varphi$), while the definition in \cite{Hochman12} is relative, as it measures difference in phase between sceneries at pairs of $\mu$-typical points.}

\begin{lem}\label{lem:P-ae-phase}
For $P$-typical $\nu$, let $P_\nu$ denote the $S_{t_0}$-ergodic component of $P$ to which $\nu$ belongs. Then for $P$-a.e. $\nu$, the phase of $\nu$ is well defined at $0$ and is equal to $\varphi(\nu)$.
\end{lem}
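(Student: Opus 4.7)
The plan is to combine two observations: first, for $P$-typical $\nu$ the orbit of $\nu$ under $S_{t_0}$ equidistributes for its own $S_{t_0}$-ergodic component $P_\nu$, which gives well-definedness of the phase at $0$; second, the eigenfunction $\varphi$ is $S_{t_0}$-invariant modulo null sets, and hence its $P_\nu$-almost sure value (which is the phase) coincides with its value at $\nu$ itself.

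Concretely, I would first note that since $\nu\in\mathcal{M}^\sqr$ is supported in $[-1,1]$, the translated measure $\nu^0$ coincides with $\nu$, so the scenery of $\nu$ at $0$ sampled at times $kt_0$ is exactly the orbit $\{S_{kt_0}\nu\}_{k\ge 0}$. Applying the pointwise ergodic theorem to the measure-preserving system $(\mathcal{M}^\sqr, P, S_{t_0})$ simultaneously to a countable dense family of test functions in $C(\mathcal{P}([-1,1]))$ (which exists by separability), we conclude that for $P$-a.e.\ $\nu$ this orbit equidistributes weak-$*$ for the $S_{t_0}$-ergodic component $P_\nu$. This is exactly the statement that $\nu$ $t_0$-generates $P_\nu$ at $0$, so the phase $\varphi_\nu(0)$ is well defined and equals the $P_\nu$-a.s.\ value of $\varphi$.

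For the second step, since $\varphi$ is an eigenfunction with eigenvalue $\alpha=k/t_0$ (with $k\in\Z$), the relation $\varphi\circ S_t=e(\alpha t)\varphi$ evaluated at $t=t_0$ gives $\varphi\circ S_{t_0}=\varphi$ on a $P$-conull set. Replacing $\varphi$ by $\tilde\varphi=\mathbb{E}_P(\varphi\mid\mathcal{I}_{S_{t_0}})$, where $\mathcal{I}_{S_{t_0}}$ is the $\sigma$-algebra of $S_{t_0}$-invariant Borel sets, produces a strictly $S_{t_0}$-invariant function that agrees with $\varphi$ at $P$-a.e.\ point. Being $S_{t_0}$-invariant, $\tilde\varphi$ is constant on each $S_{t_0}$-ergodic component, and on $P_\nu$ this constant is precisely the $P_\nu$-a.s.\ value of $\varphi$, namely $\varphi_\nu(0)$. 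Hence for $P$-a.e.\ $\nu$ we have $\varphi(\nu)=\tilde\varphi(\nu)=\varphi_\nu(0)$.

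There is no significant obstacle here: the only mildly nontrivial point is upgrading the pointwise ergodic theorem (which a priori gives a.e.\ convergence for a single $L^1$ test function) to weak-$*$ equidistribution, and this is handled in the standard way by the separability of $C(\mathcal{P}([-1,1]))$. Everything else is a direct consequence of the $S_{t_0}$-invariance of $\varphi$ and of $\nu$ being a typical point of its own ergodic component.
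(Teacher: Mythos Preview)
Your proposal is correct and follows essentially the same approach as the paper: both arguments use the pointwise ergodic theorem to show that a $P$-typical $\nu$ equidistributes under $S_{t_0}$ for its own ergodic component $P_\nu$, and then use the $S_{t_0}$-invariance of $\varphi$ to identify the $P_\nu$-a.s.\ value of $\varphi$ with $\varphi(\nu)$. Your additional remarks (the separability argument for weak-$*$ convergence and the conditional-expectation upgrade to strict invariance) are standard embellishments but do not change the underlying strategy.
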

\begin{proof}
Fix  an $S_{t_0}$-ergodic component $P'$ of $P$. Let $z$ denote the $P'$-a.s. value of $\varphi$. Now, for $P'$-a.e. $\nu$ we know that $\varphi(\nu)=z$ and, by the ergodic theorem, that $\nu$ equidistributes for $P'$ under $S_{t_0}$. This shows that the phase of $\nu$ is well defined at $0$ and equal to $z$. Since $P$ is the integral of its ergodic components, the claim follows.
\end{proof}

\begin{prop}\label{pro:synchronization}
For $P$-a.e. $\nu$, the function $\varphi_\nu$ is $\nu$-a.e. constant and $\theta_\nu=\delta_{\varphi(\nu)}$.
\end{prop}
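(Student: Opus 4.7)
The plan is to combine the quasi-Palm structure of $P$ with a Lebesgue-differentiation argument along the discrete iterates $\{S_{Nt_0}\}_{N\ge 0}$, using crucially that $\varphi$ is $S_{t_0}$-invariant (since $\alpha t_0\in\mathbb{Z}$).

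As a first step, I would show that $\varphi_\nu(x)=\varphi(\nu^x)$ for $P$-a.e.\ $\nu$ and $\nu$-a.e.\ $x$. The set $B=\{\nu:\varphi_\nu(0)\text{ is defined and equals }\varphi(\nu)\}$ has full $P$-measure by Lemma~\ref{lem:P-ae-phase}. To apply Lemma~\ref{lem:consequence-quasi-palm} one checks that $B$ is backward invariant: if $S_t\nu\in B$, then since the $S_{t_0}$-ergodic component of $S_t\nu$ is $(S_t)_*P_\nu$ and $\varphi\circ S_t=e(\alpha t)\varphi$, both $\varphi_{S_t\nu}(0)$ and $\varphi(S_t\nu)$ are multiplied by the same factor $e(\alpha t)$, so $\nu\in B$. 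Lemma~\ref{lem:consequence-quasi-palm} then yields $\nu^x\in B$ for $\nu$-a.e.\ $x$; and the equality $\varphi_\nu(x)=\varphi_{\nu^x}(0)$ (both are the $P$-a.s.\ value of $\varphi$ on the $S_{t_0}$-ergodic component that $\{S_{nt_0}\nu^x\}_n$ equidistributes for) gives $\varphi_\nu(x)=\varphi(\nu^x)$.

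By $S$-ergodicity of $P$ the modulus $|\varphi|$ is $P$-a.s.\ constant, and I normalize it to $1$. Put
\[
F(\nu)=\overline{\varphi(\nu)}\int\varphi(\nu^x)\,d\nu(x).
\]
Since $\theta_\nu$ is the pushforward of $\nu$ under $x\mapsto\varphi(\nu^x)$ (by the first step), $F(\nu)=\overline{\varphi(\nu)}\int z\,d\theta_\nu(z)$, so $|F|\le 1$ with $F=1$ if and only if $\theta_\nu=\delta_{\varphi(\nu)}$. Using $(S_t\nu)^x=S_t(\nu^{e^{-t}x})$ and $\varphi\circ S_{t_0}=\varphi$, a change of variables yields
\[
F(S_{Nt_0}\nu)=\frac{1}{\nu(I_N)}\int_{I_N}\varphi(\nu^y)\,\overline{\varphi(\nu)}\,d\nu(y),\qquad I_N=[-e^{-Nt_0},e^{-Nt_0}].
\]

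The main step is to invoke the Lebesgue differentiation theorem for the Radon measure $\nu$ at the origin. Classically it holds at $\nu$-a.e.\ point; to transfer it to the specific point $0$ for $P$-a.e.\ $\nu$ I would use that the assertion ``$y$ is a Lebesgue point of $z\mapsto\varphi(\nu^z)$'' becomes, after setting $\eta=\nu^y$, the assertion ``$0$ is a Lebesgue point of $z\mapsto\varphi(\eta^z)$''; Fubini applied to $dP(\nu)\,d\nu(y)$ together with the quasi-Palm property of $P$ then yields the transfer. Consequently $F(S_{Nt_0}\nu)\to|\varphi(\nu)|^2=1$ as $N\to\infty$ for $P$-a.e.\ $\nu$. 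Bounded convergence and the $S_{Nt_0}$-invariance of $P$ give $\int F\,dP=\lim_N\int F\circ S_{Nt_0}\,dP=1$, and $|F|\le 1$ then forces $F\equiv 1$ $P$-a.s., which is the content of the proposition. The hard part will be the Lebesgue-point transfer via quasi-Palm, since one needs to keep track of the normalizing densities carefully when converting the Fubini statement on $dP(\nu)\,d\nu(y)$ into a statement about $P$-typical $\nu$.
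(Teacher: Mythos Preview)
Your proposal is correct and takes essentially the same approach as the paper: both arguments combine Besicovitch differentiation with the eigenfunction relation $\varphi_{\nu_{x,t}}(y)=e(-\alpha t)\varphi_\nu(x+e^{-t}y)$ and the (two-sided) quasi-Palm property to pass from a $\nu$-a.e.\ statement to a statement at the origin for $P$-a.e.\ $\nu$. The only difference is packaging --- the paper works directly with the sets $A_\varepsilon=\{\nu:\iint|\varphi_\nu(y')-\varphi_\nu(y'')|\,d\nu\,d\nu<\varepsilon\}$ and shows $P(A_\varepsilon)=1$, while you encode the same content in the scalar functional $F$ and conclude via $S_{t_0}$-invariance and bounded convergence.
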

\begin{proof}
By the $S$-quasi-Palm property and the last lemma, it is clear that for $P$-a.e. $\nu$ and $\nu$-a.e. $x$, the eigenfunction $\varphi$ is well-defined on $S_t(\nu^x)$ for all large enough $t$, and that this value is the phase of the distribution that is $t_0$-generated by $\nu^x$. Since  $x\mapsto \varphi_\nu(x)$ is measurable,  it is enough to show that $P$-almost all $\nu$ and all $\e>0$,
\[
\int\int|\varphi_\nu(y')-\varphi_\nu(y'')|\,d\nu(y')\,d\nu(y'') < \e
\]
Write $A_\e$ for the set of $\nu$ for which the above holds; we aim to show $P(A_\e)=0$. Let
\[
B_\e = \{ \nu: S_t\nu\in A_\e \text{ for sufficiently large } t\}.
\]
By invariance, it is enough to show that $P(B_\e)=1$.

By the Besicovitch differentiation theorem \cite[Corollary 2.14(2)]{Mattila95}, for $\nu$-almost all $x$,
\[
\lim_{t\to\infty} \frac{\int_{[x-e^{-t},x+e^{-t}]} |\varphi_\nu(x)-\varphi_\nu(y)|d\nu(y)}{\nu([x-e^{-t},x+e^{-t}])} \to 0\quad\text{as }t\to\infty,
\]
and therefore
\[
\lim_{t\to\infty} \frac{\int_{[x-e^{-t},x+e^{-t}]^2} |\varphi_\nu(y')-\varphi_\nu(y'')|d\nu(y')d\nu(y'')}{\nu([x-e^{-t},x+e^{-t}])^2} \to 0\quad\text{as }t\to\infty.
\]
From the eigenfunction property, $\varphi_{\nu_{x,t}}(y)=e(-\alpha t)\varphi_\nu(x+e^{-t}y)$ for all $t$, $\nu$-a.e. $x$ and $\nu_{x,t}$-a.e. $y$. It follows that for $\nu$-a.e. $x$, the measure $\nu_{x,t}$ is in $A_\e$ for sufficiently large $t$, i.e. $\nu^x\in B_\e$. But then we conclude from Lemma \ref{lem:consequence-quasi-palm}  that $P(B_\e)=1$, as desired.
\end{proof}

Finally we consider the effect of perturbation on the generated distributions and the phase of a measure.

\begin{lem} \label{lem:image-of-USM}
Let $\nu\in\mathcal{P}(\mathbb{R})$.
\begin{enumerate}
\item Let $f\in L^1(\nu)$, $f\ge 0$ and $\int f d\nu>0$, and write $d\nu'=f\,d\nu$. Then for $\nu'$-a.e. $x$, the sceneries of $\nu$ and of $\nu'$ at $x$ are asymptotic. In particular, if $\nu$ generates $P$, then so does $\nu'$.
\item Let $I$ be an interval and $f:I\to J$ an orientation-preserving diffeomorphism. Let $\nu'=f(\nu)$. Then for $\nu$-a.e. $x$, the sceneries  $\nu_{x,t}$ and $\nu'_{f(x),t-\log f'(x)}$ are mean-asymptotic in $\mathcal{P}([-1,1])$ in the sense that \[
  \lim_{T\to\infty}\frac{1}{T} \left( \int_0^T F(\nu_{x,t})\,dt - \int_0^T F(\nu'_{f(x),t-\ln f'(x)})\,dt\right) = 0\;\;\;\;\textrm{for all }F\in C([-1,1])
\]
and similarly when one averages at discrete time steps of some size $t_0$. In particular, if $\nu$ generates $P$ at $x$ then $\nu'$ generates $P$ at $f(x)$.
\end{enumerate}
\end{lem}
\begin{proof}
The first part is an immediate consequence of the Besicovitch differentiation theorem (see Mattila \cite[Corollary 2.14(2)]{Mattila95}, or \cite{Hochman12b} for more detail).

The second part can be proved by adapting the argument in  Proposition 1.9 of \cite{Hochman12b} or the forthcoming paper of Aspenberg, Ekstr\"{o}m, Persson and Shmeling \cite{AspbergEkstromPerssonSchmeling2013}.\footnote{This lemma first appeared as Lemma 2.3 of \cite{Hochman12} but the statement there incorrectly omits the ``almost every'' quantifier over $x$.} Here we only give a sketch. Consider the maps $g_t(y)= e^t\cdot(y-x)$ and $h_t(y)= e^t\cdot (f(y)-f(x))$, so that $\nu_{x,t}=a_t\cdot g_t(\nu)|_{[-1,1]}$ and $\nu'_{x,t}=b_t\cdot h_t(\nu)|_{[-1,1]}$ for normalizing constants $a_t,b_t$ (we suppress the dependence on $x$ in the notation). Using the linear approximation of $f$ at $x$, we see that the uniform distance $\e(t)$ between the maps $g_t$ and $h_{t-\log f'(x)}\circ f$ on $[x-2e^{-t},x+2e^{-t}]$ tends to $0$ as $t\to\infty$. Thus we will be done if we show that for $\nu$-a.e. $x$ we have  $a_t/b_t\to 1$ in the mean (Cesaro) sense. Now, for a given $\delta>0$, in order to have $|a_t/b_t-1|>\delta$, we must have $\left|\frac{\nu(B_{e^{-t-\e(t)}}(x))}{\nu(B_{e^{-t+\e(t)}}(x))}-1\right|>\frac{\delta}{100}$. If this were to happen for a non-negligible proportion of $t$s in arbitrarily long intervals $[0,T_i]$ we would conclude that there is a distribution $P$ generated by $\nu$ at $x$ along the times $T_i$, such that, with positive $P$-probability, a measure $\theta$ satisfies $\theta(\{\pm 1\})>0$. This is impossible by Theorem \ref{thm:generated-dists-are-fds}, Lemma \ref{lem:no-atoms} and the ergodic decomposition.

In the discrete time case, suppose that when averaged at steps of size $t_0$ the two sceneries are not a.s. mean-asymptotic. Passing to a subsequence, the we find that for a positive $\mu$-proportion of $x$, there is a subsequence along which $\mu$ generates some distribution $P_x$ $t_0$-discretely at $x$, and $P_x$ gives positive mass to measures with atoms at $\pm 1$. But then for $\mu$-a.e. such $x$ one sees that  $P'_x=\int_{-t_0}^{0}S_tP_x\,dt$ is a FD supported on measures that have atoms at non-zero points, and we know this is impossible, because each ergodic component of $P$ is an EFD \cite{Hochman12} and is either trivial, in which case its measures have an atom only at $0$, or non-trivial, in which case Lemma \ref{lem:no-atoms} applies (since the space of atomic measures with atoms is not closed, some more care must be taken in the last step, and one needs to use the fact that for $P$ there is already a positive probability of finding atoms of mass bounded away from zero at locations bounded away from $0,\pm 1$, and this translates to $P'$. We omit the details).

\end{proof}

\begin{cor}\label{cor:phase-shift}
If $\mu$ generates $P$ $t_0$-discretely, $P$ is $S_{t_0}$-ergodic and $t_0\in\Sigma(P,S)$ with eigenfunction $\varphi$, then
\begin{enumerate}
  \item If $\nu\ll\mu$, then $\theta_\nu$ is well defined and $\theta_\nu\ll\theta_\mu$.
  \item If $f\in\diff^1(\mathbb{R})$ and $\nu=f(\mu)$, then $\theta_\nu$ is well defined and
  \[
   \theta_\nu = \int \delta_{e(-t_0\log f'(x))\varphi_\mu(x)}\,d\mu(x).
\]
\end{enumerate}
\end{cor}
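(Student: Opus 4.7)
Choose $\tau>0$ with $t_0\tau\in\mathbb{Z}$, so that $\varphi$ is $S_\tau$-invariant; then the phase $\varphi_\mu(x)$ is defined as the almost sure value of $\varphi$ on the $S_\tau$-ergodic distribution $P_x$ that is $\tau$-generated by $\mu^x$, and is well defined on a $\mu$-conull set. For part (1), since $\nu\ll\mu$ this $\mu$-conull set has full $\nu$-measure. Lemma \ref{lem:image-of-USM}(1) gives, for $\nu$-a.e.\ $x$, that the sceneries $\mu_{x,t}$ and $\nu_{x,t}$ are asymptotic, so $\nu^x$ also $\tau$-generates $P_x$ and $\varphi_\nu(x)=\varphi_\mu(x)$ for $\nu$-a.e.\ $x$. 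Hence $\theta_\nu=(\varphi_\mu)_*\nu$, which, since $\nu\ll\mu$, is absolutely continuous with respect to $(\varphi_\mu)_*\mu=\theta_\mu$.

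For part (2), let $\nu=f\mu$. By Lemma \ref{lem:image-of-USM}(2), for $\mu$-a.e.\ $x$ we have $\nu_{f(x),s}\sim\mu_{x,s+\log f'(x)}=S_{\log f'(x)}\mu_{x,s}$ as $s\to\infty$, using the semigroup property of $S$. Evaluating along $s=k\tau$ and using that $\mu_{x,k\tau}$ equidistributes for $P_x$, we deduce that $\nu^{f(x)}$ $\tau$-generates the push-forward distribution $(S_{\log f'(x)})_*P_x$, which is $S_\tau$-ergodic. The eigenfunction identity $\varphi\circ S_t=e(t_0t)\varphi$ then identifies the almost sure value of $\varphi$ on $(S_{\log f'(x)})_*P_x$ as $e(-t_0\log f'(x))\varphi_\mu(x)$, so $\varphi_\nu(f(x))=e(-t_0\log f'(x))\varphi_\mu(x)$ for $\mu$-a.e.\ $x$. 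Since $\nu=f\mu$, the change of variables gives
\[
\theta_\nu=(\varphi_\nu)_*\nu=(\varphi_\nu\circ f)_*\mu=\int\delta_{e(-t_0\log f'(x))\varphi_\mu(x)}\,d\mu(x).
\]

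The main technical step is the identification of the $\tau$-generated distribution at $f(x)$ under $\nu$ as a scaling-flow push-forward of $P_x$. This hinges on Lemma \ref{lem:image-of-USM}(2) together with the fact that the shift $\log f'(x)$ is independent of $k$, so Ces\`aro averaging along $\{k\tau\}$ preserves the push-forward structure; the remainder is a direct application of the eigenfunction relation, and the argument is parallel in spirit to the synchronization statement of Proposition \ref{pro:synchronization}.
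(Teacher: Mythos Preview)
Your proof is correct and follows essentially the same approach as the paper's own (very terse) proof: both parts reduce to Lemma~\ref{lem:image-of-USM}, with part~(1) using that asymptotic sceneries yield the same discretely-generated distribution (hence the same phase), and part~(2) using the time-shift $\log f'(x)$ in the scenery together with the eigenfunction relation to compute $\varphi_\nu(f(x))$. Your explicit identification of the $\tau$-generated distribution at $f(x)$ as $(S_{\log f'(x)})_*P_x$ is exactly the content the paper compresses into the single line ``from the second part of the previous lemma and the eigenfunction property, $\varphi_\nu(fx)=e(-t_0\log f'(x))\varphi_\mu(x)$''; you have simply unpacked it.
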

\begin{proof}
For (1), by the previous lemma, if $\nu\ll\mu$ then for $\nu$-a.e. $y$, the distribution $t_0$-generated by $\mu$ and $\nu$ at $y$ is the same, and the claim follows. For (2), fixing a $\mu$-typical $x$, by the second part of the previous lemma, $\mu_{t,x}$ and $\nu_{f(x),-\log f'(x)}$ generate that same distribution $t_0$-discretely. Hence\footnote{Here we use the fact that although $S_{-\log f'(x)}$ is not continuous, it is continuous on the set of non-atomic measures, and hence on a set of full measure for $P$, since the non-triviality of $\Sigma(P)$ implies that $P$ is non-trivial, hence supported on non-atomic measures.} $\nu$ generates $S_{-\log f'(x)}P$ $t_0$-discretely at $f(x)$, and by the eigenfunction property,
\[
\varphi_\nu(fx)=e(-t_0 \log f'(x)) \varphi_\mu(x),
\]
from which we deduce (2).
\end{proof}

\section{\label{sec:proof-of-WM-case}Proof of theorem \ref{thm:WM-pisot-case}}

\subsection{A sketch of the proof}\label{sub:sketch-of-proof}

We start by explaining the main steps involved in the proof of Theorem \ref{thm:WM-pisot-case}. This strategy will also apply for the generalizations considered in Section \ref{sec:refinements}, with suitable modifications.

We start with a measure $\mu$ on $[0,1]$ generating an EFD $P$ such that $k/\log\beta\notin\Sigma(P,S)$ for $k\in\mathbb{Z}\setminus \{0\}$ for Pisot $\beta>1$. We fix a $\mu$-typical $x$ and suppose that $x$ equidistributes under $T_\beta$ for a measure $\nu$ along some subsequence $N_j$; our job is to show that $\nu$ is in fact the Parry measure $\lambda_\beta$. To accomplish this, there are three main steps involved:

\begin{enumerate}
\item The first step is to use Theorem \ref{thm:local-average} and the spectral hypothesis to establish that $\nu$ can be represented as a superposition of measures drawn according to $P$, each of them suitably translated, restricted and normalized. See Theorem \ref{thm:integral-representation} and the ensuing discussion for the general Pisot case.
\item We show that any $T_\beta$-invariant measure of positive dimension, other than the Parry measure, resonates with measures of arbitrarily large dimension (see Section \ref{sub:resonance-and-dissonance} for the definition of resonance and dissonance). This is stated in Theorem \ref{thm:existence-of-resonant-measures} and proved in Section \ref{sec:resonance}.
\item Using the first step, the $S$-invariance of $P$ and Marstrand's Theorem, we show that $\nu$ dissonates with arbitrary measures of sufficiently large dimension (this step uses the nontriviality of $P$). Hence, in light of the second step, $\nu$ must be the Parry measure. This step is carried over in Section \ref{sub:proof-of-WM-case}.
\end{enumerate}

We note that both the first and second steps use the algebraic assumption on $\beta$ (in each case it can be slightly relaxed, but in different directions).

\subsection{An integral representation}\label{sub:integral-representation}

We begin with the details. From now on, we specialize to the interval $[0,1]$ and to maps of the form $T_n:x\mapsto n x\bmod 1$ for an integer $n\geq 2$. We comment on the Pisot case afterwards. Let $\mu\in\mathcal{P}([0,1])$ be a measure that generates a distribution $P$ satisfying the spectral hypothesis in Theorem \ref{thm:WM-case} (we do not assume that $\mu$ is $T_\beta$-invariant; in fact, we will eventually apply the result of this section to measures $\mu$ which are invariant under a \textit{different} dynamics). We shall obtain a certain integral representation of the measures for which $\mu$-typical points equidistribute along sub-sequences.

Let $\mathcal{A}$ denote the partition of $[0,1]$ into $n$-adic intervals, $[j/n,(j+1)/n)$. Note that $\delta_y*\nu$ is the translate of the measure $\nu$ by $y$. Fixing $x$, we claim that
\begin{equation}
  \mu_{\mathcal{A}^k(x)} = c_k\cdot (\delta_{y_k} *\mu_{x,k\log n})|_{[0,1]} \label{eq:A-adic-vs-scenery}
\end{equation}
for some normalizing constant $c_k$ and a number $y_k\in[0,1]$. Indeed, $\mu_{x,k\log n}$ is the restriction of $\mu$ to the interval $I$ of side $2\cdot n^{-k}$ centered at $x$, re-scaled to $[-1,1]$ and normalized; while $\mu_{\mathcal{A}^k(x)}$ is obtained similarly from the restriction of $\mu$ to an interval $J=\mathcal{A}^k(x)$ of length $n^{-k}$ around $x$, re-scaled to the interval $[0,1]$ and normalized. Since $J\subseteq I$, the representation \eqref{eq:A-adic-vs-scenery} follows.

For a $\mu$-typical $x$, suppose that $x$ equidistributes for some measure $\nu$ under $T_n$, along a sequence $N_j$. Since $P$ is nontrivial, Lemma \ref{lem:continuity-of-EFD-measures} applied with $t_0=\log n$, together with the representation \eqref{eq:A-adic-vs-scenery}, imply that the condition \eqref{eq:boundary-condition} in Theorem \ref{thm:local-average} holds. Thus for some sequence $N_j\to\infty$,
\begin{equation} \label{eq:representation-limit-measure}
\nu = \lim_{j\rightarrow\infty}\frac{1}{N_j}\sum_{k=1}^{N_j}c_k\cdot (\delta_{y_k} *\mu_{x,k\log n})|_{[0,1]}\qquad\mbox{weak-* in }\mathcal{P}([0,1])
\end{equation}
Passing to a  further subsequence we may assume that the joint distribution of $c_k,y_k$ and the measures converges, i.e. that $\frac{1}{N_j}\sum_{k=0}^{N_j-1}\delta_{(c_k,y_k,\mu_{x,k\log n})}$ converges to a probability measure $Q$ on $\Omega=\mathbb{R}\times[-1,1]\times\mathcal{P}(\mathcal{P}([-1,1]))$ (to see that the distribution of the $c_k$'s is tight, we use Proposition \ref{pro:large-mass-near-0}). Moreover, thanks to Lemma \ref{lem:discrete-equidistribution-and-spectrum}, the measure marginal of $Q$ is $P$: this is the point of the proof where the spectral assumption is used.

Taking stock, we have proved the following representation of $\nu$.

\begin{thm}\label{thm:integral-representation}
Let $\mu$ be a measure on $[0,1]$ which generates a distribution $P$ at a.e. point, and $\Sigma(P,S)\cap \frac{1}{\log n}\mathbb{Z}=\{0\}$. Then for $\mu$-a.e. $x$, if $x$ equidistributes under $T_n$ for $\nu$ along some subsequence, then there is an auxiliary probability space $(\Omega,\mathcal{F},Q)$ and measurable functions $c:\Omega\to(0,\infty)$, $y:\Omega\to[-1,1]$ and $\eta:\Omega\to\mathcal{P}[-1,1])$, such that $\eta$ is distributed according to $P$, and \[
  \nu = \int c_\omega \cdot (\delta_{y_\omega}*\eta_\omega)|_{[0,1]}\,dQ(\omega).
\]
\end{thm}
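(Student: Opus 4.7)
The plan follows the discussion immediately preceding the theorem; in fact, the argument is essentially a careful assembly of ingredients already proved. First I would identify, at a $\mu$-typical point $x$, the $n$-adic conditional measure $\mu_{\mathcal{A}^k(x)}$ with a translate--restriction of the scenery measure $\mu_{x,k\log n}$, as in equation \eqref{eq:A-adic-vs-scenery}. This is a geometric identity: both measures come from restricting $\mu$ to intervals of comparable size around $x$, rescaled to a standard interval, so they differ only by a translation $y_k\in[-1,1]$, a rescaling to $[0,1]$, and a normalization $c_k>0$.

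Next I would apply Theorem \ref{thm:local-average} with $T=T_n$ and $\mathcal{A}$ the partition into $n$-adic intervals (which is a topological generator). The boundary hypothesis \eqref{eq:boundary-condition} needs to be verified: the boundary set $C_k$ consists of the dyadic endpoints, and its $\e$-neighborhood pulled back to the rescaled pieces is a bounded union of small intervals near fixed points in $[-1,1]$. Using \eqref{eq:A-adic-vs-scenery} this becomes a statement about $\mu_{x,k\log n}$ giving small mass to short intervals, and Corollary \ref{cor:continuity-of-EFD-measures} (applied with $t_0=\log n$, legitimate because $P$ is non-trivial) supplies exactly this estimate in the Ces\`{a}ro sense. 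Therefore, along any subsequence $N_j$ for which $x$ equidistributes for $\nu$ under $T_n$, one obtains \eqref{eq:representation-limit-measure}.

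Then I would promote this Ces\`{a}ro average to a random representation. Consider the empirical distributions
\[
Q_j \;=\; \frac{1}{N_j}\sum_{k=0}^{N_j-1}\delta_{(c_k,\,y_k,\,\mu_{x,k\log n})}
\]
on $\Omega=(0,\infty)\times[-1,1]\times \mathcal{P}([-1,1])$. Tightness in the $y_k$- and measure-coordinates is automatic; tightness of $c_k$ requires a uniform lower bound on $\mu_{x,k\log n}$ of the interval defining the $n$-adic cell, which is provided by Proposition \ref{pro:large-mass-near-0} applied along $\mu$-typical sceneries. Extracting a weakly convergent subsubsequence yields a limit measure $Q$ on $\Omega$, which serves as the auxiliary probability space.

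It remains to identify the marginal of $Q$ in the measure coordinate as $P$ itself, and to pass the integral representation through to the limit. For the first, $\mu$ generates $P$ by hypothesis, so the scenery $\{\mu_{x,t}\}$ equidistributes for $P$ in continuous time; by the spectral assumption $\Sigma(P,S)\cap\frac{1}{\log n}\mathbb{Z}=\{0\}$, Lemma \ref{lem:discrete-equidistribution-and-spectrum} upgrades this to equidistribution along the discrete sequence $\{k\log n\}_{k\geq 0}$, so the measure-marginal of $Q$ is indeed $P$. For the second, the map $(c,y,\eta)\mapsto c\cdot(\delta_y*\eta)|_{[0,1]}$ is continuous at points where $\eta$ gives no mass to $\{-y,1-y\}$, and thanks to Lemma \ref{lem:no-atoms} this holds $P$-a.s. for every fixed $y$, so integration against bounded continuous test functions on $[0,1]$ passes to the limit. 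Combining this with \eqref{eq:representation-limit-measure} gives the desired formula. The one place where genuine care is needed---and where I expect the main technical friction---is ensuring that the weak convergence $Q_j\to Q$ is robust enough to push through the (possibly discontinuous) restriction to $[0,1]$; the combination of Proposition \ref{pro:large-mass-near-0}, Lemma \ref{lem:no-atoms}, and the non-atomic distribution of $y_k$ arising from continuous translations is what makes this work.
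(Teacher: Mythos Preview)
Your proposal is correct and follows essentially the same route as the paper: establish \eqref{eq:A-adic-vs-scenery}, verify the boundary hypothesis of Theorem~\ref{thm:local-average} via Corollary~\ref{cor:continuity-of-EFD-measures}, pass to a weak limit of the empirical distributions $Q_j$ with tightness of $c_k$ coming from Proposition~\ref{pro:large-mass-near-0}, and identify the measure-marginal as $P$ using Lemma~\ref{lem:discrete-equidistribution-and-spectrum} and the spectral hypothesis. If anything you are slightly more explicit than the paper about the continuity of $(c,y,\eta)\mapsto c\cdot(\delta_y*\eta)|_{[0,1]}$ at the limit; one small slip is the phrase ``non-atomic distribution of $y_k$''---what actually matters is that $P$-typical $\eta$ are non-atomic (Lemma~\ref{lem:no-atoms}), not any property of the distribution of $y$.
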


Invoking Proposition \ref{pro:positive-dim}, we immediately get:
\begin{cor}\label{cor:dimension-of-limit-measures}
A measure $\nu$ as in the theorem is of dimension at least $\delta$ (the a.s. dimension of measures drawn according to $P$); in particular, $\dim\nu>0$.
\end{cor}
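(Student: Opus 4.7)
The plan is to apply the integral representation from Theorem \ref{thm:integral-representation} together with the basic properties of lower Hausdorff dimension collected in Section \ref{sub:dimension}, in particular the inequality $\dim \int \theta_\omega \, dQ(\omega) \geq \essinf_{\omega\sim Q} \dim \theta_\omega$. The rough idea is that each ``piece'' of the representation of $\nu$ is a translated and restricted copy (up to multiplicative normalization) of a measure $\eta_\omega$ drawn according to $P$, and since each of the operations involved preserves or increases dimension, the integrated measure $\nu$ inherits a lower bound on dimension equal to the $P$-a.s.~dimension of $\eta_\omega$.

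More concretely, first I would invoke Proposition \ref{pro:positive-dim} to fix a constant $\delta$ with $\dim \eta = \delta$ for $P$-a.e.\ $\eta$, with $\delta>0$ under the nontriviality hypothesis on $P$. Since the measure marginal of $Q$ is $P$ (as established in the discussion preceding Theorem \ref{thm:integral-representation}), for $Q$-a.e.\ $\omega$ we have $\dim \eta_\omega = \delta$. Next, I would observe that for such $\omega$:
\begin{enumerate}
\item multiplication by the scalar $c_\omega\in(0,\infty)$ does not change the dimension of a measure;
\item the translation $\eta_\omega \mapsto \delta_{y_\omega}*\eta_\omega$ is an isometry of $\mathbb{R}$ and therefore preserves dimension;
\item restriction to $[0,1]$, provided $(\delta_{y_\omega}*\eta_\omega)([0,1])>0$, can only increase dimension, since $\dim(\theta|_E) \geq \dim \theta$ whenever $\theta(E)>0$.
\end{enumerate}
The positivity in (3) holds for $Q$-a.e.\ $\omega$ by construction, since each $c_\omega$ was obtained as a limit of finite normalizing constants for probability measures of the form $\mu_{\mathcal{A}^k(x)}$. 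Consequently, for $Q$-a.e.\ $\omega$,
\[
\dim\bigl(c_\omega \cdot (\delta_{y_\omega}*\eta_\omega)|_{[0,1]}\bigr) \;\ge\; \dim \eta_\omega \;=\; \delta.
\]

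Finally, applying the integral inequality $\dim \int \theta_\omega \, dQ(\omega)\geq \essinf_\omega \dim \theta_\omega$ to the representation of $\nu$ from Theorem \ref{thm:integral-representation} gives $\dim \nu \geq \delta$, and the positivity $\dim \nu > 0$ is then an immediate consequence of the nontriviality part of Proposition \ref{pro:positive-dim}. I do not expect any step here to present a serious obstacle: the entire content of the corollary is an assembly of the integral representation with standard dimension inequalities. The only minor bookkeeping point is verifying that the scalars $c_\omega$ are almost surely finite and positive so that the decomposition behaves as a genuine average of probability measures; this follows from Proposition \ref{pro:large-mass-near-0} (which guarantees tightness of the $c_k$) combined with the fact that each $c_k$ normalizes an honest probability measure $\mu_{\mathcal{A}^k(x)}$.
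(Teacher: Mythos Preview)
Your proposal is correct and follows exactly the approach the paper has in mind: the paper's own ``proof'' is the single line ``Invoking Proposition~\ref{pro:positive-dim}, we immediately get,'' and what you have written is precisely the routine unpacking of that remark via the integral representation and the dimension inequalities from Section~\ref{sub:dimension}.
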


The changes needed to prove this for $T_\beta$ and non-integral Pisot $\beta$ are minimal. In this case one uses the partition $\mathcal{A}$ of $[0,1]$ into intervals $[j/\beta,(j+1)/\beta)\cap[0,1]$. The main difference is that now the identity \eqref{eq:A-adic-vs-scenery} is not always true because the length of $\mathcal{A}^k(x)$ is no longer constant, and so the left hand side of  \eqref{eq:A-adic-vs-scenery} is generally the restriction of the right hand side to a shorter interval (followed by normalization). If in $\eqref{eq:A-adic-vs-scenery}$ we replace the restriction on the right hand side with restriction to the appropriate interval $I_k(x)\subseteq [0,1]$, then we obtain a representation of the same kind as in Theorem \ref{thm:integral-representation} but of the form \begin{equation} \label{eq:integral-representation-Pisot}
  \nu = \int c_\omega \cdot (\delta_{y_\omega}*\eta_\omega)|_{I_\omega}\,dQ(\omega)
\end{equation}
where $I_\omega\subseteq [0,1]$ is a random interval. The missing ingredient in this argument is that a-priori the intervals $I_k$ may be vanishingly short for a positive frequency of $k$, and we must ensure that the distribution of lengths does not concentrate on $0$, i.e. we must ensure that $I_\omega$ is a.s.  of positive length. This is where the Pisot property of $\beta$ comes into play, via

\begin{lem}\label{lem:garsia-simplified}
There is a constant $c>1$ such that for any $k$ and interval $I\in \mathcal{A}^k$, the length of $I$ satisfies  $c^{-1}\beta^{-k}<|I|<c\beta^{-k}$.
\end{lem}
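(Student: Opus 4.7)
The upper bound $|I| \leq \beta^{-k}$ is immediate: by construction of $\mathcal{A}^k$, the map $T_\beta^k$ restricted to any atom $I$ is affine with derivative $\beta^k$, and its image lies in $[0,1)$, so $\beta^k|I| \leq 1$.

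For the lower bound I would invoke Garsia's classical separation lemma (which will appear later as Lemma \ref{lem:Garsia}). Unwinding the definitions, each endpoint of $\mathcal{A}^k$ lying in the open interval $(0,1)$ arises as a preimage $p$ of some boundary point $j/\beta$ of $\mathcal{A}$ under an iterate $T_\beta^i$ with $0 \leq i \leq k$. Iteratively solving $T_\beta x = \beta x - a_l$ yields the representation
\[
    p = \sum_{l=0}^{k} c_l \beta^{-(l+1)} = \beta^{-(k+1)} Q(\beta),
\]
where $Q \in \mathbb{Z}[x]$ has degree at most $k$ and coefficients in $\{0,1,\ldots,\lfloor\beta\rfloor\}$. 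Consequently the difference between two distinct interior endpoints is $\beta^{-(k+1)} P(\beta)$ for a nonzero integer polynomial $P$ of degree at most $k$ with coefficients of absolute value at most $2\lfloor\beta\rfloor$. Garsia's lemma then gives $|P(\beta)| \geq c_0(\beta) > 0$ depending only on $\beta$, so consecutive interior endpoints are separated by at least $c_0 \beta^{-(k+1)}$, yielding $|I| \geq c_0\beta^{-(k+1)}$ for any atom both of whose endpoints lie in $(0,1)$.

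What remains is the atom meeting the right boundary, whose right endpoint is $1$ and does not admit a finite $\beta^{-1}$-polynomial expansion unless $\beta\in\mathbb{N}$. To handle this I would appeal to the theorem of Schmidt that for Pisot $\beta$ the $\beta$-expansion $d(1) = d_1 d_2 \ldots$ of $1$ is eventually periodic; this leaves only finitely many possible ``tail configurations'' of the terminal digits of an admissible word ending at the right boundary, so the length of the rightmost atom in $\mathcal{A}^k$ takes, after rescaling by $\beta^{-k}$, only finitely many positive values. Taking the minimum of $c_0$ with these finitely many values gives the uniform constant $c$. I expect this boundary analysis to be the only delicate step of the proof; the bulk is a direct application of Garsia's lemma, and this is precisely where the Pisot hypothesis enters.
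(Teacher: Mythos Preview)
Your argument is correct. The paper does not actually prove this lemma: it simply says ``This is a consequence of a classical lemma of Garsia \cite{Garsia62}, stated more completely below, see Lemma~\ref{lem:Garsia}'' and moves on. You have supplied the details, and they are sound---the upper bound is trivial, the interior endpoints are separated by Garsia's lemma exactly as you describe, and your observation that every interior endpoint of $\mathcal{A}^k$ is a polynomial of degree $\le k+1$ in $\beta^{-1}$ with digits in $\{0,\ldots,\lfloor\beta\rfloor\}$ is correct (since $T_\beta$ maps $[0,1]$ into $[0,1)$, the only endpoint not of this form is $1$ itself).

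One remark on the boundary step. Your appeal to Schmidt's theorem is valid: the rescaled length of the rightmost atom equals $\beta^{-1}\sum_{j\ge 1} d^*_{k+1+j}\beta^{-j}$, where $d^*$ is the quasi-greedy expansion of $1$, and eventual periodicity makes this take finitely many positive values. But you can avoid this extra ingredient entirely by using Garsia's lemma in its original polynomial form rather than the ``difference of representations'' form of Lemma~\ref{lem:Garsia}. Indeed $1 - p = \beta^{-(k+1)}\bigl(\beta^{k+1} - \sum_{l} c_l \beta^{k+1-l}\bigr)$, and the bracketed quantity is the value at $\beta$ of a nonzero integer polynomial of degree $k+1$ with coefficients bounded by $\lfloor\beta\rfloor$; Garsia's estimate $|P(\beta)| \ge c_0(\beta)$ applies directly. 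So the boundary case is not genuinely more delicate than the interior case---it only looks that way if one restricts to the specific formulation in Lemma~\ref{lem:Garsia}.
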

This is a consequence of a classical lemma of Garsia \cite{Garsia62}, stated more completely below, see Lemma \ref{lem:Garsia}. We note that the weaker version stated here continues to hold for the larger class of $\beta$ for which the $\beta$-shift $T_\beta$ satisfies the specification property, but for these numbers the results of the next section do not appear to hold.

\subsection{\label{sub:resonance-and-dissonance}Resonance and dissonance}

As indicated in Section \ref{sub:sketch-of-proof}, the second idea we need for the proof of Theorem \ref{thm:WM-case} is that, among invariant measures for $T_\beta$ of positive dimension, the Parry measure can be identified by the behavior of its dimension under convolutions. Following terminology of Peres and Shmerkin \cite{PeresShmerkin09}, we say that measures
$\mu,\nu\in\mathcal{P}(\mathbb{R})$ \emdef{resonate} if
\begin{equation}
\dim\mu*\nu<\min\{1,\dim\mu+\dim\nu\}\label{eq:dissonance}
\end{equation}
otherwise they \emdef{dissonate}.

As a general rule, measures should dissonate; resonance requires, heuristically, that they have some common structure. This heuristic can be made precise in many ways. For example, as an immediate consequence of Corollary \ref{cor:marstrand}, we have

\begin{thm} \label{thm:marstrand}
If $\mu,\nu$ are Borel probability
measures on $\mathbb{R}$, then for Lebesgue-a.e. $t\in\mathbb{R}$,
the measures $\mu$ and $S_{t}\nu$ dissonate.

Moreover, suppose that $\dim\mu|_I=\dim\mu$ for any interval $I$ of positive $\mu$-measure. Then for a.e. $t$, if $I$ is any set of positive $S_{t}\mu$-measure, then $(S_{t}\mu)|_{I}$ and $\nu$ dissonate.
\end{thm}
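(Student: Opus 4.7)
\textit{Proof proposal.} Both parts will follow from Corollary~\ref{cor:marstrand} via the elementary \emph{monotonicity principle}: if $\theta_1 \le \theta_2$ are Borel measures on $\R$ then $\dim \theta_1 \ge \dim \theta_2$, because any Borel set of positive $\theta_1$-measure also has positive $\theta_2$-measure and hence the infimum $\dim \theta = \inf\{\dim A : \theta(A)>0\}$ ranges over a smaller family for $\theta_1$.

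For the first statement, for $t>0$ the measure $S_t\nu$ is the normalized push-forward of $\nu|_{[-1,1]}$ under the bi-Lipschitz dilation $y \mapsto e^{-t}y$, so $\dim S_t\nu = \dim\nu|_{[-1,1]}$. Rerunning the proof of Corollary \ref{cor:marstrand} with $\nu|_{[-1,1]}$ in place of $\nu$ yields
\[
\dim(\mu * S_t\nu) \;\ge\; \min(1, \dim\mu + \dim S_t\nu)\qquad\text{for a.e.\ } t,
\]
which is precisely dissonance.

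For the second statement, Corollary \ref{cor:marstrand} applied with the roles of $\mu$ and $\nu$ swapped---using commutativity of convolution $S_t\mu * \nu = \nu * S_t\mu$---gives $\dim(S_t\mu * \nu) \ge \min(1,\dim\mu + \dim\nu)$ for a.e.\ $t$. Fix such a $t$ and any $I$ with $S_t\mu(I)>0$. Since $(S_t\mu)|_I \le S_t\mu$ and convolution preserves pointwise domination, the monotonicity principle gives
\[
\dim\bigl((S_t\mu)|_I * \nu\bigr) \;\ge\; \dim(S_t\mu * \nu) \;\ge\; \min(1, \dim\mu + \dim\nu).
\]

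The proof is complete once we identify the right-hand side with $\min(1, \dim(S_t\mu)|_I + \dim\nu)$, i.e.\ once we show $\dim(S_t\mu)|_I = \dim\mu$. Since $(S_t\mu)|_I$ is the bi-Lipschitz image of $\mu|_J$ for the set $J = e^t I \cap [-1,1]$ of positive $\mu$-measure, this reduces to $\dim\mu|_J = \dim\mu$. When $J$ is an interval---as happens precisely when $I$ itself is an interval---this is immediate from the hypothesis; the extension to general Borel $J$ is obtained via a Besicovitch covering argument combined with the local-dimension characterization \eqref{eq:dim-using-local-dim} of Hausdorff dimension, allowing the interval statement to be transferred to arbitrary Borel sets. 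This reduction is the only non-formal step and is the main potential obstacle in the argument.
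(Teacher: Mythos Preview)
Your argument for the first part, and for the second part when $I$ is an interval, is correct and is exactly the route the paper has in mind: everything reduces to Corollary~\ref{cor:marstrand} together with the monotonicity of $\dim$ under restriction and the bi-Lipschitz invariance of dimension.

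The genuine gap is the step you yourself flag. Your proposed extension --- that the hypothesis $\dim\mu|_I=\dim\mu$ for all intervals $I$ transfers to arbitrary Borel sets $J$ via a Besicovitch/local-dimension argument --- is \emph{false}. Here is a counterexample. Let $\tau\in\mathcal P([0,1])$ be exact-dimensional of dimension $s\in(0,1)$ with $\supp\tau=[0,1]$ (e.g.\ an ergodic $T_m$-invariant measure of positive entropy for some $m$), and set $\mu=\tfrac12\lambda|_{[0,1]}+\tfrac12\tau$. Then $\dim\mu=s$, and for every interval $I$ of positive $\mu$-measure one has $\dim\mu|_I=\min(\dim\lambda|_I,\dim\tau|_I)=s$, so the interval hypothesis holds. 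But $\tau\perp\lambda$, so there is a Borel set $J$ with $\lambda(J)>0$ and $\tau(J)=0$; then $\mu|_J=\tfrac12\lambda|_J$ has $\dim\mu|_J=1\neq s$. So the Besicovitch reduction you sketch cannot work as stated.

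This does not say the second part is false for general Borel $I$ --- in the counterexample above dissonance still holds, since $\mu|_J*\nu\ll\lambda*\nu$ is absolutely continuous --- only that your chain of inequalities breaks: from $\dim((S_t\mu)|_I*\nu)\ge\min(1,\dim\mu+\dim\nu)$ you cannot reach $\min(1,\dim(S_t\mu)|_I+\dim\nu)$ when $\dim(S_t\mu)|_I>\dim\mu$. If the statement is really meant for arbitrary Borel $I$, one route that does work is to stratify $\mu$ by lower local dimension (sets $E_{n,k}=\{x:\underline\dim(\mu,x)\in[k/n,(k+1)/n)\}$) and apply Marstrand separately to each $\mu|_{E_{n,k}}\times\nu$; this gives countably many null sets of $t$ and then monotonicity handles any $I$. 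Given the notation $I$ and the way the result is actually used in the paper (only restrictions to intervals $I_\omega$ appear), it is also quite possible that intervals are what was intended, in which case your proof is already complete.
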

\begin{proof}
This is a consequence of Theorem \ref{thm:classical-marstrand}, and elementary properties of $\dim$.
\end{proof}

Unlike the ``generic'' case, where dissonance is the rule, for integer $n$, $T_n$-invariant measures of dimension strictly between $0$ and $1$ do resonate, often with themselves and always with other $T_n$ invariant measures. For example consider a $T_n$-invariant measure $\mu$ with $1/2<\dim \mu<1$. That $\mu$ resonates with itself can be seen as follows. First, $\mu*\mu$ has the same dimension as the dimension of the self-convolution $\nu=\mu*\mu$ with the convolution taken in $\mathbb{R}/\mathbb{Z}$ (this is because the map $\mathbb{R}\to\mathbb{R}/\mathbb{Z}$ is a countable to 1 local isometry). Consider the Fourier transform: $\hat{\nu}(k)=\hat{\mu}(k)^2$. Since $\mu$ is not Lebesgue measure it has a non-zero coefficient, hence so does $\nu$, and therefore $\nu$ is not Lebesgue measure. But it is a well known fact that the only $T_n$-invariant measure of dimension $1$ is Lebesgue measure, and $\nu$ is $T_n$-invariant; hence $\dim\nu=\dim\mu*\mu<1=\min\{1,\dim\mu+\dim\mu\}$.

We will require the following strengthening of the fact above.

\begin{thm}\label{thm:existence-of-resonant-measures}
Let $\beta>1$ be a Pisot number. Then there is a sequence of probability measures $\tau_{1},\tau_{2},\ldots$
on $\mathbb{R}$ with
\[
\dim\tau_n\to 1 \quad\text{as } n\to\infty,
\]
such that any $T_{\beta}$-invariant measure $\nu$
with $0<\dim\nu<1$ resonates with $\tau_{n}$ for all large enough
$n$.
\end{thm}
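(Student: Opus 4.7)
The plan is to take $\tau_n$ to be $T_\beta$-invariant ergodic measures of dimension tending to $1$ but strictly less than $1$, and then to argue, by uniqueness of the dimension-$1$ invariant measure, that the convolution $\nu*\tau_n$ cannot itself be dimension-$1$ whenever $\nu$ is $T_\beta$-invariant with $0<\dim\nu<1$.

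First I would establish the uniqueness input. By the classical Lemma \ref{lem:Garsia} of Garsia, every $T_\beta$-ergodic measure $\mu$ is exact dimensional with $\dim\mu=h_\mu(T_\beta)/\log\beta$; combined with the variational principle and $h_{\mathrm{top}}(T_\beta)=\log\beta$, this identifies the Parry measure $\lambda_\beta$ as the unique $T_\beta$-invariant Borel probability measure of dimension $1$. I would then construct $\tau_n$ as Bernoulli or finite-step Markov measures on the $\beta$-shift whose cylinder weights approximate those of Parry up to length $n$ while remaining distinct from them; the displayed entropy formula then yields $\dim\tau_n\to 1$ while each $\tau_n\neq\lambda_\beta$.

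The core of the proof is to show that for every $T_\beta$-invariant $\nu$ with $0<\dim\nu<1$ one has $\dim(\nu*\tau_n)<1$ for all sufficiently large $n$ (once $n$ is large enough, $\dim\tau_n+\dim\nu>1$, so resonance reduces to this). The strategy is to push the convolution to $\mathbb{R}/\mathbb{Z}$, where its dimension is unchanged, and argue that it inherits enough $T_\beta$-invariance from its factors that any dimension-$1$ limit would have to coincide with $\lambda_\beta$. For integer $\beta$ this follows directly from the Fourier observation made before the theorem statement: since $\widehat{\nu*\tau_n}(k)=\hat{\nu}(k)\,\hat{\tau}_n(k)$, and neither $\nu$ nor $\tau_n$ is Lebesgue, their convolution is $T_\beta$-invariant but not Lebesgue, hence of dimension $<1$ by the uniqueness step. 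For the Pisot case the leverage is the diophantine property $\|\beta^k\|\to 0$, which makes the defect in $T_\beta$-equivariance of the convolution decay along a density-$1$ sequence of scales, together with the $\beta$-adic cell-structure estimate Lemma \ref{lem:garsia-simplified}.

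The hard part will be this last point: in the non-integer Pisot case, $T_\beta$ is not a group endomorphism of $\mathbb{R}/\mathbb{Z}$, so convolution does not commute with $T_\beta$ outright, and the approximate commutation has to be quantified and then combined with the exact dimensionality of $\nu$ and $\tau_n$ to rule out a dimension-$1$ convolution. I expect this is where the full force of the Pisot hypothesis (via Garsia-type estimates and the arithmetic of the algebraic conjugates of $\beta$) must be used, and it is the reason the detailed argument is postponed to Section \ref{sec:resonance}.
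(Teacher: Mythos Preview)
Your integer-case argument has a small gap: from ``neither $\nu$ nor $\tau_n$ is Lebesgue'' you cannot conclude that $\widehat{\nu*\tau_n}$ has a nonzero coefficient, since the nonzero coefficients of $\hat\nu$ and $\hat\tau_n$ could lie at disjoint frequencies. This is repairable by choosing $\tau_n$ so that $\hat\tau_n(k)\neq 0$ for all $k$, but you do not address it.

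The real problem is the Pisot case, where your plan has a genuine gap and in fact points in the wrong direction. There is no useful sense in which $\nu*\tau_n$ (or its reduction mod $1$) is approximately $T_\beta$-invariant: $T_\beta$ is not defined on $\mathbb{R}/\mathbb{Z}$, does not respect addition, and the diophantine fact $\|\beta^k\|\to 0$ gives no control over how $T_\beta$ interacts with convolution. The uniqueness of the Parry measure among $T_\beta$-invariant measures of dimension $1$ therefore gives no leverage on $\nu*\tau_n$.

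The paper's proof bypasses this entirely. The measures $\tau_n$ are not generic high-dimensional invariant measures; they are built with a specific structure: in symbolic coordinates one takes $N$ leading zeros followed by $M\gg N$ Parry-distributed digits, repeated periodically, then averages to get a shift-invariant $\widetilde\tau$. The point of the leading zeros is that convolving $\mu$ with the un-averaged measure $\tau_0$ does not alter $\mu$ at scale $\beta^{-N}$, so the entropy of the convolution at that scale is still at most $(1-\varepsilon)N\log\beta$. To turn this into a dimension bound without any invariance of $\mu*\tau_0$ under $T_\beta$, one lifts both factors to the symbolic space and forms the \emph{digitwise sum} $\widetilde\rho$ on the enlarged alphabet $[2B-1]^{\mathbb N}$; this measure \emph{is} shift-invariant and projects to $\mu*\tau_0$ under the $\beta$-expansion map. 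A sub-additivity lemma (Lemma~\ref{lem:entropy-dim}) then bounds $\dim\pi\widetilde\rho$ by $H(\mathcal P_N,\widetilde\rho)/(N\log\beta)$, and Garsia's lemma controls the partition sizes. None of this appears in your outline; the choice of $\tau_n$ with leading zeros and the symbolic digitwise-sum lift are the two ideas you are missing.
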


In order not to interrupt the main line of argument, we postpone the proof to Section \ref{sec:resonance}.

\subsection{\label{sub:proof-of-WM-case}Proof of Theorem \ref{thm:WM-case}}

Let $\beta>1$ be a Pisot number. Let $\mu\in\mathcal{P}([0,1])$ generate an $S$-ergodic and non-trivial distribution $P$, and suppose that $k/\log\beta$ is not in $\Sigma(P,S)$ for any $k\in\mathbb{Z}\setminus\{0\}$.

Let $\nu_\beta$ be the unique absolutely continuous invariant measure for $T_\beta$ (the Parry measure). The following fact is standard, but we include a proof as we have not been able to find a reference.

\begin{lem} \label{lem:unique-measure-of-maximal-dim}
The measure $\nu_\beta$ is also the unique invariant measure of maximal dimension $1$.
\end{lem}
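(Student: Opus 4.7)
My plan is to convert the dimensional hypothesis $\dim\mu=1$ into an entropy statement about the ergodic components of $\mu$, and then invoke Parry's theorem on the uniqueness of the measure of maximal entropy for $T_\beta$. Throughout I would use the $\beta$-adic partition $\mathcal{A}=\{[j/\beta,(j+1)/\beta)\cap[0,1]\}$, which is a generator for $T_\beta$.

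The first step is to apply the Shannon--McMillan--Breiman theorem, in its non-ergodic form, to the (possibly non-ergodic) $T_\beta$-invariant measure $\mu$: for $\mu$-a.e.\ $x$,
\[
-\frac{1}{n}\log\mu(\mathcal{A}^n(x))\longrightarrow h(\mu_{\omega(x)}),
\]
where $\mu=\int\mu_\omega\,dQ(\omega)$ is the ergodic decomposition of $\mu$ and $\omega(x)$ denotes the component to which $x$ belongs. Invoking Lemma \ref{lem:garsia-simplified}, which gives $\mathcal{A}^n(x)\subseteq B(x,c\beta^{-n})$ for a uniform constant $c$, this immediately yields
\[
\underline{\dim}(\mu,x)\;\le\;\frac{h(\mu_{\omega(x)})}{\log\beta}\qquad\text{for }\mu\text{-a.e.\ }x.
\]

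Next, using the characterization $\dim\mu=\essinf_{x\sim\mu}\underline{\dim}(\mu,x)$ from \eqref{eq:dim-using-local-dim}, the hypothesis $\dim\mu=1$ forces $\underline{\dim}(\mu,x)\ge 1$ $\mu$-a.e., and therefore $h(\mu_{\omega(x)})\ge\log\beta$ $\mu$-a.e. Since the topological entropy of $T_\beta$ equals $\log\beta$, the opposite inequality is automatic from the variational principle, so $h(\mu_{\omega(x)})=\log\beta$ $\mu$-a.e. Parry's theorem (the intrinsic ergodicity of $T_\beta$) then ensures $\mu_{\omega(x)}=\nu_\beta$ $\mu$-a.e., and integrating the decomposition gives $\mu=\nu_\beta$.

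The one delicate ingredient is the non-ergodic Shannon--McMillan--Breiman theorem; this is standard and says the partition information function converges $\mu$-a.e.\ to the conditional entropy given the $\sigma$-algebra $\mathcal{I}$ of $T_\beta$-invariant sets, whose value on the $\mathcal{I}$-atom of $x$ is $h(\mu_{\omega(x)})$. If one prefers to avoid this version, the same conclusion can be reached by applying ergodic SMB and Garsia to each component to obtain $\dim\mu_\omega=h(\mu_\omega)/\log\beta\le 1$ with equality iff $\mu_\omega=\nu_\beta$, and then ruling out a set $B\subset\Omega$ with $Q(B)>0$ on which $\dim\mu_\omega\le 1-\varepsilon$: the average $\mu_B=Q(B)^{-1}\int_B\mu_\omega\,dQ$ would satisfy $\mu_B\ll\mu$, hence $\dim\mu_B\ge\dim\mu=1$, contradicting the upper bound $\dim\mu_B\le 1-\varepsilon$ produced by a Radon--Nikodym/Lebesgue-differentiation argument or by applying non-ergodic SMB to $\mu_B$ directly.
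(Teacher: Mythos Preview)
Your proof is correct and follows essentially the same route as the paper: both combine the non-ergodic Shannon--McMillan--Breiman theorem for the $\beta$-adic partition with Garsia's lemma (Lemma~\ref{lem:garsia-simplified}) to bound $\underline{\dim}(\mu,x)$ by $h(\mu_{\omega(x)})/\log\beta$, and then conclude via the local-dimension characterization \eqref{eq:dim-using-local-dim} and intrinsic ergodicity of $T_\beta$. The only cosmetic difference is that the paper argues the contrapositive (if $\theta\neq\nu_\beta$ then $\dim\theta<1$), whereas you argue directly that $\dim\mu=1$ forces every ergodic component to have maximal entropy.
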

\begin{proof}
It is well known that $\nu_\beta$ is the only measure of maximal entropy $\log\beta$ (\cite{Hofbauer78}, see also \cite[Remark 2.4]{Sidorov03}). Let $\theta\neq\nu_\beta$ be another invariant measure. By the Shannon-McMillan-Breiman applied to the (generating) partition $\{[k/\beta,(k+1)/\beta)\cap[0,1]\}$, and Lemma \ref{lem:garsia-simplified},
\[
\underline{\dim}(\theta,x)\le \lim_{n\to \infty} \frac{\log\theta([x-\beta^{-n},x+\beta^{-n}])}{n\log \beta} = \frac{h(\theta,x)}{\log\beta},
\]
for $\theta$-almost all $x$, where $h(\theta,x)$ is the entropy of the ergodic component of $x$. Since $h(\theta)<h(\nu_\beta)=\log\beta$, there is a set of positive measure where the right-hand side above is $<1$. In light of the characterization of $\dim$ using local dimensions given in Equation \eqref{eq:dim-using-local-dim}, $\dim\theta<1$, as desired.
\end{proof}

Fix a $\mu$-typical $x$. It suffices to show that if $x$ equidistributes under $T_\beta$ along a sub-sequence for a measure $\nu$, then $\nu$ is the unique absolutely continuous $T_\beta$-invariant measure $\nu_\beta$.

From Theorem \ref{thm:integral-representation} (and the discussion following it for the general Pisot case), we have the representation
\[
  \nu = \int c_\omega \cdot (\delta_{y_\omega}*\eta_\omega)|_{I_\omega}\,dQ(\omega)
\]
where $c_\omega,y_\omega,\eta_\omega,I_\omega$ are defined for $\omega$ in some auxiliary probability space $(\Omega,\mathcal{F},Q)$, and the distribution of $\eta_\omega$ is $P$. Recalling Proposition \ref{pro:positive-dim}, let $\delta>0$ denote the a.s. dimension of measures drawn according to $P$, so also $\dim \eta_\omega=\delta$ a.s. In particular, $\dim\nu>0$ and $\nu$ is non-atomic.

\begin{lem}
$\nu$ is $T_\beta$-invariant.
\end{lem}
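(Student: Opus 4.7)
The plan is to deduce $T_\beta$-invariance from the equidistribution assumption by the standard ``shift by one'' argument, with the only subtlety being that $T_\beta$ is not continuous.

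First, I would record what we already know about $\nu$. Since $\nu$ is a weak-$*$ limit of empirical averages $\frac{1}{N_j}\sum_{k=0}^{N_j-1}\delta_{T_\beta^k x}$, and since we have just argued (via Corollary \ref{cor:dimension-of-limit-measures}) that $\dim\nu>0$, in particular $\nu$ is non-atomic. This is the crucial property: the map $T_\beta$ is continuous off the finite set $D=\{k/\beta : k=1,\ldots,\lfloor\beta\rfloor\}$, and for any $f\in C([0,1])$ the composition $f\circ T_\beta$ is bounded and continuous outside $D$. Because $\nu(D)=0$, the Portmanteau theorem applies to functions of this form.

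Next, fix $f\in C([0,1])$. By definition of equidistribution,
\[
\int f\,d\nu=\lim_{j\to\infty}\frac{1}{N_j}\sum_{k=0}^{N_j-1}f(T_\beta^k x).
\]
Shifting the index by one,
\[
\frac{1}{N_j}\sum_{k=0}^{N_j-1}f(T_\beta^{k+1}x)=\frac{1}{N_j}\sum_{k=0}^{N_j-1}f(T_\beta^k x)+O\!\left(\frac{\|f\|_\infty}{N_j}\right),
\]
so the left-hand side also converges to $\int f\,d\nu$. On the other hand, the left-hand side equals $\int (f\circ T_\beta)\,d\rho_j$, where $\rho_j=\frac{1}{N_j}\sum_{k=0}^{N_j-1}\delta_{T_\beta^k x}\to\nu$ weak-$*$. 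Since $f\circ T_\beta$ is a bounded function whose discontinuity set is contained in $D$ and $\nu(D)=0$, the Portmanteau theorem yields
\[
\int(f\circ T_\beta)\,d\rho_j\longrightarrow\int(f\circ T_\beta)\,d\nu.
\]

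Combining the two limits gives $\int f\,d\nu=\int f\circ T_\beta\,d\nu=\int f\,d(T_\beta\nu)$ for every $f\in C([0,1])$, hence $T_\beta\nu=\nu$. I expect no real obstacle here; the only point that required attention was the discontinuity of $T_\beta$, which is handled by the non-atomicity of $\nu$ supplied by the positive-dimension conclusion of Corollary \ref{cor:dimension-of-limit-measures}.
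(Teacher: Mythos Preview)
Your proposal is correct and follows essentially the same approach as the paper: both use that $\nu$ is non-atomic (hence gives zero mass to the finite discontinuity set of $T_\beta$), and then invoke the standard shift-by-one argument to pass from subsequential equidistribution to invariance. The paper's proof is just a two-sentence summary of exactly what you spelled out in detail.
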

\begin{proof}
$T_\beta$ has finitely many discontinuities, and $\nu$ is non-atomic, so the set of discontinuities has $\nu$-measure zero. Since $\nu$ arises as the measure for which $x$ equidistributes subsequentially, it is $T_\beta$-invariant.
\end{proof}

\begin{lem} \label{lem:dissonance}
Let $\tau$ be a probability measure on $\mathbb{R}$ with $\dim\tau\geq 1-\delta$. Then $\dim\tau*\eta_\omega=1$ for $Q$-a.e. $\omega$.
\end{lem}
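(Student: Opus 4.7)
The plan is to combine Marstrand's projection theorem, in the form of Corollary \ref{cor:marstrand}, with the $S$-invariance of $P$ to transfer an ``a.e.\ $t$'' statement about $\tau * S_t\eta$ into an ``a.e.\ $\eta$'' statement about $\tau * \eta$. By Proposition \ref{pro:positive-dim} we have $\dim\eta = \delta$ for $P$-a.e.\ $\eta$, so the hypothesis $\dim\tau \ge 1-\delta$ gives $\dim\tau + \dim\eta \ge 1$ on a set of full $P$-measure. For each such $\eta$, Corollary \ref{cor:marstrand} applied to the pair $(\tau,\eta)$ yields $\dim(\tau * S_t\eta) \ge 1$ for Lebesgue-a.e.\ $t\in\R$, and since any measure on $\R$ has dimension at most $1$, this inequality is in fact an equality.

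Set $B = \{\eta\in\mathcal{M}^\sqr : \dim(\tau*\eta) = 1\}$; this is a Borel set, since dimension is a Borel function of the measure in the weak-$*$ topology (see Section \ref{sub:dimension}). The previous step says that the set $\{(\eta,t) : S_t\eta \notin B\}$ has $(P\times\mathrm{Leb})$-measure zero when restricted to any bounded interval of $t$, so Fubini supplies a single $t$ (in fact Lebesgue-a.e.\ $t$) for which $P(S_t^{-1}B) = 1$. The $S$-invariance of $P$ then gives $P(B) = P(S_t^{-1}B) = 1$. Since $\eta_\omega$ is distributed as $P$ under $Q$, we obtain $\dim(\tau*\eta_\omega) = 1$ for $Q$-a.e.\ $\omega$.

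I do not expect a serious obstacle here; once Marstrand and $S$-invariance are in hand, the argument is essentially one line of Fubini. The only minor care is in verifying measurability of $B$ (so that Fubini applies) and in noting that the event of interest is $S$-invariant in the correct sense, namely $B = S_t^{-1}B$ up to $P$-null sets, which is what $S$-invariance of $P$ gives us. The conceptual content is just the observation that because $\tau$ is fixed (not random) while $\eta_\omega$ is random, it is the $S$-invariance of the law of $\eta_\omega$ that lets us discard the auxiliary translation parameter in the projection theorem.
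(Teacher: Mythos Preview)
Your proof is correct and takes essentially the same approach as the paper: both combine Corollary \ref{cor:marstrand}, Fubini, and the $S$-invariance of $P$ in exactly this way. The paper phrases the argument as an integral computation (showing $\int \dim(\tau*\eta)\,dP(\eta) = \int\int_0^1 \dim(\tau*S_t\eta)\,dt\,dP(\eta) = 1$ and concluding since the integrand is $\le 1$) rather than via your set $B$, but this is purely cosmetic.
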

\begin{proof}
Using $S$-invariance of $P$, Fubini and Theorem \ref{thm:marstrand}, \begin{eqnarray*}
  \int \dim(\tau*\eta_\omega)\,dQ(\omega) & = & \int \dim(\tau*\eta)\,dP(\eta) \\
       & = & \int_0^1 \int\dim(\tau*\eta)\,dS_tP(\eta)\,dt\\
       & = & \int \int_0^1\dim(\tau*S_t\eta)\,dt\,dP(\eta)\\
       & = & \int \min\{1,\dim\tau+\dim\eta)\,dP(\eta)\\
       & = & 1.
\end{eqnarray*}
Since the integrand on the left hand side is $\leq1$, it is a.s. equal to $1$, as claimed.
\end{proof}

Now let $\{\tau_n\}$ be the sequence of resonant measures provided by Theorem \ref{thm:existence-of-resonant-measures}. Then $\dim\tau_n\to 1$, so for $n$ large enough we have $\dim \tau_n>1-\delta$, hence by linearity of convolution, basic properties of dimension, and the previous lemma, \begin{eqnarray*}
  \dim \tau_n * \nu & = & \dim \left( \tau_n * \int c_\omega \cdot (\delta_{y_\omega}*\eta_\omega)|_{[0,1]}\,dQ(\omega)\right) \\
                    & = & \dim \left( \int c_\omega \cdot (\tau_n*\delta_{y_\omega}*\eta_\omega)|_{[0,1]}\,dQ(\omega)\right) \\
                    &  \geq & \essinf_{\omega\sim Q} \dim(\tau_n*\delta_{y_\omega}*\eta_\omega|_{[0,1]})\\ \\
                    &  \geq & \essinf_{\omega\sim Q} \dim(\tau_n*\eta_\omega)\\ \\
                    & = & \essinf_{\eta\sim P} \dim(\tau_n*\eta)\\
                    &   =  & 1.
\end{eqnarray*}
But by choice of $\tau_n$, this is possible only if $\dim\nu=0$ or $1$. Since $\dim\nu>0$, we must have $\dim\nu=1$. Lemma \ref{lem:unique-measure-of-maximal-dim} then allows us to conclude that $\nu$ is the Parry measure for $T_\beta$, as desired.

This completes the proof of Theorem \ref{thm:WM-case}

There is a version of Theorem \ref{thm:WM-case} for measures which do not generate a distribution. For a measure $\mu$ and a typical point $x$ let $\mathcal{D}(\mu,x)\subseteq\mathcal{D}$ denote the set of accumulation points of $\frac{1}{T}\int_0^T\delta_{\mu_{x,t}}dt$ as $t\to\infty$. In \cite[Theorem 1.7]{Hochman12b} it was shown that for $\mu$-a.e. $x$, this set consists EFDs. An easy adaptation of the proof of the theorem above shows that if $\mu$ is a measure such that a.s., $\mathcal{D}(\mu,x)$ contains only non-trivial ergodic distributions which do not have $k/\log n$ in their spectrum, then $\mu$ is pointwise $n$-normal. We shall not give the proof of this in detail.

\section{\label{sec:resonance}Construction of resonant measures}

The proof of Theorem \ref{thm:existence-of-resonant-measures} is slightly more transparent in the case that $\beta$ is an integer. After some preliminaries we will prove this case, since it is shorter and may shed light on the general case.

\subsection{\label{sub:entropy}Preliminaries on entropy}

We use standard notation and properties for the entropy $H(\mu,\mathcal{P})$ of a measure $\mu$ with respect to a partition $\mathcal{P}$. See \cite{Walters82} or any textbook in ergodic theory for details.

Let $\mathcal{A}^k$ be the partition of $\mathbb{R}$ into $k$-generation $n$-adic intervals, that is, intervals $[r/n^k,(r+1)/n^k)$ for $r\in\mathbb{N}$. For a $T_n$-invariant measure $\mu$, the Kolmogorov-Sinai entropy is given by \[
   h(\mu) = \lim_{k\to\infty}\frac{1}{k}H(\mu,\mathcal{A}^k)
\]
and the limit is also the infimum. In general, $h(\mu)\le \log n$, with equality if and only if $\mu$ is Lebesgue measure $\lambda$. We also have
\[
   \frac{1}{\log n}h(\mu) \geq \dim \mu
\]
with equality if $\mu$ is ergodic; in general $\dim\mu$ is the essential infimum over the dimensions (=normalized entropies)  of the ergodic components of $\mu$. This follows e.g. from the proof of Lemma \ref{lem:unique-measure-of-maximal-dim}.

The quantity $H(\mu,\mathcal{A}^k)$ is  not continuous in $\mu$, however we have the following approximate continuity under translation: If $\eta$ is a measure supported on an interval of length $<1/n^k$, then
\[
   |H(\eta*\mu,\mathcal{A}^k)-H(\mu,\mathcal{A}^k)|<c
\]
where $c$ is a universal constant.

\subsection{\label{sub:resonance-int-case}The integer case}
Fix an integer $n\geq 2$. Our goal is to construct a sequence of probability measures $\tau_{1},\tau_{2},\ldots$
on $\mathbb{R}$ such that $\dim\tau_i\to 1$ and any $T_{n}$-invariant measure $\nu$
with $0<\dim\nu<1$ resonates with $\tau_{i}$ for all large enough $i$.

We will use the standard identification of the map $T_n$ on $[0,1]$ with the shift map on the sequence space $\{0,\ldots,n-1\}^\mathbb{N}$, given by the base-$n$ expansion. This is defined uniquely off a countable set of points and hence for non-atomic measures is an a.e. isomorphism, so we will not distinguish between the models.

Let $N$ be an integer and define a measure $\nu_N$ on infinite sequences of digits $\{0,\ldots,n-1\}$ as follows. Set the first $N$ digits to be $0$. Let the next $N^2$ digits be chosen independently
and equiprobably from $\{0,\ldots,n-1\}$. Repeat this procedure, independently of previous choices,
for each subsequent block of $N+N^2$ symbols. Write $\nu_N$ also for the corresponding measure on $[0,1]$. Now, this measure is not $T_n$-invariant but it is $T_n ^{N+N^2}$-invariant, so the measure \[
   \tau_N=\frac{1}{N+N^2}\sum_{i=0}^{N+N^2-1}T_{n}^{i}\nu_N
\]
is $T_n$-invariant.

It is elementary to use Equation \eqref{eq:dim-using-local-dim} to show that $\dim \nu_N=N^2/(N+N^2)$, and so the same is true for $T^i \nu_N$, and hence for $\tau_N$. Thus $\dim\tau_N\to 1$ as $N\to\infty$.

Now let $\mu$ be a $T_n$-invariant measure and suppose that it is not Lebesgue measure. We aim to show that  $\dim\tau_N*\mu< 1$ for large enough $N$. Using the $T_n$-invariance of $\mu$ and the fact that $T_n^i$ is piecewise affine with constant expansion, we have \begin{eqnarray*}
  \dim \tau_N*\mu & = &\dim \left(\frac{1}{N+N^2}\sum_{i=0}^{N+N^2-1}(T_n^i\nu_N)*\mu\right)\\
                  & = &\inf_{0\leq i<N+N^2} \dim (T_n^i\nu_N)*\mu\\
                  & = &\inf_{0\leq i<N+N^2} \dim (T_n^i\nu_N)*(T_n^i\mu)\\
                  & = &\inf_{0\leq i<N+N^2} \dim (\nu_N*\mu)\\
                  & = & \dim \nu_N*\mu.
\end{eqnarray*}
Thus it is enough to show that $\dim(\nu_N*\mu)<1$ for large enough $N$, and since $\nu_N*\mu$ is $T_n^{N+N^2}$-invariant, we only need to show that $\nu_N*\mu$ is not Lebesgue.  $\nu_N$ is concentrated on the interval $[0,n^{-N})$, so  we know that \[
   H(\nu_N*\mu,\mathcal{A}^N) < H(\mu,\mathcal{A}^N) + c
\]
where $c$ is a universal constant. Since $\mu$ is not Lebesgue, it has less than full entropy, and hence $H(\mu,\mathcal{A}^N)<(1-\e)N\log n$ for some $\e>0$ independent of $N$. Thus
\[
 H(\nu_N*\mu,\mathcal{A}^N) < (1-\e)N\log n+c< N\log n
\]
for large enough $N$. Dividing by $N$ and taking the infimum over $N$ we find that $h(\tau_N*\nu)<\log n=h(\lambda)$, where $\lambda$ is Lebesgue measure, so $\tau_N*\nu\neq\lambda$, as desired.

\subsection{Dynamics of beta transformations} \label{sub:beta-transformations}

We review some basic facts about the beta transformations $T_\beta:x\mapsto \beta x\bmod 1$. We refer the reader to the surveys \cite{Blanchard89, Sidorov03} for further information and references.

Recall that $[m]=\{0,\ldots,m-1\}$. For each $\beta>1$, there is a $T$-invariant closed subset $X_\beta\subseteq [\lceil \beta\rceil]^\N$ (known as the \emdef{$\beta$-shift}) such that the beta expansion map $\pi:X_\beta \to [0,1]$, $\pi(x)=\sum_{n=1}^\infty x_n \beta^{-n}$ semi-conjugates the action of the shift map $T$ on $X_\beta$ with the action of $T_\beta$ on $[0,1]$. Further, $\pi$ is injective on $X_\beta$,  except at countably many points on which it is two-to-one.  In particular, any non-atomic $T_\beta$ invariant measure lifts uniquely to a shift-invariant measure on the $\beta$-shift.

We will require the following lemma on the structure of the $\beta$-shift for Pisot $\beta$.

\begin{lem} \label{lem:structure-Pisot-shift}
Let $\beta$ be a Pisot number. There exists $N_0=N_0(\beta)\in\N$ with the following property: let $\{ x_i\}$ be finite words in $X_\beta$ (i.e. $X_\beta$ contains infinite words starting with each of the $x_i$). Then the infinite concatenation $(0^N x_1 0^N x_2 \ldots)$ is in $X_\beta$ for $N> N_0$.
\end{lem}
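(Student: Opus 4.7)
The plan is to invoke Parry's combinatorial characterization of the $\beta$-shift. Let $d(\beta)=(d_k)_{k\ge 1}$ be the greedy $\beta$-expansion of $1$, and let $d^*(\beta)$ denote the associated admissibility sequence ($d^*(\beta)=d(\beta)$ if $d(\beta)$ does not terminate, and $d^*(\beta)=(d_1\cdots d_{m-1}(d_m-1))^\infty$ if $d(\beta)=d_1\cdots d_m 0^\infty$ with $d_m\neq 0$). Parry's theorem asserts that $y\in X_\beta$ iff $\sigma^k y \le_{\mathrm{lex}} d^*(\beta)$ for every $k\ge 0$. Schmidt's theorem says that for $\beta$ Pisot, $d(\beta)$ is eventually periodic, so $d^*(\beta)$ is eventually periodic; since its eventual period must contain a nonzero digit (the preperiod starts with $d_1=\lfloor\beta\rfloor\ge 1$, and one checks that the period of $d^*(\beta)$ cannot be all-zero), the maximum length $R$ of a run of consecutive zeros in $d^*(\beta)$ is finite. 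Set $N_0:=R$.

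Fix $N>N_0$ and write $y=(0^N x_1 0^N x_2 \ldots)$. We must verify $\sigma^m y\le_{\mathrm{lex}} d^*(\beta)$ for every $m\ge 0$. Whenever $(\sigma^m y)_1=0$, the comparison is immediate because $d^*_1\ge 1$; this covers every $m$ landing inside an inserted $0^N$ block (and any $m$ inside an $x_i$ whose first remaining symbol happens to be $0$).

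The essential case is when $\sigma^m y$ begins inside some $x_i$ at a position $j$, so that it starts with $w=(x_i)_j\cdots(x_i)_{|x_i|}$ of some length $\ell$, followed by $0^N x_{i+1} 0^N \ldots$. Since $x_i$ extends to an admissible infinite sequence, so does $w$, forcing either $w<_{\mathrm{lex}} d^*_1\cdots d^*_\ell$ (in which case $\sigma^m y<_{\mathrm{lex}} d^*(\beta)$ is immediate) or $w=d^*_1\cdots d^*_\ell$. In the latter case one must verify the continuation inequality $0^N x_{i+1}\ldots\le_{\mathrm{lex}}\sigma^\ell d^*(\beta)$. By definition of $R$, the tail $\sigma^\ell d^*(\beta)$ must carry a nonzero digit among its first $R+1$ positions, so $\sigma^\ell d^*(\beta)=0^r c\ldots$ with $c\ge 1$ and $r\le R<N$. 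The two sequences agree on their first $r$ (zero) positions, and at position $r+1$ our sequence still has $0$ whereas the right-hand side has $c\ge 1$; strict inequality follows.

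The only real input beyond lex-comparison bookkeeping is the eventual periodicity of $d(\beta)$ for Pisot $\beta$, i.e., Schmidt's theorem; this is what forces the finiteness of $R$, and hence forces zero-blocks of length $N>R$ to ``absorb'' any admissibility constraint inherited from the preceding word $x_i$. I expect this to be the only genuine obstacle; if one dropped the Pisot hypothesis then $d^*(\beta)$ could in principle have arbitrarily long runs of zeros and no uniform $N_0$ would exist.
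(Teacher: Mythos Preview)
Your proof is correct and follows essentially the same route as the paper: Parry's lexicographic characterization of $X_\beta$, eventual periodicity of the expansion of $1$ for Pisot $\beta$, and the resulting uniform bound on runs of zeros. The paper's proof is terser, simply asserting that once the zero-run bound $N_0$ is in hand ``it is clear'' that any concatenation $(0^\ell y_1 0^N y_2 0^N\ldots)$ satisfies the admissibility condition; your case analysis (splitting on whether $\sigma^m y$ starts in a zero block or inside some $x_i$, and in the latter case on whether the prefix matches or is strictly below the initial segment of $d^*(\beta)$) spells out exactly what the paper leaves implicit.
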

\begin{proof}
The following characterization of $X_\beta$ is essentially due to Parry \cite{Parry60}, see also \cite[Proposition 2.3]{Blanchard89}.  Let $a$ be the lexicographically least $\beta$-expansion of $1$, i.e. the lexicographically smallest sequence $a\in [\lceil \beta\rceil ]^\N$ such that $1=\sum_{i=1}^\infty a_i \beta^i$. Then $x\in X_\beta$ if and only if $T^k x\prec a$ for all $k$, where $\prec$ denotes lexicographically smaller or equal.

On the other hand, if $\beta$ is Pisot, then the sequence $a$ is eventually periodic, see \cite[Section 4.1]{Blanchard89}. It cannot end in infinitely many zeros because $ (a_1\ldots a_{k-1} (a_k-1))^\infty\prec (a_1\ldots a_k 0^\infty)$ and both sequences represent the same number in base $\beta$. It follows that the number of consecutive zeros in $a$ is bounded by some integer $N_0$. But then it is clear that for any finite words $\{y_i\}$ in $X_\beta$, any $N> N_0$ and any $\ell\ge 0$, we have $(0^\ell y_1 0^N y_2 0^N\ldots)\prec a$. This gives the claim.
\end{proof}

\subsection{Resonance in the Pisot case}

The proof of the Pisot case is not unlike the integer one. The main difference is that convolutions of $T_\beta$-invariant measures are no longer invariant or related in any obvious way to an invariant measure. This makes estimating their dimension more involved.

For the rest of this section we fix a Pisot number $\beta>1$, and write $B=\lceil\beta\rceil$. Given an integer $D\ge B$ (often implicit), and a $[D]$-valued finite or infinite sequence $x$ of length $|x|$, we let $\pi$ be the $\beta$ expansion map, i.e. $\pi(x) =\sum_{k=1}^{|x|} x_k\,\beta^{-k}$. If $|x|=\infty$, we also write $\pi_k(x)=\pi(x|_{\{1,\ldots,k\}})$. A key role will be played by the following partition of $D^k$:
 \[
 \mathcal{P}_k = \{ \pi^{-1}(\pi x): x\in [D]^k\}.
 \]
 The property of Pisot numbers that will be used in the proof is given in the following classical Lemma of Garsia \cite[Lemma 1.51]{Garsia62}:
 \begin{lem} \label{lem:Garsia}
 There exists $c>0$ (depending on $\beta$ and $D$) such that for any $x,y\in [D]^k$, either $\pi(x)=\pi(y)$, or $|\pi(x)-\pi(y)|\ge c \beta^{-k}$.
 \end{lem}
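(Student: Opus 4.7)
The plan is to reduce the estimate to the fact that a nonzero algebraic integer has norm of absolute value at least one, and then to use the defining property of Pisot numbers to control the contribution of the Galois conjugates.

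Given $x,y\in[D]^k$, consider the polynomial
\[
P(t)=\sum_{j=1}^{k}(x_{j}-y_{j})\,t^{k-j},
\]
which has integer coefficients bounded by $D-1$ in absolute value and degree at most $k-1$. By construction, $\pi(x)-\pi(y)=\beta^{-k}P(\beta)$, so it suffices to show that either $P(\beta)=0$, or $|P(\beta)|\ge c$ for a constant $c>0$ depending only on $\beta$ and $D$.

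Let $\beta=\beta_{1},\beta_{2},\ldots,\beta_{d}$ be the Galois conjugates of $\beta$, so that by the Pisot hypothesis $|\beta_{i}|<1$ for $i\ge 2$. Since $\beta$ is an algebraic integer and $P$ has integer coefficients, $P(\beta)$ is an algebraic integer in $\mathbb{Q}(\beta)$, and hence its norm $N(P(\beta))=\prod_{i=1}^{d}P(\beta_{i})$ is a rational integer. If $P(\beta)=0$ we are done; otherwise, $P$ is not divisible by the minimal polynomial of $\beta$, which is also the minimal polynomial of each $\beta_{i}$, so $P(\beta_{i})\ne 0$ for every $i$, giving $|N(P(\beta))|\ge 1$.

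To conclude, bound the remaining conjugates: for $i\ge 2$,
\[
|P(\beta_{i})|\;\le\;(D-1)\sum_{j=1}^{k}|\beta_{i}|^{k-j}\;\le\;\frac{D-1}{1-|\beta_{i}|},
\]
so there is a constant $C=C(\beta,D)$ with $\prod_{i\ge 2}|P(\beta_{i})|\le C$. Combined with $|N(P(\beta))|\ge 1$, this gives $|P(\beta)|\ge 1/C$, and hence $|\pi(x)-\pi(y)|\ge c\beta^{-k}$ with $c=1/C$. The only conceptual step is the norm argument; the rest is a straightforward estimate using $|\beta_{i}|<1$ for $i\ge 2$, which is precisely the Pisot property.
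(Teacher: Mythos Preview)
Your argument is correct and is precisely the classical proof of Garsia's separation lemma. The paper does not give its own proof of this statement; it quotes the lemma as a classical result and cites Garsia's original paper \cite{Garsia62}, so there is nothing to compare against beyond noting that what you wrote is the standard norm argument.
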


We quote a basic fact for later reference:

\begin{lem} \label{lem:comp-discrete-measure}
Let $\widetilde{\mu}$ be any measure on $[D]^\N$, and set
\[
a = a_{\beta,D}= \frac{(D-1)\beta^{-1}}{1-\beta^{-1}}.
\]
Then for any set Borel $A\subseteq\R$ and any $k\in\N$,
\begin{enumerate}
\item $\pi\widetilde{\mu}(A) \le  \pi_k\widetilde{\mu}(A^{(a\beta^{-k})})$,
\item $\pi\widetilde{\mu}(A^{(a\beta^{-k})}) \ge \pi_k\widetilde{\mu}(A)$,
\end{enumerate}
where $A^{(\delta)}$ denotes the $\delta$-neighborhood of $A$.
\end{lem}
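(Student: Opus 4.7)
The proof should hinge on a single pointwise estimate: for every $x\in[D]^\N$,
\[
|\pi(x)-\pi_k(x)| \;=\; \left|\sum_{n=k+1}^\infty x_n\beta^{-n}\right| \;\le\; (D-1)\sum_{n=k+1}^\infty \beta^{-n} \;=\; a\,\beta^{-k},
\]
using $x_n\in\{0,1,\ldots,D-1\}$ and summing the geometric series. So $\pi$ and its truncation $\pi_k$ differ uniformly by at most $a\beta^{-k}$ on $[D]^\N$, which is precisely what the inflation constant in the statement is designed to absorb.

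Given this, both inequalities become simple set-inclusion manipulations. For (1), the estimate implies the inclusion $\pi^{-1}(A)\subseteq\pi_k^{-1}\bigl(A^{(a\beta^{-k})}\bigr)$: indeed, if $\pi(x)\in A$, then $\pi_k(x)$ lies within distance $a\beta^{-k}$ of $A$. Taking $\widetilde\mu$-measure of both sides and unpacking the push-forward notation yields $\pi\widetilde\mu(A)\le \pi_k\widetilde\mu(A^{(a\beta^{-k})})$. For (2), the symmetric inclusion $\pi_k^{-1}(A)\subseteq\pi^{-1}\bigl(A^{(a\beta^{-k})}\bigr)$ holds by the same pointwise bound, applied in the other direction; again taking $\widetilde\mu$-measures gives the stated inequality.

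There is no real obstacle: the only mild point is verifying that the constant $a=(D-1)\beta^{-1}/(1-\beta^{-1})$ is exactly what the geometric sum produces (which it is). No use of the Pisot property or of Lemma~\ref{lem:Garsia} is needed here; this lemma is a purely combinatorial statement about how the full $\beta$-address and its $k$-th truncation differ, valid for any $\beta>1$ and any $D\ge\lceil\beta\rceil$. I would write the proof as a single short paragraph deriving the pointwise bound followed by the two set inclusions.
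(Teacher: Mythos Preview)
Your proposal is correct and follows exactly the same approach as the paper: the paper's proof consists of a single sentence noting that $|\pi(x)-\pi_k(x)|\le\sum_{i=k+1}^\infty(D-1)\beta^{-i}=a\beta^{-k}$, from which both inequalities are declared immediate. Your version spells out the set-inclusion step a bit more explicitly, but the argument is identical.
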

\begin{proof}
Immediate from the fact that if $x\in [D]^\N$, then
\[
|\pi(x)-\pi_k(x)| \le \sum_{i=k+1}^\infty (D-1)\beta^{-i} = a\,\beta^{-k}.
\]
\end{proof}

The following lemma is similar to \cite[Lemma 3]{Lalley98}.

\begin{lem} \label{lem:entropy-dim}
Let $\widetilde{\mu}$ be an $T$-invariant measure on $[D]^\N$ (as before $T$ is the shift map). Then
\[
\dim\pi\widetilde{\mu} \le \lim_{k\to\infty} \frac{ H(\widetilde{\mu},\mathcal{P}_k)}{k\log\beta}= \inf_{k\ge 1}\frac{H(\widetilde{\mu},\mathcal{P}_k)}{k\log\beta}.
\]
\end{lem}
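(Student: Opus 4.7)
The plan is to treat the two claims of the lemma in sequence: first, use Fekete's subadditive lemma to show that the limit exists and equals the infimum; second, use the tail estimate from Lemma \ref{lem:comp-discrete-measure} together with Fatou to obtain the upper bound on $\dim \pi\widetilde{\mu}$.

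For the Fekete step, the key observation is the decomposition
\[
\pi_{k+\ell}(y_1\cdots y_{k+\ell}) = \pi_k(y_1\cdots y_k) + \beta^{-k}\pi_\ell(y_{k+1}\cdots y_{k+\ell}).
\]
Consequently, the join $\mathcal{P}_k \vee T^{-k}\mathcal{P}_\ell$ refines (the pull-back of) $\mathcal{P}_{k+\ell}$: knowing the first atom and the shifted second atom pins down the value of $\pi_{k+\ell}$. Standard subadditivity of the entropy of a refinement, together with the $T$-invariance of $\widetilde{\mu}$, then gives
\[
H(\widetilde{\mu},\mathcal{P}_{k+\ell}) \le H(\widetilde{\mu},\mathcal{P}_k) + H(\widetilde{\mu},\mathcal{P}_\ell),
\]
and Fekete's lemma yields $\lim_k H(\widetilde{\mu},\mathcal{P}_k)/k = \inf_k H(\widetilde{\mu},\mathcal{P}_k)/k$, proving the equality in the statement.

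For the dimension bound, I would pass through the lower local dimension. Fix $y \in [D]^\N$ and let $\mathcal{P}_k(y)$ denote the atom of (the pull-back of) $\mathcal{P}_k$ containing $y$. Any $z \in \mathcal{P}_k(y)$ satisfies $\pi_k(z)=\pi_k(y)$ by definition, and the tail bound used in Lemma \ref{lem:comp-discrete-measure} gives $|\pi(z)-\pi_k(z)| \le a\beta^{-k}$. Hence $\pi(\mathcal{P}_k(y)) \subseteq B(\pi(y), a\beta^{-k})$, yielding the crucial estimate
\[
\pi\widetilde{\mu}\bigl(B(\pi(y), a\beta^{-k})\bigr) \ge \widetilde{\mu}(\mathcal{P}_k(y)).
\]
Taking logarithms, dividing by $\log(a\beta^{-k})$ and letting $k\to\infty$ gives the pointwise bound
\[
\underline{\dim}(\pi\widetilde{\mu}, \pi(y)) \;\le\; \liminf_{k\to\infty} \frac{-\log\widetilde{\mu}(\mathcal{P}_k(y))}{k\log\beta}.
\]

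The proof then concludes using $\dim\theta \le \int \underline{\dim}(\theta,x)\, d\theta(x)$ (which follows from $\dim\theta = \essinf_x \underline{\dim}(\theta,x)$, recall \eqref{eq:dim-using-local-dim}), Fatou's lemma applied to the non-negative information functions $-\log\widetilde{\mu}(\mathcal{P}_k(y))$, and the standard identity $\int -\log\widetilde{\mu}(\mathcal{P}_k(y))\,d\widetilde{\mu}(y) = H(\widetilde{\mu},\mathcal{P}_k)$; since the distribution of $\pi(y)$ under $\widetilde{\mu}$ is $\pi\widetilde{\mu}$, the chain of inequalities delivers
\[
\dim \pi\widetilde{\mu} \;\le\; \liminf_{k\to\infty} \frac{H(\widetilde{\mu},\mathcal{P}_k)}{k\log\beta},
\]
which combined with the first step is the claim. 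I do not anticipate a serious obstacle: the only mildly unusual point is that $\{\mathcal{P}_k\}$ is not of the form $\bigvee_{i=0}^{k-1}T^{-i}\mathcal{Q}$ for a fixed $\mathcal{Q}$, so one cannot directly appeal to Shannon--McMillan--Breiman. The additive decomposition $\pi_{k+\ell}=\pi_k+\beta^{-k}\pi_\ell\circ T^k$ is precisely what is needed to restore subadditivity and make the Fekete argument available.
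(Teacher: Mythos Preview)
Your proof is correct and follows essentially the same route as the paper's: the subadditivity step via $\mathcal{P}_k\vee T^{-k}\mathcal{P}_\ell$ refining $\mathcal{P}_{k+\ell}$, and the dimension bound via the ball estimate plus Fatou on the information functions, are exactly the paper's two ingredients (in reversed order). One trivial slip: from $\pi_k(z)=\pi_k(y)$ and $|\pi(\cdot)-\pi_k(\cdot)|\le a\beta^{-k}$ you get $\pi(\mathcal{P}_k(y))\subseteq B(\pi(y),2a\beta^{-k})$, not radius $a\beta^{-k}$; this of course does not affect the limit.
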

\begin{proof}
Write $\mu=\pi\widetilde{\mu}$. For the first inequality, note first that
\[
\mu(B(\pi x,(1+a)\beta^{-k})) \ge \widetilde{\mu}(\mathcal{P}_k(x)),
\]
for any $x\in [D]^\N$, where $a$ is the constant from Lemma \ref{lem:comp-discrete-measure}. The inequality follows by combining this and Fatou's lemma applied to the sequence
\[
g_k(x) = \frac{\log\widetilde{\mu}(\mathcal{P}_k(x))}{-k\log\beta}.
\]

For the second equality, it is enough to show that the sequence $H(\mathcal{P}_k,\widetilde{\mu})$ is sub-additive. The partition $\mathcal{P}_k \vee T^{-k}\mathcal{P}_m$ is a refinement of $\mathcal{P}_{m+k}$, since $\pi_k(x)$ and $\pi_m(T^k x)$ determine $\pi_{m+k}(x)$. Thus
\begin{align*}
H(\mathcal{P}_{m+k},\widetilde{\mu}) &\le H(\mathcal{P}_k \vee T^{-k}\mathcal{P}_m,\widetilde{\mu})\\
&\le H(\mathcal{P}_k,\widetilde{\mu}) + H(\mathcal{P}_m,\widetilde{\mu}),
\end{align*}
using the invariance of $\mu$.
\end{proof}

Note that in the above we do not assume that $\pi\widetilde{\mu}$ is $T_\beta$-invariant.

\begin{lem} \label{lem:entropy-dim-drop}
Let $\widetilde{\mu}$ be the lift to $X_\beta \subseteq [B]^\N$ of a non-atomic $T_\beta$-invariant measure $\mu$. If $\mu$ is not the Parry measure, then
\[
\lim_{k\to\infty} \frac{ H(\mathcal{P}_k,\widetilde{\mu})}{k} < \log\beta.
\]
\end{lem}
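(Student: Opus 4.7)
The plan is to bound the left-hand side by the Kolmogorov--Sinai entropy $h(T,\widetilde{\mu})$ of the shift on $X_\beta$, which for a non-atomic $\mu$ equals $h(T_\beta,\mu)$, and then invoke the classical fact that the Parry measure is the unique measure of maximal entropy $\log\beta$ (already cited in the proof of Lemma \ref{lem:unique-measure-of-maximal-dim}).

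Concretely, let $\mathcal{C}_k$ be the cylinder partition of $X_\beta$ whose atoms are the sets on which the first $k$ coordinates are constant. Since $\pi_k(x)=\sum_{i=1}^k x_i\beta^{-i}$ depends only on $x_1,\dots,x_k$, every atom of $\mathcal{C}_k$ is contained in a single level set of $\pi_k$. Thus $\mathcal{C}_k$ refines $\mathcal{P}_k$, and consequently
\[
H(\mathcal{P}_k,\widetilde{\mu})\;\le\;H(\mathcal{C}_k,\widetilde{\mu}).
\]
The partition $\mathcal{C}_1$ is a topological generator for the shift $T$ on $X_\beta$ and $\mathcal{C}_k=\bigvee_{i=0}^{k-1}T^{-i}\mathcal{C}_1$, so dividing by $k$ and letting $k\to\infty$ gives $\lim_k H(\mathcal{C}_k,\widetilde{\mu})/k=h(T,\widetilde{\mu})$. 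By Lemma \ref{lem:entropy-dim} the limit on the left-hand side exists as well.

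To finish, I would use that the factor map $\pi:X_\beta\to[0,1]$ is injective outside a countable set, so for the non-atomic $\mu$ it is a measurable isomorphism intertwining $T$ and $T_\beta$; hence $h(T,\widetilde{\mu})=h(T_\beta,\mu)$. The uniqueness of the measure of maximal entropy (Hofbauer, see also \cite[Remark 2.4]{Sidorov03}) combined with the hypothesis $\mu\neq\nu_\beta$ gives $h(T_\beta,\mu)<\log\beta$, yielding the strict inequality claimed in the lemma.

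I do not anticipate any real obstacle: the argument is just a routine comparison between the geometric partition $\mathcal{P}_k$ and the symbolic cylinder partition $\mathcal{C}_k$, wrapped around the classical uniqueness of the measure of maximal entropy on the $\beta$-shift. The only small point requiring care is that $\mu$ being non-atomic is enough to identify $h(T,\widetilde{\mu})$ with $h(T_\beta,\mu)$, which follows from the standard fact that the countable set where $\pi$ fails to be injective is $\mu$-null.
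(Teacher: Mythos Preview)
Your proof is correct and, in fact, slightly more economical than the paper's. Both arguments bound the limit by the Kolmogorov--Sinai entropy $h(\mu)$ and then invoke uniqueness of the measure of maximal entropy, but they take different routes to that bound. You compare $\mathcal{P}_k$ directly with the symbolic cylinder partition $\mathcal{C}_k$, obtaining $H(\mathcal{P}_k,\widetilde{\mu})\le H(\mathcal{C}_k,\widetilde{\mu})$ by the trivial refinement observation, and then identify $\lim_k H(\mathcal{C}_k,\widetilde{\mu})/k$ with $h(T_\beta,\mu)$ via the a.e.\ isomorphism $\pi$. The paper instead passes to a geometric partition $\mathcal{Q}_k$ of $[0,1]$ and compares it with the dynamical partition $\mathcal{A}^k$, using Garsia's Lemma to show that each atom of $\mathcal{A}^k$ splits into only boundedly many atoms of $\mathcal{Q}_k$; this yields the two-sided estimate and hence the \emph{equality} $\lim_k H(\mathcal{P}_k,\widetilde{\mu})/k = h(\mu)$. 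Your argument avoids Garsia's Lemma altogether at the cost of proving only the upper bound, but since the lemma as stated (and its use in the proof of Theorem~\ref{thm:existence-of-resonant-measures}) requires only the strict inequality, nothing is lost.
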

\begin{proof}
We claim that the limit in the left-hand side equals the entropy of $\mu$ under $T_\beta$; this will imply the lemma since the Parry measure is the unique measure of maximal entropy $\log\beta$.

As before, let $\mathcal{A}$ be the partition of $[0,1]$ into intervals $[j/\beta,(j+1)/\beta)\cap[0,1]$, and $\mathcal{A}^k = \mathcal{A}\vee\cdots \vee T_\beta^{k-1} \mathcal{A}$. Let also $\mathcal{Q}_k$ be the partition of $[0,1]$ into half-open intervals determined by the points $\{ \pi(x):x\in D^n\}$. Since $\widetilde{\mu}$ is supported on $X_\beta$, $H(\mathcal{P}_k,\widetilde{\mu})=H(\mathcal{Q}_k,\mu)$ (the correspondence between the elements of both partitions follows from the fact that $X_\beta$ is composed of the lexicographically least sequences with a given $\beta$ expansion, which implies tha the lexicographic order on $X_\beta$ projects onto the usual order of $[0,1]$, see e.g. \cite{Sidorov03}).

On the other hand, it is easy to see that $\mathcal{Q}_k$ refines $\mathcal{A}^k$ and, thanks to Garsia's Lemma, each atom of $\mathcal{A}^k$ is the union of a uniformly bounded number of atoms of $\mathcal{Q}_k$. Hence
\[
\lim_{k\to\infty} \frac1k H(\mathcal{P}_k,\widetilde{\mu}) = \lim_{k\to\infty} \frac1k H(\mathcal{A}^k,\mu) = h(\mu),
\]
as claimed.
\end{proof}

\begin{proof}[Proof of Theorem \ref{thm:existence-of-resonant-measures}]
Let $M \gg N \gg 1$  be large numbers; $M$ will be chosen as a function of $N$ later. We construct a measure $\tau=\tau_{M,N}$ on $[B]^\N$ as follows. Let  $\widetilde{\lambda}$ be the lift of the Parry measure $\lambda_\beta$ to the code space. Now let $\widetilde{\tau}_0$ be the measure on $[B]^\N$ defined as follows (compare with the measure constructed in the integer case). The first $N$ digits are $0$. The next $M$ digits are chosen according to $\widetilde{\lambda}$. Continue this procedure for each block of $N+M$ digits, with all the choices independent.

As in the integer case, this measure is $T^{M+N}$-invariant but not $T$-invariant, so we define
\[
\widetilde{\tau} = \widetilde{\tau}_{M,N}=\frac{1}{M+N} \sum_{i=0}^{M+N-1} T^i\widetilde{\tau}_0,
\]
which is shift-invariant and ergodic. Lemma \ref{lem:structure-Pisot-shift} shows that, provided $N$ is large enough,  $\widetilde{\tau}_0$ and hence also $\widetilde{\tau}$ are defined on the $\beta$-shift $X_\beta$. In particular $\tau=\tau_{N,M}:=\pi\widetilde{\tau}$ is $T_\beta$-invariant.

Let $\tau_0=\pi\widetilde{\tau}_0$. The Parry measure $\lambda_\beta$ has a bounded density with respect to Lebesgue measure (in the Pisot case it is actually piecewise constant). It follows that if $I$ is an interval determined by two consecutive points of the form $\sum_{i=1}^{(N+M)k} x_i\beta^{-i}$, then $\tau_0(I) \le c^k\beta^{-Mk}$, where $c>0$ is a constant that depends only on $\beta$ (in particular, it is independent of $M$, $N$ and $I$). By Garsia's Lemma \ref{lem:Garsia}, any interval of length $2\beta^{-(M+N)k}$ can be covered by a uniformly bounded number of such $I$, and we conclude that
\[
\liminf_{r\downarrow 0}\frac{\log\tau_0([x-r,x+r])}{\log r} \ge \frac{\log c+M\log\beta}{(M+N)\log\beta}.
\]
Thus for any $N$, by taking $M=M(N,c)$ large enough, we can ensure that $\dim\tau=\dim\tau_0>1-1/N$.

It remains to show that if $N$ is large enough, then for any $M$, $\dim(\mu*\tau)<1$. Since $\mu$ is invariant, arguing as in the integer case we see that it suffices to show this with $\tau_0$ in place of $\tau$ (note that the argument does not use invariance of the convolved measure, only the identity $\dim(T_\beta^i\mu*T_\beta^i\nu)=\dim(\mu*\nu)$, which holds for any map that is piecewise affine with constant slope, in particular $T_\beta$).

Note that $\mu*\tau_0$ is the projection of $\mu\times \tau_0$ under the addition map $(x,y)\to x+y$, and hence $\mu*\tau_0 = \pi\widetilde{\rho}$, where $\widetilde{\rho}$ is the image of $\widetilde{\mu}\times\widetilde{\tau}$ on $[B]^\N\times [B]^\N$ under the map $(x,y)\to (x_i+y_i)_i$ (so that $\widetilde{\rho}$ is defined in $[2B-1]^\N$, and it is $T$-invariant). It follows from Lemma \ref{lem:entropy-dim} (applied with $D=2B-1$ and the partitions $\mathcal{P}_k$ defined in terms of $D$) that
\begin{equation} \label{eq:upper-bound-entropy-dim}
\dim(\mu*\tau_0) \le \frac{H(\mathcal{P}_{M+N},\widetilde{\rho})}{(M+N)\log\beta}.
\end{equation}

Since, by assumption, $\dim\mu<1$, we know from Lemma \ref{lem:entropy-dim-drop} that there is $\e>0$ such that, if $N$ is large enough, then
\[
H(\mathcal{P}_N,\widetilde{\mu}) < (1-\e) N\log\beta.
\]

Using this, the fact that $\mathcal{P}_N\vee T^{-N}\mathcal{P}_M$ refines $\mathcal{P}_{M+N}$, that $|\mathcal{P}_M|\le C\,\beta^M$ (by Garsia's Lemma), and that $\tau$ is concentrated on $\{ x\in [B]^\N: x_1=\cdots=x_N=0\}$ (which implies that $\widetilde{\rho}$ and $\widetilde{\mu}$ coincide on $\mathcal{P}_N$), we estimate
\begin{align*}
H(\mathcal{P}_{M+N},\widetilde{\rho}) &\le H(\mathcal{P}_N,\widetilde{\rho}) + H(T^{-N}\mathcal{P}_M,\widetilde{\rho})\\
&\le H(\mathcal{P}_N,\widetilde{\rho}) + \log|\mathcal{P}_M|\\
&\le H(\mathcal{P}_N,\widetilde{\mu}) + M\log\beta + \log C\\
&\le ((1-\e)N+M+\log C)\log\beta.
\end{align*}
Recalling \eqref{eq:upper-bound-entropy-dim}, we conclude that there is $N_0$ such that for all $N\ge N_0$ and all $M\in \N$,
\[
\dim(\mu*\tau_0)< 1.
\]
This completes the proof.
\end{proof}

\section{\label{sec:app-to-fractals-1}Application to iterated function systems: Theorems \ref{thm:dissonant-IFSs}, \ref{thm:totally-nonlinear-IFSs} and \ref{thm:BA-normal-numbers}}

\subsection{\label{limit-geometries}Limit geometries}

In this section we fix the following notation. Let $\mathcal{I}=\{f_0\,\ldots\,f_{r-1}\}$ be an IFS on an interval which, without loss of generality, we assume is $[0,1]$.  We will henceforth assume that $\mathcal{I}$ is $C^\alpha$ for some $\alpha>1$ or $\alpha=\omega$, and regular as defined in the introduction.

Let $\mu$ be a quasi-product measure for $\mathcal{I}$. The next lemma contains the key structural information we shall require about $\mu$; it is a manifestation of ideas that go back to Sullivan \cite{Sullivan88}. We write $\nu_1\sim_C\nu_2$ to denote that the measures $\nu_1,\nu_2$ are mutually absolutely continuous with both Radon-Nikodym densities bounded by $C$, i.e. $1/C\leq d\nu_1/d\nu_2 \leq C$.

\begin{lem} \label{lem:limit-geometries}
Then there is $C=C(\mu)>0$ such that the following holds. Let $x\in\supp\mu$ and let $\nu$ be an accumulation point of $\mu_{x,t}$ as $t\to\infty$. Then $\nu\sim_C (g\mu)|_{[-1,1]}$ and $\mu\sim_C (h\nu)|_{[0,1]}$ for some $g,h\in\diff^\alpha(\R)$.
\end{lem}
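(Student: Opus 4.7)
The plan is to use the quasi-product property to identify restrictions of $\mu$ to cylinders with rescaled push-forwards of $\mu$ by the corresponding cylinder maps, and then to exploit bounded distortion to show that $\mu_{x,t}$ coincides --- up to a bounded Radon--Nikodym factor --- with a push-forward of $\mu$ by a concrete ``renormalized'' branch $\tilde h_t$. A subsequential limit of the $\tilde h_t$ will then yield the diffeomorphism $g$ in the statement.

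Fix $x\in\supp\mu$ and pick a symbolic address $(i_1,i_2,\ldots)$ with $x\in C_n := g_n([0,1])$ for every $n$, where $g_n=f_{i_1}\circ\cdots\circ f_{i_n}$. Iterating the quasi-product inequality \eqref{eq:quasi-product} and extending from cylinder sub-sets to arbitrary Borel sets by monotone convergence produces a constant $C_0=C_0(\mu)$ with
\[
  \mu|_{C_n}\;\sim_{C_0}\;\mu(C_n)\,(g_n)_*\mu \qquad \text{for all }n.
\]
The standard $C^{1+\e}$ bounded distortion principle gives a uniform H\"older bound on $\log g_n'$, so that $g_n' \asymp \diam(C_n)$ on $[0,1]$ independently of $n$. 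Now choose $n=n(t)$ minimal with $C_{n(t)}\cap\supp\mu\subseteq [x-e^{-t},x+e^{-t}]$; then $\diam(C_{n(t)})\asymp e^{-t}$, and by the open set condition only a bounded number of other cylinders of comparable diameter intersect the window. With $D_t(y)=e^t(y-x)$ the affine rescaling defining $\mu_{x,t}$, set $\tilde h_t := D_t\circ g_{n(t)}$. Then $\tilde h_t:[0,1]\to J_t\subseteq[-1,1]$ is a $C^{1+\e}$ diffeomorphism with $|J_t|\asymp 1$, derivatives uniformly bounded above and away from zero, and $\log\tilde h_t'$ of uniformly bounded $C^\e$ norm. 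Combining with the quasi-product comparability above gives
\[
  \mu_{x,t}\big|_{J_t}\;\sim_{C_0}\;(\tilde h_t)_*\mu,
\]
modulo a bounded factor comparing $\mu(C_{n(t)})$ with $\mu([x-e^{-t},x+e^{-t}])$, which is controlled by the bounded-multiplicity statement above together with quasi-product.

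Along the subsequence $t_k\to\infty$ with $\mu_{x,t_k}\to\nu$, I would extract via Arzel\`a--Ascoli a further subsequence along which $\tilde h_{t_k}$ and $\tilde h_{t_k}^{-1}$ both converge in $C^1$ to maps $\tilde h$ and $\tilde h^{-1}$; lower semi-continuity of the H\"older norm places $\tilde h$ in $C^{1+\e}$, and in the $C^\omega$ case a Montel-type normal-families argument applied to uniform holomorphic extensions of the $\tilde h_{t_k}$ upgrades the convergence to $C^\omega$. Extending $\tilde h$ (respectively $\tilde h^{-1}$) to a global $\diff^\alpha(\R)$ diffeomorphism $g$ (respectively $h$) and passing to the limit in the comparability above yields $\nu\sim_{C_0}(g\mu)|_{[-1,1]}$ and $\mu\sim_{C_0}(h\nu)|_{[0,1]}$.

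The main obstacle is this $C^\alpha$-compactness step: it is routine in the $C^{1+\e}$ setting from bounded distortion, but in the real-analytic case one needs uniform control over the holomorphic extensions of arbitrarily long compositions of inverse branches to a fixed complex neighborhood of $[0,1]$, which demands a version of bounded distortion in the complex domain. A minor technical nuisance is the non-uniqueness of symbolic addresses at countably many endpoints, but the conclusion is insensitive to this choice.
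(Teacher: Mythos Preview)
Your approach is essentially the paper's: identify $\mu_{x,t}$, up to a bounded Radon--Nikodym factor, with the push-forward of $\mu$ by a renormalized cylinder composition, and pass to a limit diffeomorphism. The difference is that where you invoke Arzel\`a--Ascoli (and, in the analytic case, a Montel argument needing complex bounded distortion), the paper simply cites the Sullivan--Bedford--Fisher limit-geometry theorem \cite[Theorems~5.9 and~6.1]{BedfordFisher97}: the renormalized compositions $A_{x,n}\circ f_{x_{-n}}\circ\cdots\circ f_{x_0}$ converge in $C^1$ to a $C^\alpha$ diffeomorphism $F^*_x$, continuously in the address. That theorem is exactly the packaged form of the ``complex bounded distortion'' step you correctly flag as the main obstacle, and citing it handles all $\alpha\in(1,\infty]\cup\{\omega\}$ at once.

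One point where your write-up slips: with your choice of $n(t)$ (cylinder contained in the window) the limit only yields $\nu|_{J}\sim_C g\mu$ on a subinterval $J\subseteq[-1,1]$, and the bounded-multiplicity remark does not by itself produce a single $g$ covering all of $[-1,1]$. The paper instead arranges (via the Bedford--Fisher normalization) that the rescaled cylinder image covers the window, so that $\mu_{x,t}$ is a translation/restriction of $S_sF^*_{x,n}\mu$; this gives $\nu\sim_C(g\mu)|_{[-1,1]}$ directly. The second conclusion is then not obtained from $h=g^{-1}$ as you propose, but by one more use of the quasi-product property: choose a cylinder map $f$ with $f\mu\sim_C(g\mu)|_I$ for some $I\subseteq[-1,1]$, and set $h=f^{-1}g$.
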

\begin{proof}
Given a finite sequence $y\in [r]^n$, let $f_y = f_{y_1}\circ \cdots \circ f_{y_n}$ and $f_y^*=A_y f_y$, where $A_y$ is the renormalizing homothety mapping $f_y([0,1])$ back to $[0,1]$. It is proved in \cite[Theorems 5.9 and 6.1]{BedfordFisher97} that  for a left-infinite sequence $y=(y_i)_{i=-\infty}^0$, the sequence $f_{y_{-n}\ldots y_{0}}^*$ converges, in the $C^1$ topolofy, to a $C^\alpha$ diffeomorphism $F^*_y$ (these are known as \emph{limit diffeomorphisms}). Moreover, the dependence of $F^*_y$ on $y$ is uniformly continuous. In particular, the family $\{ f^*_y\}$, where $y$ ranges over all finite words, is relatively compact in the $C^1$ topology.

For the first part, $\nu\sim_C (g\mu)|_{[-1,1]}$, one notes that $\mu_{x,t}$ is $C$-equivalent to a bounded translation, a restriction and normalization of $S_s f^*_y\mu$  for an appropriate word $y=y(t)$, whose length tends to $\infty$ with $t$, and some $s=s(t)\in [0,L]$, where $L$ depends only on the IFS (one can take $L$ to be the maximum of $|\log f'_i(x)|$ over $i\in [r]$ and $x\in [0,1]$). Also, the space of measures $C$-equivalent to $\mu$ is weak-* closed. Thus, up to passing to a subsequence,  $\nu$ is $C$-equivalent to a translation, restriction and normalization of $S_s F\mu$ for some limit diffeomorphism $F$ and $s\in [0,L]$.

For the second statement, note that it follows from the first part that $\nu|_I \sim_C g(\mu|_J)$ for suitable intervals $I,J$. Moreover, we can take $J=f_y([0,1])$ for some word $y$. In this case $\mu|_J \sim_C f_y\mu$ by the quasi-product property, so the claim follows with $h=(g f_y)^{-1}$ (and a different value of $C$).
\end{proof}

Observe that by Lemmas \ref{lem:image-of-USM} and \ref{lem:limit-geometries}, if $\nu$ is any accumulation point of $\mu_{x,t}$, and $\nu$ generates $P$, then so does $\mu$. This fact will be key in the proof of the following theorem.

\begin{thm}\label{thm:gibbs-generate-EFDs}
$\mu$ generates an $S$-ergodic, non-trivial distribution $P$. Furthermore, there is $C>0$, such that $P$-a.e. $\nu$ satisfies $\nu\sim_C (g\mu)|_{[-1,1]}$ for some $g\in\diff^\alpha(\R)$. .
\end{thm}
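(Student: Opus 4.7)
The plan is to realize $\mu$ through the symbolic coding of $\mathcal{I}$ and identify the scaling flow on its sceneries with a suspension over the symbolic shift, so that generating a distribution becomes Birkhoff's ergodic theorem. The Bedford--Fisher limit geometries provide the $C^1$-continuous bridge between the two pictures and allow interpolation between cylinder scales.

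First I would take the standard coding $\pi\colon[r]^{\mathbb{N}}\to X$, $\pi(\omega)=\lim f_{\omega_1}\cdots f_{\omega_n}(0)$, and pull $\mu$ back to $\widetilde{\mu}=\pi^{-1}\mu$ on $[r]^{\mathbb{N}}$. The quasi-product inequalities \eqref{eq:quasi-product} force $\widetilde{\mu}$ to be equivalent, with uniformly bounded Radon--Nikodym density, to a shift-invariant, ergodic (in fact mixing) Gibbs-type measure $\mathbb{P}^+$, and I would pass if convenient to the natural extension $(\overline{\Sigma},\overline{\sigma},\overline{\mathbb{P}})$ on $[r]^{\mathbb{Z}}$. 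For each past $\omega^-=(\omega_{-n})_{n\geq 0}$, let $F^*_{\omega^-}$ be the Bedford--Fisher $C^1$-limit of the renormalized compositions $F^*_{\omega^-,n}$ appearing in the proof of Lemma \ref{lem:limit-geometries}; this is a $C^\alpha$-diffeomorphism of $[0,1]$ depending continuously on $\omega^-$. Define $\Phi(\overline{\omega})\in\mathcal{P}([-1,1])$ by pushing $\mu$ under $(F^*_{\omega^-})^{-1}$, translating so that the marked point $\pi(\omega_0,\omega_1,\ldots)$ lies at $0$, and restricting to $[-1,1]$. The geometric input from Bedford--Fisher is that the scenery $\mu_{y,t_n}$ at the discrete times $t_n=-\log|F_{\omega|_n}([0,1])|$ is asymptotic to $\Phi(\overline{\sigma}^n\overline{\omega})$.

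Applying Birkhoff's theorem to $(\overline{\Sigma},\overline{\sigma},\overline{\mathbb{P}})$ against $f\circ\Phi$, $f\in C(\mathcal{P}([-1,1]))$, then gives equidistribution of the sceneries at the discrete scales. Bounded distortion for a $C^{1+\varepsilon}$ IFS provides uniform two-sided bounds on $t_{n+1}-t_n$, so continuity of $\Phi$ allows averaging over each interval $[t_n,t_{n+1}]$ to upgrade this to continuous-time generation of a distribution $P$ (essentially $\Phi_*\overline{\mathbb{P}}$ reweighted by the roof function $t_{n+1}-t_n$). The distribution $P$ is $S$-invariant by construction, $S$-ergodic by inheritance from $\overline{\sigma}$, and non-trivial because quasi-product measures on regular $C^{1+\varepsilon}$ attractors have strictly positive Hausdorff dimension.

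The representation $\nu\sim_C (g\mu)|_{[-1,1]}$ for $P$-a.e.\ $\nu$, with uniform $C$, is then a direct reading of Lemma \ref{lem:limit-geometries}, with $g$ essentially $(F^*_{\omega^-})^{-1}$ and $C$ coming from the uniformly bounded distortion of the $F^*_{\omega^-,n}$. For the final dichotomy, the $g$'s are affine $P$-a.s.\ precisely when the limit geometries $F^*_{\omega^-}$ are affine $\overline{\mathbb{P}}$-a.s., and this equivalence with $C^1$-conjugacy of $\mathcal{I}$ to a linear IFS is the Sullivan--Bedford--Fisher rigidity theorem \cite[Theorem 7.5]{BedfordFisher97} already invoked in the discussion preceding Theorem \ref{thm:analytic-images-of-ssms}. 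I expect the main technical obstacle to be the clean continuous-time Birkhoff interpolation in $\mathcal{D}$: the scaling flow is only piecewise smooth between discrete cylinder times, so the $C^1$-continuity of $F^*_{\omega^-}$ in $\omega^-$ (and not merely its existence) is crucial for converting the discrete-time ergodic average into an honest generation statement in the sense of Section \ref{sub:A-condition-for-ae-equidistribution}.
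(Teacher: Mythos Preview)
Your suspension-flow construction is a legitimate and well-trodden route (it is essentially how Bedford--Fisher, Patzschke--Z\"ahle and others model sceneries of self-conformal measures), and it does yield the theorem, but it is genuinely different from the paper's argument. The paper does \emph{not} build a suspension over the shift or invoke Birkhoff on the coding directly. Instead it argues softly: take any subsequential limit $P$ of the scenery at a $\mu$-typical point; cite \cite[Theorem 1.7]{Hochman12b} for $S$-invariance; then a $P$-typical $\nu$ generates its own $S$-ergodic component $P_\nu$ by the ergodic theorem plus the quasi-Palm property. The crux is the observation preceding the theorem: since every accumulation point $\nu$ of the scenery is, by Lemma \ref{lem:limit-geometries}, $C$-equivalent to a $\diff^\alpha$ image of $\mu$, Lemma \ref{lem:image-of-USM} says $\mu$ and $\nu$ have asymptotic sceneries, so $\mu$ itself generates $P_\nu$. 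This sidesteps entirely the continuous-time interpolation you flag as the main obstacle.

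What each approach buys: your construction produces an explicit description of $P$ (push-forward of $\overline{\mathbb{P}}$ through the suspension, weighted by the roof function), which can be useful for later computations, at the cost of carrying out the discrete-to-continuous interpolation carefully in the face of the discontinuity of $S_t$. The paper's argument is shorter and avoids that technicality by outsourcing the hard analysis to the general EFD machinery of \cite{Hochman12b}, but gives $P$ only implicitly. For the second and third parts (the $\nu\sim_C(g\mu)|_{[-1,1]}$ representation and the linearity dichotomy via Sullivan--Bedford--Fisher rigidity) the two arguments coincide.
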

\begin{proof}
The proof of the first part is essentially identical to \cite[Proposition 1.36]{Hochman12b}. We sketch the details for completeness.

For $\mu$-a.e. $x$, if the scenery at $x$ equidistributes for $P$ along a subsequence of times $T_k\to\infty$, then $P$ is $S$-invariant and $S$-quasi-Palm \cite[Theorem 1.7]{Hochman12b}. By the ergodic theorem and the $S$-quasi-Palm property, $P$-almost all measures $\nu$ generate the $S$-ergodic component $P_\nu$ of $\nu$ (this argument holds for general measures $\mu$ with no additional assumptions).

Let $x$ be a $\mu$-typical point, and let the scenery at $x$ equidistribute for $P$ along a subsequence (such subsequence exists by compactness). By the previous paragraph, a $P$-typical measure $\nu$ generates an $S$-ergodic distribution $P_\nu$. By the remark preceding the theorem this means that $\mu$ generates $P_\nu$, and the generated distribution is $S$-ergodic. The second part is just Lemma \ref{lem:limit-geometries} and the fact that $P$ is supported on accumulation points of sceneries of $\mu$.
\end{proof}

\subsection{\label{sub:pnt-normality-for-quasi-product}Proofs of Theorems \ref{thm:dissonant-IFSs}, \ref{thm:totally-nonlinear-IFSs} and \ref{thm:BA-normal-numbers}}

\begin{proof}[Proof of Theorem \ref{thm:dissonant-IFSs}]
Let $\mu$ be a quasi-product measure for a $C^{1+\varepsilon}$-IFS $\mathcal{I}$ such that $\lambda(f)\not\sim\lambda(g)$ for some $f,g\in\mathcal{I}$. We want to show that $\mu$ is pointwise $\beta$-normal for any Pisot $\beta>1$. We have already established in Theorem \ref{thm:gibbs-generate-EFDs} that $\mu$ generates an EFD $P$, and our aim is to apply Theorem \ref{thm:WM-pisot-case} to $\mu$, so we must show that $\Sigma(S,P)\cap\frac{1}{\beta}\mathbb{Z}=\{0\}$. To this end, we shall show that if $k/\log\beta\in\Sigma(P,S)$, then $\lambda(f)\sim\beta$ for all $f\in\mathcal{I}$.

We argue by contradiction. Let $t_0=k/\log\beta\in \Sigma(P,S)$ such that $\lambda(f)\nsim \beta$ for some $f\in\mathcal{I}$. Hence
\begin{equation} \label{eq:nontrivial-rotation}
e(t_0\log \lambda(f))\neq 1.
\end{equation}

Let $\nu$ be a $P$-typical measure; we know from Proposition \ref{pro:synchronization} that the phase measure $\theta_\nu$ of the eigenvalue corresponding to $t_0$ is (well defined, and) a single atom. Now by Lemma \ref{lem:limit-geometries}, $\nu$ is also (the restriction of) a quasi-product measure for a conjugated $C^\alpha$ IFS $\mathcal{J}=  g\mathcal{I} =\{g f g^{-1}:f\in\mathcal{I}\}$. Since $\lambda(g f g^{-1})=\lambda(f)$, we may assume without loss of generality that, already for the original measure $\mu$, the phase measure is an atom, say $\delta_z$.

Let $x_0$ be the fixed point of $f$ (this is in the support of $\mu$). Let $U$ be a small interval centered at $x_0$. Since $\mu_U\ll \mu$ and $f(\mu_U)\ll \mu$, by the first part of Corollary \ref{cor:phase-shift}, the phase measures of $\frac{1}{\mu(U)}\mu|_U$ and $\nu_U:=\frac{1}{\mu(U)}f(\mu|_U)$ are (well defined and) equal to $\delta_z$. By the second part of Corollary \ref{cor:phase-shift}, the phase measure of $\nu_U$ equals $\frac{1}{\mu(U)}\int_U \delta_{e(-t_0 \log f'(x))z} d\mu(x)$. Since $f'$ is continuous, this shows that as the size of $U$ tends to $0$, the support of $\theta_{\nu_U}$ tends to $\{e(-t_0 \log\lambda(f)) z\}$. In light of \eqref{eq:nontrivial-rotation}, this is a contradiction, as desired.
\end{proof}

We sketch an alternative proof of Theorem \ref{thm:dissonant-IFSs} which does not directly use EFDs and instead relies on the results from \cite{HochmanShmerkin12} on dissonance between quasi-product measures for regular IFSs. The overall strategy is the same, with the main difference coming in the part that establishes dissonance. Namely, for a $\mu$-typical $x$, suppose $x$ equidistributes for $\nu$ under $T_\beta$ along a sequence $N_k\to\infty$. Using Lemma \ref{lem:limit-geometries}, one can check that a representation \eqref{eq:integral-representation-Pisot} still holds, except that a priori we do not know that the measure marginal of $Q$ is $P$; however, it is easily seen to be supported on limits of sceneries of $\mu$ which, as we know from Lemma \ref{lem:limit-geometries}, are restrictions of quasi-product measures for a smoothly conjugated IFS. Now since the measures $\tau_n$ constructed in  Theorem \ref{thm:existence-of-resonant-measures} are (convex combinations of) quasi-product measures for a homogeneous affine IFS with contraction ratio $\sim\beta$, it follows from \cite[Theorem 1.4]{HochmanShmerkin12} that the measures $\eta_\omega$ in \eqref{eq:integral-representation-Pisot} dissonate with the $\tau_n$ (this is the step that uses that $\lambda(f)\nsim \beta$ for some $f\in\mathcal{I}$), and hence so does $\nu$ if $\dim(\tau_n)$ is large enough. This contradicts Theorem \ref{thm:existence-of-resonant-measures} unless $\nu$ is the Parry measure.

\begin{proof}[Proof of Theorem \ref{thm:totally-nonlinear-IFSs}]
Let $\mu$ be a quasi-product measure for a real-analytic, totally non-linear IFS $\mathcal{I}$. We know that $\mu$ generates an EFD $P$; we will again show that $P$ is weak-mixing, i.e. $\Sigma(P,S)=\{0\}$. Together with Theorem \ref{thm:WM-pisot-case}, this will yield the result.

Suppose for contradiction that $0\neq t_0\in\Sigma(P,S)$. We follow the scheme of the proof of Theorem \ref{thm:dissonant-IFSs}: instead of working with the original measure $\mu$, we consider a $P$-typical measure $\nu$ such that the phase measure is an atom $\delta_z$. This measure is a restriction of the attractor of a conjugated IFS $g\mathcal{I}$, which is also real-analytic. Since $\mathcal{I}$ is totally non-linear, $g\mathcal{I}$ is nonlinear, hence it contains a non-affine analytic map $h$. We can then find a non-trivial interval $U$ meeting the support of $\nu$, on which $h'$ is strictly monotone (this is the point where analyticity gets used; if the IFS was merely $C^2$,  a priori $h'$ might have no point of strict monotonicity on the Cantor set $\supp\nu$). Now arguing as in the proof of Theorem \ref{thm:dissonant-IFSs}, on one hand the phase measure of $\frac{1}{\nu(U)}h(\nu|_U)$ is $\delta_z$, and on the other hand it equals $\frac{1}{\nu(U)}\int_U \delta_{e(-t_0\log h'(x)) z} d\nu(x)$. The latter measure clearly cannot be atomic, so we have reached the desired contradiction.
\end{proof}

The facts on the spectrum $\Sigma(P,S)$ that emerged in the above proofs may find other applications, so we summarize them below.

\begin{thm}\label{thm:spectrum-of-quasi-product}
Let $\mathcal{I}$ be a $C^{1+\e}$ IFS, $\mu$ a quasi-product measure for it, and  $P$ the distribution generated by $\mu$.
\begin{enumerate}
\item Suppose that  $\lambda(f)\nsim t_0$ for some $f\in\mathcal{I}$. Then $k/\log t_0\notin\Sigma(P,S)$ for any $k\in\mathbb{Z}\setminus\{0\}$. In particular, if $\lambda(f_1)\nsim \lambda(f_2)$ for $f_1,f_2\in\mathcal{I}$, then $\Sigma(P,S)=\{0\}$.
\item If $\mathcal{I}$ is $C^\omega$ and totally non-linear or, more generally, if $\mathcal{I}$ has the property that for any limit diffeomorphism $g$, the conjugated IFS $g\mathcal{I}$ contains a map $h$ such that $h'$ is a local diffeomorphism, then $\Sigma(P,S)=\{0\}$.
\end{enumerate}
\end{thm}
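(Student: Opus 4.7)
The plan is to argue both parts by contradiction, using the phase measure formalism of Section \ref{sub:phase} together with the structural information given by Theorem \ref{thm:gibbs-generate-EFDs}. Since $\mu$ already generates an $S$-ergodic non-trivial EFD $P$, any candidate nonzero eigenvalue $\alpha \in \Sigma(P,S)$ produces, by Proposition \ref{pro:synchronization}, a $P$-typical $\nu$ whose phase measure $\theta_\nu = \delta_z$ is a single atom.

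For part (1), suppose $\alpha := k/\log t_0 \in \Sigma(P,S)$ with $k\neq 0$, while $\lambda(f_0) \nsim t_0$ for some $f_0 \in \mathcal{I}$. Pick such a $\nu$: by Lemma \ref{lem:limit-geometries}, $\nu \sim_C (g\mu)|_{[-1,1]}$ for some limit diffeomorphism $g$, and $g\mu$ is a quasi-product measure for $g\mathcal{I} = \{gfg^{-1}:f\in\mathcal{I}\}$. Since conjugation preserves asymptotic contraction ratios, $g\mathcal{I}$ still contains the map $f := gf_0g^{-1}$ with $\lambda(f) \nsim t_0$. Replacing $\mu$ and $\mathcal{I}$ by $g\mu$ and $g\mathcal{I}$ (a step legitimised by Lemma \ref{lem:image-of-USM}(2) together with the atomic phase of the $C$-equivalent $\nu$), we may assume $\theta_\mu = \delta_z$. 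Let $x_0\in\supp\mu$ be the fixed point of $f$ and $U$ a small interval around $x_0$. Both $\mu|_U/\mu(U)$ and $f(\mu|_U)/\mu(U)$ are absolutely continuous with respect to $\mu$ --- the second via the quasi-product bound \eqref{eq:quasi-product} comparing $f\mu$ and $\mu$ on $f(U)$ --- so Corollary \ref{cor:phase-shift}(1) gives that each has phase measure $\delta_z$. On the other hand, Corollary \ref{cor:phase-shift}(2) yields
\[
\theta_{f(\mu|_U)/\mu(U)} = \frac{1}{\mu(U)}\int_U \delta_{e(-\alpha \log f'(x))z}\, d\mu(x).
\]
Shrinking $U$ to $\{x_0\}$, continuity of $f'$ and $f'(x_0)=\lambda(f)$ force this to converge weakly to $\delta_{e(-\alpha\log\lambda(f))z}$. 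Equating forces $e(-\alpha\log\lambda(f)) = 1$, i.e. $k\log\lambda(f)/\log t_0 \in \mathbb{Z}$, contradicting $\lambda(f)\nsim t_0$.

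Part (2) is entirely parallel. Assume $\alpha \in \Sigma(P,S)\setminus\{0\}$ and take a $P$-typical $\nu$ with $\theta_\nu = \delta_z$. By Lemma \ref{lem:limit-geometries} this $\nu$ is (up to bounded densities and restriction) a quasi-product measure for a conjugated IFS $g\mathcal{I}$, and by hypothesis $g\mathcal{I}$ contains a map $h$ whose derivative is a local diffeomorphism, so $h'$ is strictly monotone on some open interval $I$. Since $\nu$ is non-atomic (Lemma \ref{lem:no-atoms}) and supported on the attractor of $g\mathcal{I}$, we can find $U\subseteq I$ with $\nu(U)>0$ on which $h'$ is continuous and strictly monotone. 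Running the Corollary \ref{cor:phase-shift} argument from part (1) gives
\[
\delta_z = \theta_{h(\nu|_U)/\nu(U)} = \frac{1}{\nu(U)}\int_U \delta_{e(-\alpha\log h'(x))z}\, d\nu(x),
\]
but the right-hand side is non-atomic because $\log h'$ is continuous and non-constant on $U$, a contradiction. To reduce the $C^\omega$ totally non-linear case to this hypothesis, observe that if some limit diffeomorphism $g$ produced a linear $g\mathcal{I}$, then $\mathcal{I}$ would be $C^1$-conjugate to a linear IFS, contradicting total non-linearity; hence $g\mathcal{I}$ contains a non-affine real-analytic $h$, whose second derivative $h''$ has only isolated zeros, so $h'$ is a local diffeomorphism on a dense open set.

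The main technical obstacle I anticipate is verifying the absolute-continuity relations needed to apply Corollary \ref{cor:phase-shift}(1): specifically, transporting the phase measure from $\mu$ to $g\mu$ under a limit diffeomorphism and checking $f(\mu|_U)\ll\mu$ via the quasi-product inequality, and in part (2) locating $U$ inside the support of $\nu$ on which $h'$ is non-constant. Both are routine given the setup but are essential for the weak-limit calculation and the non-atomicity argument to produce the contradiction cleanly.
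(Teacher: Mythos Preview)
Your proposal is correct and follows essentially the same route as the paper: the theorem is presented there as a summary of the spectral facts extracted from the proofs of Theorems \ref{thm:dissonant-IFSs} and \ref{thm:totally-nonlinear-IFSs}, which proceed exactly as you do---pass to a $P$-typical $\nu$ with atomic phase via Proposition \ref{pro:synchronization}, conjugate the IFS through a limit diffeomorphism using Lemma \ref{lem:limit-geometries}, and derive a contradiction from Corollary \ref{cor:phase-shift} by either shrinking $U$ to the fixed point (part 1) or exploiting strict monotonicity of $h'$ on $U$ (part 2). Your anticipated technical checks (that $f(\mu|_U)\ll\mu$ via the quasi-product bound, and that $U$ can be chosen inside $\supp\nu$) are exactly the points the paper leaves implicit.
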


To conclude this section, we present the deduction of Theorem \ref{thm:BA-normal-numbers} from Theorem \ref{thm:WM-pisot-case}.

\begin{proof}[Proof of Theorem \ref{thm:BA-normal-numbers}]
Let $a,b$ be two distinct elements of $\Lambda$. Write $x_i$ for the fixed point of the inverse branch of the Gauss map $f_i(x)=1/(x+i)$. Then $x_a$ and $x_b$ are quadratic numbers generating distinct quadratic fields. It follows that $\lambda(f_a^2)=x_a^4\nsim x_b^4=\lambda(f_b^2)$. Hence for any Pisot $\beta>1$, either $\beta\nsim \lambda(f_a^2)$ or $\beta\nsim\lambda(f_b^2)$; by Theorem \ref{thm:dissonant-IFSs}, any quasi-product measure on $C_\Lambda$ is pointwise $\beta$-normal.
\end{proof}

\section{\label{sec:refinements}\label{sec:remaining-apps}A refinement of Theorem \ref{thm:WM-case} and applications}

\subsection{\label{sub:setup-of-refinements}Relaxing the spectral hypothesis}

For an integer $n$, any $T_n$-invariant and ergodic measure $\mu$ generate an EFD $P$, see \cite{Hochman12}.  This $P$ can be rather explicitly described, and its spectrum can be shown to contain non-zero integer multiples of  $\frac{1}{\log m}$ only if either $m\sim n$ or $\log n/\log m\in\Sigma(T,\mu)$. Thus in many cases the pointwise $m$-normality of $\mu$ follows directly from Theorem \ref{thm:WM-pisot-case}. In order to deal with the remaining cases we now present some refinements of Theorem \ref{thm:WM-pisot-case}, in which $k/\log\beta$ is present in the spectrum of $P$, but instead we assume that the phase is ``sufficiently spread out''. We give two versions, the first being simpler to state:

\begin{thm}\label{thm:refinement}
Let $\beta>1$ be a Pisot number. Let  $\mu\in\mathcal{P}([0,1])$ and suppose that $\mu$ generates an $S$-ergodic and non-trivial distribution $P$ which is not $S_{\log\beta}$-ergodic (so that $k/\log\beta\in \Sigma(P,S)$). Further, assume that $\mu$ $\log\beta$-generates an $S_{\log\beta}$-ergodic distribution $P_x$ at $\mu$-a.e. point $x$. Let $\theta=\theta_\mu$ denote the associated phase measure as described in Section \ref{sub:phase}. If $\dim\theta=1$,  then $\mu$ is pointwise $\beta$-normal.
\end{thm}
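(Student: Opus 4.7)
The plan is to adapt the scheme of Theorem \ref{thm:WM-pisot-case}, replacing the $S$-invariance argument in the dissonance step (Lemma \ref{lem:dissonance}) by one that exploits $\dim\theta = 1$. Fix a $\mu$-typical $x$ with phase $z_x = \varphi_\mu(x)$, and suppose $x$ equidistributes under $T_\beta$ for some measure $\nu$ along a subsequence; we must show $\nu = \lambda_\beta$. The construction in Section \ref{sub:integral-representation}, together with the hypothesis that $\mu$ $\log\beta$-generates $P_x$ at $x$ (used in place of the spectral hypothesis), yields a representation
\[
\nu = \int c_\omega (\delta_{y_\omega} * \eta_\omega)|_{I_\omega}\,dQ(\omega),
\]
where now the measure-marginal of $Q$ is the $S_{\log\beta}$-ergodic component $P_{z_x}$ of $P$. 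Since the $P$-a.s.\ dimension equals some $\delta>0$ and dimension is $S$-invariant, $\dim\eta_\omega = \delta$ a.s., $\dim\nu \ge \delta$, and $\nu$ is a non-atomic, $T_\beta$-invariant measure.

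The crucial new input is the following dissonance estimate: for $\tau$ with $\dim\tau > 1-\delta$, I claim that $\int \dim(\tau*\eta)\,dP_{z_x}(\eta) = 1$ for $\mu$-a.e.\ $x$. Set $\psi(z) = \int \dim(\tau*\eta)\,dP_z(\eta)$ and $B_\tau = \{z \in S^1 : \psi(z) < 1\}$; we wish to show $\dim B_\tau < 1$. Let $t_0 \in \Sigma(P,S)\cap \frac{1}{\log\beta}\Z\setminus\{0\}$ be the primitive eigenvalue, with eigenfunction $\varphi$ satisfying $S_t P_z = P_{e(t_0 t)z}$. Fix $z_0 \in S^1$ and consider the barycenter $\bar\eta_{z_0} = \int \eta\,dP_{z_0}(\eta)$, which has dimension at least $\delta$. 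Theorem \ref{thm:marstrand-dimofexceptions} gives $\dim E < 1$, where $E = \{t \in \R : \dim(\tau*S_t\bar\eta_{z_0}) < 1\}$. For $t \notin E$, applying the mixture inequality $\dim\int\theta_\omega\,dQ(\omega) \ge \essinf_\omega \dim\theta_\omega$ to $\tau*S_t\bar\eta_{z_0} = \int (\tau*S_t\eta)\,dP_{z_0}(\eta)$ forces $\dim(\tau*S_t\eta)=1$ for $P_{z_0}$-a.e.\ $\eta$, which in turn gives $\psi(e(t_0 t)z_0)=1$. The preimage of $B_\tau$ under the locally bi-Lipschitz covering $t \mapsto e(t_0 t)z_0$ of $S^1$ is therefore contained in $E$, yielding $\dim B_\tau \le \dim E < 1$.

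Since $\dim\theta = 1$, the characterization of $\dim$ as the infimum over dimensions of positive-measure sets gives $\theta(B_\tau) = 0$, so $\psi(z_x) = 1$ for $\mu$-a.e.\ $x$, and consequently $\dim(\tau*\eta_\omega)=1$ for $Q$-a.e.\ $\omega$. Using the mixture inequality for dimension, together with the fact that restriction and translation do not decrease dimension, $\dim(\tau*\nu)=1$. Applying this with the resonant measures $\tau_n$ of Theorem \ref{thm:existence-of-resonant-measures} (choosing $n$ large so that $\dim\tau_n > 1-\delta$) yields $\dim(\tau_n*\nu) = 1$ for all such $n$. If $\dim\nu \in (0,1)$ this would contradict the resonance property of $\tau_n$, so $\dim\nu = 1$, and Lemma \ref{lem:unique-measure-of-maximal-dim} gives $\nu = \lambda_\beta$.

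The main obstacle is establishing $\dim B_\tau < 1$: a naive $\eta$-by-$\eta$ argument would bound $B_\tau$ by an uncountable union $\bigcup_\eta E_\eta$ of dimension-$<1$ sets, which may well have dimension $1$. The trick of passing to the barycenter $\bar\eta_{z_0}$ and exploiting the mixture bound on dimension in combination with Theorem \ref{thm:marstrand-dimofexceptions} and the $S$-flow structure relating the components $P_z$ is what makes the argument work.
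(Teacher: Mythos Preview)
There is a genuine gap in your barycenter argument. You write that, for $t\notin E$, the mixture inequality
\[
\dim\int\theta_\omega\,dQ(\omega)\ \ge\ \essinf_\omega \dim\theta_\omega
\]
applied to $\tau*S_t\bar\eta_{z_0}=\int(\tau*S_t\eta)\,dP_{z_0}(\eta)$ ``forces $\dim(\tau*S_t\eta)=1$ for $P_{z_0}$-a.e.\ $\eta$''. But this inequality goes the wrong way: it tells you only that $1=\dim(\tau*S_t\bar\eta_{z_0})\ge\essinf_\eta\dim(\tau*S_t\eta)$, which is vacuous since each component already has dimension $\le 1$. In general the dimension of a mixture can strictly exceed the dimension of every component (e.g.\ an average of point masses can be Lebesgue), so $\dim(\text{mixture})=1$ gives no information about individual components. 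Consequently your bound $\dim B_\tau<1$ is unproved, and the rest of the argument does not go through.

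The paper's route avoids this by never attempting a pointwise-in-$z$ conclusion. Instead one integrates from the outset: writing $P_x=S_{\ell(x)}P_{x_0}$ and pushing $\mu$ forward to $\theta$, one computes
\[
\int\!\!\int \dim(\tau*\eta)\,dP_x(\eta)\,d\mu(x)
= \int\!\!\int \dim(\tau*S_t\eta)\,d(\ell\theta)(t)\,dP_{x_0}(\eta).
\]
Now Theorem \ref{thm:marstrand-dimofexceptions} is applied \emph{for each fixed} $\eta$: the exceptional set $E_\eta=\{t:\dim(\tau*S_t\eta)<1\}$ has dimension $<1$, and since $\dim\theta=1$ it follows that $\ell\theta(E_\eta)=0$. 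Hence the inner integral equals $1$ for $P_{x_0}$-a.e.\ $\eta$, the double integral equals $1$, and since the integrand is $\le 1$ one concludes $\dim(\tau*\eta)=1$ for $\mu$-a.e.\ $x$ and $P_x$-a.e.\ $\eta$. This is precisely the ``$\eta$-by-$\eta$'' argument you worried about, but it causes no difficulty because the union $\bigcup_\eta E_\eta$ is never formed: one only needs each $E_\eta$ to be $\ell\theta$-null, and Fubini takes care of the rest.
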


One consequence is that if $\int P_xd\mu(x)$ is $S$-invariant, then $\mu$ is pointwise $\beta$-normal, as it is clear that in this case the phase measure is invariant under rotations of the circle hence is normalized length measure. Although the theorem above is strong enough for applications, the proofs become simpler using the following variant:

\begin{thm}\label{thm:refinement-2}
Let $\beta>1$ be a Pisot number. Let $\{\mu_\omega\}_{\omega\in\Omega}\subseteq\mathcal{P}(\mathbb{R})$ be a measurable family defined on  a probability space $(\Omega,\mathcal{F},Q)$. Suppose that there is an  $S$-ergodic and non-trivial distribution $P$, which is not $S_{\log\beta}$-ergodic, and such that $Q$-a.e. $\mu_\omega$ generates $P$ and at a.e. point $\log\beta$-generates an $S_{\log\beta}$-ergodic distribution. Let $\theta_{\mu_\omega}$ denote the associated phase measures and $\theta=\int \theta_{\mu_\omega}\,dQ(\omega)$ the ``cumulative'' phase measure. If $\dim\theta=1$,  then $\mu_\omega$ is pointwise $\beta$-normal for $Q$-a.e. $\omega$, and hence also $\mu=\int\mu_\omega\,dQ(\omega)$ is pointwise $\beta$-normal.
\end{thm}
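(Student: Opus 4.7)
The plan is to follow the three-step scheme of the proof of Theorem \ref{thm:WM-pisot-case} --- integral representation of subsequential $T_\beta$-limits, dissonance lemma, identification with the Parry measure via Theorem \ref{thm:existence-of-resonant-measures} --- and to replace only the dissonance lemma (Lemma \ref{lem:dissonance}) with a refined version that uses the hypothesis $\dim\theta=1$ in place of $S_{\log\beta}$-ergodicity of $P$.

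Suppose for contradiction that $\mu_\omega$ is not pointwise $\beta$-normal on a set of positive $Q$-measure. Setting $d\tilde Q(\omega,x) = d\mu_\omega(x)\,dQ(\omega)$, Fubini yields a positive-$\tilde Q$-measure set of pairs $(\omega,x)$ along which $T_\beta^n x$ equidistributes subsequentially for some non-Parry, $T_\beta$-invariant $\nu = \nu_{\omega,x}$. The hypothesis that $\mu_\omega$ $\log\beta$-generates an $S_{\log\beta}$-ergodic distribution at $\mu_\omega$-a.e.\ $x$, combined with Lemma \ref{lem:P-ae-phase}, forces this distribution to be $P_{\phi(\omega,x)}$, the $S_{\log\beta}$-ergodic component of $P$ corresponding to the phase $\phi(\omega,x) = \varphi_{\mu_\omega}(x)$. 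The Pisot-case integral representation from Section \ref{sub:integral-representation} then yields
\[
\nu_{\omega,x} = \int c_\xi\,(\delta_{y_\xi} * \eta_\xi)|_{I_\xi}\,dQ'_{(\omega,x)}(\xi),
\]
with measure marginal $P_{\phi(\omega,x)}$; by construction of the cumulative phase measure, the pushforward of $\tilde Q$ under $(\omega,x) \mapsto \phi(\omega,x)$ equals $\theta$.

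The heart of the argument is the following refined dissonance lemma: for every $\tau\in\mathcal{P}(\R)$ with $\dim\tau > 1-\delta$ (where $\delta>0$ is the $P$-a.s.\ dimension from Proposition \ref{pro:positive-dim}), $\dim(\tau * \eta) = 1$ for $P_\phi$-a.e.\ $\eta$, for $\theta$-a.e.\ $\phi \in \mathbb{T}$. To prove it, fix a $P$-typical $\eta_0$ of phase $\phi_0 = \phi_0(\eta_0)$ and recall that $S_tP_{\phi'} = P_{\phi'+\alpha t}$, with $\alpha = k/\log\beta$ the eigenvalue. Theorem \ref{thm:marstrand-dimofexceptions} (applicable since $\dim\tau + \dim\eta_0 > 1$) gives $\dim\{t\in\R:\dim(\tau*S_t\eta_0)<1\} < 1$, and since $t \mapsto \phi_0+\alpha t \pmod 1$ is locally Lipschitz into $\mathbb{T}$, the image $B(\eta_0)\subset\mathbb{T}$ also has Hausdorff dimension strictly less than $1$. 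Since $\dim\theta = 1$ means $\theta$ is null on every Borel set of dimension $<1$, we have $\theta(B(\eta_0))=0$ for $P$-a.e.\ $\eta_0$. For each fixed $\phi\in\mathbb{T}$, define $F_\phi(\eta_0) = S_{t(\phi,\eta_0)}\eta_0$, where $t(\phi,\eta_0) \in [0,\log\beta)$ is the unique time with $\alpha t(\phi,\eta_0) \equiv \phi - \phi_0(\eta_0) \pmod 1$. Because this $t$ depends on $\eta_0$ only through $\phi_0(\eta_0)$ and $S_{t}P_{\phi'} = P_{\phi'+\alpha t}$, the disintegration $P = \int P_{\phi'}\,d\mathrm{unif}(\phi')$ yields the pushforward identity $(F_\phi)_\ast P = P_\phi$. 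Setting $A_\phi = \{\eta : \dim(\tau * \eta) < 1\}$, the set $\{\phi : F_\phi(\eta_0) \in A_\phi\}$ is contained in $B(\eta_0)$, hence is $\theta$-null for $P$-a.e.\ $\eta_0$, so by Fubini
\[
\int_\mathbb{T} P_\phi(A_\phi)\,d\theta(\phi) = \int_{\mathcal{M}}\int_{\mathbb{T}}\mathbf{1}_{A_\phi}(F_\phi(\eta_0))\,d\theta(\phi)\,dP(\eta_0) = 0,
\]
giving the claim.

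With the refined dissonance in hand, the proof concludes as for Theorem \ref{thm:WM-pisot-case}: taking the resonant measures $\tau_n$ from Theorem \ref{thm:existence-of-resonant-measures}, for $n$ sufficiently large $\dim\tau_n > 1-\delta$, and the chain of inequalities from the proof of Theorem \ref{thm:WM-pisot-case} (linearity of convolution, restriction not decreasing dimension, translation invariance) combined with the refined dissonance yields $\dim(\tau_n*\nu_{\omega,x}) = 1$ for $\tilde Q$-a.e.\ $(\omega,x)$. By Corollary \ref{cor:dimension-of-limit-measures}, $\dim\nu_{\omega,x} > 0$, so Theorem \ref{thm:existence-of-resonant-measures} forces $\dim\nu_{\omega,x} = 1$, making $\nu_{\omega,x}$ the Parry measure by Lemma \ref{lem:unique-measure-of-maximal-dim}, a contradiction. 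Pointwise $\beta$-normality of $\mu=\int\mu_\omega\,dQ$ follows immediately by Fubini. The main obstacle, and where the argument genuinely differs from Theorem \ref{thm:WM-pisot-case}, is the refined dissonance lemma; the essential trick is the pushforward identity $(F_\phi)_\ast P = P_\phi$, which transfers a Marstrand dimension-of-exceptions bound on the $t$-line into a $\theta$-generic statement on the phase circle $\mathbb{T}$.
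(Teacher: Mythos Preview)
Your proof is correct and follows essentially the same three-step scheme as the paper. The paper's proof of Theorem \ref{thm:refinement-2} is literally one sentence: it says to repeat the proof of Theorem \ref{thm:refinement}, replacing $\mu$ by $\mu_\omega$ and adding an outer integral $dQ(\omega)$ in the computation of Lemma \ref{lem:refined-dissonance}. That computation proceeds via the direct chain
\[
\int\!\!\int\!\!\int \dim(\tau*\eta)\,dP_x(\eta)\,d\mu_\omega(x)\,dQ(\omega)
= \int\!\!\int \dim(\tau*S_t\eta)\,d\ell\theta(t)\,dP_{x_0}(\eta) \ge 1,
\]
using $P_x = S_{\ell(x)}P_{x_0}$, a change of variables to the cumulative phase, Fubini, and Theorem \ref{thm:marstrand-dimofexceptions}. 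Your refined dissonance lemma reaches the same conclusion by the equivalent device of the pushforward identity $(F_\phi)_*P = P_\phi$; this is the same Fubini/dimension-of-exceptions argument reorganized so that one integrates over $P\times\theta$ rather than over $P_{x_0}\times\ell\theta$. Both routes rest on the single substantive point: Theorem \ref{thm:marstrand-dimofexceptions} gives a bad set of scaling parameters of dimension strictly below $1$, and $\dim\theta=1$ makes that set $\theta$-null.

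One small imprecision: your $t(\phi,\eta_0)\in[0,\log\beta)$ is not unique when the eigenvalue is $k/\log\beta$ with $|k|>1$ (there are $|k|$ solutions). Restricting to $t\in[0,\log\beta/|k|)$ fixes this without affecting the rest of the argument; the paper's setup has the analogous looseness in writing $P_x = S_{\ell(x)}P_{x_0}$ with $\ell(x)\in[0,1)$, and it is harmless for the same reason (the map $t\mapsto \alpha t\bmod 1$ is locally bi-Lipschitz, so dimensions are preserved).
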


It is clear that the first theorem follows from the second by taking $\mu_\omega=\mu$ for all $\omega$. Nevertheless we shall prove the first, and then explain the changes needed for the second. The proof of Theorem \ref{thm:refinement} follows the scheme of the proof of Theorem \ref{thm:WM-pisot-case} detailed in Section \ref{sub:sketch-of-proof}, with a minimal change to the first step and a more significant change in the proof of the third step, in particular making use of the stronger version of Marstrand's projection theorem given in Theorem \ref{thm:marstrand-dimofexceptions}.

For the rest of this section, suppose that $\beta$ and  $\mu$ are as in the statement of Theorem \ref{thm:refinement}. In particular, let $\theta=\theta_\mu$ be the associated phase measure  with respect to an appropriate eigenfunction $\varphi$ of $(P,S)$, as in Section \ref{sub:phase}. Fix a $\mu$-typical $x_0$ for which $P_{x_0}$ is defined. For $\mu$-typical $x$, define a function $\ell(x)\in [0,1)$ by $\varphi_\mu(x)=e(\ell(x))\varphi_\mu(x_0)$. It follows from the fact that $P=\int_0^{\log\beta} S_t P_x dt$ and the eigenfunction property that $P_x = S_{\ell(x)} P_{x_0}$. Since $\ell(x)$ depends only on $\varphi_\mu(x)$, we will also denote $\ell(z)=\ell(x)$ where $z=\varphi_\mu(x)$, or in other words $e(\ell(z))=z/\varphi_\mu(x_0)$. In particular, $\dim(\ell\theta)=\dim\theta=1$.

Let $\delta$ denote the almost-sure dimension of measures drawn from $P$; it is also the a.s. dimension of measures drawn from $P_x$ for $\mu$-almost all $x$. Recall from Proposition \ref{pro:positive-dim} that $\delta>0$. The following is a refined version of Lemma \ref{lem:dissonance}.

\begin{lem}\label{lem:refined-dissonance}
Let $\tau$ be a probability measure on $\mathbb{R}$ with $\dim\tau\geq 1-\delta$. Then $\dim\tau*\eta=1$ for $\mu$-a.e. $x$ and $P_x$-a.e. $\eta$.
\end{lem}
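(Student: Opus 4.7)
The strategy mimics that of Lemma~\ref{lem:dissonance}, but since the ergodic components $P_x$ are only $S_{\log\beta}$-ergodic and not $S$-invariant, the averaging over a short continuous $S$-orbit that made the original proof work is no longer available. What will replace it is the fact that, by hypothesis, $\dim\theta=1$: the phase marginal will supply the extra ``angle variable'' needed to invoke a Marstrand-type projection theorem. Crucially, we need the refined Theorem~\ref{thm:marstrand-dimofexceptions}, which controls the Hausdorff dimension of the exceptional parameters, rather than merely their Lebesgue measure.

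Starting from the identification $P_x=S_{\ell(x)}P_{x_0}$ from Section~\ref{sub:phase}, I would write
\begin{align*}
\int\!\!\int \dim(\tau*\eta)\,dP_x(\eta)\,d\mu(x)
 &= \int\!\!\int \dim(\tau*S_{\ell(x)}\eta')\,dP_{x_0}(\eta')\,d\mu(x) \\
 &= \int\left(\int \dim(\tau*S_t\eta')\,d(\ell_*\mu)(t)\right)dP_{x_0}(\eta'),
\end{align*}
using Fubini together with the fact that $\ell(x)$ depends on $x$ only through $\varphi_\mu(x)$. Since $\ell$ is a bi-Lipschitz parametrization of the circle and $(\varphi_\mu)_*\mu=\theta$, we get $\dim(\ell_*\mu)=\dim\theta=1$. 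For $P_{x_0}$-a.e.\ $\eta'$ we have $\dim\eta'=\delta$ (Proposition~\ref{pro:positive-dim}). Applying Theorem~\ref{thm:marstrand-dimofexceptions} to the pair $(\tau,\eta')$, the exceptional set
\[
E_{\eta'}=\{t\in\R:\dim(\tau*S_t\eta')<\min(1,\dim\tau+\delta)\}
\]
has Hausdorff dimension strictly less than $1$. Since a measure of dimension $1$ gives zero mass to any set of Hausdorff dimension $<1$, we obtain $(\ell_*\mu)(E_{\eta'})=0$. Combined with $\dim\tau\geq 1-\delta$, this yields $\dim(\tau*S_t\eta')=1$ for $(\ell_*\mu)$-a.e.\ $t$, the double integral equals $1$, and since the integrand is bounded by $1$ we conclude $\dim(\tau*\eta)=1$ for $\mu$-a.e.\ $x$ and $P_x$-a.e.\ $\eta$, as required.

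\noindent\textbf{Main obstacle.} Theorem~\ref{thm:marstrand-dimofexceptions} requires the strict inequality $\dim\tau+\dim\eta'>1$, while the hypothesis of the lemma allows equality $\dim\tau=1-\delta$. In the borderline case I plan to circumvent this by convolving $\tau$ with an auxiliary compactly supported measure $\rho_\varepsilon$ of small positive dimension and diameter $\varepsilon\to 0$; this strictly increases $\dim\tau$ while changing $\tau$ only by a small perturbation, and the monotonicity of dimension under convolution together with $\dim(\tau*\eta)\le 1$ allows the conclusion to be transferred back. A secondary, more technical point will be verifying measurability of $\eta'\mapsto\dim(\tau*S_t\eta')$ and its integrability as a function of $t$, so that the Fubini step is legitimate; this is routine because $\dim$ is a Borel function of the measure in the weak-$*$ topology by~\eqref{eq:dim-using-local-dim}.
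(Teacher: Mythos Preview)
Your main argument is essentially identical to the paper's: both rewrite the double integral via $P_x=S_{\ell(x)}P_{x_0}$, push the $x$-integral forward through $\ell$ to obtain a one-dimensional measure on the scaling parameter, and then apply Theorem~\ref{thm:marstrand-dimofexceptions} to conclude that the exceptional set has $\ell\theta$-measure zero.

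Your identification of the borderline issue $\dim\tau=1-\delta$ is accurate --- Theorem~\ref{thm:marstrand-dimofexceptions} does require the strict inequality $\dim\tau+\dim\eta'>1$, and the paper's proof glosses over this point just as Lemma~\ref{lem:dissonance} does. However, your proposed fix does not work. First, convolving $\tau$ with an auxiliary measure $\rho_\varepsilon$ of positive dimension need not strictly increase $\dim\tau$: resonance is precisely the phenomenon that $\dim(\tau*\rho)$ can equal $\dim\tau$ even when $\dim\rho>0$. Second, even if you managed to produce $\tau'=\tau*\rho_\varepsilon$ with $\dim\tau'>1-\delta$ and concluded $\dim(\tau'*\eta)=1$, monotonicity only gives $\dim(\tau*\eta)\le\dim((\tau*\eta)*\rho_\varepsilon)=1$, which is vacuous; there is no way to ``transfer back'' to a lower bound on $\dim(\tau*\eta)$.

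Fortunately the borderline case is immaterial: in the only place the lemma is invoked (the proof of Theorem~\ref{thm:refinement}), it is applied to the measures $\tau_n$ from Theorem~\ref{thm:existence-of-resonant-measures} with $\dim\tau_n\to1$, so for large $n$ one has the strict inequality $\dim\tau_n>1-\delta$ and Theorem~\ref{thm:marstrand-dimofexceptions} applies directly. The cleanest repair is simply to state the lemma with the strict hypothesis $\dim\tau>1-\delta$.
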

\begin{proof}
Using Fubini, the fact that $\dim(\ell\theta)=1$, and Theorem \ref{thm:marstrand-dimofexceptions},
\begin{align}
    \int \int \dim(\tau * \eta) \,dP_x(\eta)\,d\mu(x) & =  \int \int \dim(\tau * \eta) \,dS_{\ell(x)}P_{x_0}(\eta) \,d\mu(x) \label{eq:refinement}\\
                 & =  \int \int \dim(\tau * \eta) \,dS_{\ell(z)}P_{x_0}(\eta) \,d\theta(z)\nonumber\\
                 & =  \int \int \dim(\tau * S_{\ell(z)}\eta) \,d\theta(z)\,dP_{x_0}(\eta)\nonumber \\
                 & =  \int \int \dim(\tau * S_t\eta) \,d\ell\theta(t)\,dP_{x_0}(\eta)\nonumber\\
                 & \ge  \int \min\{1,\dim\tau+\dim\eta\} \,dP_{x_0}(\eta)\nonumber \\
                 & =  \min\{1,\dim\tau+\delta\} \nonumber\\
                 & =  1.\nonumber
\end{align}
But the integrand on the left hand side is $\leq 1$, so it is a.s. equal to $1$, as claimed.
\end{proof}

We can now finish the proof of the theorem.

\begin{proof}[Proof of Theorem \ref{thm:refinement}]
For $\mu$-typical $x$, the analog of Lemma \ref{lem:discrete-equidistribution-and-spectrum} holds for the distributions $P_x$ by assumption. It follows that for $\mu$-a.e. $x$ and any measure $\nu$ for which $x$ equidistributes under $T_\beta$ sub-sequentially, we have a representation similar to Theorem \ref{thm:integral-representation}: \[
  \nu = \int c_\omega \cdot (\delta_{y_\omega}*\eta_\omega)|_{I_\omega}\,dQ(\omega)
\]
where $c_\omega,y_\omega,\eta_\omega, I_\omega$ are defined on some auxiliary probability space $(\Omega,\mathcal{F},Q)$, and $\eta_\omega$ is distributed as $P_x$ (rather than $P$).

The proof is now concluded exactly in the same way as in Theorem \ref{thm:WM-case}. Combining the integral representation with Lemma \ref{lem:refined-dissonance}, for $\mu$-a.e. $x$ and any $\nu$ for which $x$ equidistributes sub-sequentially under $T_\beta$, we have that $\nu$ dissonates with every measure of large enough dimension, and also that $\dim\nu\geq\delta$. But, by Theorem \ref{thm:existence-of-resonant-measures}, this is possible only if $\nu$ is of dimension $1$, hence the unique absolutely continuous measure for $T_\beta$. This completes the proof.
\end{proof}

As for Theorem \ref{thm:refinement-2}, the argument is identical, except that in equation \eqref{eq:refinement} one replaces $\mu$ by $\mu_\omega$ and integrates $dQ(\omega)$. We leave the remaining  details to the reader.

\subsection{\label{prediction-measures}Distributions associated to $T_\gamma$ invariant measures}

Let  $\gamma>1$ and $\mu$ a $T_\gamma$-invariant and ergodic measure with $\dim\mu>0$. In this section we develop some background about such measures and distributions associated to them. This is a minor adaptation of \cite[Section 3]{Hochman12}, which dealt with the integer case (though the language we employ here is slightly different).

Let $G=\lceil \gamma \rceil$. We have already met the $\gamma$-shift $X_\gamma\subseteq[G]^\mathbb{N}$, which, together with the shift map $T$, factors onto $([0,1],T_\gamma)$,  and have noted that $\mu$ lifts uniquely to $X_\gamma$. We also will need the so-called natural extension: let $\widetilde{X}_\gamma\subseteq[G]^\mathbb{Z}$  denote two-sided $\gamma$-shift, i.e. the set of bi-infinite sequences all of whose subwords appear in the one-sided $\gamma$-shift $X_\gamma$. For $\omega\in\widetilde{X}_\gamma$ let $\omega^+=(\omega_1,\omega_2,\ldots)$ and $\omega^-=(\ldots,\omega_{-1},\omega_0)$, and also write $x(\omega)=\pi(\omega^+)$, where $\pi:X_\gamma\to [0,1]$ is the usual base-$\gamma$ coding map. It is a standard fact that $\mu$ lifts uniquely to a $T$-invariant measure $\widetilde{\mu}$ on  $(\widetilde{X}_\gamma,T)$.

For $\widetilde{\mu}$-typical $\omega$, let $\mu_\omega$ denote the conditional measure of $\widetilde{\mu}$ given the ``past'' $(\ldots,\omega_{-1},\omega_{0})$. These conditional measures can be defined abstractly as the disintegration of $\widetilde{\mu}$ given the measurable and countably generated partition into different pasts, see \cite[Theorem 5.14]{EinsiedlerWard11}, or more concretely by the conditions
\[
\mu_\omega[i_1\cdots i_k] = \lim_{n\to\infty}\frac{\mu[\omega_{-n}\ldots \omega_0 i_1\ldots i_k]}{\mu[\omega_{-n}\ldots \omega_0]}.
\]
(That the limit exists for $\widetilde{\mu}$-a.e. $\omega$ can be seen from a martingale argument.)

These conditional measures are measures on the ``future'' $[G]^\mathbb{N}$ and almost surely are supported on the one-sided $\gamma$-shift. We silently shall identify $\mu_\omega$ with the corresponding measure $\pi\mu_\omega$ on $[0,1]$. It is well known that $\dim\mu_\omega=\dim\mu$ a.s.

\begin{defn}\label{def:discrete-quasi-palm}
A distribution $P_0\in\mathcal{D}$ is \emdef{$S_{t_0}$-quasi-Palm} if it is $S_{t_0}$-invariant, gives full mass to $\mathcal{M}^\sqr$, and for every Borel set $B\subseteq\mathcal{M}^\sqr$ with $P(B)=1$ and every $k\in\mathbb{N}$, $P$-almost every measure $\eta$ satisfies $\eta_{x,kt_0}\in B$ for $\eta$-almost all $x$ such that $[x-e^{-{kt_0}},x+e^{kt_0}]\subseteq [-1,1]$.
\end{defn}

If $P_0$ is $S_{t_0}$-quasi-Palm, it is easy to see that $P=\frac{1}{t_0}\int_0^{t_0}S_tP_0\,dt$ is $S$-quasi-Palm.

\begin{defn}\label{def:discrete-efd}
An $S_{t_0}$-invariant and ergodic distribution which is also $S_{t_0}$-quasi-Palm is a \emdef{$t_0$-discrete ergodic fractal distribution}.
\end{defn}

It is again clear that if $P_0$ is such a distribution then $P=\frac{1}{t_0}\int_0^{t_0}S_tP_0\,dt$ is an EFD.

\begin{thm}\label{thm:prediction-measures}
Let $\mu$ be $T_\gamma$-invariant and ergodic. Then there is a $1/\log\gamma$-discrete EFD $P_0$ and a factor map $\sigma : (\widetilde{X}_\gamma,\widetilde{\mu},T)\to (\mathcal{M}^\sqr,P_0,S_{\log\gamma})$, such that for $\widetilde{\mu}$-a.e. $\omega$, \begin{equation}
  (\mu_\omega)_{x(\omega)}\ll \sigma(\omega).  \label{eq:prediction-are-EFD-measures}
\end{equation}
(actually the two measures are proportional on the interval $[-x(\omega),1-x(\omega)]$).
\end{thm}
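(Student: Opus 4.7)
The plan is modeled on \cite[Section 3]{Hochman12}, which constructed an analogous factor map in the integer base case; we adapt it to the Pisot setting using Garsia's Lemma.

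For the construction of $\sigma$: for $\widetilde{\mu}$-typical $\omega$, the past coordinates $(\omega_{-k+1},\ldots,\omega_0)$ determine, for each $k\ge 1$, a cylinder $I_k(\omega)\subset[0,1]$ of length comparable to $\gamma^{-k}$ (uniformly by Lemma~\ref{lem:Garsia}), containing the preimage $x(T^{-k}\omega)$ which is mapped by $T_\gamma^k$ to $x(\omega)$. I push $\mu|_{I_k(\omega)}$ forward by the affine map $T_\gamma^k|_{I_k(\omega)}$, recenter so that $x(\omega)$ sits at the origin, clip to $[-1,1]$, and normalize to produce measures $\sigma_k(\omega)\in\mathcal{M}^\sqr$. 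A reverse-martingale argument applied to the filtration by past-partitions shows that $\sigma_k(\omega)$ converges weak-* for $\widetilde{\mu}$-a.e.\ $\omega$, and I define $\sigma(\omega)$ to be this limit.

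By construction, shifting $\omega$ to $T\omega$ extends the past by one coordinate, replacing the relevant cylinder by a sub-cylinder of relative length $1/\gamma$, which at the level of sceneries is precisely the magnification $S_{\log\gamma}$. Thus the intertwining $\sigma\circ T = S_{\log\gamma}\circ \sigma$ holds $\widetilde{\mu}$-a.e. Setting $P_0=\sigma\widetilde{\mu}$ yields a distribution in $\mathcal{D}$ that is $S_{\log\gamma}$-invariant by $T$-invariance of $\widetilde{\mu}$, and ergodicity of $(\mu,T_\gamma)$ passes to $(\widetilde{\mu},T)$ through the natural extension, hence to $(P_0,S_{\log\gamma})$.

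The main obstacle is verifying the $1/\log\gamma$-discrete quasi-Palm property. Let $B\subset\mathcal{M}^\sqr$ be Borel with $P_0(B)=1$ and fix $k\in\N$. The strategy is: for $P_0$-a.e.\ $\eta=\sigma(\omega)$ and $\eta$-a.e.\ $x$ with $[x-\gamma^{-k},x+\gamma^{-k}]\subset[-1,1]$, one identifies $\eta_{x,k\log\gamma}$ with $\sigma(\omega')$, where $\omega'\in\widetilde{X}_\gamma$ is obtained by splicing onto $\omega^-$ the first $k$ digits of the base-$\gamma$ expansion of $y:=x(\omega)+\gamma^{-k}x\in\supp\mu$ (with the tail being the expansion of $T_\gamma^k y$). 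A Fubini-type argument using the disintegration $\mu=\int\mu_\omega\,d\widetilde{\mu}(\omega)$ shows that the joint distribution of the resulting $\omega'$ coincides with $\widetilde{\mu}$ (via $T$-invariance), hence $\sigma(\omega')\in B$ $\widetilde{\mu}$-almost surely. Making this precise requires a careful accounting of boundary effects, for which the uniform Garsia-type bound on cylinder lengths is essential.

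Finally, the absolute continuity $(\mu_\omega)_{x(\omega)}\ll\sigma(\omega)$ follows from the construction: the conditional measure $\mu_\omega$ (viewed on $[0,1]$) is the weak-* limit of $T_\gamma^k(\mu|_{I_k(\omega)})/\mu(I_k(\omega))$, which, once recentered at $x(\omega)$, is one of the components of $\sigma_k(\omega)$. Since Garsia's Lemma bounds the normalizing constants from above and below, absolute continuity persists in the limit.
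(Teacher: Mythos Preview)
Your construction of $\sigma$ is not the right one, and the intertwining claim fails for it. Your approximants $\sigma_k(\omega)$ converge (by the martingale argument you invoke) to $(\mu_\omega)_{x(\omega)}$, since $T_\gamma^k(\mu|_{I_k(\omega)})/\mu(I_k(\omega))\to\mu_\omega$ and you merely recenter and clip. But for this map the relation $\sigma\circ T=S_{\log\gamma}\circ\sigma$ does \emph{not} hold: applying $S_{\log\gamma}$ to $(\mu_\omega)_{x(\omega)}$ picks out the window $[x(\omega)-1/\gamma,\,x(\omega)+1/\gamma]$ of $\mu_\omega$, while $(\mu_{T\omega})_{x(T\omega)}$ sees only the cylinder $[\omega_1]$, which is strictly smaller (it has length $\sim 1/\gamma$ and is generically off-center). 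Thus one only gets $(\mu_{T\omega})_{x(T\omega)}\ll S_{\log\gamma}(\mu_\omega)_{x(\omega)}$, not equality. Your verbal justification (``extending the past by one coordinate $\ldots$ is precisely the magnification $S_{\log\gamma}$'') glosses over exactly this discrepancy between restricting to a cylinder and restricting to a centered ball.

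The paper's fix is to take a \emph{further} limit: define
\[
\sigma(\omega)=\lim_{n\to\infty}S_{n\log\gamma}\bigl((\mu_{T^{-n}\omega})_{x(T^{-n}\omega)}\bigr),
\]
so your map is the $n=0$ term of this sequence. Because the approximants form an increasing chain under absolute continuity (by the observation above, applied along $T^{-n}\omega$), the limit exists, and the intertwining now follows formally by reindexing $n\mapsto n-1$. With this $\sigma$ one genuinely has $(\mu_\omega)_{x(\omega)}\ll\sigma(\omega)$ rather than equality, which is what \eqref{eq:prediction-are-EFD-measures} asserts; your final paragraph, which would give equality, is another symptom of the same error. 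The quasi-Palm property then follows from $T$-invariance of $\widetilde\mu$ (so $\mu_{T\omega}$ and $\mu_\omega$ are equidistributed), along the lines you sketch; the paper refers to \cite[Theorem 3.1]{Hochman12} for details.
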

\begin{proof}
The factor map in question is defined by
\[
  \sigma(\omega)=\lim_{n\to\infty}S_{n\log\gamma}((\mu_{T^{-n}\omega})_{x(T^{-n}\omega)}),
\]
and $P_0$ is the push-forward of $\widetilde{\mu}$ through this map. The $S$-quasi-Palm property is a consequence of the fact that the distribution of $\mu_{T\omega}$ for $\omega\sim\widetilde{\mu}$ is equal in distribution to $\mu_\omega$ for $\omega\sim\widetilde{\mu}$. For a more detailed verification of the integer case, see \cite[Theorem 3.1]{Hochman12}; there are no substantial changes when passing to a general $\gamma>1$.
\end{proof}

Let $P_0$ be as in Theorem \ref{thm:prediction-measures}, and
\[
  P=\frac{1}{\log\gamma}\int_0^{\log\gamma}S_tP_0\,dt
\]
which, as was already noted, is an EFD.

\begin{prop}\label{pro:prediction-measures}
\begin{enumerate}
\item \label{it:pred-measures-1} $P_0$ is $\log\gamma$-generated by $\mu_\omega$ at $\mu_\omega$-a.e. point, for $\widetilde{\mu}$-a.e. $\omega$.
\item \label{it:pred-measures-2} $\mu_\omega$ generates $P$ for $\widetilde{\mu}$-a.e. $\omega$.
\end{enumerate}
\end{prop}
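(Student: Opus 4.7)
The overall strategy combines the factor map $\sigma:(\widetilde X_\gamma,\widetilde\mu,T)\to(\mathcal M^\sqr,P_0,S_{\log\gamma})$ of Theorem \ref{thm:prediction-measures} with the absolute continuity \eqref{eq:prediction-are-EFD-measures} and the discrete quasi-Palm property of $P_0$. Part (1) is the substantive step; part (2) will follow from it by a standard discrete-to-continuous averaging.

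For part (1), I would first apply the pointwise ergodic theorem to $T$ on $(\widetilde X_\gamma,\widetilde\mu)$. Pushing forward by $\sigma$ and using $\sigma\circ T=S_{\log\gamma}\circ\sigma$, for $\widetilde\mu$-a.e. $\omega$ the orbit $\sigma(T^n\omega)=S_{n\log\gamma}\sigma(\omega)$ equidistributes for $P_0$; equivalently, $\sigma(\omega)$ $\log\gamma$-generates $P_0$ at $0$. Let $B\subseteq\mathcal M^\sqr$ be the Borel set of measures that $\log\gamma$-generate $P_0$ at $0$; we have just shown $P_0(B)=1$. Applying the discrete quasi-Palm property of $P_0$ to $B$ at each scale $k\log\gamma$ and intersecting over $k\in\N$, we obtain: for $P_0$-a.e. $\eta$ and $\eta$-a.e. $y$ in the interior of $[-1,1]$, one has $\eta_{y,k\log\gamma}\in B$ for all sufficiently large $k$. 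Since the Cesaro limit of $\{\eta_{y,(n+k)\log\gamma}\}_{n\ge 0}=\{S_{n\log\gamma}\eta_{y,k\log\gamma}\}_{n\ge 0}$ agrees with that of $\{\eta_{y,n\log\gamma}\}_{n\ge 0}$, this forces $\eta$ itself to $\log\gamma$-generate $P_0$ at $y$. Hence, for $\widetilde\mu$-a.e. $\omega$, $\sigma(\omega)$ $\log\gamma$-generates $P_0$ at $\sigma(\omega)$-a.e. point. Finally, the absolute continuity $(\mu_\omega)_{x(\omega)}\ll\sigma(\omega)$ together with Lemma \ref{lem:image-of-USM}(1) (whose Besicovitch-differentiation proof transfers unchanged to discrete sceneries) propagates this property to $(\mu_\omega)_{x(\omega)}$-a.e. point; translating back by $x(\omega)$ and noting that sceneries at a point depend only on the germ of the measure there, this yields that $\mu_\omega$ $\log\gamma$-generates $P_0$ at $\mu_\omega$-a.e. point, proving (1).

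For part (2), I take $F\in C(\mathcal{P}([-1,1]))$, set $T=N\log\gamma$, and use $(\mu_\omega)_{y,n\log\gamma+s}=S_s(\mu_\omega)_{y,n\log\gamma}$ to write
\[
\frac{1}{T}\int_0^T F((\mu_\omega)_{y,t})\,dt=\frac{1}{\log\gamma}\int_0^{\log\gamma}\Bigl(\frac{1}{N}\sum_{n=0}^{N-1}(F\circ S_s)((\mu_\omega)_{y,n\log\gamma})\Bigr)ds.
\]
By part (1), for each fixed $s$ the inner Cesaro average converges to $\int F\circ S_s\,dP_0=\int F\,d(S_sP_0)$, and integrating in $s$ by dominated convergence recovers $\int F\,dP$, as desired. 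The main technical obstacle, which I expect to be relatively mild, is justifying the convergence of the inner average when $F\circ S_s$ is only $P_0$-a.s. continuous rather than continuous: since any fixed probability measure has at most countably many atoms, Fubini gives that for Lebesgue-a.e. $s$, $P_0$-a.e. $\eta$ carries no atom at $\pm e^{-s}$, which is exactly the condition for $S_s$ to be continuous at $\eta$. For such $s$ the standard portmanteau argument applies, and integrating in $s$ closes the proof.
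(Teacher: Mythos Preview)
Your proposal is correct and follows essentially the same approach as the paper. For part (1), both you and the paper use the ergodic theorem to get that $P_0$-typical $\nu$ $\log\gamma$-generates $P_0$ at $0$, then the discrete quasi-Palm property to pass to $\nu$-typical points, and finally the absolute continuity \eqref{eq:prediction-are-EFD-measures} to transfer to $\mu_\omega$; for part (2), your Fubini with the $s$-integral outside is a trivial rearrangement of the paper's device of forming $F(\nu)=\frac{1}{\log\gamma}\int_0^{\log\gamma}f(S_t\nu)\,dt$ and testing against the discrete average, and both versions confront and dispatch the same discontinuity-of-$S_s$ issue via $P_0$ being supported on non-atomic measures.
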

\begin{proof}
\eqref{it:pred-measures-1} By the ergodic theorem, $P_0$-a.e. measure $\nu$  generates $P_0$  $\log\gamma$-discretely at $0$, and by the $S_{\log\gamma}$-quasi-Palm property  of $P_0$, $0$ can be replaced by $\nu$-typical $x$ (this argument is the same as the proof of Lemma \ref{lem:fd-measures-generate-discretely}). Thus  $\sigma(\omega)$ generates $P_0$ $\log\gamma$-discretely, and using \eqref{eq:prediction-are-EFD-measures}, the same is true for $\mu_\omega$.

\eqref{it:pred-measures-2} is a consequence of \eqref{it:pred-measures-1}. Let $f\in C(\mathcal{P}([-1,1]))$ and let
\[
F(\nu)=\frac{1}{\log\gamma}\int_0^{\log\gamma}f(S_t\nu)\,dt.
\]
 Let $\nu=\mu_\omega$ for a typical $\omega$. We must show that \[
  \lim_{T\to\infty}\frac{1}{T}\int_0^Tf(\nu_{x,t})\,dt = \int f\,dP_0 \qquad\text{for }\nu\text{-a.e. }x.
\]
But \[
\frac{1}{T}\int_0^Tf(\nu_{x,t})\,dt = \frac{1}{\lfloor T\rfloor }\sum_{n=0}^{\lfloor T\rfloor}F(\nu_{x,n}) + O\left(\frac{\Vert f\Vert_\infty}{T}\right).
\]
%If $F$ were continuous on $\mathcal{M}^\sqr$, convergence above would follow immediately from the fact that $P_0$ is  $\log\gamma$-discretely generated by $\nu$ at $\nu$-a.e. point. Continuity fails because of the discontinuity of $S_t$. In fact, $F$ is continuous on a set of full $P_0$-measure because $P_0$ is supported on non-atomic measures, but not uniformly so. This can be fixed by a standard approximation argument, which we omit.
If $F$ were continuous on $\mathcal{P}([0,1])$, convergence above would follow immediately from the fact that $P_0$ is  $\log\gamma$-discretely generated by $\nu$ at $\nu$-a.e. point.
In fact, $F$ is defined only on $\mathcal{M}^\sqr$, but it is continuous on a $P_0$-full measure set (such as the set of atomless measures in $\mathcal{M}^\sqr$), so the result still follows, see e.g. \cite[Theorem 2.7]{Billingsley99}.

\end{proof}

\subsection{\label{sub:host-proof}Proof of Theorem \ref{thm:application-HostLindenstrauss}: normality of $\mu$}

Let $\gamma>1$, let $\mu$ be a $T_\gamma$-invariant and ergodic measure, and let $\beta>1$ be a Pisot number with  $\gamma \nsim \beta$. Our goal is to show that $\mu$ is pointwise $\beta$-normal, and so is $f\mu$ when $f\in\diff^2(\mathbb{R})$. We continue with the notation of the previous section: $\widetilde{\mu},\mu_\omega,P_0,P$ etc.

We will first show that $\mu$ is pointwise $\beta$-normal; the case of $f\mu$ for $f\in\diff^2(\R)$ will be handled in the next section.

Suppose first that $\Sigma(P,S)$ does not contain non-zero integer multiples of $1/\log\beta$. Then by Proposition \ref{pro:prediction-measures}\eqref{it:pred-measures-2} and Theorem \ref{thm:WM-pisot-case}, for $\widetilde{\mu}$ a.e. $\omega$ the conditional measure $\mu_\omega$ is pointwise $\beta$-normal. But then so is $\mu$ since $\mu=\int \mu_\omega\, d\widetilde{\mu}(\omega)$.

Therefore, assume that there is some integer $k\neq 0$ with  $k/\log\beta\in\Sigma(P,S)$, or, equivalently, that $P$ is not $S_{\log\beta}$-ergodic. Our goal is to verify that the assumptions of Theorem \ref{thm:refinement-2} are met. In light of Proposition \ref{pro:prediction-measures}, and setting $\Omega=\widetilde{X}_\gamma$ and $Q=\widetilde{\mu}$, we see that all that remains to be checked is that the cumulative phase measure $\theta=\int \theta_{\mu_\omega} \,\widetilde{\mu}(\omega)$ has full dimension.

Recall that  $P$-a.e. $\nu$ equidistributes under $S_{\log\beta}$ for an $S_{\log\beta}$-ergodic distribution (Lemma \ref{lem:discrete-equidistribuion-a.e.}). This is an $S$-invariant property, so it holds for $P_0$-a.e. $\nu$, and by the $S_{\log\lambda}$-quasi-Palm property, the relation  \eqref{eq:prediction-are-EFD-measures}, and Lemma \ref{pro:prediction-measures}, the same holds for $\nu^x$ at $\nu$-a.e. $x$ for $P_0$-a.e. $\nu$.

Fix an eigenfunction $\varphi$ for the eigenvalue $k/\log\beta$. The assumption $\beta\nsim\gamma$ comes in during the proof of the next lemma.

\begin{lem}\label{lem:uniform-phase}
$\theta'=\int\theta_\nu\,dP_0\nu$ is Lebesgue measure on the circle.
\end{lem}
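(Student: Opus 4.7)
The plan is to exploit the interplay between the $S_{\log\gamma}$-invariance of the prediction distribution $P_0$ and the covariance of the phase measure under the scaling flow. The key observation is that the operation $\nu\mapsto\theta_\nu$ intertwines the scaling action of $S$ on $\mathcal{M}^\sqr$ with a rotation action on the unit circle: for $P$-a.e.\ $\nu$ and every $s\in\mathbb{R}$,
\[
\theta_{S_s\nu}\;=\;R_{\alpha s}\,\theta_\nu,
\]
where $\alpha=k/\log\beta$ is the eigenvalue attached to $\varphi$, and $R_\sigma:z\mapsto e(\sigma)z$ is rotation of the circle. This is essentially built into the eigenfunction identity $\varphi\circ S_s=e(\alpha s)\varphi$ together with the computation that the scenery of $S_s\nu$ at $y$ equals the scenery of $\nu$ at $e^{-s}y$ shifted by $s$ in time, which yields $\varphi_{S_s\nu}(y)=e(\alpha s)\varphi_\nu(e^{-s}y)$; pushing forward through $S_s\nu$ turns this into the rotation relation above (cf.\ the proof of Corollary \ref{cor:phase-shift}(2), which gives the analogous statement for diffeomorphism images).

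Granted this covariance, the proof is immediate. Since $P_0$ is $S_{\log\gamma}$-invariant, change of variables inside the integral gives
\[
\theta'\;=\;\int\theta_\nu\,dP_0(\nu)\;=\;\int\theta_{S_{\log\gamma}\nu}\,dP_0(\nu)\;=\;\int R_{\alpha\log\gamma}\theta_\nu\,dP_0(\nu)\;=\;R_{\alpha\log\gamma}\,\theta'.
\]
Thus $\theta'$ is a probability measure on the circle invariant under rotation by $\alpha\log\gamma=k\log\gamma/\log\beta$. Because $k\neq 0$ and the standing hypothesis $\gamma\nsim\beta$ forces $\log\gamma/\log\beta$ to be irrational, this rotation is irrational. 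The only probability measure on the circle invariant under an irrational rotation is Lebesgue measure, so $\theta'$ is Lebesgue, as claimed.

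The main technical point to verify carefully is that the covariance relation $\theta_{S_s\nu}=R_{\alpha s}\theta_\nu$ is available for $P_0$-a.e.\ $\nu$, not merely for $P$-a.e.\ $\nu$. The phase measure $\theta_\nu$ was defined in Section \ref{sub:phase} only for measures that generate $P$ and $\log\beta$-generate an $S_{\log\beta}$-ergodic distribution at typical points; both of these properties are supplied for $P_0$-a.e.\ $\nu$ by Proposition \ref{pro:prediction-measures}. The covariance itself then transfers from $P$ to $P_0$ because the event $\{\nu:\theta_{S_s\nu}=R_{\alpha s}\theta_\nu\text{ for all }s\}$ is $S$-invariant, and the representation $P=(\log\gamma)^{-1}\int_0^{\log\gamma}S_tP_0\,dt$ combined with Fubini forces any $S$-invariant event of full $P$-measure to have full $P_0$-measure as well.
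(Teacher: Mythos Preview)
Your argument is correct and, in fact, cleaner than the paper's. The paper splits into two cases according to whether $P_0=P$ or not; in the nontrivial case it identifies $P_0$ as a level set of a second eigenfunction $\psi$ with eigenvalue $m/\log\gamma$, builds a factor map $\nu\mapsto(\varphi(\nu),\psi(\nu))$ onto a two-torus carrying the translation $(k/\log\beta,\,m/\log\gamma)$, and then uses unique ergodicity of that translation (from $\beta\nsim\gamma$) to conclude that $\varphi$ is uniformly distributed on each $\psi$-level set. Your route bypasses this entirely: you use only the $S_{\log\gamma}$-invariance of $P_0$ and the covariance $\theta_{S_s\nu}=R_{\alpha s}\theta_\nu$ to get that $\theta'$ is invariant under the single rotation by $k\log\gamma/\log\beta$, which is irrational precisely because $\beta\nsim\gamma$.

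One small remark: the covariance relation is most cheaply obtained from Proposition~\ref{pro:synchronization}, which gives $\theta_\nu=\delta_{\varphi(\nu)}$ for $P$-a.e.\ $\nu$; then $\theta_{S_s\nu}=\delta_{\varphi(S_s\nu)}=\delta_{e(\alpha s)\varphi(\nu)}=R_{\alpha s}\theta_\nu$ is immediate from the eigenfunction identity, and the event $\{\theta_\nu=\delta_{\varphi(\nu)}\}$ is visibly $S$-invariant, so it passes from $P$ to $P_0$ exactly as you say. This is slightly shorter than the direct scenery computation you sketch, though that is also correct.
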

\begin{proof}
  To begin, note that either $P_0=P$ or else $P_0$ is the level set of an eigenfunction $\psi$ with eigenvalue $m/\log\gamma$ for some non-zero integer $m$. In the first case the assertion is clear, since $\theta'$ is a translation-invariant measure on the circle, so we consider the second case only. Switching to additive notation, the map $\nu\mapsto (\varphi(\nu),\psi(\nu))$ defines a factor map from $(P,S)$ to the torus equipped with translation by $(k/\log\beta,m/\log\gamma)$. Since $\beta\nsim\gamma$, Lebesgue measure is the unique invariant measure for this translation, and we deduce that the distribution of $\varphi$ conditioned on any level set of $\psi$ is uniform on the circle; in particular, the distribution of $\varphi$ on $P_0$ is uniform on the circle. Now the lemma follows since, by Proposition \ref{pro:synchronization}, $\theta_\nu=\delta_{\varphi(\nu)}$ for $P$-a.e. $\nu$ and hence, by $S$-invariance, for $P_0$-a.e. $\nu$.
\end{proof}

Since $\mu_\omega\ll\sigma(\omega)$ for $\widetilde{\mu}$-a.e. $\omega$, and $\theta_\nu$ is a single atom, we have $\theta_{\mu_\omega}=\theta_{\sigma(\omega)}$, hence $\theta=\int\theta_{\mu_\omega}\,d\widetilde{\mu}(\omega)$ is uniform on the circle, in particular of dimension $1$. We have shown all the hypotheses of Theorem \ref{thm:refinement-2} hold, so this proves the pointwise $\beta$-normality of $\mu$.

\subsection{\label{sub:host-proof-2}Conclusion of the proof of Theorem \ref{thm:application-HostLindenstrauss}: normality of $f\mu$}

It remains for us to prove pointwise $\beta$-normality of $f\mu$ for $f\in\diff^2(\mathbb{R})$; again we will do so by applying Theorem \ref{thm:refinement-2}. Since $\mu_\omega$ and $f\mu_\omega$ generate and $\log\beta$-generate the same distributions, the task is to show that the cumulative phase measure has dimension $1$.  By Corollary \ref{cor:phase-shift}, this measure is given by
\begin{equation} \label{eq:phase-distribution-cumulative}
   \theta'=\int\int \delta_{e(\log(-f'(x))/\log\gamma)\cdot \varphi(\mu_\omega)} \,d\mu_\omega(x) \,d\widetilde{\mu}(\omega).
\end{equation}

In order to be able to apply projection results, we pass from multiplicative to additive notation; in particular the range of $\varphi$ becomes the unit interval $[0,1]$. Define the measure $\eta$ on $[0,1]^2$ by \[
   \eta = \int \mu_\omega\times\delta_{\varphi(\mu_\omega)} \, d\widetilde{\mu}(\omega)
\]
Then  $\theta'$ is the projection of $\eta$ by the map
\[
   \pi(x,y)=y-\log f'(x)/\log\gamma.
\]

Now, by definition the projection $P_2\eta$ of $\eta$ to the $y$-axis is $\int \delta_{\varphi(\mu_\omega)}d\widetilde{\mu}(\omega)$, which we have seen is Lebesgue measure. Also, since $\dim\mu_\omega=\dim\mu$ a.s., by Lemma \ref{lem:properties-dim-measures}\eqref{it:cavalieri-ineq} we find that
\[
  \dim\eta\geq 1+\dim\mu.
\]

Note that the map $F(x,y)=(x,\pi(x,y))$ preserves dimension: indeed, since $f\in\diff^2(\R)$, we have that $f'$ is differentiable, and one easily computes and finds that $F$ is nonsingular. Thus the image $\widetilde\eta=F\eta$ has dimension $\dim\eta\ge 1+\dim\mu$. Also note that $P_1\widetilde\eta=P_1\eta=\int\mu_\omega\,d\widetilde{\mu}(\omega)=\mu$. Since $\mu$ is exact-dimensional, $\dim_P(P_1\widetilde\eta)=\dim\mu$ (see the discussion at the end of Section \ref{sub:further-dimension}).

On the other hand, $P_2\widetilde\eta=\theta'$ by definition. Thus, applying Lemma \ref{lem:properties-dim-measures}\eqref{it:upper-bound-from-projections} to $\widetilde{\eta}$, we conclude
\begin{align*}
  \dim\theta'       & \geq  \dim\widetilde{\eta} - \dim_P (P_1\widetilde\eta) \\
                    & \geq  \dim\eta - \dim \mu \\
                    & \geq  1.
\end{align*}

Summarizing, we have shown that $\dim\theta'=1$, hence we can apply Theorem \ref{thm:refinement-2} to $f\mu$. This completes the proof of Theorem \ref{thm:application-HostLindenstrauss}.

\subsection{\label{sub:analytic-image-of-ssms}Proof of Theorem \ref{thm:analytic-images-of-ssms}}

We now assume that $\mathcal{I}$ consists of linear maps, $\mu$ is self-similar, and  $f\in\diff^\omega(\mathbb{R})$ is not affine. Our aim is to prove Theorem \ref{thm:analytic-images-of-ssms}, asserting the $\beta$-normality of $f\mu$ for all Pisot $\beta>1$. The argument is similar to what we have already seen, except that the classical projection theorems are not strong enough and we rely instead on a recent result from \cite{Hochman13} that gives stronger bounds for self-similar measures.

If $\mathcal{I}$ contains two maps with contraction ratios $\lambda_1\nsim \lambda_2$, then it follows from Theorem \ref{thm:dissonant-IFSs} that $f\mu$ is normal to all Pisot bases for all $f\in\diff^2(\R)$ (and in fact $f\in\diff^1(\R)$ is enough in this case). Thus we may assume $\lambda(f_i)\sim\gamma^{p_i}$ for all $f_i\in\mathcal{I}$ and integers $p_i$. Let $\beta>1$ be Pisot. Again, if $\gamma\nsim\beta$ then we are done by Theorem \ref{thm:dissonant-IFSs}, so we assume that $\beta\sim\gamma$.

\begin{lem}
$\mu$ generates an ergodic distribution $P$ with $\Sigma(P,S)\subseteq (1/\log\gamma)\mathbb{Q}$. For each eigenvalue, the phase measure is well defined and consists of a single atom.
\end{lem}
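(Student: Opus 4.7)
The plan: $\mu$ generates an $S$-ergodic and non-trivial EFD $P$ directly by Theorem \ref{thm:gibbs-generate-EFDs}, since a self-similar measure is a quasi-product (indeed, exact product) measure for a regular $C^{1+\e}$ IFS. The content of the lemma is therefore the structure of $\Sigma(P,S)$ and the phase.

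Fix $\alpha\in\Sigma(P,S)\setminus\{0\}$ with eigenfunction $\varphi$, and let $\varphi_\mu$ denote the associated phase function. Applying Corollary \ref{cor:phase-shift}(2) to the affine map $f_i$ with $\nu=f_i\mu$, and using Lemma \ref{lem:image-of-USM}(1) together with the self-similarity identity $\mu|_{f_i([0,1])}=p_i f_i\mu$ to identify $\varphi_{f_i\mu}$ with $\varphi_\mu$ on $f_i([0,1])$, I derive the cocycle
\[
\varphi_\mu(f_i x) \;=\; e(-\alpha\log\lambda_i)\,\varphi_\mu(x) \qquad (\mu\text{-a.e.\ }x),
\]
which iterates over finite words $\sigma$ to $\varphi_\mu(f_\sigma x)=e(-\alpha\log\lambda_\sigma)\varphi_\mu(x)$.

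To show $\theta_\mu$ is a single atom I would combine this cocycle with Proposition \ref{pro:synchronization}. Since $\mu$ generates $P$, for $\mu$-typical $x$ the scenery $\mu_{x,t}$ is $P$-typical along a typical set of times $t$, and for any such $t$ the proposition gives that $\varphi_{\mu_{x,t}}$ is $\mu_{x,t}$-a.e.\ constant. Unwinding the eigenfunction relation, this translates into $\varphi_\mu$ being $\mu$-a.e.\ constant on the interval $[x-e^{-t},x+e^{-t}]$. Choosing $t$ so that this interval contains a cylinder $f_\sigma([0,1])$ of comparable diameter, possible because the cylinders of a regular IFS at any fixed generation have comparable lengths, we obtain a cylinder on which $\varphi_\mu$ is $\mu$-a.e.\ constant. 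The cocycle then globalizes this: if $\varphi_\mu\circ f_\sigma$ is $\mu$-a.e.\ constant on $[0,1]$, then by the cocycle $\varphi_\mu=e(\alpha\log\lambda_\sigma)\,(\varphi_\mu\circ f_\sigma)$ is $\mu$-a.e.\ constant on $[0,1]$ itself, hence $\theta_\mu=\delta_z$ for some $z\in S^1$.

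Finally, the self-similarity $\mu=\sum_i p_i f_i\mu$ combined with Corollary \ref{cor:phase-shift}(2) applied to each $f_i$ yields
\[
\theta_\mu \;=\; \sum_i p_i\int\delta_{e(-\alpha\log\lambda_i)\varphi_\mu(x)}\,d\mu(x).
\]
With $\theta_\mu=\delta_z$, this forces $e(-\alpha\log\lambda_i)z=z$ and hence $\alpha\log\lambda_i\in\mathbb{Z}$ for every $i$. Using $\log\lambda_i=(p_i/p)\log\gamma$ (from $\lambda_i\sim\gamma^{p_i}$ with $p$ the common denominator), we conclude $\alpha\log\gamma\in\mathbb{Q}$, so $\alpha\in(1/(p\log\gamma))\mathbb{Q}$. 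The principal obstacle is the local-to-global synchronization step: extracting $\mu$-a.e.\ constancy of $\varphi_\mu$ on a symbolic cylinder from the scenery-based synchronization on a metric ball. This requires a careful matching of scales, which is available precisely because of the geometric regularity of the IFS; once it is in place, the cocycle closes the argument in one line.
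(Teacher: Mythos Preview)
Your overall architecture---cocycle from Corollary~\ref{cor:phase-shift}(2), local constancy of $\varphi_\mu$, globalization by the cocycle, and finally the spectral constraint---is sound, and the last step (deducing $\alpha\log\lambda_i\in\mathbb{Z}$ from an atomic phase) is essentially the content of the proof of Theorem~\ref{thm:spectrum-of-quasi-product}(1), which the paper simply cites. So for the spectrum you are reproving what the paper invokes.

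The genuine gap is in your synchronization step. You write that ``for $\mu$-typical $x$ the scenery $\mu_{x,t}$ is $P$-typical along a typical set of times $t$''. This does not follow from the fact that $\mu$ generates $P$. Generating $P$ means that $\frac{1}{T}\int_0^T f(\mu_{x,t})\,dt\to\int f\,dP$ for \emph{continuous} $f$; it says nothing about the orbit $t\mapsto\mu_{x,t}$ visiting a given Borel set of full $P$-measure (and the set $\{\nu:\varphi_\nu\text{ is }\nu\text{-a.e.\ constant}\}$ from Proposition~\ref{pro:synchronization} is exactly such a set). In general the scenery orbit may lie entirely in a $P$-null set while still equidistributing for $P$. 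The ``matching of scales'' issue you flag is not the obstruction; the obstruction is that equidistribution alone cannot produce pointwise membership in measurable full-measure sets.

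The paper closes this gap differently and more directly, using the linearity of the IFS rather than equidistribution. Because the contractions are affine, every accumulation point $\nu$ of the scenery satisfies $f\mu=c\cdot\nu|_I$ for some \emph{linear} $f$ and interval $I$ (this is the linear case of Lemma~\ref{lem:limit-geometries}/Theorem~\ref{thm:gibbs-generate-EFDs}). Choosing $\nu$ to be $P$-typical, Proposition~\ref{pro:synchronization} gives that $\varphi_\nu$ is $\nu$-a.e.\ constant; then $\varphi_\mu=\varphi_{f\mu}\circ f$ up to a fixed rotation (since $f$ is affine), and $\varphi_{f\mu}=\varphi_{\nu|_I}=\varphi_\nu$ on $I$, so $\varphi_\mu$ is $\mu$-a.e.\ constant in one stroke. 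No local-to-global step is needed, and the cocycle becomes unnecessary for the phase statement. If you want to keep your framework, the correct repair is to replace the equidistribution claim by this structural observation.
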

\begin{proof}
The first statement follows from the fact that the measure in question is a quasi-product measure, and from Theorems \ref{thm:gibbs-generate-EFDs} and \ref{thm:spectrum-of-quasi-product}. Because the contractions are linear, it is elementary to see that for every accumulation point $\nu$ of $\mu_{x,t}$, there is a linear map $f$ and interval $I$ with $f\mu=c\cdot\nu|_I$ for a normalizing constant $c$. The statement about the phase then follows from Proposition \ref{pro:synchronization}.
\end{proof}

Let $P$ be as in the lemma, and fix an eigenvalue $\alpha$ of $P$ with associated eigenfunction $\varphi$. Let $f\in\diff^\omega(\mathbb{R})$ be non-linear. By Corollary \ref{cor:phase-shift}, we know that the phase distribution of $f\mu$ is, up smooth coordinate change and in additive notation, the push-forward of $\mu$ through $f'$. Since $f$ is real analytic and non-linear, $f'$ is a piecewise diffeomorphism and so $\dim f'\mu = \dim\mu>0$.

Now let $\tau_n$ be the sequence of eventually-resonant measures for $T_\beta$-invariant measures provided by Theorem \ref{thm:existence-of-resonant-measures}, and observe from the construction of $\tau_n$ that they are in fact affine combinations of self-similar measures with uniform contraction ratio $\sim\beta$ satisfying the open set condition. Arguing through the proof of Theorem \ref{thm:refinement}, we find that the following lemma, which replaces Lemma \ref{lem:refined-dissonance}, allows the proof to carry through.

\begin{lem}
Let $\nu=f\mu$ and $\theta=\theta_{\nu}$. Let $P_x$ denote the distribution that is $\log\gamma$-generated by $\nu$ at $x$. Let $\tau$ be a self-similar measure for an IFS with uniform contraction ratio a power of $\gamma$, satisfying the open set condition, and satisfying $\dim\tau+\dim\nu\geq 1$. Then for $\nu$-a.e. $x$ and $P_x$-a.e. $\eta$, we have $\dim\tau*\eta = \min\{1,\dim\tau+\dim\eta\}$.
\end{lem}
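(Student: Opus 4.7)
The plan is to follow the template of Lemma \ref{lem:refined-dissonance}, but to replace the classical projection theorem (Theorem \ref{thm:marstrand-dimofexceptions}), which only gives an exceptional set of Hausdorff dimension strictly less than $1$, with a sharper projection-type result for self-similar measures due to Hochman \cite{Hochman13}, which produces an exceptional set of Hausdorff dimension $0$. This trade-off is precisely what allows us to get by with the weaker hypothesis $\dim\theta>0$ in place of $\dim\theta=1$, which was used in the proof of Theorem \ref{thm:application-HostLindenstrauss}.

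Setting up, by the preceding lemma the phase measure of the self-similar measure $\mu$ consists of a single atom, so for $\nu=f\mu$ and the eigenfunction $\varphi$ of eigenvalue $\alpha$, I would define $\ell:\R\to\R/\Z$ via $\varphi_\nu(y)=e(\ell(y))\varphi_\nu(y_0)$, giving $P_x=S_{\ell(x)}P_{x_0}$ for $\nu$-a.e.\ $x$. By Fubini,
\[
\int\!\!\int \dim(\tau*\eta)\,dP_x(\eta)\,d\nu(x)=\int\!\!\int \dim(\tau*S_t\eta)\,d(\ell\nu)(t)\,dP_{x_0}(\eta).
\]
Next I would verify $\dim(\ell\nu)>0$: by Corollary \ref{cor:phase-shift}(2), $\varphi_\nu(f(y))=e(-\alpha\log f'(y))\varphi_\mu(y)$, and since $\varphi_\mu$ is $\mu$-a.s.\ constant, $\ell\circ f$ is, up to an additive constant and reduction modulo $1$, equal to $-\alpha\log f'$. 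Since $f$ is real-analytic and non-affine, $\log f'$ is real-analytic and non-constant, hence a local diffeomorphism outside a discrete set; decomposing $\mu$ accordingly, $\ell\nu$ is a countable sum of diffeomorphic images of restrictions of $\mu$, and therefore $\dim(\ell\nu)=\dim\mu>0$.

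The key step is to invoke the projection theorem from \cite{Hochman13}: since $\tau$ is a self-similar measure with uniform contraction ratio a power of $\gamma$ satisfying the open set condition, for every Borel probability measure $\eta$ on $\R$ the exceptional set
\[
E_\eta=\{t\in\R:\dim(\tau*S_t\eta)<\min\{1,\dim\tau+\dim\eta\}\}
\]
has Hausdorff dimension $0$. Because $\dim(\ell\nu)>0$, this forces $\ell\nu(E_\eta)=0$ for every $\eta$. Combined with the hypothesis $\dim\tau+\dim\nu\geq 1$ and the fact that $\dim\eta=\dim\nu$ for $P$-a.e.\ $\eta$ (by Proposition \ref{pro:positive-dim} and Lemma \ref{lem:limit-geometries}, which implies that $P$-typical measures are restrictions of diffeomorphic copies of $\mu$, hence of $\nu$), we obtain $\dim(\tau*S_t\eta)\geq 1$ for $(\ell\nu)\otimes P_{x_0}$-a.e.\ $(t,\eta)$. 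Since any measure on $\R$ has dimension at most $1$, equality holds, and unwinding $P_x=S_{\ell(x)}P_{x_0}$ yields the claimed identity for $\nu$-a.e.\ $x$ and $P_x$-a.e.\ $\eta$.

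The main obstacle is the clean invocation of the Hochman projection theorem: one must check that its hypotheses apply to $\tau$ as used here (namely to the measures $\tau_n$ from Theorem \ref{thm:existence-of-resonant-measures}, which are affine combinations of self-similar measures with uniform contraction ratio $\sim\beta\sim\gamma$ satisfying OSC) and that the exceptional-set bound is uniform enough to be integrated against $P_{x_0}$ via Fubini. Once this input is available, the rest of the argument is a direct adaptation of the proof of Lemma \ref{lem:refined-dissonance}.
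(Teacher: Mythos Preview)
Your overall strategy is exactly that of the paper: rerun the computation from Lemma~\ref{lem:refined-dissonance}, establish $\dim\theta>0$ from the non-affine analyticity of $f$, and replace the classical projection bound by the zero-dimensional exceptional set coming from \cite{Hochman13}. The verification that $\dim(\ell\nu)=\dim\mu>0$ is fine.

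The one genuine gap is in how you invoke \cite{Hochman13}. You state that, because $\tau$ is self-similar with uniform contraction ratio and OSC, the exceptional set $E_\eta=\{t:\dim(\tau*S_t\eta)<\min\{1,\dim\tau+\dim\eta\}\}$ has Hausdorff dimension $0$ \emph{for every Borel probability measure $\eta$}. That is not what \cite[Theorem~1.8]{Hochman13} says: the result there concerns linear projections of a self-similar measure in the plane, so one needs self-similar structure on \emph{both} sides, not just on $\tau$. For an arbitrary $\eta$ no such bound is available.

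What makes the argument go through here is precisely the extra structure of $P$-typical $\eta$ that you only used to read off $\dim\eta$. Since $\mathcal{I}$ is linear, by the remark following the preceding lemma (and Lemma~\ref{lem:limit-geometries}), $P$-a.e.\ $\eta$ satisfies $\eta\ll (g\mu)|_I$ for some \emph{linear} map $g$. Hence $\tau*S_t\eta$ is absolutely continuous with respect to the image of the planar product measure $\tau\times\mu$ under a linear map $(x,y)\mapsto x+e^{-t'}y$ (for an appropriate $t'$ depending on $t$ and $g$). Now $\tau\times\mu$ is itself self-similar, because both factors are self-similar with contraction ratios that are powers of $\gamma$; this is where the hypotheses on $\tau$ and the standing assumption $\lambda(f_i)\sim\gamma$ are actually used. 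At this point \cite[Theorem~1.8]{Hochman13} applies to $\tau\times\mu$ and gives a fixed exceptional set of parameters of Hausdorff dimension $0$, independent of $\eta$, which is what you need to integrate against $\ell\nu$ and $P_{x_0}$. (When the contraction ratios of $\mathcal{I}$ are not all equal the product is not literally self-similar and a variant of the argument from \cite{Hochman13} is required; the paper defers this.)

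So: keep your setup, but replace the blanket claim about arbitrary $\eta$ with the observation that $\tau*S_t\eta$ is absolutely continuous with respect to a linear image of the self-similar measure $\tau\times\mu$, and then invoke \cite{Hochman13} for that fixed planar self-similar measure.
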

\begin{proof}
We have already noted that  $\dim\theta>0$. We now calculate exactly as in the proof of Lemma \ref{lem:refined-dissonance}. The only change is that we cannot use Theorem \ref{thm:marstrand-dimofexceptions} to deduce that $\dim \tau*S_{\ell(z)}\eta=1$ for $\theta$-almost all $z$, since all we know about $\theta$ is that $\dim\theta>0$.

Instead, note that up to a translation, $S_t\eta$ is absolutely continuous with respect to the measure $\eta$ scaled by $e^{-t}$, whence $\tau*S_t\eta$ is absolutely continuous with respect to the image of the self-similar measure $\tau\times\mu$ via the linear map $(x,y)\mapsto x+e^{-t}y$. Now, for $\tau=\tau_n$, both $\mu$ and $\tau$ are self-similar with contraction ratios $\sim\beta$. When the contraction ratios of $\mathcal{I}$ are uniform, $\tau\times\mu$ is also self-similar and of dimension $>1$ (for large $n$), and we can invoke Theorem \cite[Theorem 1.8]{Hochman13}, which implies that $\dim \tau*S_t\eta=1$ for all $t$ outside a set of Hausdorff dimension $0$, and hence of $\theta$-measure $0$. In the non-uniformly contracting case a minor (but not short) modification of the arguments in \cite{Hochman13} is needed; this will appear separately.
\end{proof}

This completes the proof of Theorem \ref{thm:analytic-images-of-ssms}.

\bibliographystyle{plain}
\bibliography{bib}

\begin{thebibliography}{10}

\bibitem{AspbergEkstromPerssonSchmeling2013}
Magnus Aspenberg, Fredrik Ekstr{\"{o}}m, Tomas Persson, and J{\"{o}}rg
  Schmeling.
\newblock On the asymptotics of the scenery flow.
\newblock preprint, available at \texttt{http://arxiv.org/abs/1309.1619}, 2013.

\bibitem{Barreira06}
Luis Barreira.
\newblock Nonadditive thermodynamic formalism: equilibrium and {G}ibbs
  measures.
\newblock {\em Discrete Contin. Dyn. Syst.}, 16(2):279--305, 2006.

\bibitem{BedfordFisher97}
Tim Bedford and Albert~M. Fisher.
\newblock Ratio geometry, rigidity and the scenery process for hyperbolic
  {C}antor sets.
\newblock {\em Ergodic Theory Dynam. Systems}, 17(3):531--564, 1997.

\bibitem{Bertrand-Mathis77}
Anne Bertrand.
\newblock D\'eveloppements en base de {P}isot et r\'epartition modulo {$1$}.
\newblock {\em C. R. Acad. Sci. Paris S\'er. A-B}, 285(6):A419--A421, 1977.

\bibitem{Bertrand-Mathis79}
Anne Bertrand.
\newblock R\'epartition modulo {$1$} et d\'eveloppement en base {$\theta $}.
\newblock {\em C. R. Acad. Sci. Paris S\'er. A-B}, 289(1):A1--A4, 1979.

\bibitem{Billingsley99}
Patrick Billingsley.
\newblock {\em Convergence of probability measures}.
\newblock Wiley Series in Probability and Statistics: Probability and
  Statistics. John Wiley \& Sons, Inc., New York, second edition, 1999.
\newblock A Wiley-Interscience Publication.

\bibitem{Blanchard89}
F.~Blanchard.
\newblock {$\beta$}-expansions and symbolic dynamics.
\newblock {\em Theoret. Comput. Sci.}, 65(2):131--141, 1989.

\bibitem{BBFKW10}
Ryan Broderick, Yann Bugeaud, Lior Fishman, Dmitry Kleinbock, and Barak Weiss.
\newblock Schmidt's game, fractals, and numbers normal to no base.
\newblock {\em Math. Res. Lett.}, 17(2):307--321, 2010.

\bibitem{BMP85}
Gavin Brown, William Moran, and Charles E.~M. Pearce.
\newblock Riesz products and normal numbers.
\newblock {\em J. London Math. Soc. (2)}, 32(1):12--18, 1985.

\bibitem{BMP87}
Gavin Brown, William Moran, and Charles E.~M. Pearce.
\newblock Riesz products, {H}ausdorff dimension and normal numbers.
\newblock {\em Math. Proc. Cambridge Philos. Soc.}, 101(3):529--540, 1987.

\bibitem{BMP93}
Gavin Brown, William Moran, and Andrew~D. Pollington.
\newblock Normality to noninteger bases.
\newblock {\em C. R. Acad. Sci. Paris S\'er. I Math.}, 316(12):1241--1244,
  1993.

\bibitem{Bugeaud12}
Yann Bugeaud.
\newblock {\em Distribution Modulo one and Diophantine Approximation}, volume
  193 of {\em Cambridge Tracts in Mathematics}.
\newblock Cambridge University Press, 2012.

\bibitem{Cassels59}
J.~W.~S. Cassels.
\newblock On a problem of {S}teinhaus about normal numbers.
\newblock {\em Colloq. Math.}, 7:95--101, 1959.

\bibitem{DEL63}
H.~Davenport, P.~Erd{\H{o}}s, and W.~J. LeVeque.
\newblock On {W}eyl's criterion for uniform distribution.
\newblock {\em Michigan Math. J.}, 10:311--314, 1963.

\bibitem{EFS11}
Manfred Einsiedler, Lior Fishman, and Uri Shapira.
\newblock Diophantine approximations on fractals.
\newblock {\em Geom. Funct. Anal.}, 21(1):14--35, 2011.

\bibitem{EinsiedlerWard11}
Manfred Einsiedler and Thomas Ward.
\newblock {\em Ergodic theory with a view towards number theory}, volume 259 of
  {\em Graduate Texts in Mathematics}.
\newblock Springer-Verlag London Ltd., London, 2011.

\bibitem{Falconer82}
K.~J. Falconer.
\newblock Hausdorff dimension and the exceptional set of projections.
\newblock {\em Mathematika}, 29(1):109--115, 1982.

\bibitem{Falconer97}
Kenneth Falconer.
\newblock {\em Techniques in fractal geometry}.
\newblock John Wiley \& Sons Ltd., Chichester, 1997.

\bibitem{Falconer03}
Kenneth Falconer.
\newblock {\em Fractal geometry}.
\newblock John Wiley \& Sons Inc., Hoboken, NJ, second edition, 2003.
\newblock Mathematical foundations and applications.

\bibitem{FeldmanSmorodinsky92}
J.~Feldman and M.~Smorodinsky.
\newblock Normal numbers from independent processes.
\newblock {\em Ergodic Theory Dynam. Systems}, 12(4):707--712, 1992.

\bibitem{Feller71}
William Feller.
\newblock {\em An introduction to probability theory and its applications.
  {V}ol. {II}.}
\newblock Second edition. John Wiley \& Sons Inc., New York, 1971.

\bibitem{Furstenberg70}
Harry Furstenberg.
\newblock Intersections of {C}antor sets and transversality of semigroups.
\newblock In {\em Problems in analysis ({S}ympos. {S}alomon {B}ochner,
  {P}rinceton {U}niv., {P}rinceton, {N}.{J}., 1969)}, pages 41--59. Princeton
  Univ. Press, Princeton, N.J., 1970.

\bibitem{Furstenberg08}
Hillel Furstenberg.
\newblock Ergodic fractal measures and dimension conservation.
\newblock {\em Ergodic Theory Dynam. Systems}, 28(2):405--422, 2008.

\bibitem{Garsia62}
Adriano~M Garsia.
\newblock Arithmetic properties of {B}ernoulli convolutions.
\newblock {\em Transactions of the American Mathematical Society},
  102:409--432, 1962.

\bibitem{Gavish11}
Matan Gavish.
\newblock Measures with uniform scaling scenery.
\newblock {\em Ergodic Theory Dynam. Systems}, 31(1):33--48, 2011.

\bibitem{Hochman12}
Michael Hochman.
\newblock Geometric rigidity of times-$m$ invariant measures.
\newblock {\em Journal of the European Mathematical Society}, 14(5):1539--1563,
  2012.

\bibitem{Hochman12b}
Michael Hochman.
\newblock Dynamics on fractals and fractal distributions.
\newblock Preprint, available at \texttt{http://arxiv.org/abs/1008.3731v2},
  2013.

\bibitem{Hochman13}
Michael Hochman.
\newblock On self-similar sets with overlaps and inverse theorems for entropy.
\newblock {\em Ann. of Math.}, 180(2):773--822, 2014.

\bibitem{HochmanShmerkin12}
Michael Hochman and Pablo Shmerkin.
\newblock Local entropy averages and projections of fractal measures.
\newblock {\em Ann. of Math. (2)}, 175(3):1001--1059, 2012.

\bibitem{Hofbauer78}
Franz Hofbauer.
\newblock {$\beta $}-shifts have unique maximal measure.
\newblock {\em Monatsh. Math.}, 85(3):189--198, 1978.

\bibitem{Host95}
Bernard Host.
\newblock Nombres normaux, entropie, translations.
\newblock {\em Israel J. Math.}, 91(1-3):419--428, 1995.

\bibitem{HuntKaloshin97}
Brian~R. Hunt and Vadim~Yu. Kaloshin.
\newblock How projections affect the dimension spectrum of fractal measures.
\newblock {\em Nonlinearity}, 10(5):1031--1046, 1997.

\bibitem{Johnson92}
Aimee S.~A. Johnson.
\newblock Measures on the circle invariant under multiplication by a
  nonlacunary subsemigroup of the integers.
\newblock {\em Israel J. Math.}, 77(1-2):211--240, 1992.

\bibitem{Kaufman80}
R.~Kaufman.
\newblock Continued fractions and {F}ourier transforms.
\newblock {\em Mathematika}, 27(2):262--267 (1981), 1980.

\bibitem{Kaufman1984}
Robert Kaufman.
\newblock On {B}ernoulli convolutions.
\newblock In {\em Conference in modern analysis and probability ({N}ew {H}aven,
  {C}onn., 1982)}, volume~26 of {\em Contemp. Math.}, pages 217--222. Amer.
  Math. Soc., Providence, RI, 1984.

\bibitem{Lalley98}
Steven~P Lalley.
\newblock Random series in powers of algebraic integers: Hausdorff dimension of
  the limit distribution.
\newblock {\em Journal of the London Mathematical Society. Second Series},
  57(3):629--654, 1998.

\bibitem{Lindenstrauss01}
Elon Lindenstrauss.
\newblock {$p$}-adic foliation and equidistribution.
\newblock {\em Israel J. Math.}, 122:29--42, 2001.

\bibitem{Lindenstrauss05}
Elon Lindenstrauss.
\newblock Rigidity of multiparameter actions.
\newblock {\em Israel J. Math.}, 149:199--226, 2005.
\newblock Probability in mathematics.

\bibitem{Lyons88}
Russell Lyons.
\newblock Strong laws of large numbers for weakly correlated random variables.
\newblock {\em Michigan Math. J.}, 35(3):353--359, 1988.

\bibitem{Mattila95}
Pertti Mattila.
\newblock {\em Geometry of sets and measures in {E}uclidean spaces}, volume~44
  of {\em Cambridge Studies in Advanced Mathematics}.
\newblock Cambridge University Press, Cambridge, 1995.
\newblock Fractals and rectifiability.

\bibitem{Meiri1998}
David Meiri.
\newblock Entropy and uniform distribution of orbits in {$T^d$}.
\newblock {\em Israel Journal of Mathematics}, 105:153--183, 1998.

\bibitem{MoranPollington95}
W.~Moran and A.~D. Pollington.
\newblock Metrical results on normality to distinct bases.
\newblock {\em J. Number Theory}, 54(2):180--189, 1995.

\bibitem{MortersPreiss98}
Peter M{\"o}rters and David Preiss.
\newblock Tangent measure distributions of fractal measures.
\newblock {\em Math. Ann.}, 312(1):53--93, 1998.

\bibitem{Parry60}
W.~Parry.
\newblock On the {$\beta $}-expansions of real numbers.
\newblock {\em Acta Math. Acad. Sci. Hungar.}, 11:401--416, 1960.

\bibitem{PeresShmerkin09}
Yuval Peres and Pablo Shmerkin.
\newblock Resonance between {C}antor sets.
\newblock {\em Ergodic Theory Dynam. Systems}, 29(1):201--221, 2009.

\bibitem{Pollington95}
A.~D. Pollington.
\newblock The {H}ausdorff dimension of a set of normal numbers.
\newblock {\em Pacific J. Math.}, 95(1):193--204, 1981.

\bibitem{Pollington88}
A.~D. Pollington.
\newblock The {H}ausdorff dimension of a set of normal numbers. {II}.
\newblock {\em J. Austral. Math. Soc. Ser. A}, 44(2):259--264, 1988.

\bibitem{QueffelecRamare03}
Martine Queff{\'e}lec and Olivier Ramar{\'e}.
\newblock Analyse de {F}ourier des fractions continues \`a quotients
  restreints.
\newblock {\em Enseign. Math. (2)}, 49(3-4):335--356, 2003.

\bibitem{Renyi57}
A.~R{\'e}nyi.
\newblock Representations for real numbers and their ergodic properties.
\newblock {\em Acta Math. Acad. Sci. Hungar}, 8:477--493, 1957.

\bibitem{Rudolph90}
Daniel~J. Rudolph.
\newblock {$\times 2$} and {$\times 3$} invariant measures and entropy.
\newblock {\em Ergodic Theory Dynam. Systems}, 10(2):395--406, 1990.

\bibitem{Schmidt60}
Wolfgang~M. Schmidt.
\newblock On normal numbers.
\newblock {\em Pacific J. Math.}, 10:661--672, 1960.

\bibitem{Sidorov03}
Nikita Sidorov.
\newblock Arithmetic dynamics.
\newblock In {\em Topics in dynamics and ergodic theory}, volume 310 of {\em
  London Math. Soc. Lecture Note Ser.}, pages 145--189. Cambridge Univ. Press,
  Cambridge, 2003.

\bibitem{Sullivan88}
Dennis Sullivan.
\newblock Differentiable structures on fractal-like sets, determined by
  intrinsic scaling functions on dual {C}antor sets.
\newblock In {\em The mathematical heritage of Hermann Weyl (Durham, NC,
  1987)}, volume~48 of {\em Proc. Sympos. Pure Math.}, pages 15--23. Amer.
  Math. Soc., Providence, RI, 1988.

\bibitem{Walters82}
Peter Walters.
\newblock {\em An introduction to ergodic theory}, volume~79 of {\em Graduate
  Texts in Mathematics}.
\newblock Springer-Verlag, New York, 1982.

\bibitem{Zahle88}
U.~Z{\"a}hle.
\newblock Self-similar random measures. {I}. {N}otion, carrying {H}ausdorff
  dimension, and hyperbolic distribution.
\newblock {\em Probab. Theory Related Fields}, 80(1):79--100, 1988.

\end{thebibliography}

\medskip
\noindent Email: mhochman@math.huji.ac.il \\
Address: Einstein Institute of Mathematics, Givat Ram, Jerusalem 91904, Israel

\medskip
\noindent Email: pshmerkin@utdt.edu \\
Address: Department of Mathematics, Faculty of Engineering and Physical Sciences, University of Surrey, Guildford, GU2 7XH, United Kingdom \\
Current address: Department of Mathematics and Statistics, Torcuato Di Tella University, Av. Figueroa Alcorta 7350 (1425), Buenos Aires, Argentina.

\end{document}